\newcommand{\TiledSurface}{(S,M,M^*,P)}
\newcommand{\Algebra}{A}
\newcommand{\calc}{\mathcal{C}}
\newcommand{\calt}{\mathcal{T}}
\newcommand{\za}{\alpha}
\newcommand{\zb}{\beta}
\newcommand{\zd}{\delta}
\newcommand{\zD}{\Delta}
\newcommand{\ze}{\epsilon}
\newcommand{\zg}{\gamma}
\newcommand{\zG}{\Gamma}
\newcommand{\zl}{\lambda}
\newcommand{\zs}{\sigma}
\newcommand{\zL}{\Lambda}
\newcommand{\kb}{\Bbbk}
\newcommand{\ot}{\leftarrow}
\newcommand{\Hom}{\textup{Hom}}
\newcommand{\add}{\textup{add}}
\newcommand{\rad}{\textup{rad}\,}
\newcommand{\Pbar}{\overline{P}}
\newcommand{\Sbar}{\overline{S}}
\newcommand{\Lbar}{\overline{L}}
\newcommand{\Ext}{\textup{Ext}}
\newcommand{\im}{\textup{im}}
\newcommand{\mar}[1]{\mathsf{mar}(#1)}
\renewcommand{\dim}{\mathsf{dim}}
\newcommand{\proj}{\mathsf{proj}}
\newcommand{\inj}{\mathsf{inj}}
\theoremstyle{definition} 
\newtheorem{theorem}{Theorem}[section]
\newtheorem{lemma}[theorem]{Lemma}
\newtheorem{proposition}[theorem]{Proposition}
\newtheorem{corollary}[theorem]{Corollary}
\newtheorem{definition}[theorem]{Definition}
\newtheorem{example}[theorem]{Example}
\newtheorem{remark}[theorem]{Remark}
\newtheorem{question}[theorem]{Question}
\numberwithin{equation}{section}
\newcommand{\stringlength}
{\ell}
\newcommand{\LEFT}{\text{pre}}
\newcommand{\RIGHT}{\text{suff}}
\newcommand{\overlapLEFT}{\text{pre}}
\newcommand{\overlapRIGHT}{\text{suff}}
\newcommand{\basis}{\mathcal{B}}
\newcommand{\hook}[1]{\textnormal{\textsf{hook}}(#1)} 
\newcommand{\cohook}[1]{\textnormal{\textsf{cohook}}(#1)} 
\newcommand{\degree}[1]{\textnormal{\textsf{degree}}(#1)} 
\newcommand{\indegree}[1]{\textnormal{\textsf{indegree}}(#1)} 
\newcommand{\outdegree}[1]{\textnormal{\textsf{outdegree}}(#1)} 
\newcommand{\required}[1]{\textnormal{\textsf{Required}}(#1)}
\definecolor{forGreen}{RGB}{0,120,0}
\newcommand{\End}{\textup{End}}
\newcommand{\Qbar}{\overline{Q}}
\newcommand{\Ibar}{\overline{I}}
\newcommand{\Abar}{\overline{A}}
\newcommand{\Mbar}{\overline{M}}
\newcommand{\fbar}{\overline{f}}
\title{Maximal almost rigid modules over gentle algebras}
\author{Emily Barnard}
\address{Department of Mathematical Sciences, DePaul University, Chicago, IL 60614-3210, USA}\email{
e.barnard@depaul.edu}
\author{Raquel Coelho Sim\~{o}es}\address{School of Mathematical Sciences, University of Lancaster, Lancaster
LA1 4YF, United Kingdom}
\email{r.coelhosimoes@lancaster.ac.uk}
\author{Emily Gunawan}\address{Department of Mathematics and Statistics, University of Massachusetts Lowell, Lowell, MA 01854-2874, USA}
\email{emily\_gunawan@uml.edu}
\author{Ralf Schiffler}
\address{Department of Mathematics, University of Connecticut, Storrs, CT 06269-1009, USA}
\email{schiffler@math.uconn.edu}
\date{\today.   
}
\subjclass[2020]{Primary 
05E10, 
16G20; 
Secondary 16G70, 
13F60 
}
\begin{document}

\begin{abstract}
We study maximal almost rigid modules over a gentle algebra $A$.  We prove that the number of indecomposable direct summands of every maximal almost rigid $A$-module is equal to the sum of the number of vertices and the number of arrows of the Gabriel quiver of $A$. Moreover, the algebra $A$, considered as an $A$-module, can be completed  to a maximal almost rigid module in a unique way.

Gentle algebras are precisely the tiling algebras of surfaces with marked points.  We show that the (permissible) triangulations of the surface of $A$ are in bijection with the maximal almost rigid $A$-modules. 

Furthermore, we study the endomorphism algebra  $C=\textup{End}_A T$ of a maximal almost rigid module $T$. We construct a fully faithful functor $G\colon \textup{mod}\,A\to \textup{mod}\, \overline{A}$ into the module category of a bigger gentle algebra $\overline{A}$ and show that $G$ maps maximal almost rigid $A$-modules to tilting $\overline{A}$-modules. In particular, $C$ and $\overline{A}$ are derived equivalent and $C$ is gentle.

After giving a geometric realization of the functor $G$, we obtain a tiling $G(\mathcal{T})$ of the surface of $\overline{A}$ as the image of the triangulation $\mathcal{T}$ corresponding to $T$. We then show that the tiling algebra of $G(\mathcal{T})$ is $C$. Moreover, the tiling algebra of $\mathcal{T}$ is obtained algebraically from $C$ as the tensor algebra with respect to the $C$-bimodule $\textup{Ext}_C^2(DC,C)$, which also is fundamental in cluster-tilting theory.
\end{abstract}

\maketitle

\setcounter{tocdepth}{1}

\tableofcontents

\section{Introduction}
\subsection{Maximal almost rigid modules} 

The notion of a maximal almost rigid module is a novel concept in representation theory that was introduced in 2023 by three of the four authors in collaboration with Emily Meehan in \cite{BGMS19}. 
For a module $T$ over a finite dimensional (non-commutative) algebra $A$, the classical notion of rigidity is defined by the condition that every short exact sequence of the form $0\to T\to E\to T\to 0$ splits. We weaken this condition and say that $T$ is \emph{almost rigid} if for all indecomposable summands $T_i,T_j$ of $T$ and all short exact sequences $0\to T_i\to E\to T_j\to 0$, either the sequence splits, or the module $E$ is indecomposable.

The original motivation in \cite{BGMS19} was to give an algebraic interpretation of Catalan combinatorics, in particular,  Cambrian lattices of type $\mathbb{A}$  and triangulations of polygons \cite{ReadingCambrianLattices06}. 
Given a path algebra $A$ of Dynkin type $\mathbb{A}_n$, the authors show that there is a bijection between the maximal almost rigid $A$-modules and triangulations of a certain $(n+1)$-gon $S$.

This bijection is realized by establishing an equivalence from the category of oriented diagonals in $S$ to the category $\textup{mod}\,A$. Furthermore, it is shown that the endomorphism algebra of a maximal almost rigid module in this case is isomorphic to a tilted algebra of type $\mathbb{A}_{2n-1}$, hence providing a connection to the cluster algebra of the same type as in \cite{ABS}.

In the current paper, we generalize these results to all gentle algebras using the geometric model of \cite{BCS21}.
 
The idea of allowing non-split short exact sequences with indecomposable middle term is a new perspective on representation theory that may seem strange from the point of view of classical homological algebra. Nevertheless, since their introduction in \cite{BGMS19}, maximal almost rigid modules did naturally arise in a number of other research projects in representation theory. 
In \cite{ACFGS23}, they appear in the study of cluster structures for the $\mathbb{A}_\infty$ singularity, where they correspond to
triangulations of the completed infinity-gon. In this context, mutation is defined and the exchange graph of maximal almost rigid subcategories is proven to be connected.
It is shown in \cite{Deq} that, for type $\mathbb{A}$, the two concepts of maximal almost rigidity and Jordan recoverability are complementary to each other, and in \cite{HansonYou}, maximal almost rigid modules appear in the study of bricks over preprojective algebras of type $\mathbb{A}$. A generalization of maximal almost rigid modules to Dynkin type $\mathbb{D}$ was introduced in \cite{ ChenZheng24}.

A forthcoming paper \cite{BHRS} introduces a different exact structure on the module category $\textup{mod}\,A$  of  any type $\mathbb{A}$  path algebra $A$ and proves that the maximal almost rigid $A$-modules are exactly the tilting modules relative to this exact structure. In this case, the category $\textup{mod}\,A$ is an example of a $0$-Auslander category, which were recently introduced in \cite{GorskyNakaokaPalu} in the setting of extriangulated categories, see also the survey \cite{Palu}.

\subsection{Gentle algebras} \emph{Gentle algebras} form an important and well-studied class of finite dimensional algebras. They were introduced by Assem and Skowro\'nski in \cite{AS87} as a special case of the string algebras defined by Butler and Ringel in \cite{BR87}. The class of all gentle algebras is closed under derived equivalences \cite{Schroer99, SZ03}, and important examples of gentle algebras are the tilted algebras of Dynkin type $\mathbb{A}$ and extended Dynkin type $\widetilde{\mathbb{A}}$, see \cite{AS87}.  More recently, gentle algebras became fundamental in the study of Fukaya categories, mirror symmetry and dimer models, see \cite{LP20,HKK17,Bocklandt}. The singularity category of a gentle algebra is studied in \cite{Kalck}.

The indecomposable modules over a gentle algebra, in fact over any string algebra, have the combinatorial structure of a string module or a band module \cite{BR87}. 
From the point of view of maximal almost rigid modules, the band modules don't play any role and will be mostly ignored throughout this paper. The combinatorics of string modules is a powerful tool that also yields a good understanding of morphisms and extensions. 
Already in \cite{BR87}, the authors give a combinatorial way to describe the \emph{irreducible} morphisms by adding hooks to or removing cohooks from the string of the module. A basis for the morphism spaces between two string modules was given in \cite{CB89}. Recently, the extensions  between string modules are described explicitly in \cite{CanakciSchroll21,CPS21}. 

\subsection{Geometric models for gentle algebras} An important feature of gentle algebras is the existence of a geometric models. A first example of this is the 2-Calabi-Yau tilted algebras arising from the cluster algebras of unpunctured surfaces of \cite{FST08} which were studied by Assem, Br\"ustle, Charbonneau-Jodoin and Plamondon \cite{ABCP10} (also \cite{CCS} for the polygon). 
This led to a geometric model for the cluster categories of these algebras in terms of arcs on the surface in \cite{CCS, BZ11}. 
Related constructions were given in \cite{DavidRoeslerAndSchiffler12,AmiotGrimeland16,AmiotTorusWithOneBoundaryComponent16}. 

In 2018, the following two major contributions realized geometric models for \emph{all} gentle algebras.
\begin{enumerate}
\item A geometric model for the derived category of a gentle algebra is given by Opper, Plamondon and Schroll in \cite{OPS} building on earlier work by Schroll in \cite{Schroll}.
\item A geometric model for the module category of a gentle algebra is given by Baur and the second author in \cite{BCS21}. 
\end{enumerate}
The two models are similar, but not the same. Since we work in the module category, we use (a modification) of the second model. We give here a quick overview and postpone the precise definitions to Section~\ref{sec:geometric model}. We point out that this model was very recently generalized to string algebras in \cite{BCS24}. 

A \emph{tiled surface} is a quadruple $\TiledSurface$, where $S $ is an oriented surface, $M$ and $M^*$ are two finite sets of marked points with $M$ a subset of the boundary of $S$, and $P$ is a tiling (or partial triangulation) of the marked surface $(S,M)$ such that every tile contains exactly one point of $M^*$. The authors of  \cite{BCS21} associate a gentle algebra, called a \emph{tiling algebra}, to every tiled surface and show that every gentle algebra arises this way. Moreover, the indecomposable string modules over the tiling algebra are in bijection with (permissible) arcs in the surface $(S,M^*)$, and the morphisms between string modules are given by (permissible) angles at intersections of these arcs.

Thus, there is a set of arcs on a surface that is in bijection with the set of indecomposable string modules over the tiling algebra. It is very natural to ask what type of module would correspond to a triangulation of the surface. 
As we will see in Theorem~\ref{thm intro 2}, these  are precisely the maximal almost rigid modules. 

\subsection{Main results}
Let $A$ be a gentle algebra with tiled surface $\TiledSurface$. 
Our first main result  characterizes two very particular maximal almost rigid modules. 
\begin{theorem}\label{thm intro 1}
(a) There is a unique maximal almost rigid $A$-module that contains all indecomposable projective modules as direct summands.

(b) There is a unique maximal almost rigid $A$-module that contains all indecomposable injectives as direct summands.
\end{theorem}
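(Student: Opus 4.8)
The plan is to prove (a) directly by exhibiting the completion explicitly, and to deduce (b) from (a) by duality. For the latter: the $\Bbbk$-duality $D=\Hom_{\Bbbk}(-,\Bbbk)\colon\textup{mod}\,A\to\textup{mod}\,A^{\mathrm{op}}$ preserves indecomposability, interchanges projectives and injectives, and carries a short exact sequence to one with the two outer terms swapped; hence $T$ is (maximal) almost rigid over $A$ if and only if $DT$ is (maximal) almost rigid over $A^{\mathrm{op}}$. As $A^{\mathrm{op}}$ is again gentle and $T$ contains all indecomposable injective $A$-modules exactly when $DT$ contains all indecomposable projective $A^{\mathrm{op}}$-modules, part (b) for $A$ is part (a) for $A^{\mathrm{op}}$. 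Now for (a): for each arrow $\alpha\colon i\to j$ of the Gabriel quiver $Q$ of $A$, let $N_\alpha$ be the cokernel of the map $P_j\to P_i$ induced by $\alpha$; equivalently, writing $P_i$ as a string module whose top $S_i$ is a source of its string, $N_\alpha$ is the string module obtained from $P_i$ by deleting the arm that emanates from $S_i$ through $\alpha$. Since each vertex of $Q$ emits at most two arrows, the $N_\alpha$ are precisely the modules one obtains from an indecomposable projective by deleting exactly one arm; they are pairwise non-isomorphic and none is projective. Set $T:=A\oplus\bigoplus_{\alpha\in Q_1}N_\alpha$.

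The first thing to check is that $T$ is almost rigid. Any short exact sequence ending in a projective splits, which disposes of all sequences of the form $0\to P_k\to E\to P_\ell\to 0$ and $0\to N_\alpha\to E\to P_\ell\to 0$; so it remains to treat $0\to P_k\to E\to N_\alpha\to 0$ and $0\to N_\alpha\to E\to N_\beta\to 0$, allowing $\alpha=\beta$. For this I would use the combinatorial description of extensions between string modules in \cite{BR87,CanakciSchroll21,CPS21}: a non-split extension of string modules has indecomposable middle term unless it is of ``overlap type'', in which case the middle term decomposes (into two strings). So the task is to exclude overlap-type extensions among the strings of the $P_k$ and the $N_\alpha$. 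Each of those strings is a subword of a projective string that still contains that projective's top, so it is maximal at the end or ends where an overlap would be forced to occur; verifying that this rules out the overlap configurations is a finite, local analysis of the strings near their endpoints.

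The heart of the argument --- and the step I expect to be the main obstacle --- is the converse: if $X$ is an indecomposable $A$-module with $A\oplus X$ almost rigid, then $X\cong P_k$ for some $k$ or $X\cong N_\alpha$ for some $\alpha$. Equivalently, every indecomposable $X$ that is not ``a projective with at most one arm deleted'' admits a non-split short exact sequence $0\to P_k\to E\to X\to 0$ whose middle term decomposes; for instance, at a vertex $i$ emitting two arrows the simple $S_i$ --- a projective with both arms deleted --- falls into this case. Again this should be proved via the combinatorics of string modules and their extensions: if the string of $X$ does not have that shape, then at one of its endpoints it can be prolonged by a hook running along an outgoing arrow at some vertex, and such a hook realizes an overlap of the string of $X$ with the string of a suitable indecomposable projective $P_k$, producing an overlap-type --- hence decomposable-middle --- extension. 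Turning this into a clean classification of the string modules that are simultaneously compatible with every indecomposable projective is where the real work lies.

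Granting the two facts above, the theorem is immediate. If $T\oplus X$ is almost rigid for an indecomposable $X$, then so is $A\oplus X$, so $X$ is already a summand of $T$; thus $T$ is a maximal almost rigid module containing $A$. Conversely, if $T'=A\oplus N'$ is any maximal almost rigid module, then every indecomposable summand of $N'$ is compatible with $A$ and therefore isomorphic to some $N_\alpha$; and for any arrow $\beta$ with $N_\beta$ not a summand of $T'$, the module $T'\oplus N_\beta$ is a direct summand of $T$, hence almost rigid, contradicting the maximality of $T'$. So $T'=T$, proving (a); in particular $T$ has $|Q_0|+|Q_1|$ indecomposable summands, consistent with the count announced in the abstract. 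One could also run the argument geometrically via the model of \cite{BCS21}, where the $P_k$ and $N_\alpha$ become a distinguished family of arcs on $(S,M^*)$ that cuts the surface into regions each admitting a unique permissible triangulation, so that the triangulation attached to $T$ is forced; but this presupposes the dictionary between almost rigidity of modules and compatibility of arcs that is set up only later in the paper.
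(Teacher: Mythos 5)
Your approach coincides with the paper's (Theorem \ref{thm:Mproj is unique mar containing proj}). The explicit module you propose, $T = A \oplus \bigoplus_{\alpha\in Q_1} N_\alpha$ with $N_\alpha$ the cokernel of the map $P(t(\alpha)) \to P(s(\alpha))$ given by $\alpha$, is the same module as the paper's $M_\proj$, which is defined there as the basic direct sum of the indecomposable projectives, the indecomposable summands of their radicals, and the \emph{required} summands (modules appearing in every MAR module, classified in Corollary \ref{cor:required summands}). One can check the two descriptions match: $N_\alpha$ is either a simple $S(s(\alpha))$ (when $\outdegree{s(\alpha)}=1$, and then it is required or a radical summand according to $\indegree{s(\alpha)}$) or the right-maximal direct string along the other arrow at $s(\alpha)$ (when $\outdegree{s(\alpha)}=2$, and then it is required or a radical summand according to whether it can be prolonged on the left); conversely, every radical summand and every required module is some $P(k)$ or some $N_\alpha$. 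Your cokernel formulation is a more uniform packaging of the same set and makes the $|Q_0|+|Q_1|$ count transparent. The reduction of (b) to (a) via $\kb$-duality matches the paper's ``the proof of the second part is similar.'' Your logical skeleton (prove $T$ almost rigid; prove $A\oplus X$ almost rigid forces $X\mid T$; conclude both maximality and uniqueness) is the correct structure and is what the paper carries out. The almost-rigidity sketch, based on the observation that each non-projective $w(N_\alpha)$ is a right-maximal direct string so that no overlap can have $N_\alpha$ as a term, is workable and would in fact avoid the paper's appeal to the geometric Proposition \ref{prop can complete ar to mar}.

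There is, however, a real gap in your sketch of the ``heart of the argument'' (uniqueness). You claim that a string $w$ not of the allowed shape can always be ``prolonged at one of its endpoints by a hook,'' and that this produces an overlap with a projective. That heuristic misses the case where $w$ cannot be prolonged at either end but has an interior \emph{valley} $cd^{-1}$. Minimal example: $Q=1\xrightarrow{a}2\xleftarrow{b}3$ and $X=M(ab^{-1})$. Neither endpoint of $ab^{-1}$ can be prolonged (vertices $1$ and $3$ have no incoming arrows), and $X$ is neither projective nor any $N_\alpha$; yet there is an overlap extension $0\to P(2)\to M(b)\oplus M(a)\to X\to 0$ with trivial overlap $1_2$ at the valley vertex. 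The paper's proof handles exactly this in Case~2.2.1 of Theorem \ref{thm:Mproj is unique mar containing proj}: if $w$ cannot be prolonged past its endpoint and is not direct, one locates a valley $cd^{-1}$ in $w$ and uses the projective $P(t(c))$. Your case analysis needs this branch added; the endpoint-prolongation argument alone does not close the proof.
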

The proof only uses algebraic methods. We explicitly describe the indecomposable summands of these modules.

Our next result answers the question above. 
\begin{theorem}\label{thm intro 2}
 The bijection from string modules to permissible arcs induces a bijection from maximal almost rigid $A$-modules to permissible triangulations of the tiled surface.
\end{theorem}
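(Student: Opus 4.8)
The plan is to reduce the statement to a criterion on \emph{pairs} of summands and then feed that criterion into the combinatorics of the geometric model. Throughout I use the dictionary of \cite{BCS21} recalled in Section~\ref{sec:geometric model}: the assignment $\gamma\mapsto M_\gamma$ is a bijection between permissible arcs of the tiled surface $\TiledSurface$ and indecomposable string $A$-modules, and both $\Hom$-spaces and extensions between string modules can be read off from the (permissible) angles at the endpoints and crossing points of the arcs. I would first note that no band module is a direct summand of a maximal almost rigid module, so that every maximal almost rigid module is basic and of the form $T=\bigoplus_{\gamma\in\Gamma_T}M_\gamma$ for a finite set $\Gamma_T$ of permissible arcs. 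The bijection to be constructed is $T\mapsto\Gamma_T$, with proposed inverse $\mathcal T\mapsto\bigoplus_{\gamma\in\mathcal T}M_\gamma$, so everything comes down to identifying the sets $\Gamma_T$ with the permissible triangulations.

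The technical core is the pairwise statement: for permissible arcs $\gamma,\gamma'$ (allowing $\gamma=\gamma'$), the module $M_\gamma\oplus M_{\gamma'}$ is almost rigid if and only if $\{\gamma,\gamma'\}$ is a partial permissible triangulation. To prove it I would classify all non-split short exact sequences $0\to M_\gamma\to E\to M_{\gamma'}\to 0$ and $0\to M_{\gamma'}\to E\to M_\gamma\to 0$ using the explicit description of $\Ext^1$ between string modules from \cite{CanakciSchroll21,CPS21} together with its interpretation in the geometric model, and determine the string(s) of the middle term $E$. Such extensions fall into two types: \emph{arrow extensions}, where $E$ is the single string module obtained by joining the two strings through an arrow or a hook/cohook in the sense of \cite{BR87}, so that $E$ is indecomposable; and \emph{overlap extensions}, coming from a common substring of the two strings, where $E$ is a direct sum of two string modules, hence decomposable. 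The point is to match these with arc geometry: arrow extensions occur exactly when the arcs share an endpoint and turn through a consecutive permissible angle, whereas overlap extensions are produced precisely by the configurations that a permissible triangulation forbids, in particular by a genuine crossing of $\gamma$ and $\gamma'$. Consequently, if $\{\gamma,\gamma'\}$ is a partial permissible triangulation then every non-split extension between the two modules is an arrow extension, hence has indecomposable middle term and $M_\gamma\oplus M_{\gamma'}$ is almost rigid; conversely a forbidden configuration yields an overlap extension with decomposable middle term, breaking almost rigidity. The diagonal case $\gamma=\gamma'$ is treated in the same way, using that an embedded arc has no self-overlap.

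Granting the pairwise statement, the theorem becomes formal. By definition $T=\bigoplus_{\gamma\in\Gamma_T}M_\gamma$ is almost rigid if and only if each pair of its summands is, i.e.\ if and only if the arcs of $\Gamma_T$ are pairwise compatible, i.e.\ $\Gamma_T$ is a partial permissible triangulation. Hence $T$ is \emph{maximal} almost rigid if and only if no indecomposable string module $M_\delta$, $\delta\notin\Gamma_T$, can be added while preserving almost rigidity, which by the pairwise statement says exactly that $\Gamma_T$ admits no compatible extra arc, i.e.\ $\Gamma_T$ is a (maximal) permissible triangulation of $\TiledSurface$. The map $T\mapsto\Gamma_T$ is injective because $M_\gamma$ determines and is determined by $\gamma$, and surjective because for any permissible triangulation $\mathcal T$ the same argument shows $\bigoplus_{\gamma\in\mathcal T}M_\gamma$ is maximal almost rigid; the two maps are mutually inverse. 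In particular all maximal almost rigid modules have the same number of indecomposable summands, matching the fact that all permissible triangulations of $\TiledSurface$ have the same number of arcs.

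The step I expect to be the main obstacle is the pairwise statement of the second paragraph, specifically showing that \emph{every} decomposable middle term of a non-split extension between two string modules comes from a configuration excluded by permissible triangulations, and conversely that every such configuration really does produce one. This requires a careful case analysis intertwining the string-combinatorial extension formulas of \cite{CanakciSchroll21,CPS21}, the hooks-and-cohooks calculus of \cite{BR87}, the angle conventions of \cite{BCS21}, and the role of the tiling $P$ and of the points of $M^*$ in the notion of permissibility; by contrast, the globalization and the identification of maximality on the two sides in the third paragraph are purely formal once the dictionary is set up.
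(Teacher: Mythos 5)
Your pairwise reduction is the right first step and matches the paper: almost rigidity of $M_\gamma\oplus M_{\gamma'}$ is equivalent to $\gamma,\gamma'$ having no interior crossing, via Proposition~\ref{prop:all-exts} (overlap extensions are exactly the extensions with decomposable middle term) and Proposition~\ref{prop:overlap-cross} (overlap extensions correspond exactly to interior crossings). That gives the paper's Proposition~\ref{prop:mar-maximal-noncrossing}: maximal almost rigid modules correspond to maximal noncrossing sets of permissible arcs.

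The gap is in your last paragraph, where you treat the passage from ``maximal noncrossing set of permissible arcs'' to ``permissible triangulation'' as purely formal. It is not. A priori, a maximal noncrossing collection of \emph{permissible} arcs could fail to cut $(S,M^*)$ into triangles: there might be a polygonal region bounded by arcs of the collection such that every arc of $(S,M^*)$ that would subdivide it is either non-permissible or crosses an existing arc. In fact, nothing you have said even guarantees that a permissible triangulation of $(S,M^*)$ exists. You appear to define ``partial permissible triangulation'' as ``pairwise noncrossing permissible arcs'' and then slide to ``(maximal) permissible triangulation'', but the equality of these two notions is precisely what must be proved, not assumed. The paper isolates this as Lemma~\ref{lem:maximalistriangulation} and proves it by a genuine geometric argument: given a maximal noncrossing collection $\calt$ that is not a triangulation, there is an arc $\alpha$ with a non-permissible interior segment $\alpha^i$ in some tile $\Delta$; one then considers the $P$-arc $j$ cut off by $\alpha^i$ and splits into cases (no arc of $\calt$ crosses $j$, versus some arcs do, following the one farthest from a chosen corner) to exhibit a new \emph{permissible} arc $\gamma$ that is noncrossing with all of $\calt$, contradicting maximality. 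Your proposal needs this lemma, or an argument replacing it; without it the claimed bijection only lands in maximal noncrossing collections of permissible arcs, not triangulations.
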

In particular, this proves the existence of permissible triangulations. The combination of  the two theorems above yields the following result. Let $Q$ be the quiver of the algebra $A$.

\begin{corollary}\label{cor intro 3}
The number of indecomposable direct summands in a maximal almost rigid $A$-module is $|Q_0| + |Q_1|$.
\end{corollary}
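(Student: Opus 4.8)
The plan is to deduce Corollary~\ref{cor intro 3} by combining Theorem~\ref{thm intro 1}(a) and Theorem~\ref{thm intro 2} with the elementary topological fact that any two triangulations of a fixed marked surface have the same number of arcs. Under the bijection of Theorem~\ref{thm intro 2}, a maximal almost rigid $A$-module $T=\bigoplus_i T_i$ (its indecomposable summands being string modules) is sent to the permissible triangulation $\{\gamma_{T_i}\}_i$, where $\gamma_{T_i}$ is the permissible arc corresponding to $T_i$; since the string-to-arc assignment is injective, the number of indecomposable summands of $T$ equals the number of arcs of its triangulation. As all triangulations of $(S,M^*)$ have the same number of arcs, every maximal almost rigid $A$-module has the same number $n$ of indecomposable summands. (Theorem~\ref{thm intro 1} also guarantees this set of modules is nonempty, so $n$ is well defined.) It therefore suffices to compute $n$ for a single such module, and the natural choice is the module $T$ produced in Theorem~\ref{thm intro 1}(a).

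By construction $T$ has all $|Q_0|$ indecomposable projectives among its summands, so $n=|Q_0|+r$, where $r$ is the number of non-projective indecomposable summands of $T$. The remaining task is to show $r=|Q_1|$. For this I would use the explicit description of $T$ obtained by completing $A=\bigoplus_{i\in Q_0}P_i$ to a maximal almost rigid module: I expect that for each arrow $\alpha\colon i\to j$ of $Q$ exactly one new indecomposable string module $M_\alpha$ is adjoined, built from the string determined by $\alpha$ (for instance a suitable truncation of $P_i$, or the string module read off from $\alpha$ together with the maximal string to its left), and that the modules $M_\alpha$, $\alpha\in Q_1$, are pairwise non-isomorphic and non-projective. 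Here one exploits the defining local structure of a gentle algebra --- at most two arrows start and at most two arrows end at each vertex, and the relations are prescribed --- so that the relevant strings, hence the summands $M_\alpha$, can be enumerated and compared vertex by vertex.

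The main obstacle is precisely this last bookkeeping step: establishing a bijection between the non-projective summands of $T$ and the arrows of $Q$, and verifying their distinctness and non-projectivity. This is the genuine content carried by Theorem~\ref{thm intro 1}; once that description is in hand, Corollary~\ref{cor intro 3} follows formally from the two theorems. A more geometric alternative would bypass Theorem~\ref{thm intro 1} altogether: count the arcs of a permissible triangulation directly, using that the arcs of the tiling $P$ correspond to the vertices of $Q$ and computing, via the Euler characteristic of $\TiledSurface$, how many further arcs a refinement to a triangulation contributes. That route, however, requires careful use of the precise definition of a permissible triangulation and of the full dictionary between $\TiledSurface$ and $(Q_0,Q_1)$, so I would keep the algebraic argument above as the primary proof.
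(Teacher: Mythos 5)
Your global structure matches the paper's proof exactly: pass through the bijection of Theorem~\ref{thm intro 2} to permissible triangulations, invoke the fact that all triangulations of a fixed marked surface have the same number of arcs (the paper cites \cite[Proposition 2.10]{FST08}), deduce that every MAR module has the same number of summands, and then count summands in the one concrete MAR module $M_{\proj}$ produced by Theorem~\ref{thm intro 1}(a). That count is the content of Proposition~\ref{prop projective MAR summands}.

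Where you diverge is in the bookkeeping for $M_{\proj}$, and here the specific bijection you sketch would not go through. You propose to split the summands as the $|Q_0|$ projectives plus, for each arrow $\alpha\colon i\to j$, one non-projective module $M_\alpha$ obtained as a ``suitable truncation of $P_i$.'' Taking that literally to be the radical summand of $P_i$ in the direction of $\alpha$ fails even in the paper's first example: for $\beta\colon 1\to 4$, the relevant truncation of $P(1)$ is $\rad P(1)\cong P(4)$, which is projective, and for $\delta\colon 2\to 3$ one gets $S(3)=P(3)$, again projective. The paper's bijection $f\colon Q_0\cup Q_1\to\{\text{summands of }M_{\proj}\}$ deliberately does not respect projectivity: it sends $a\in Q_1$ to the string module of the right-maximal direct string starting with $a$ (which equals $P(s(a))$ whenever $\outdegree{s(a)}=1$), and it sends $i\in Q_0$ to $P(i)$ if $\outdegree{i}=2$ and to $S(i)$ otherwise (the latter non-projective when $\outdegree{i}=1$). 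Indexing by out-degree rather than by projectivity is what makes injectivity and surjectivity uniformly checkable. Your proposed projective/non-projective split does produce the correct total --- since the count of projective $f(a)$'s equals the count of non-projective $f(i)$'s, one could permute to match --- but there is no single formula giving a bijection from $Q_1$ onto the non-projective summands, and constructing one would be harder, not easier, than the paper's choice. You correctly flag this bookkeeping as the gap; just note that the clean fix is Proposition~\ref{prop projective MAR summands}'s $f$, not a projective/non-projective dichotomy.
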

This result can be thought of an analogy to  the number of summands of a \emph{tilting module} being equal to $|Q_0|$.

\medskip
Now let $T$ be a maximal almost rigid $A$-module with associated triangulation $\calt$. Then it is natural to consider the following two algebras. 
\begin{itemize}
\item The endomorphism algebra $C=\End_A T$.
\item The tiling algebra $B$ of the tiled surface given by the triangulation $\calt$. 
\end{itemize}

To study the endomorphism algebra $C$, we introduce a bigger gentle algebra $\Abar$ by replacing every arrow $\za\colon i\to j$ in the quiver of $A$ by a path $\xymatrix{i\ar[r]^{\za_a}&v_\za\ar[r]^{\za_b}&j}$, and every relation $\za\zb$ by the relation $\za_b\zb_a$. We show that  $\Abar$ is a gentle algebra of global dimension at most 2 and construct a functor $G\colon \textup{mod}\,A \to \textup{mod}\,\Abar$. 

\begin{theorem}
 \label{thm intro 3} With the notation above, we have the following.
 \begin{enumerate}[\rm(a)]
 \item   The functor $G$ is fully faithful.
\item   The module $G(T)$ is a tilting $\Abar$-module.
 \item   There is an isomorphism of algebras $C=\End_A \,T\cong \End_{\Abar}\, G(T)$.
 \item  The algebras  $\Abar$ and $C$  are derived equivalent.
 \item   The global dimension of $C$ is at most 2.
\end{enumerate}
\end{theorem}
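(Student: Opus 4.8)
The plan is to build the functor $G$ explicitly, verify parts (a)--(e) roughly in the stated order, and reduce each one to a property of $\Abar$ that is easier to check. First I would set up $G$ at the level of representations: a representation $M$ of $Q$ assigns to each arrow $\za\colon i\to j$ a linear map $M_\za\colon M_i\to M_j$, and $G(M)$ is the representation of $\Qbar$ with $G(M)_v=M_v$ for $v\in Q_0$, $G(M)_{v_\za}=M_j$ (the target), $G(M)_{\za_a}=M_\za$, and $G(M)_{\za_b}=\id_{M_j}$. One checks directly that the relations $\za_b\zb_a$ of $\Abar$ are satisfied precisely because $M$ satisfies the relations $\za\zb$ of $A$, so $G$ lands in $\textup{mod}\,\Abar$; it is clearly functorial and exact. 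For part (a), a morphism $G(M)\to G(N)$ is a tuple of maps compatible with all the $\za_a$ and $\za_b$; the components at the original vertices already give a morphism $M\to N$, and the components at the new vertices $v_\za$ are forced to equal the target-vertex component because $G(M)_{\za_b}=\id$ — so $\Hom_{\Abar}(G(M),G(N))\cong\Hom_A(M,N)$ naturally, giving full faithfulness. This simultaneously proves the algebra isomorphism in part (c) with $M=N=T$, since the isomorphism $\Hom_{\Abar}(G(T),G(T))\cong\Hom_A(T,T)$ is visibly compatible with composition.

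Next I would prove (b), that $G(T)$ is a tilting $\Abar$-module, using Bongartz's criterion: one needs $\pd_{\Abar}G(T)\le 1$, $\Ext^1_{\Abar}(G(T),G(T))=0$, and that the number of non-isomorphic indecomposable summands of $G(T)$ equals $|\Qbar_0|=|Q_0|+|Q_1|$. The count is immediate from Corollary \ref{cor intro 3} once one observes that $G$ sends non-isomorphic indecomposable summands of $T$ to non-isomorphic indecomposables (a consequence of full faithfulness) and that $G(T)$ has no extra summands. For the projective dimension and self-extension vanishing, the key input is the string-combinatorics dictionary: the indecomposable summands of $T$ correspond to the arcs of the permissible triangulation $\calt$, and $G$ should have a geometric description (promised later in the paper) sending these arcs to arcs in the surface of $\Abar$ whose pairwise crossing/angle data is controlled. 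I would translate "$T$ almost rigid" into the statement that between summands $T_i,T_j$ the only extensions have indecomposable middle term, and show that applying $G$ — which doubles each arrow and hence "resolves" the offending angles — turns each such almost-split-type extension into a genuinely split one over $\Abar$; combined with $\gldim\Abar\le 2$ and $\pd G(T)\le 1$ this forces $\Ext^1_{\Abar}(G(T),G(T))=0$. This is the step I expect to be the main obstacle: making precise, either via the explicit string calculus for $\Ext^1$ over gentle algebras (as in \cite{CanakciSchroll21, CPS21}) or via the geometric model, why the "almost" in almost rigid is exactly what $G$ repairs. Once (b) holds, part (d) is automatic: a tilting module of projective dimension $\le 1$ induces a derived equivalence between $\Abar$ and its endomorphism algebra, which by (c) is $C$.

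Finally, for part (e) I would argue that $\gldim C\le 2$ follows from the derived equivalence together with $\gldim\Abar\le 2$. More directly: since $G(T)$ is a classical tilting module over $\Abar$ and $\gldim\Abar\le 2$, the tilting theorem bounds $\gldim(\End_{\Abar}G(T))\le \gldim\Abar+1$ in general, but here one gets the sharper bound $\gldim C\le 2$ because $\pd_{\Abar}G(T)\le 1$; this is the standard estimate (e.g.\ from the fact that a derived equivalence preserves finiteness of global dimension and that tilting with $\pd\le 1$ raises global dimension by at most the projective dimension of the tilting module). I would record the short homological computation: for any $\Abar$-module $X$, resolving by $\add G(T)$ and using $\pd_{\Abar}G(T)\le 1$ shows every $C$-module has a projective resolution of length $\le 2$ after applying $\Hom_{\Abar}(G(T),-)$. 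The routine verifications — that $G$ is well defined, exact, and additive; that $\Abar$ is gentle of global dimension $\le 2$ (already granted in the paragraph preceding the theorem) — I would relegate to short lemmas preceding the proof.
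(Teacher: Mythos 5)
Your definition of $G$ is not correct, and this is a fatal problem at the start. You set $G(M)_{v_\za}=M_j$ (the target space) and $G(M)_{\za_b}=\id_{M_j}$. Now take a relation $\za\zb\in I$, so that $\za_b\zb_a\in\Ibar$. The path $\za_b\zb_a$ acts on your $G(M)$ by the composition
\[
G(M)_{\zb_a}\circ G(M)_{\za_b}=M_\zb\circ \id_{M_j}=M_\zb,
\]
which is not zero in general (the relation $\za\zb\in I$ only gives $M_\zb\circ M_\za=0$, not $M_\zb=0$). So with your definition $G(M)$ need not satisfy the relations of $\Abar$, and the functor does not land in $\textup{mod}\,\Abar$. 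The correct definition is $G(M)_{v_\za}=\im M_\za$, with $G(M)_{\za_a}=M_\za$ (corestricted) and $G(M)_{\za_b}$ the inclusion $\im M_\za\hookrightarrow M_j$; then the offending composite is $M_\zb|_{\im M_\za}=0$ as required. This change also shows up numerically: the dimension formula $\dim G(M)=2\dim M-1$ for a string module holds for the $\im$-version but not for yours. With the corrected definition, your argument for (a) (the $v_\za$-component of a morphism is forced) and hence for (c) does go through essentially as in the paper.

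Your argument for (e) is also incomplete. The general estimate for a classical tilting module $T$ over $\Lambda$ with $\pd_\Lambda T\le 1$ is $\gldim\End_\Lambda T\le\gldim\Lambda+1$; with $\gldim\Abar\le 2$ this only gives $\gldim C\le 3$, not $\le 2$. There is no ``sharper general bound'' of the kind you invoke. The paper obtains $\le 2$ by a separate argument: it first identifies $C$ with the tiling algebra of $(G(S),G(M^*),G(M^*)^*,G(\calt))$, and then observes (via Lemma~\ref{lem:1-angle-not-traversed}) that every tile in this tiling has a boundary edge, so $C$ has no overlapping relations and therefore $\gldim C\le 2$ for a gentle algebra. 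Alternatively one can deduce $\le 2$ from the explicit structure of $C$ as a gentle algebra, but some argument beyond abstract tilting theory is needed.

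Finally, for (b): you correctly flag the $\Ext^1$-vanishing as the hard step, but the paper's mechanism is different from the one you sketch. Rather than showing that $G$ ``repairs'' angles so extensions split, the paper first proves (using Proposition~\ref{prop im G} and a socle/top argument) that the middle term of any short exact sequence $0\to G(T_1)\to E\to G(T_2)\to 0$ lies in the image of $G$, so the sequence comes from a sequence $0\to T_1\to E'\to T_2\to 0$ over $A$; almost-rigidity then says $E'$ is either split or indecomposable, and in the indecomposable case the exact dimension count $\dim G(E')=2\dim E'-1$ (or $2\dim E'$ for a band) contradicts additivity of dimensions along the exact sequence. You should also address $\pd_{\Abar}G(T)\le 1$, which you do not mention: the paper proves it by combining \cite[Proposition~3.1]{BS21} with a structural lemma (Proposition~\ref{prop:projective Abar}) about submodules of projectives over $\Abar$ whose top is $\Sbar(v_\zb)$.
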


We also construct the tiled surface of the algebra $\Abar$ as a refinement of the tiled surface of the algebra $A$, and we give a geometric realization of the functor $G$. This construction is essential for the understanding of the relationship between the two algebras $B$ and $C$. In fact, the tiled surface of $C$ is obtained from the tiled surface of $B$ by cutting 
one angle in every interior triangle of $\calt$. Going in the other direction, we have the  result below. Let $C=\oplus P(i)$ denote the direct sum of all indecomposable projective $C$-modules and $DC=\oplus I(i)$ the direct sum of the indecomposable injectives. Then $\Ext^2_C(DC,C)$ is a $C$-bimodule, and we denote the tensor algebra by $T_C(\Ext^2_C(DC,C))$.

\begin{theorem}
\label{thm intro 4}
Let $T$ be a maximal almost rigid $A$-module, $\calt $ the associated triangulation, $C=\End_A T$ and $B$ the tiling algebra of $\calt$. Then \[B\cong T_C(\Ext^2_C(DC,C)).\]
\end{theorem}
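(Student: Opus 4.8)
The plan is to compare the two gentle algebras $B$ and $T_C(\Ext^2_C(DC,C))$ by identifying their tiled surfaces, using the geometric realization of the functor $G$ established earlier in the paper. First I would recall the geometric picture: the triangulation $\calt$ of $\TiledSurface$ corresponding to $T$ has a tiling algebra $B$, and by Theorem~\ref{thm intro 3} together with the geometric realization of $G$, the algebra $C = \End_A T \cong \End_{\Abar} G(T)$ is the tiling algebra of the refined tiled surface $G(\calt)$. The key structural observation, already stated in the excerpt, is that the tiled surface of $C$ is obtained from the tiled surface of $B$ by cutting exactly one angle in every interior triangle of $\calt$. So the combinatorial heart of the argument is to show that reversing this cutting operation — gluing back the cut angles — corresponds on the algebra side precisely to forming the tensor algebra $T_C(\Ext^2_C(DC,C))$.

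The main steps I would carry out are: (1) Describe explicitly, on the level of quivers with relations, what the cutting operation does: cutting an angle in an interior triangle of $\calt$ removes a certain arrow from the quiver of $B$ and splits/reroutes the adjacent relations, so that $Q_C$ is obtained from $Q_B$ by deleting one arrow per interior triangle. Equivalently, $Q_B$ is obtained from $Q_C$ by \emph{adding} one arrow for each interior triangle. (2) Identify these added arrows intrinsically in terms of $C$: show that for each interior triangle of $\calt$, there is a corresponding generator of $\Ext^2_C(DC,C)$, i.e. a minimal projective resolution of some indecomposable injective $I(i)$ has a degree-2 syzygy hitting some projective $P(j)$, and that these are exactly the second extension classes. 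Since $C$ is gentle of global dimension at most $2$ (Theorem~\ref{thm intro 3}(e)), the $\Ext^2$ can be computed combinatorially from the relations of $C$ — each relation $\za\zb$ of a gentle algebra of global dimension $\le 2$ contributes (under the right boundary conditions) a basis element of $\Ext^2_C(DC,C)$, and these relations are exactly the "uncut" angles that used to be full angles of interior triangles in $\calt$. (3) Match the $C$-bimodule structure: show that the left and right $C$-module actions on $\Ext^2_C(DC,C)$ correspond to pre- and post-composition of paths in $Q_B$ with the new arrow, so that $T_C(\Ext^2_C(DC,C))$ has underlying quiver $Q_B$, and then check that the relations coincide — the multiplication in the tensor algebra forces precisely the gentle relations of $B$ at the reglued angles (two arrows whose composite "goes around" the reglued triangle must compose to zero, matching the gentle condition).

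The main obstacle I expect is step (3): verifying that the tensor algebra relations match the tiling-algebra relations of $B$ on the nose. The subtle point is that in the tensor algebra $T_C(V)$ with $V = \Ext^2_C(DC,C)$ placed in degree $1$, the relations come from two sources — the relations of $C$ itself, and the vanishing of certain products $C \cdot V$, $V \cdot C$, and $V \cdot V$ forced by the bimodule structure (e.g. $\Ext^2_C(DC,C)\otimes_C \Ext^2_C(DC,C) = 0$ since $\operatorname{gldim} C \le 2$) — and one must show these assemble exactly into the gentle relations of $B$, with no extra relations and none missing. I would handle this by arguing triangle-by-triangle: around a single interior triangle of $\calt$ the local configuration of $Q_B$ versus $Q_C$ plus new arrow is small and explicit, the relevant $\Ext^2$ class is one-dimensional and its bimodule structure is transparent, and the gentle relations of $B$ at that triangle are exactly the ones produced. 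A secondary technical point is checking that $\Ext^2_C(DC,C)$ as a bimodule has no contributions beyond those coming from interior triangles — i.e. boundary triangles and non-triangle tiles of $\calt$ contribute nothing — which again follows from the combinatorial description of projective resolutions over the gentle algebra $C$ and the explicit form of its relations. Once the local analysis is done, a global gluing argument identifies $B \cong T_C(\Ext^2_C(DC,C))$ as algebras.
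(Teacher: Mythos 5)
Your high-level strategy matches the paper's: compare the quiver with relations of $B$ to that of $T_C(\Ext^2_C(DC,C))$, using the geometric fact that the tiled surface of $C$ is obtained from that of $\calt$ by cutting one non-permissible angle per triangle. Where you diverge is in pinning down $T_C(\Ext^2_C(DC,C))$ as a quiver with relations: the paper cites \cite{ABS} for the quiver (one new arrow $\ze(r)\colon t(r)\to s(r)$ per minimal relation $r$ of $C$) and then invokes the fact that, for $C$ of global dimension at most $2$, this tensor algebra is the Jacobian algebra of the potential $\widetilde W=\sum_r\ze(r)\,r$, whose relations are exactly the cyclic derivatives $\za\zb$, $\zb\,\ze(r)$, $\ze(r)\,\za$ for $r=\za\zb$. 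You propose instead to recompute the bimodule $\Ext^2_C(DC,C)$ and the tensor-algebra relations directly. This is a legitimate route in principle (it amounts to reproving those results in the gentle, global-dimension-at-most-$2$ case), but as executed your step (3) contains a real error.

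The error: you assert that $\Ext^2_C(DC,C)\otimes_C\Ext^2_C(DC,C)=0$ because $C$ has global dimension at most $2$. That implication is false; global dimension at most $2$ only yields $\Ext^{\ge 3}_C=0$ and controls nothing about tensor powers of $\Ext^2_C(DC,C)$. In fact the paper notes that $B$ is infinite dimensional whenever $A$ has infinite global dimension, and in Example~\ref{ex skew gentle} the quiver of $B$ contains an oriented $3$-cycle carrying no relations. Since $B\cong T_C(\Ext^2_C(DC,C))=\bigoplus_{n\ge 0}\Ext^2_C(DC,C)^{\otimes_C n}$, infinite dimensionality of $B$ forces $\Ext^2_C(DC,C)^{\otimes_C n}\ne 0$ for all $n$, so in particular $V\otimes_C V\ne 0$. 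Adding the relation $V\otimes V=0$ would kill every path in $Q_B$ that traverses two new arrows consecutively and produce an algebra strictly smaller than $B$, so the isomorphism you are aiming for would fail. The relations on the tensor algebra involving new arrows are only the length-two paths $\zb\,\ze(r)$ and $\ze(r)\,\za$ for each $r=\za\zb$; nothing of the form $\ze(r)\ze(r')$ is imposed. Once you replace the erroneous $V\otimes V=0$ with this correct description, the triangle-by-triangle matching you sketch --- new arrows of $Q_B$ correspond to non-permissible $\calt$-angles, one per triangle by Lemma~\ref{lem:1-angle-not-traversed}, and the three cyclic-derivative relations match the gentle relations around that triangle --- is exactly what the paper's proof carries out.
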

 Since $C$ is of global dimension at most 2, it follows that the algebra $B$ is the endomorphism algebra of a cluster-tilting object in Amiot's generalized cluster category $\calc_C$ of $C$, see \cite{Amiot09}. 
 
\begin{corollary}
 The algebra $B$ is 2-Calabi-Yau tilted and
 \[ B\cong \End_{\calc_C} C.\]
\end{corollary}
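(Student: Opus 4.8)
The plan is to deduce this directly from Theorem~\ref{thm intro 4} together with Amiot's construction of generalized cluster categories \cite{Amiot09}. By Theorem~\ref{thm intro 3}(e) the algebra $C$ has global dimension at most $2$, so one can form the derived $3$-preprojective dg algebra $\Pi=\Pi_3(C)$ (the derived preprojective algebra of $C$) and the associated generalized cluster category $\calc_C = \operatorname{per}\Pi \,/\, \cald^b(\Pi)$. Amiot shows that the zeroth homology of $\Pi$ is the ordinary tensor algebra $H^0(\Pi)\cong T_C(\Ext^2_C(DC,C))$, which by Theorem~\ref{thm intro 4} is isomorphic to the tiling algebra $B$; in particular $H^0(\Pi)$ is finite dimensional.

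First I would verify that $\calc_C$ is Hom-finite, which is the hypothesis needed in order to invoke Amiot's theorem. By Amiot's criterion this holds as soon as the $3$-preprojective tensor algebra $T_C(\Ext^2_C(DC,C))$ is finite dimensional, i.e. as soon as $C$ is $\tau_2$-finite, and this is guaranteed by the isomorphism $T_C(\Ext^2_C(DC,C))\cong B$ of Theorem~\ref{thm intro 4}. (The gentleness of $C$, established earlier, also makes the bimodule $\Ext^2_C(DC,C)$ completely explicit, but for the present argument only the finiteness of $B$ is used.) Granting Hom-finiteness, Amiot's main theorem applies: $\calc_C$ is a $2$-Calabi-Yau triangulated category, the image of $C$ under the canonical functor $\operatorname{per}C\to\operatorname{per}\Pi\to\calc_C$ is a cluster-tilting object, and its endomorphism algebra is $\End_{\calc_C}C\cong H^0(\Pi)\cong T_C(\Ext^2_C(DC,C))$.

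Combining this last chain of isomorphisms with Theorem~\ref{thm intro 4} then yields $B\cong \End_{\calc_C}C$, and since $C$ is a cluster-tilting object in the $2$-Calabi-Yau category $\calc_C$, the algebra $B$ is $2$-Calabi-Yau tilted by definition. I expect the verification of Hom-finiteness of $\calc_C$ to be the only genuinely non-formal step; once that is in place, the statement is just an assembly of Theorem~\ref{thm intro 4} with the cited results of Amiot, and no further obstacle should arise.
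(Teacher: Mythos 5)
Your overall route is the same as the paper's: use the fact that $C$ has global dimension at most $2$ to form Amiot's generalized cluster category $\calc_C$, identify $\End_{\calc_C}C$ with the tensor algebra $T_C(\Ext^2_C(DC,C))$, and then invoke Theorem~\ref{thm intro 4} to conclude $B\cong\End_{\calc_C}C$. The paper's own justification is a single sentence doing exactly this.

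However, your middle paragraph contains a genuine error. You assert that $T_C(\Ext^2_C(DC,C))\cong B$ ``guarantees'' that the tensor algebra is finite dimensional, hence that $\calc_C$ is Hom-finite in the sense required by Amiot's main theorem. This reverses the logic (the isomorphism transfers finiteness in either direction; it does not create it), and more importantly it contradicts the paper itself: the remark immediately following the corollary in the introduction states that $B$ is infinite dimensional whenever $A$ has infinite global dimension, and Example~\ref{ex skew gentle} exhibits such a $B$ explicitly (a $3$-cycle with no relations on it). So $C$ need not be $\tau_2$-finite, and the Hom-finiteness hypothesis of Amiot's theorem on $2$-Calabi-Yau-ness and cluster-tilting is not always satisfied here. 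What remains valid unconditionally from Amiot's construction is the identification $\End_{\calc_C}C\cong H^0(\Pi)\cong T_C(\Ext^2_C(DC,C))$, which together with Theorem~\ref{thm intro 4} gives $B\cong\End_{\calc_C}C$; but the claim that $\calc_C$ is a Hom-finite $2$-Calabi-Yau category with $C$ cluster-tilting in the strict sense needs either the extra hypothesis that $A$ has finite global dimension or an appeal to a generalized (non-Hom-finite) notion of $2$-Calabi-Yau tilted algebra. You should not present Hom-finiteness as a verified fact.
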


\begin{remark}
  $B$ is infinite dimensional if the global dimension of $A$ is infinite. 
\end{remark}

The result in the corollary is reminiscent of the main result in \cite{ABS} which realizes a relation between tilted algebras $C$ and cluster-tilted algebras $B= T_C(\Ext^2_C(DC,C))$. Recall that a tilted algebra is the endomorphism algebra of a tilting module over a \emph{hereditary} algebra. In that situation, there is an inverse construction that associates several tilted algebras $C$ to the algebra $B$ by taking the quotient by the annihilator of a local slice (a special kind of $B$-module that comes from tilting theory) \cite{ABS2}.
When the algebra $C$ is not a tilted algebra, there is in general no such inverse map; in fact, there is no natural candidate for $C$. 
 In our situation, the algebra $C$ is not a tilted algebra (unless the global dimension of $A$ is at most 1)
 but it is a natural choice that realizes the 2-Calabi-Yau algebra $B$ as a tensor algebra.

\subsection{Future directions}

The paper \cite{BGMS19} was inspired by Catalan combinatorics and it is shown 
in sections 8.1-8.2 there 
that the mutation of maximal almost rigid modules produces an  oriented exchange graph that is isomorphic to the Cambrian lattice. 
In a forthcoming paper, we will study mutations of maximal almost rigid modules and the resulting oriented exchange graph.

    Recall that the cardinality of $Q_0$ is equal to the number of summands of a (basic) tilting $A$-module. By Corollary~\ref{cor intro 3}, the cardinality of $Q_0\cup Q_1$ is the number of summands of a maximal almost rigid module. If we let $S=\oplus_{i\in Q_0} S(i)$ be the sum of all simple modules, we have 
    $|Q_0|=\dim\,\Hom(S,S)$, and
    $|Q_1|=\dim\,\Ext^1(S,S)$. So it is natural to ask if there are other classes of modules, which one might call maximal $m$-almost rigid modules, that capture the  dimension of $\cup_{i=0}^m\Ext^i(S,S) $.

\subsection{Structure of the article}
After recalling results on gentle algebras in section~\ref{sect 2}, we define and study their maximal almost rigid modules and prove Theorem~\ref{thm intro 1} in section~\ref{sec:MAR-algebra}. We also characterize the indecomposable modules that are direct summands of every maximal almost rigid module, which we call \emph{required summands}. In section~\ref{sec:geometric model}, we recall the geometric model of \cite{BCS21} and give a geometric realization of the required summands. Theorem~\ref{thm intro 2} is the main result of section~\ref{sect 5}. Section~\ref{sect 6} is devoted to the study of the algebras $C$ and $B$.  Theorem~\ref{thm intro 3} is the combination of Theorem~\ref{thm functor G}, Theorem~\ref{thm:tilting} and Corollary~\ref{cor:end-alg-tilt}, and Theorem~\ref{thm intro 4} is proved in Theorem~\ref{thm:tensor}. 
Section~\ref{sect examples} contains two detailed examples. 

\section{Background on gentle algebras}\label{sect 2}

In this section we will recall the definition of gentle algebras and the properties of their module categories necessary for this article. 

Let $\kb$ be an algebraically closed field and $Q$ be a connected quiver with set of vertices $Q_0$ and set of arrows $Q_1$. Given an arrow $\za$ in $Q_1$, we denote by $s(\za)$ the source of $\za$ and by $t(\za)$ the target of $\za$. An arrow $\za$ is called a \emph{loop} if $s(\za)=t(\za)$. 
A \emph{path} in $Q$ is a sequence $\za_1\za_2\ldots \za_\stringlength$ where each $\za_i\in Q_1$ and $t(\za_i)=s(\za_{i+1})$ for $i=1,\ldots, \stringlength-1$.

The \emph{outdegree} of a vertex $i \in Q_0$ is the number of arrows 
$\za$ in $Q_1$ with $s(\za)=i$, and the 
 \emph{indegree} of $i$ is the number of arrows $\za$ in $Q_1$ with $t(\za)=i$. 
 They are denoted by $\outdegree{i}$ and $\indegree{i}$, respectively. 
 The \emph{degree} of $i$ is the sum $\outdegree{i} + \indegree{i}$ and is denoted by $\degree{i}$.
 Note that if $i$ is incident to a loop, this loop is counted in both $\outdegree{i}$ and $\indegree{i}$.

Let $I$ be an admissible ideal of $\kb Q$ and let $A$ be the bound path algebra $\kb Q/I$. We denote by $\textup{mod}\,{A}$ the category of finitely generated right $A$-modules.

\begin{definition}[Gentle algebra~\cite{AS87}]
\label{def gentle} A finite dimensional algebra $A={\kb}Q/I$ is called \emph{gentle}
if 
the following conditions hold:
\begin{enumerate}
[label=(G\arabic*)]
\item\label{def gentle:itm1:degrees} If $i \in Q_0$, then $\indegree{i} \leq 2$ and $\outdegree{i} \leq 2$. 
\item\label{def gentle:itm2:string algebra} For each arrow $\za$ in $Q$, 
there is at most one arrow $\zb$ in $Q$ such that $\za\zb \notin I$
and 
there is at most one arrow $\zg$ in $Q$ such that $\zg\za \notin I$.
\item\label{def gentle:itm3:gentle algebra} For each arrow $\za$ in $Q$, 
there is at most one arrow $\zb'$ in $Q$ such that $\za\zb'\in I$
and 
there is at most one arrow $\zg'$ in $Q$ such that $\zg'\za \in I$.
\item\label{def gentle:itm4:ideal} $I$ is generated by paths of length $2$.
\end{enumerate}
\end{definition}

Let $A={\kb}Q/I$ be a gentle algebra.
For each $\za\in Q_1$ we associate a formal \emph{inverse arrow} $\za^{-1}$ with $s(\za^{-1})=t(\za)$ and $t(\za^{-1})=s(\za)$.
We write $Q_1^{-1}$ for the set of all formal inverse arrows $\za^{-1}$. 

A \emph{walk} in $Q$ is a sequence $w=\za_1 \za_2\ldots \za_\stringlength$, where $\stringlength \geq 1$ is called the \emph{length of $w$}, and each $\za_i\in Q_1\cup Q_1^{-1}$ such that $t(\za_i)=s(\za_{i+1})$, for $i=1,\ldots,\stringlength-1.$ 
Let $s(w)$ denote $s(\za_1)$ and $t(w)$ denote $t(\za_\stringlength)$. 
The \emph{inverse of a walk} $w=\za_1 \ldots \za_\stringlength$, denoted by $ w^{-1}$, is defined to be the walk $\za_\stringlength^{-1} \ldots \za_1^{-1}$.

A \emph{subwalk} of $w$ is a contiguous subsequence of $w$, that is, a subsequence of the form $\za_i \za_{i+1} \dots \za_j$ for some $1 \leq i \leq j \leq \stringlength$. 
A walk is called \emph{reduced} if it does not have subwalks of the form $\za\za^{-1}$ or $\za^{-1}\za$, with $\za\in Q_1$.

A \emph{(non-trivial) string of $A$} is a reduced walk in $Q$ that avoids relations, that is, there are no subwalks of the form $\za\zb$ such that $\za\zb\in I$ or $(\za\zb)^{-1}\in I$. 
For each vertex $v\in Q_0$, we associate a \emph{trivial string} $1_v$ with length $\stringlength=0$. The vertex $v$ is both the source and target of $1_v$. For technical reasons, we formally define the inverse of a trivial string $1_v$ which we denote by $1_v^{-1}$. 
We will also consider the empty string, which we will call \emph{zero string}, denoted by $0$.

A string $w=\za_1 \dots \za_\stringlength$ is \emph{direct} (\emph{inverse}, respectively) if 
$\za_i \in Q_1$ ($\za_i \in Q_1^{-1}$, respectively) for all $i=1,\dots,\stringlength$. 
A trivial string is considered to be both direct and inverse.
A direct string $w$ is \emph{right maximal} (\emph{left maximal}, respectively), if there is no arrow $\beta \in Q_1$ such that $w \beta$ ($\beta w $, respectively) is a string.  
A direct string is called \emph{maximal} if it is both right maximal and left maximal.

If a string $w$ is not direct nor inverse, then $w$ has a subwalk of the form $a^{-1} b$, called a \emph{hill}, or of the form $a b^{-1}$, called a \emph{valley}, where $a, b \in Q_1$ and $a \neq b$.

\begin{definition}[String modules~\cite{BR87}] \label{def:string modules}
For every string $w = \za_1 \ldots \za_\stringlength$ we define an indecomposable representation
$\displaystyle M(w)=(M(w)_x,M(w)_\za)_{x\in Q_0, \za\in Q_1}$ of $Q$  as follows. Let $v_1=s(\za_1)$ and $v_i=t(\za_{i-1})$, for $i=2,\ldots,\stringlength+1.$ Given a vertex $x$ of $Q_0$, let $I_x =\{i\mid v_i=x\}\subset\{1,2,\ldots,\stringlength+1\}$, and define $M(w)_x$ as the vector space with ordered basis $B_x=\{b_{x,i}\mid i\in I_x\}$. If $\za_i$ is an arrow, define $\za_i(b_{x,i})=b_{x,i+1}$, and if $\za_i$ is an inverse arrow, define $\za_i(b_{x,i+1})=b_{x,i}$. 
This defines an action of every arrow $\za$ of $Q$ for which $\za$ or $\za^{-1}$ appears in $w$ on the basis of $M(w)$. For arrows $\zb$ for which neither $\zb$ nor $\zb^{-1}$ appears in $w$, the action on $M(w)$ is defined to be zero. 

If $w=0$, then $M(w) = 0$ and if $w=1_v$, then $M(w)=S(v)$ is the simple module at $v$. 

From the definition of strings, it follows that $M(w)$ satisfies the relations in the ideal $I$ and hence $M(w)$ is a $\kb Q/I$-module. Moreover $M(w) \simeq M(w')$ if and only if $w'=w$ or $w'=w^{-1}$. The modules of the form $M(w)$ are called \emph{string modules}.

Note that, for every arrow $\za\colon x\to y$ of $Q$, the corresponding linear map $M(w)_\za\colon M(w)_x\to M(w)_y$ can be represented by a matrix relative to the bases $B_x $ and $B_y$. The entry of this matrix at position $(b_{x,i},b_{y,j})$ is equal to 1, if $M(w)_\za$ maps $b_{x,i}$ to $b_{y,j}$, and zero, otherwise. 

The \emph{coefficient quiver} $\Gamma(M(w))$ of $M(w)$ has as vertex set the basis $B=\cup_{x\in Q_0} B_x$ of $M(w)$ and there is an arrow $b_{x,i}\to b_{y,j}$ if  and only if the corresponding matrix entry of $M(w)_\za$ is nonzero.
\end{definition}

\begin{example}   
If $Q=\xymatrix{1\ar[r]_\za\ar@/^10pt/[rr]^\zg&2\ar[r]_\zb&3}$, and $w=\za\zb\zg^{-1}\za\zb$, then 
\[M(w)=
\xymatrix{
\kb^2\ar[r]_{\left(\begin{smallmatrix}1&0\\0&1\end{smallmatrix}\right)} \ar@/^10pt/[rr]^{\left(\begin{smallmatrix}0&1\\0&0\end{smallmatrix}\right)}&\kb^2\ar[r]_{\left(\begin{smallmatrix}1&0\\0&1\end{smallmatrix}\right)} 
&\kb^2}
\qquad
\textup{and} \qquad
\Gamma(M(w))=\xymatrix@R5pt@C3pt{
b_{1,1}\ar[rd] &&&b_{1,4}\ar[ldd]\ar[rd] \\
&b_{2,2}\ar[rd]&&&b_{2,5}\ar[rd]\\
&&b_{3,3}&&&b_{3,6}}
\]
\end{example}
Note that for any string $w$ the coefficient quiver $\Gamma(M(w))$ is of Dynkin type $\mathbb{A}$.
The poset whose Hasse diagram is a Dynkin type $\mathbb{A}$ poset is often called a fence poset in the literature. 
The submodule lattice of $M(w)$ is isomorphic to the lattice of order ideals (or down-sets) of the fence poset; see for example \cite{CanakciSchroll21}.

In this article we will only consider string modules. However, certain strings are called bands, and for these we can associate, in addition to the string module, a family of indecomposable modules, called \emph{band modules}. Every indecomposable module over a gentle algebra is either a string or a band module. For more details, we refer the reader to~\cite{BR87}.

\subsection{Morphisms between string modules}

In this section, we review a description of all morphisms between string $A$-modules given in~\cite{CB89}, and in particular the irreducible morphisms given in~\cite{BR87}. Two examples of Auslander-Reiten quivers of gentle algebras are given in section~\ref{sect examples}.  

\begin{definition}[Up-set and down-set]
 Let $w$ be a string of $A$ and let $w'$ be a subwalk of $w$. 
\begin{enumerate}
\item 
We say $w'$ is an \emph{up-set} of $w$  if the subquiver $\Gamma(M(w'))$ of $\Gamma(M(w))$ is closed under predecessors. 
\item Dually, $w'$ is a \emph{down-set} if the subquiver $\Gamma(M(w'))$ of $\Gamma(M(w))$ is closed under successors.
\end{enumerate}
\end{definition}

\begin{remark}[String submodule and string quotient module]\label{rem:string submodule and string quotient module}
Let $w=\za_1 \za_2 \dots \za_\stringlength$ be a string of $\Algebra$ and let $w'=\za_i \za_{i+1} \dots \za_j$ be a subwalk of $w$. 

Then $w'$ is an up-set of $w$ if and only if the following  two conditions hold:
\begin{enumerate}
\item $\za_{i-1}$ is inverse or $i=1$; 
\item $\za_{j+1}$ is direct or $j=\stringlength$.
\end{enumerate}
Note that $M(w')$ is a string module which is a quotient module of $M(w)$ if and only if $w'$ is an up-set of $w$.

Dually, $w'$ is an down-set of $w$ if and only if the following  two conditions hold:
\begin{enumerate}
\item $\za_{i-1}$ is direct or $i=1$;
\item $\za_{j+1}$ is inverse or $j=\stringlength$.
\end{enumerate}
Note that $M(w')$ is a string module which is a submodule of $M(w)$ if and only if $w'$ is an down-set of $w$.
\end{remark}

\begin{proposition}[Basis for morphisms 
\cite{CB89}]\label{prop: string_hom_basis}
Let $w$ and $w'$ be strings of a gentle algebra $A$ and let $M(w)$ and $M(w')$ be the corresponding string modules.
Then the following elements form a basis of $\Hom_A(M(w), M(w'))$, which we denote by $\basis(w,w')$: 
\begin{enumerate}
\item Each mono basis element has the form $M(w) \rightarrow M(w'') \hookrightarrow M(w')$ corresponding to a string $w''$ which is equivalent to $w$ and is a down-set subwalk in $w'$. 

\item Each epi basis element $M(w) \twoheadrightarrow M(w'')\rightarrow M(w')$ corresponds to a string $w''$ which is equivalent to $w'$ and is an up-set subwalk of $w$. 

\item Each basis element which is neither epi nor mono is of the form $M(w) \twoheadrightarrow M(w'') \hookrightarrow  M(w')$, where $w''$ is an up-set subwalk of $w$ and a down-set subwalk of $w'$.
\end{enumerate}

Figure \ref{fig:string_hom_basis} illustrates all three cases in Proposition~\ref{prop: string_hom_basis}.
\end{proposition}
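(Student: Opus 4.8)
The plan is to prove Proposition~\ref{prop: string_hom_basis} (the Crawley-Boevey basis for $\Hom_A(M(w),M(w'))$) by a direct combinatorial analysis of linear maps commuting with the arrow actions on the coefficient quivers $\Gamma(M(w))$ and $\Gamma(M(w'))$. First I would reduce to the ``graph map'' picture: any $A$-module homomorphism $f\colon M(w)\to M(w')$ is determined by an $I$-indexed system of matrices, and decomposing via the factorization through the image, it suffices to classify the indecomposable ``pieces'' of $f$. Concretely, I would show that a nonzero indecomposable homomorphism is always of the form $M(w)\twoheadrightarrow M(w'')\hookrightarrow M(w')$, where the surjection is the canonical quotient onto a string quotient $M(w'')$ (so $w''$ is an up-set subwalk of $w$, by Remark~\ref{rem:string submodule and string quotient module}) and the inclusion is the canonical inclusion of a string submodule (so $w''$ is a down-set subwalk of $w'$). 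The three listed cases are then just the possibilities that the map is mono (the up-set of $w$ is all of $w$, i.e.\ $w''\sim w$), epi ($w''\sim w'$), or neither.

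The key steps, in order, are: (1) Establish that the submodule and quotient-module structure of a string module is controlled by down-sets and up-sets of its string, which is exactly Remark~\ref{rem:string submodule and string quotient module}; in particular every string submodule of $M(w')$ is $M(v)$ for a down-set subwalk $v$, and every string quotient of $M(w)$ is $M(u)$ for an up-set subwalk $u$. (2) Show that the composite $M(w)\twoheadrightarrow M(w'')\hookrightarrow M(w')$ attached to a string $w''$ that is simultaneously an up-set subwalk of $w$ and a down-set subwalk of $w'$ is a well-defined nonzero homomorphism; this is a routine check that the canonical basis vectors are carried correctly and that the arrow relations are respected, using the gentle hypotheses \ref{def gentle:itm2:string algebra}--\ref{def gentle:itm4:ideal} to guarantee that no ``forced'' arrow action outside $w''$ obstructs the map. (3) Prove linear independence of the family $\basis(w,w')$: distinct admissible $w''$ (together with distinct positions of the subwalk occurrences) give maps with distinct supports on the fixed bases $B=\cup_x B_x$ of $M(w)$ and $M(w')$, so they cannot cancel in a linear combination. (4) Prove spanning: given an arbitrary $f$, look at a basis vector $b_{x,i}$ of $M(w)$ on which $f$ is nonzero and trace the string through the coefficient quiver; the commutation relations force $f$ to agree, up to scalar, with one of the canonical maps on a maximal connected ``string segment,'' and peeling this off reduces the support of $f$, so induction on $\dim_\kb M(w)$ finishes. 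Throughout I would lean on the fact that $\Gamma(M(w))$ is a type $\mathbb{A}$ quiver (a fence), so ``closed under predecessors/successors'' is an unambiguous and easy-to-manipulate condition.

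The main obstacle is step (4), the spanning argument, and within it the bookkeeping needed when the string $w$ (or $w'$) has repeated arrows or repeated vertices, so that a given vertex $x\in Q_0$ contributes several basis elements $b_{x,i}$ and a homomorphism can mix them. One must argue that after choosing a basis vector $b_{x,i}$ where $f(b_{x,i})\neq 0$, the ``up-set closure'' of the single vertex $\{i\}$ in $\Gamma(M(w))$ and the ``down-set closure'' in $\Gamma(M(w'))$ of the target both yield genuine subwalks (not merely subquivers), and that these two subwalks are forced to be equal as strings; the gentle conditions are exactly what prevents branching here, because at each vertex at most one direct and one inverse arrow can continue a string, so the closure is a single path rather than a tree. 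Once this is set up carefully, the decomposition of $f$ into canonical pieces is essentially automatic, and comparing with the already-established count (each piece is indexed by an up-set-and-down-set subwalk) shows that $\basis(w,w')$ both spans and is independent, hence is a basis. I would organize the write-up so that the easy direction (steps 1--3) is dispatched quickly by citing Remark~\ref{rem:string submodule and string quotient module} and a direct verification, and the bulk of the proof is the inductive support-reduction in step (4), with the three cases of the proposition falling out at the end according to whether the peeled-off piece exhausts $w$, exhausts $w'$, or neither.
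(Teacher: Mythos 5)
The paper does not prove this proposition; it is cited from Crawley-Boevey \cite{CB89} as a known result, so there is no internal proof to compare against. Your sketch follows the standard ``graph map'' proof of that basis theorem, and the architecture (up-sets and down-sets of fence posets controlling string quotients and submodules as in Remark~\ref{rem:string submodule and string quotient module}, then independence, then spanning by support reduction) is the right one.

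Two points need tightening. First, the linear-independence claim in your step (3) --- distinct admissible $w''$ give maps with ``distinct supports \ldots so they cannot cancel'' --- is not a valid implication on its own: linearly dependent families can have pairwise distinct supports. The usual argument endows the family of graph maps with a partial order (by inclusion of the supporting subwalk occurrences, or by length of $w''$) and exhibits a staircase structure, showing that a minimal element of a putative relation realizes some matrix entry that no other term in the relation does. That is more work than the one sentence you wrote. Second, the opening reduction ``decomposing via the factorization through the image'' is misleading: the image of an arbitrary $f\in\Hom_A(M(w),M(w'))$ need not be a string module, so $f$ does not factor through a single string module and there is no a priori decomposition of $f$ into ``indecomposable pieces''. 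The clean logical order is to first check that each candidate in $\basis(w,w')$ is a well-defined homomorphism (a routine verification from Remark~\ref{rem:string submodule and string quotient module} and conditions \ref{def gentle:itm1:degrees}--\ref{def gentle:itm4:ideal}), then prove independence as above, then prove spanning by the inductive support-reduction of your step (4); the factorization-through-$M(w'')$ description of an arbitrary $f$ only emerges as a consequence of the spanning argument, not before it. With these patches your outline is sound; it is essentially the argument in \cite{CB89}, restricted to the gentle case (the original theorem is for all string algebras).
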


\def\myxscale{.6}
\def\myyscale{.6}
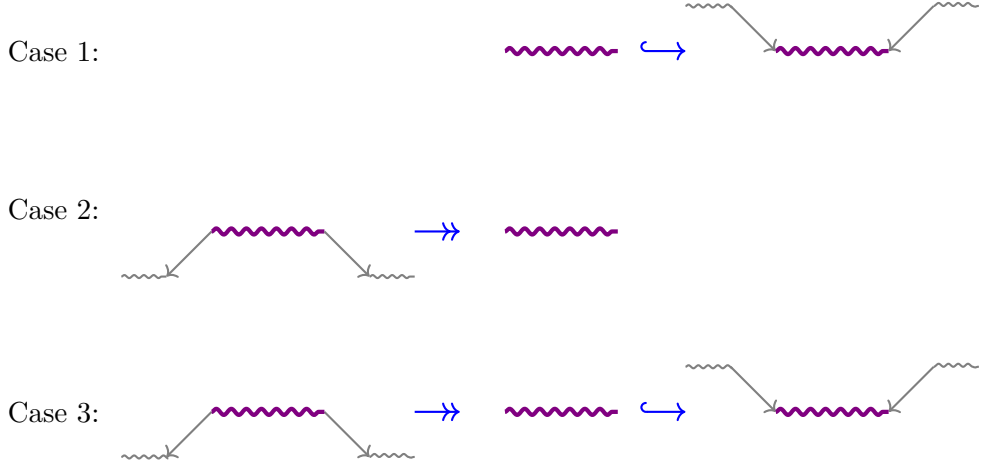
\begin{figure}
\begin{tikzpicture}[xscale=\myxscale,yscale=\myyscale]
\node at (-3,0) {Case 1:};
\coordinate (1) at (0+7,0);   
\coordinate (2) at (3-.5+7,0);

\draw[violet, ultra thick,decorate,decoration={snake,amplitude=.4mm,segment length=2mm}] 
(1) -- (2); 

\draw[thick,blue,right hook->] (5+5,0) -- (6+5,0);

\begin{scope}[shift={(13,0)}]
\coordinate (-3') at (-5+1+2,1); 
\coordinate (-2') at (-3+2,1); 
\coordinate (-1') at (-2+2,0);   
\coordinate (0') at (0.5+2,0);    
\coordinate (1') at (1.5+2,1);   
\coordinate (2') at (2.5+2,1);

\draw[gray, thick,decorate,decoration={snake,amplitude=.2mm,segment length=1.3mm}] 
(-3')-- (-2')  
(1') -- (2');
\draw[gray, thick, ->] (-2')-- (-1');
\draw[violet, ultra thick,decorate,decoration={snake,amplitude=.4mm,segment length=2mm}] (-1')-- (0')
;
\draw[gray, thick, <-] (0')-- (1'); 
\end{scope}
\node at (-3,1.5-5) {Case 2:};
\coordinate (1') at (-1.5,0-5);   
\coordinate (0) at (-0.5,0-5);    
\coordinate (1) at (0.5,1-5);   
\coordinate (2) at (3,1-5);
\coordinate (3) at (4,0-5);
\coordinate (4) at (5,0-5);
\coordinate (5) at (6-1,1-5);
\coordinate (6) at (7-1,1-5); 
\coordinate (7) at (10-3,1-5);
\coordinate (8) at (12.5-3,1-5);

\draw[gray, thick,decorate,decoration={snake,amplitude=.2mm,segment length=1.3mm}] 
(1') -- (0);
\draw[gray, thick, <-] (0)-- (1); 
\draw[violet,ultra thick,decorate,decoration={snake,amplitude=.4mm,segment length=2mm}] 
(1) -- (2);
\draw[gray, thick, ->] (2)-- (3); 
\draw[gray,thick,decorate,decoration={snake,amplitude=.2mm,segment length=1.3mm}] 
(3) -- (4);

\draw[thick,blue,->>] (5) -- (6);
\draw[violet,ultra thick,decorate,decoration={snake,amplitude=.4mm,segment length=2mm}] 
(7) -- (8);

\node at (-3,4-12) {Case 3:};
\coordinate (-1) at (-1-.5,0+3-12); 
\coordinate (0) at (0-.5,0+3-12); 
\coordinate (1) at (1-.5,1+3-12); 
\coordinate (2) at (3,1+3-12);
\coordinate (3) at (4,0+3-12);
\coordinate (4) at (5,0+3-12);

\draw[gray,thick,decorate,decoration={snake,amplitude=.2mm,segment length=1.3mm}] 
(-1) -- (0)
(3) -- (4);

\draw[gray, thick, <-] (0)-- (1);
\draw[gray, thick, ->] (2)-- (3);

\draw[violet,ultra thick,decorate,decoration={snake,amplitude=.4mm,segment length=2mm}] 
(1) -- (2);
 
\draw[thick,blue,->>] (6-1,4-12) -- (7-1,4-12);

\draw[violet,ultra thick,decorate,decoration={snake,amplitude=.4mm,segment length=2mm}] 
(9.5-1-1-.5,4-12) -- (12-1-1-.5,4-12);

\draw[thick,blue,right hook->] (14-3-1,4-12) -- (15-3-1,4-12);

\coordinate (-1') at (-1+16-4,5-12); 
\coordinate (0') at (0+16-4,5-12);    
\coordinate (1') at (1+16-4,4-12);   
\coordinate (2') at (3+16+.5-4,4-12);
\coordinate (3') at (4+16+.5-4,5-12);
\coordinate (4') at (5+16+.5-4,5-12);

\draw[gray,thick,decorate,decoration={snake,amplitude=.2mm,segment length=1.3mm}] 
(-1') -- (0')
(3') -- (4');

\draw[gray, thick, ->] 
(0')-- (1');
\draw[gray, thick, <-] 
(2')-- (3'); 

\draw[violet,ultra thick,decorate,decoration={snake,amplitude=.4mm,segment length=2mm}] 
(1') -- (2');
\end{tikzpicture}
\caption{Basis described in Proposition \ref{prop: string_hom_basis}, where the bold squiggly line corresponds to the string $w''$}\label{fig:string_hom_basis}
\end{figure}
In order to describe the irreducible morphisms between string modules, we need to discuss hooks and cohooks.

\begin{definition}[Hooks and cohooks]
Let $\alpha$ be an arrow in $Q_1$. The \emph{hook} of $\alpha$, denoted by $\hook{\alpha}$, is given by the right maximal direct string  of $(Q,I)$ starting at $x=s(\alpha)$ that does not start with $\alpha$, if $\outdegree{x}=2$. In the case that $\outdegree{x}=1$, then $\hook{\alpha}$ is the trivial string $1_x$.

The \emph{cohook} of $\alpha$, denoted by $\cohook{\alpha}$, 
is given by the left maximal  direct string of $(Q,I)$ ending at  $y=t(\alpha)$ that does not end with $\alpha$, if the indegree of $y$ is 2;  
in the case that the indegree of $y$ is $1$, then $\cohook{\alpha}$ is the trivial string $1_y$.
\end{definition}

Figures \ref{fig:hook} and \ref{fig:cohook} illustrate the hook and cohook of an arrow. 
Note that it follows from properties \ref{def gentle:itm1:degrees} and \ref{def gentle:itm2:string algebra} of $A$ that the hook and cohook of a given arrow are uniquely defined.

\def\myxscale{0.88}
\def\myyscale{0.75}
\begin{figure}[htb!]
\centering
\begin{tikzpicture}[xscale=\myxscale,yscale=\myyscale
]
\node(top) at (0,0) {$s(\alpha)$}; 
\node(Left1) at (1,-1) {$\cdot$ }; 
\node(Left2) at (2,-2) {$\cdot$ }; 
\node(Left3) at (3,-3) {$\cdot$ }; 
\draw (2.5, -2.5) 
node{$  \ddots $}; 
\node(Left4) at (4,-4) {$\cdot$ }; 
\draw[->] (top) -- (-1,-1) node[pos=0.7,right]{$\alpha$};
\draw[->] (top) -- (Left1);
\draw[->] (Left1) -- (Left2);
\draw[->] (Left3) -- (Left4);
\end{tikzpicture}
\caption{The hook of the arrow $\alpha$, starting at $s(\alpha)$.
}
\label{fig:hook}
\begin{tikzpicture}[xscale=\myxscale,yscale=\myyscale]
\node(bottom) at (0,0) {$t(\alpha)$}; 
\node(Left1) at (-1,1) {$\cdot$ }; 
\node(Left2) at (-2,2) {$\cdot$ }; 
\node(Left3) at (-3,3) {$\cdot$ }; 
\draw (-2.5, 2.5) 
node{$  \ddots $}; 
\node(Left4) at (-4,4) {$\cdot$ }; 
\draw[<-] (bottom) -- (1,1) node[pos=0.3,right]{$\alpha$};
\draw[<-] (bottom) -- (Left1);
\draw[<-] (Left1) -- (Left2);
\draw[<-] (Left3) -- (Left4);
\end{tikzpicture}
\caption{The cohook of
the arrow $\alpha$, ending at  $t(\alpha)$. 
}
\label{fig:cohook}
\end{figure}

\begin{definition}[Adding a hook and removing a cohook]\label{def:adding a hook}
Let $w$ be a nonzero string of $A$. 
Let string $w_\text{left}$ be a string defined as follows: 
\begin{enumerate}[(1)]
\item\label{def:adding a hook:add hook on left:itm1} If there is an arrow $\alpha \in Q_1$ for which $\alpha w$ is a string, then $w_\text{left} \coloneqq (\hook{\alpha})^{-1}\alpha w$. In this case, we say $w_\text{left}$ is obtained from $w$ by \emph{adding a hook on the left}. 
\item\label{def:adding a hook:zero morphism on left:itm2}  If there is no arrow $\alpha \in Q_1$ for which $\alpha w$ is a string and $w$ is a direct string, then $w_\text{left} \coloneqq 0$. 
\item\label{def:adding a hook:remove cohook on left:itm3} If there is no arrow $\alpha \in Q_1$ for which $\alpha w$ is a string and $w$ is not a direct string, then 
we can write $w=\cohook{\beta}\beta^{-1}w'$, where $\beta\in Q_1$
and $w'$ is a string.
In this case, define $w_\text{left}\coloneqq w'$, and we say $w_\text{left}$ is obtained from $w$ by \emph{removing a cohook on the left}.
\end{enumerate}

The definition of $w_\text{right}$ is dual: 
\begin{enumerate}[($1'$)]
\item\label{def:adding a hook:add hook on right:itm1} If there is an arrow $\alpha \in Q_1$ for which $w \alpha^{-1}$ is a string, then let $w_\text{right} \coloneqq  w \alpha^{-1} (\hook{\alpha})$. In this case, we say $w_\text{right}$ is obtained from $w$ by \emph{adding a hook on the right}.
\item\label{def:adding a hook:zero morphism on right:itm2}  If there is no arrow $\alpha \in Q_1$ for which $w\alpha^{-1}$ is a string and $w$ is an inverse string, then $w_\text{right} \coloneqq 0$.
\item\label{def:adding a hook:remove cohook on right:itm3} If there is no arrow $\alpha \in Q_1$ for which $w\alpha^{-1}$ is a string and $w$ is not an inverse string, then 
we can write $w=w'\beta \, \cohook{\beta}^{-1}$, where $\beta\in Q_1$
and $w'$ is a string.
In this case, define $w_\text{right}\coloneqq w'$, and we say $w_\text{right}$ is obtained from $w$ by \emph{removing a cohook on the right}.
\end{enumerate}
\end{definition}

Formally, one can define concatenation of strings (including trivial strings) via a pair of so called \emph{sign functions} (see \cite{BR87} for more details). From this we can deduce that the definition of adding a hook on the left (respectively right) of a string is indeed well-defined.

Note that if $w_\text{left}$ is obtained by adding a hook on the left, then $w$ is a down-set of $w_\text{left}$, and if  $w_\text{left}$ is obtained by removing a cohook, then $w_\text{left}$ is an up-set of $w$. In particular, there is a nonzero morphism $M(w) \rightarrow M(w_\text{left})$, which is the natural inclusion in the former case and the natural projection in the latter case. We have a similar statement involving $w_\text{right}$. The following proposition states that all irreducible morphisms between string modules are of this form.

\begin{proposition}[{\cite{BR87}}]
\label{prop:Butler Ringel:at most two arrows starting at M and ending at M}
Let $A$ be a gentle algebra, $w$ a nonzero string of $A$, and $M(w)$ the corresponding string module. 
Then, in the Auslander--Reiten quiver of $A$, there are at most two arrows starting at $M(w)$. These arrows correspond to the natural inclusions or projections $M(w) \rightarrow M(w_\text{left})$, provided $w_\text{left} \neq 0$, 
and $M(w) \rightarrow M(w_\text{right})$, provided $w_\text{right} \neq 0$.

Similarly, there are at most two arrows ending at $M(w)$. These arrows correspond to $M(u) \to M(w)$, where $u_{\text{left}}=w$, and $M(v) \to M(w)$, where $v_{\text{right}}=w$.
\end{proposition}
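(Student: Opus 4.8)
The plan is to prove the assertion about arrows \emph{starting} at $M(w)$ and then obtain the assertion about arrows \emph{ending} at $M(w)$ by duality. The opposite algebra $A^{\mathrm{op}}$ is again gentle, the standard duality $D=\Hom_\kb(-,\kb)\colon \textup{mod}\,A\to\textup{mod}\,A^{\mathrm{op}}$ sends each string module $M(w)$ to a string module $M(\bar w)$ over $A^{\mathrm{op}}$ for an explicit string $\bar w$, and it exchanges irreducible morphisms with irreducible morphisms while reversing their direction. Under this correspondence the operations ``add a hook'' / ``remove a cohook'' of Definition~\ref{def:adding a hook} for $A^{\mathrm{op}}$ applied to $\bar w$ translate precisely into the conditions $u_{\text{left}}=w$ and $v_{\text{right}}=w$ of the statement. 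Hence once the first half is proved, applying it over $A^{\mathrm{op}}$ and pulling back through $D$ yields the second half.

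\textbf{The source morphism.} To control the arrows out of $M(w)$ we identify the minimal left almost split morphism (source morphism) starting at $M(w)$. I claim it is
\[
\theta \;=\; \begin{pmatrix} f_{\text{left}} \\ f_{\text{right}} \end{pmatrix}\colon M(w)\longrightarrow M(w_{\text{left}})\oplus M(w_{\text{right}}),
\]
where $f_{\text{left}}$ is the canonical inclusion $M(w)\hookrightarrow M(w_{\text{left}})$ when $w_{\text{left}}$ is obtained by adding a hook and the canonical projection $M(w)\twoheadrightarrow M(w_{\text{left}})$ when it is obtained by removing a cohook (Definition~\ref{def:adding a hook} and the remark following it), and likewise for $f_{\text{right}}$; a summand is omitted when the corresponding string is $0$. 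Since the arrows of the Auslander--Reiten quiver starting at an indecomposable $X$ are exactly the indecomposable summands (with multiplicity) of the codomain of a source morphism at $X$, and the codomain above has at most two indecomposable summands, this claim gives at once the first half of the proposition. The first property to check is that the components of $\theta$ are irreducible: $f_{\text{left}}$ is neither a split mono nor a split epi because $M(w)$ is indecomposable and $M(w_{\text{left}})\not\cong M(w)$ (the string $w_{\text{left}}$ is strictly longer, respectively strictly shorter, than $w$), and it admits no factorization through two non-isomorphisms, which one verifies by writing any hypothetical factor $M(w)\to M(v)\to M(w_{\text{left}})$ in the basis of Proposition~\ref{prop: string_hom_basis} and checking, from the up-set/down-set description of the basis elements, that one of the two factors must be split; factorizations through a band module are ruled out by direct inspection, since a canonical inclusion or projection between string modules is never recovered as such a composite.

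\textbf{Left almost split.} It remains to show every $\phi\colon M(w)\to N$ that is not a split monomorphism factors through $\theta$. Decomposing $N$ and using that a split mono out of the indecomposable $M(w)$ would be an isomorphism onto a summand, it suffices to treat $\phi\colon M(w)\to M(v)$ with $M(v)$ indecomposable and $\phi$ not an isomorphism (the case where $M(v)$ is a band module being handled by the same inspection as above). Expanding $\phi$ in the basis $\basis(w,v)$ of Proposition~\ref{prop: string_hom_basis}, one reduces to a single basis element $M(w)\twoheadrightarrow M(w'')\hookrightarrow M(v)$, where $w''$ is an up-set subwalk of $w$ and a down-set subwalk of $v$. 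The epimorphism $M(w)\twoheadrightarrow M(w'')$ onto an up-set is a composite of steps that remove a cohook (on the left or on the right), each of which factors through $f_{\text{left}}$ or $f_{\text{right}}$ by construction; dually, the monomorphism $M(w'')\hookrightarrow M(v)$ onto a down-set is a composite of hook additions, each factoring through the corresponding inclusion map. Concatenating these factorizations exhibits $\phi$ as factoring through $\theta$. Minimality of $\theta$ (no superfluous summand in the codomain) follows from the irreducibility of $f_{\text{left}}$ and $f_{\text{right}}$ established above.

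\textbf{Main obstacle.} The formal Auslander--Reiten part of the argument is routine; the real work is the string combinatorics in the last paragraph: checking that the epi-part of every Hom-basis element is literally an iterated cohook removal and its mono-part an iterated hook addition, that these iterations factor through $\theta$, and a uniform treatment of the degenerate cases where $w_{\text{left}}=0$ or $w_{\text{right}}=0$ (so that one side of $\theta$ disappears and $M(w)$ behaves like an injective on that side), together with the exclusion of band modules throughout. These are not deep points, but they are where conditions~\ref{def gentle:itm1:degrees}--\ref{def gentle:itm3:gentle algebra} are genuinely used.
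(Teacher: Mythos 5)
The paper does not prove Proposition~\ref{prop:Butler Ringel:at most two arrows starting at M and ending at M}; it is cited directly from \cite{BR87}, so there is no in-paper argument to compare against. Your sketch reconstructs the Butler--Ringel strategy — exhibit the minimal left almost split map out of $M(w)$ as $M(w)\to M(w_{\text{left}})\oplus M(w_{\text{right}})$ and dualize — which is the right and standard approach.

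That said, the paragraph headed ``Left almost split'' contains the real content, and the justification given there is not quite right as stated. You claim the epi part $M(w)\twoheadrightarrow M(w'')$ of a Hom-basis element ``is a composite of steps that remove a cohook, each of which factors through $f_{\text{left}}$ or $f_{\text{right}}$ by construction.'' But $f_{\text{left}}$ is a cohook removal only in case~(3) of Definition~\ref{def:adding a hook}; in case~(1) it is a hook \emph{addition}, hence an inclusion $M(w)\hookrightarrow M(w_{\text{left}})$, and a nontrivial epimorphism does not factor through it ``by construction.'' One has to check, in that case, that the proper up-set $w''$ of $w$ is still an up-set of the larger string $w_{\text{left}}$, so that the composite $M(w)\hookrightarrow M(w_{\text{left}})\twoheadrightarrow M(w'')$ recovers the original projection; this is true, but it is a separate verification and is precisely where the up-set/down-set conditions on $\Gamma(M(w))$ enter. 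The mono part $M(w'')\hookrightarrow M(v)$ is handled dually but again requires a genuine argument (the proposed factorization through $M(w_{\text{left}})$ needs a map $M(w_{\text{left}})\to M(v)$ whose existence depends on which of the adjacent letters in $v$ and in $\text{hook}(\alpha)^{-1}\alpha w$ agree). Two further points are waved away rather than addressed: maps into band modules, which you dispatch with ``the same inspection as above'' although the Hom-basis of Proposition~\ref{prop: string_hom_basis} is stated only for string modules, so a separate argument is needed; and minimality of $\theta$, which does not follow from irreducibility of $f_{\text{left}}$ and $f_{\text{right}}$ alone — one must also rule out that one component factors through the other, e.g.\ by observing that $w_{\text{left}}\neq w_{\text{right}}$ (and $w_{\text{left}}\neq w_{\text{right}}^{-1}$) whenever both are nonzero. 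None of this changes the route, and you flag some of it yourself under ``Main obstacle,'' but as written the sketch asserts rather than establishes the key factorization.
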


Given a string $w$, we will refer to the irreducible morphisms of the form $M(w) \to M(w_\text{left})$, 
and of the form $M(u) \to M(w)$, where $w=u_\text{left}$,
as being \emph{irreducible morphisms on the left} associated to $w$. 
\emph{Irreducible morphisms on the right} associated to $w$ are defined similarly.

\begin{lemma}\label{lem:add-hook-remove-cohook}
 Let $w$ be a nonzero string. Then we can say the following about irreducible morphisms starting at $M(w)$ and ending at $M(w)$. 
\begin{enumerate}[(1)]
\item\label{lem:add-hook-remove-cohook:itm1} We can add a hook on the left (respectively hook on the right) of $w$ if and only if $\alpha w$ (respectively $w\alpha^{-1}$) is a string for some $\alpha \in Q_1$.

\item\label{lem:add-hook-remove-cohook:itm2} We can obtain $w$ by removing a cohook on the left (respectively right) of another string $v$ if and only if there is $\alpha \in Q_1$ such that $\alpha^{-1}w$ (respectively $w\alpha$) is a string.

\item\label{lem:add-hook-remove-cohook:itm3} If $w$ is direct (respectively inverse), then 
it is impossible to 
remove a cohook on the left (respectively right) of $w$. 
\item\label{lem:add-hook-remove-cohook:itm4} If $w$ is direct (respectively inverse), then $w$ cannot be obtained by adding a hook on the right (respectively left) of another string.

\item\label{lem:add-hook-remove-cohook:itm5}
If $w$ is not direct (respectively, not inverse) then there is an irreducible morphism on the left  (respectively right) starting at $M(w)$. 

\item\label{lem:add-hook-remove-cohook:itm6} 
If $w$ is not inverse (respectively, not direct),  
then there is an irreducible morphism on the left (respectively right) ending at $M(w)$. 

\end{enumerate}
\end{lemma}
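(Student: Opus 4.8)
The plan is to reduce every part of the lemma to Definition~\ref{def:adding a hook} and Proposition~\ref{prop:Butler Ringel:at most two arrows starting at M and ending at M}, handling the ``left'' versions explicitly and invoking the evident symmetry ($w\mapsto w^{-1}$, which swaps ``left'' with ``right'' and ``direct'' with ``inverse'') for the ``right'' versions. Parts~\eqref{lem:add-hook-remove-cohook:itm1} and \eqref{lem:add-hook-remove-cohook:itm2} are essentially a rereading of the trichotomy in Definition~\ref{def:adding a hook}\eqref{def:adding a hook:add hook on left:itm1}--\eqref{def:adding a hook:remove cohook on left:itm3}: adding a hook on the left is \emph{by definition} the case where some $\alpha\in Q_1$ makes $\alpha w$ a string, giving \eqref{lem:add-hook-remove-cohook:itm1} immediately. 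For \eqref{lem:add-hook-remove-cohook:itm2}, I would note that ``$w$ is obtained from $v$ by removing a cohook on the left'' means $v=\cohook{\beta}\beta^{-1}w$ for some $\beta\in Q_1$; the first letter of $\beta^{-1}w$ after $\beta^{-1}$ must be direct or absent (else $v$ would not be reduced, as $\cohook\beta$ is a direct string and $\beta$ is its last letter — actually one must check the concatenation $\cohook\beta\,\beta^{-1}$ is reduced and relation-avoiding, which holds because $\cohook\beta$ does not end in $\beta$). Conversely, if $\alpha^{-1}w$ is a string for some $\alpha\in Q_1$, then $w$ starts with a direct letter, the hill $\alpha^{-1}(\text{first letter of }w)$ forces $\outdegree{s(\alpha)}=2$ after accounting for the relation conditions; setting $v=\cohook{\alpha}\alpha^{-1}w$ and checking $v$ is a string (using \ref{def gentle:itm1:degrees}, \ref{def gentle:itm2:string algebra}) exhibits $w$ as the result of removing a cohook on the left of $v$. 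The one subtlety here is verifying that the maximality built into $\cohook\alpha$ is compatible with $\alpha^{-1}w$ being a string — but that is exactly what \ref{def gentle:itm2:string algebra} guarantees.

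Parts~\eqref{lem:add-hook-remove-cohook:itm3} and \eqref{lem:add-hook-remove-cohook:itm4} are the ``obstruction'' statements and follow by contradiction. If $w$ is direct, then $w$ has no subwalk of the form $\cohook\beta\,\beta^{-1}w'$ since such a subwalk contains the inverse letter $\beta^{-1}$; hence Definition~\ref{def:adding a hook}\eqref{def:adding a hook:remove cohook on left:itm3} cannot apply, proving \eqref{lem:add-hook-remove-cohook:itm3}. For \eqref{lem:add-hook-remove-cohook:itm4}: if $w$ were obtained by adding a hook on the right of some $v$, then $w=v\alpha^{-1}\hook\alpha$, which contains the inverse letter $\alpha^{-1}$ (note $\hook\alpha$ could be trivial, but the letter $\alpha^{-1}$ is always present since adding a hook on the right presupposes $v\alpha^{-1}$ is a string, so $v$ is nonempty and $\alpha^{-1}$ genuinely appears), contradicting directness of $w$. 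The inverse/``right'' versions follow by applying $w\mapsto w^{-1}$.

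Parts~\eqref{lem:add-hook-remove-cohook:itm5} and \eqref{lem:add-hook-remove-cohook:itm6} combine the above with Proposition~\ref{prop:Butler Ringel:at most two arrows starting at M and ending at M}. For \eqref{lem:add-hook-remove-cohook:itm5}: if $w$ is not direct, then $w$ ends in some letter, and I would argue that either $\alpha w$ is a string for some $\alpha$ (so we add a hook on the left, giving an irreducible morphism $M(w)\to M(w_\text{left})$ on the left by Proposition~\ref{prop:Butler Ringel:at most two arrows starting at M and ending at M}), or no such $\alpha$ exists, in which case — since $w$ is not direct — we are in Definition~\ref{def:adding a hook}\eqref{def:adding a hook:remove cohook on left:itm3} and remove a cohook on the left, again producing an irreducible morphism on the left out of $M(w)$. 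In both cases $w_\text{left}\neq 0$, so the irreducible morphism exists. For \eqref{lem:add-hook-remove-cohook:itm6}: if $w$ is not inverse, I want an irreducible morphism \emph{into} $M(w)$ on the left, i.e.\ a string $u$ with $u_\text{left}=w$. The natural move is to try $u = w_\text{right}'$ for an appropriate operation, but cleaner is to directly produce $u$: since $w$ is not inverse, either $w$ admits removing a cohook — wait, the correct dual bookkeeping is that ``$w=u_\text{left}$'' happens when either $u$ is obtained from $w$ by the \emph{reverse} of adding a hook on the left (i.e.\ $w$ has the form $(\hook\alpha)^{-1}\alpha u$, so $u$ is a down-set and we should \emph{check} $w$ starts with an inverse segment of the right shape), or $u$ is obtained from $w$ by the reverse of removing a cohook (i.e.\ $u = \cohook\beta\,\beta^{-1}w$). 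The first option requires $w$ to begin with $(\hook\alpha)^{-1}\alpha$; the second requires $w$ to start with a direct letter $\beta$ with $\indegree{t(\beta)}=2$ in the right sense. Since $w$ is not inverse, $w$ contains a direct letter, and a short case analysis on the first letter of $w$ (direct vs.\ inverse, and whether it can be extended) shows one of the two constructions applies. The main obstacle in the whole proof is precisely this last bit of bookkeeping in \eqref{lem:add-hook-remove-cohook:itm6}: getting the trichotomy ``dual'' to Definition~\ref{def:adding a hook} exactly right — in particular handling trivial hooks/cohooks and the boundary cases where $w$ is a single letter — so that one can always exhibit the predecessor string $u$. I expect this to be routine once the sign-function formalism for concatenation (mentioned after Definition~\ref{def:adding a hook}) is used carefully, but it is the step most prone to off-by-one errors.
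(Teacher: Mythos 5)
Your approach is the same as the paper's: reduce everything to the trichotomy in Definition~\ref{def:adding a hook} and read off the irreducible morphisms via Proposition~\ref{prop:Butler Ringel:at most two arrows starting at M and ending at M}, with parts~(1)--(5) essentially a translation of that definition and part~(6) requiring the two explicit constructions ($w=(\hook\alpha)^{-1}\alpha\,u$, giving a hook-addition into $M(w)$, and $u=\cohook\beta\,\beta^{-1}w$, giving a cohook-removal into $M(w)$). The paper in fact only writes out part~(6) and waves at the rest exactly as you do.

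Where you wobble is the organization of the case split in~(6), and the claim that ``the second \lbrack construction\rbrack{} requires $w$ to start with a direct letter $\beta$ with $\indegree{t(\beta)}=2$'' is not right: the cohook construction $u=\cohook\beta\,\beta^{-1}w$ applies whenever there exists $\beta\in Q_1$ with $\beta^{-1}w$ a string, regardless of whether $w$ begins with a direct or an inverse letter (for example, if $w$ starts with $\gamma^{-1}$ and $\gamma\beta\notin I$, then $\beta^{-1}w$ is a string and you use the cohook construction, even though the first letter of $w$ is inverse). The degree condition you want, when $w$ starts with a direct letter $\gamma$, is $\outdegree{s(\gamma)}=2$, not an indegree condition. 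The paper avoids this entirely by taking the case split to be ``does there exist $\beta\in Q_1$ with $\beta^{-1}w$ a string?'' — if yes, use $u=\cohook\beta\,\beta^{-1}w$; if no, use the hypothesis that $w$ is not inverse to write $w=(\hook\alpha)^{-1}\alpha v$ and use $u=v$ — which makes the two cases exhaustive with no bookkeeping on the first letter. A similar small slip occurs in your sketch of~(2), where you assert the first letter of $w$ ``must be direct or absent''; it need not be (again because $\beta^{-1}w$ can start with two inverse letters), and the only genuine check is the one you do make in the parenthetical, that $\cohook\beta\,\beta^{-1}$ is reduced because $\cohook\beta$ does not end in $\beta$. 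These are fixable, but you flagged them as the places prone to error, and indeed they are.
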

\begin{proof} 
We only prove part \ref{lem:add-hook-remove-cohook:itm6} for when $w$ is not an inverse string. The rest of the proofs are similar, and they follow from Definition \ref{def:adding a hook} and Proposition \ref{prop: string_hom_basis}.

First, suppose there is $\zb\in Q_1$ such that $\zb^{-1} w$ is a string.  Let $v=\cohook{\zb} \beta^{-1} w$. 
Then $w=v_{\text{left}}$ as in Definition \ref{def:adding a hook}\ref{def:adding a hook:remove cohook on left:itm3}, and we have an irreducible morphism $M(v)\to M(w)$ which is given by removing the cohook of $\zb$ on the left of $v$.

Now suppose there is no such $\beta$. Since $w$ is not inverse, we can write $w=\hook{\za}^{-1} \za v$ for some arrow $\za$ and nonzero string $v$. Then $w=v_{\text{left}}$ as in Definition \ref{def:adding a hook}\ref{def:adding a hook:remove cohook on left:itm3}, and we have an irreducible morphism $M(v)\to M(w)$ which is given by adding the hook of $\za$ on the left of $v$.
\end{proof}

\subsection{Extensions between string modules}

We will now describe a basis of the extension space between string modules, following~\cite{CPS21}, see also~\cite{CombinatoricsOfGentleAlgebras}. This description is in terms of arrow extensions and overlap extensions.

\begin{definition}
[Arrow extension]\label{def:arrow_extension}
Let $w$ and $v$ be strings. 
Suppose there is an arrow $a \in Q_1$ such that $vaw$ is a string. 
Then the (nonsplit) short exact sequence
\[
0 \to M(w) \to  M(vaw) \to M(v) \to 0
\] is called 
an \emph{arrow extension} of $M(v)$ by $M(w)$. See Figure~\ref{fig:arrow extension} for a schematic. 
\end{definition}

\begin{figure}[htbp]
\begin{tikzpicture}[node distance=1cm and 1.5cm]
\coordinate[label=left:{}] (1);
\coordinate[right=1cm of 1] (2);

\coordinate[right=1.3cm of 1] (3);
\coordinate[right=1cm of 3] (4);

\coordinate[above right=.4cm of 4] (5);
\coordinate[right=1cm of 5] (6);

\coordinate[below right=1cm of 6] (7);
\coordinate[right=1cm of 7] (8);

\coordinate[right=3cm of 4] (9);
\coordinate[right=1cm of 9] (10);

\coordinate[right=.3cm of 10] (11);
\coordinate[right=1cm of 11] (12);

\coordinate[right=-.3cm of 1] (1');
\coordinate[right=-1cm of 1'] (2');
\coordinate[right=-.7cm of 2',label=right:{$0$}] (3');

\coordinate[right=.3cm of 12] (12');
\coordinate[right=1cm of 12'] (13');
\coordinate[right=.3cm of 13',label=right:{$0$}] (14');

\draw[thick,->,blue] (3)--(4);
\draw[thick,->,blue] (9)--(10);
\draw[thick,->,blue] (2')--(1');
\draw[thick,->,blue] (12')--(13');

\draw[thick,decorate,decoration={snake,amplitude=.2mm
,
segment length=
1.3mm
}] 
(1)-- node [anchor=south,scale=.9]{$w$} (2) 
(5)-- node [anchor=south,scale=.9]{$v$} (6) 
(7)-- node [anchor=north,scale=.9]{$w$} (8) 
(11)-- node [anchor=north,scale=.9]{$v$} (12);

\draw[violet,ultra thick,->] 
(6)-- (7) node [pos=0.4,right,violet]{$a$};
\end{tikzpicture}
\caption{Arrow extension}\label{fig:arrow extension}
\bigskip 
\begin{tikzpicture}
\coordinate[label=left:{}] (1);
\coordinate[right=1cm of 1] (2);
\coordinate[above right=.8cm of 2,label=right:{}] (3);
\coordinate[right=1cm of 3] (4);
\coordinate[below right=.8cm of 4] (5);
\coordinate[right=1cm of 5] (6);

\coordinate[right=11mm of 6] (6''');

\coordinate[below=0.8cm of 6'''] (7);
\coordinate[right=1cm of 7] (8);
\coordinate[below right=.8cm of 8] (9);
\coordinate[right=1cm of 9] (10);
\coordinate[below right=.8cm of 10] (11);
\coordinate[right=1cm of 11] (12);
\coordinate[right=10.4cm of 1] (13);
\coordinate[right=1cm of 13] (14);
\coordinate[below right=.8cm of 14] (15);
\coordinate[right=1cm of 15] (16);
\coordinate[above right=.8cm of 16] (17);
\coordinate[right=1cm of 17] (18);

\coordinate[right=.25cm of 6] (6'');
\coordinate[right=.5cm of 6''] (7'');

\coordinate[right=3.5cm of 6,label=right:{$\oplus$}] (19);

\coordinate[above=0.8cm of 6'''] (7');
\coordinate[right=1cm of 7'] (8');
\coordinate[above right=.8cm of 8'] (9');
\coordinate[right=1cm of 9'] (10');
\coordinate[above right=.8cm of 10'] (11');
\coordinate[right=1cm of 11'] (12');
\coordinate[above right=.8cm of 12'] (13');

\coordinate[right=54mm of 6] (12'');
\coordinate[right=.5cm of 12''] (13'');

\coordinate[right=-.3cm of 1] (1');
\coordinate[right=-0.5cm of 1'] (2');
\coordinate[right=-.6cm of 2',label=right:{$0$}] (3');

\coordinate[right=.3cm of 18] (18');
\coordinate[right=.5cm of 18'] (19');
\coordinate[right=.2cm of 19',label=right:{$0$}] (19''');

\draw[thick,->,blue] (2')--(1');
\draw[thick,->,blue] (6'')--(7'');
\draw[thick,->,blue] (12'')--(13'');
\draw[thick,->,blue] (18')--(19');

\draw[thick,decorate,decoration={snake,amplitude=.2mm,segment length=
1.3mm
}] 
(1)-- node [anchor=north,scale=.9]{$w_\overlapLEFT$} (2) 
(5)-- node [anchor=north,scale=.9]{$w_\overlapRIGHT$} (6) 
(7)-- node [anchor=south,scale=.9]{$v_\overlapLEFT$} (8) 
(7')-- node [anchor=north,scale=.9]{$w_\overlapLEFT$} (8') 
(11)-- node [anchor=north,scale=.9]{$w_\overlapRIGHT$} (12) 
(11')-- node [anchor=south,scale=.9]{$v_\overlapRIGHT$} (12') 
(13)-- node [anchor=south,scale=.9]{$v_\overlapLEFT$} (14) 
(17)-- node [anchor=south,scale=.9]{$v_\overlapRIGHT$} (18);

\draw[violet,ultra thick,decorate,decoration={snake,amplitude=.4mm,segment length=2mm}] 
(3)-- node [anchor=south,scale=.9]{$e$} (4)
(9)-- node [pos=0.5,below=1pt]{$e$} (10)
(9')-- node [anchor=south,scale=.9]{$e$} (10')
(15)-- node [pos=0.5,below=1pt]{$e$} (16);

\draw[thick,->] (3)--node [pos=0.7, above=1pt]{$a^{-1}$} (2); 
\draw[thick,->] (4)-- (5) node [pos=0.2, right=3pt]{${b}$} ;
\draw[thick,->] (8)--node [pos=0.3, right=1pt]{${c}$}(9);
\draw[thick,->] (9')-- node [pos=0.7, above=1pt]{$a^{-1}$}(8');
\draw[thick,->] (10)--node [pos=0.2, right=3pt]{${b}$}(11);
\draw[thick,->] (11')--node [pos=0.7, above=1pt]{$d^{-1}$}(10');
\draw[thick,->] (14)--node [pos=0.3, right=1pt]{${c}$}(15);
\draw[thick,->] (17)--node [pos=0.7, above=1pt]{$d^{-1}$}(16);
\end{tikzpicture} 
\caption{Overlap extension}\label{fig:overlap extension}
\end{figure}

\begin{definition}
[Overlap extension]\label{def:overlap_extension}
Suppose we have strings 
\[ 
\text{
$w=w_\overlapLEFT \, a^{-1} \, e \, b  \, w_\overlapRIGHT$ and  
$v=v_\overlapLEFT \,  c \, e \, d^{-1} \, v_\overlapRIGHT$}
\]

where $a, b, c, d \in Q_1 \cup \{0\}$, 
$e$ is a nonzero string,
and 
$w_\overlapLEFT, w_\overlapRIGHT, v_\overlapLEFT, v_\overlapRIGHT$ are strings such that the following constraints are satisfied:
\begin{enumerate}[(1)]
\item \label{enum:a nonzero or c nonzero}
$a \neq 0 $ or $c \neq 0$;
\item \label{enum:b nonzero or d nonzero}
$b \neq 0 $ or $d \neq 0 $, and
\item \label{enum:c trivial overlap implies da and cb are strings}
if the string $e$ is a trivial string $1_x$ where $x=s(a)=s(b)=t(c)=t(d)$, then $da,cb \notin I$ (whenever these arrows exist). 
\end{enumerate} 
We call $e$ an \emph{overlap} of the string $v$ by $w$.

Let $E_1$ and $E_2$ denote the string modules corresponding to the strings 
\[ 
\text{
$e_1 = w_\overlapLEFT \, a^{-1} \, e \, d^{-1}  \, w_\overlapRIGHT$ and  
$e_2 = v_\overlapLEFT \,  c \, e \, b \, w_\overlapRIGHT$}.
\]
Then 
the (nonsplit) short exact sequence
\[
0 \to M(w) \to  E_1 \oplus E_2 \to M(v) \to 0
\]
is called an \emph{overlap extension of $M(v)$ by $M(w)$ with overlap $e$}. 
See Figure~\ref{fig:overlap extension} for a schematic. 
\end{definition}

Note that if the overlap $e$ is a trivial string, then we must have $a\neq b$ and $c \neq d$, since $w$ and $v$ are reduced walks. Moreover, condition \ref{enum:c trivial overlap implies da and cb are strings}
guarantees that $e_1$ and $e_2$ are indeed strings.

For example, consider the quiver 
$\xymatrix{1\ar[r]^c&2\ar[r]^b&3}$, with $cb=0$. Take the strings $w=b, v=c$, and let $e=1_2$ be the trivial string at vertex $2$. 
Then conditions \ref{enum:a nonzero or c nonzero} and \ref{enum:b nonzero or d nonzero} are satisfied, but not condition \ref{enum:c trivial overlap implies da and cb are strings} because $cb \in I$. 
The fact that $cb \in I$ means that $e_2=cb$ is not a string. 
Indeed, the modules $M(c),M(b)$ 
corresponding to $v$ and $w$ 
are both projective so there is no extension between them.

\begin{remark}
\label{rem:first string w is an up-set}
The overlap $e$ in Definition~\ref{def:overlap_extension} is an up-set of $w$ and a down-set of $v$. In particular, Proposition~\ref{prop: string_hom_basis} implies  $\Hom (M(w), M(v)) \neq 0$.
\end{remark}

The following theorem describes the dimension of the extension space between string modules as the number of arrow extensions and overlap extensions.

\begin{theorem}[{\cite[Theorem A]{CPS21}}]
\label{thm:CPS21}
Let $M(v)$ and $M(w)$ be string modules. 
Then the set of all overlap extensions and arrow extensions of $M(v)$ by $M(w)$ form a basis of $\Ext^1(M(v),M(w))$. 
\end{theorem}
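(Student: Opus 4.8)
\textbf{Proof proposal for Theorem~\ref{thm:CPS21}.}

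The plan is to prove this by identifying $\Ext^1_A(M(v), M(w))$ with a quotient of the kernel of the map in the projective presentation, and then matching generators of that kernel with arrow extensions and overlap extensions. First I would recall that for string modules over a gentle (string) algebra, the projective cover $P(v) \to M(v)$ has a combinatorially explicit description: $P(v)$ decomposes according to the "peaks" of the string $v$ (the sources in the coefficient quiver $\Gamma(M(v))$), and each summand is an indecomposable projective $P(x)$ where $x$ runs over the vertices at those peaks. The syzygy $\Omega M(v)$ is then again a direct sum of string modules, obtained by deleting the "up-set" pieces of $v$ surrounding each peak and retaining the complementary "valleys" extended by the appropriate arrows. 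This is standard string combinatorics; I would set up notation carefully for the peaks $p_1, \dots, p_r$ of $v$, the associated arrows $a_i^{-1}$ and $b_i$ flanking peak $p_i$ (either possibly zero when the peak sits at an endpoint of $v$), and the resulting string summands of $\Omega M(v)$.

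Next, $\Ext^1_A(M(v), M(w)) = \operatorname{coker}\big(\Hom_A(P(v), M(w)) \to \Hom_A(\Omega M(v), M(w))\big)$, so the task reduces to computing $\Hom_A(\Omega M(v), M(w))$ using the Crawley-Boevey basis (Proposition~\ref{prop: string_hom_basis}) and then factoring out the image of morphisms that lift to $P(v)$. A basis element of $\Hom_A$ from a syzygy summand to $M(w)$ is given by a common subwalk $w''$; the key observation is that such a homomorphism lifts through $P(v) \to \Omega M(v)$ precisely when $w''$ can be "pushed past the peak" into $P(v)$, i.e. exactly when extending $w''$ by the peak arrow on the relevant side still gives a valid string mapping to $M(w)$. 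I would organize the surviving (non-liftable) basis elements into two types according to how $w''$ sits relative to the peak: those where $w''$ is a single-endpoint extension, which correspond to arrow extensions $M(vaw)$ (the arrow $a$ being the peak arrow glued to an endpoint of $w$), and those where $w''$ is a genuine two-sided overlap $e$ between $v$ and $w$ flanked by $a, b, c, d \in Q_1 \cup \{0\}$ with the conditions \ref{enum:a nonzero or c nonzero}--\ref{enum:c trivial overlap implies da and cb are strings} of Definition~\ref{def:overlap_extension}, which correspond to overlap extensions with middle term $E_1 \oplus E_2$. The conditions in Definition~\ref{def:overlap_extension} are exactly what guarantees the overlap produces a valid non-split extension and not a liftable morphism; in particular condition \ref{enum:c trivial overlap implies da and cb are strings} in the trivial-overlap case rules out the degenerate situation (as in the $cb = 0$ example in the text) where the would-be syzygy summand is not actually a string.

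To finish I would verify that the short exact sequences attached to these basis elements are the ones written in Definitions~\ref{def:arrow_extension} and \ref{def:overlap_extension} — i.e. compute the pushout of the syzygy sequence along the chosen $\Hom$-basis element and check the middle term is $M(vaw)$ (arrow case) or $E_1 \oplus E_2$ (overlap case) — and that distinct basis elements give classes that are linearly independent in the cokernel. Linear independence follows because distinct choices of $w''$ (as subwalks of $v$, up to the equivalence $w'' \sim (w'')^{-1}$) land in distinct basis elements of $\Hom_A(\Omega M(v), M(w))$ and no nontrivial linear combination lies in the image from $P(v)$, which one sees by looking at the coefficient-quiver positions. \textbf{The main obstacle} I expect is the bookkeeping at the peaks of $v$ where a flanking arrow is zero (peak at an endpoint of $v$), since there the syzygy summand degenerates and one must check by hand that the non-liftable morphisms in that case are precisely the arrow extensions on that side and nothing is double-counted with the two-sided overlap case; keeping the case $e = 1_x$ a trivial string consistent with the reducedness of $v$ and $w$ (forcing $a \neq b$, $c \neq d$) is the delicate point. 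An alternative, and perhaps cleaner, route would be to cite the already-available description of $\Hom$ and $\Omega$ and instead dualize: compute $\Ext^1$ via the injective copresentation of $M(w)$ and match against cohooks, but the peak/valley combinatorics is symmetric so the difficulty is comparable.
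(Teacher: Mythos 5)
This theorem is not proved in the paper: it is quoted verbatim from \cite{CPS21} and used as a black box, so there is no in-paper argument for you to be compared against. Taken on its own terms, your plan — compute $\Ext^1_A(M(v),M(w))$ as the cokernel of $\Hom(P(v),M(w))\to\Hom(\Omega M(v),M(w))$, describe $\Omega M(v)$ combinatorially, and sort the Crawley--Boevey graph maps into liftable and nonliftable ones — is the standard route, and it is broadly the method of \cite{CPS21}.

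The gap is concentrated exactly where you flag the ``main obstacle'' and the ``delicate point'' and then do not resolve them. The sharpest issue is linear independence in the cokernel. You assert that distinct common subwalks $w''$ give distinct graph maps and that ``no nontrivial linear combination lies in the image from $P(v)$,'' but this needs a lemma you have not stated: that the image of the restriction map $\Hom(P(v),M(w))\to\Hom(\Omega M(v),M(w))$ is spanned by a subset of the Crawley--Boevey basis of the target. This is not automatic, because at each valley of $v$ the inclusion $\Omega M(v)\hookrightarrow P(v)$ involves a diagonal embedding of a syzygy string summand into two projective summands $P(p_i)\oplus P(p_{i+1})$, so the composite of a graph map $P(v)\to M(w)$ with $\iota\colon\Omega M(v)\hookrightarrow P(v)$ is a priori only a \emph{sum} of graph maps, not visibly a single one. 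One has to either prove this closure directly (which is possible, but it is the real work) or replace the basis argument by a dimension count. Similarly, the equivalence ``nonliftable graph map $\Longleftrightarrow$ arrow-extension configuration or conditions (1)--(3) of Definition~\ref{def:overlap_extension}'' is asserted in both directions but verified in neither; in particular, condition~\ref{enum:c trivial overlap implies da and cb are strings} in the trivial-overlap case is precisely what separates a nonliftable map from one factoring through $P(v)$, and your proposal names it without checking it. As written this is a sensible outline rather than a proof.
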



\section{Maximal almost rigid modules}\label{sec:MAR-algebra}

Throughout, $A = \kb Q/I$ will denote a gentle algebra. In this section, we give the definition of maximal almost rigid modules over $A$. This naturally generalizes the definition of maximal almost rigid modules over path algebras of Dynkin type $\mathbb{A}$, given in~\cite{BGMS19}.
We will also prove the existence of a specific maximal almost rigid module using projective (respectively injective) modules, and describe all indecomposable modules which are summands of any maximal almost rigid module. 

\subsection{Maximal almost rigid modules}

\begin{definition}\label{def:mar}
Let $A$ be a gentle algebra. A basic $A$-module $T$ is {\it almost rigid} if 
\begin{enumerate}[label=(M\arabic*)]
\item \label{item:M1} 
$T$ is a direct sum of string modules, 
\item \label{item:M2} 
for each pair $M, N$ of indecomposable summands of $T$, if $0 \to N \to E \to M \to 0$ is a short exact sequence then it either splits or the middle term $E$ is indecomposable.
\end{enumerate}    

An $A$-module $T$ is {\it maximal almost rigid} (MAR, for short) if 
$T$ is an almost rigid $A$-module and it is maximal with respect to~\ref{item:M2}, that is,
\begin{enumerate}[label=(M3)]
\item  
$T\oplus L$ is not almost rigid, for every nonzero string $A$-module $L$.
\end{enumerate}    
\end{definition}

We denote by $\mar{A}$ the set of all maximal almost rigid modules over $A$.

\begin{tikzpicture}[scale=1]
\draw[line width=2mm, red, opacity=0.6] (0,0) circle (8mm);
\draw[line width=2mm, red, opacity=0.6] (-0.6,-0.6)--(0.6,0.6);
\node at (0,0) {bands};

\draw[line width=2mm, forGreen, opacity=0.6] (2,0) circle (8mm);
\node at (2,0) {strings};
\end{tikzpicture}

\begin{remark}
We exclude band modules in Definition~\ref{def:mar} in order to avoid infinitely many direct summands in an MAR module. 
Take for example the Kronecker algebra $A=\kb Q$ with 
\[Q=\begin{tikzcd} 1 \arrow[r,shift left] \ar[r,shift right] & 2. \end{tikzcd}\]
Then $T = \oplus_{\lambda \in \kb^*} M_\lambda$, where $M_\lambda$ is the band module given by 
\[\begin{tikzcd} \kb \arrow[r,shift left]{} {\text{1}}
\arrow[r,shift right]{}[below]{\lambda} & \kb, \end{tikzcd}\]
satisfies condition \ref{item:M2}. 
Indeed, if $\lambda \neq \lambda'$, then $\Ext^1(M_\lambda, M_{\lambda'})=0$. On the other hand, $\Ext^1(M_\lambda, M_\lambda)$ has dimension one (by the Auslander-Reiten formula), and it is generated by a non-split exact sequence with indecomposable middle term 
\[\xymatrix@C35pt{\kb^2 \ar@<2pt>[r]^{\left(\begin{smallmatrix}
    1&0\\0&1
\end{smallmatrix}\right)}\ar@<-2pt>[r]_{\left(\begin{smallmatrix}
    \zl&1\\0&\zl
\end{smallmatrix}\right)} &\kb^2}.\] 
\end{remark}

The following lemma will be useful in the proof of Proposition~\ref{prop:all-exts}. 

\begin{lemma}\label{lem:dim-Exts}
Let $A = \kb Q/I$ be a gentle algebra, and $M,N$ be (possibly isomorphic) string $A$-modules. Suppose there are no overlap extensions between $M$ and $N$. 
Then the following properties hold:
\begin{enumerate}[(1)]
\item\label{lem:dim-Exts:item1} $\Ext^1(M,N)$ is generated by arrow extensions. 
\item\label{lem:dim-Exts:item2} $\dim~\Ext^1(M,N) \leq 2$.
\item\label{lem:dim-Exts:item3}$\dim~\Ext^1(M,N)=2$ if and only if $M = S(i)$ and $N=S(j)$, where $i\neq j$ and  $\begin{tikzcd} i \arrow[r,shift left] \ar[r,shift right] & j \end{tikzcd}$ is a subquiver of $Q$. 
\end{enumerate}
\end{lemma}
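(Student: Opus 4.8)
The plan is to analyze $\Ext^1(M,N)$ purely via arrow extensions, using Theorem~\ref{thm:CPS21}. Since by hypothesis there are no overlap extensions between $M$ and $N$, the basis of $\Ext^1(M,N)$ provided by that theorem consists entirely of arrow extensions, which proves part \ref{lem:dim-Exts:item1} immediately. So the content is in bounding the number of arrow extensions of $M(v)$ by $M(w)$, where I set $v$, $w$ to be the strings of $M$, $N$ respectively.

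For part \ref{lem:dim-Exts:item2}, recall an arrow extension is determined by a choice of arrow $a\in Q_1$ such that $vaw$ (equivalently $w \, a \, v$, depending on the convention used in Definition~\ref{def:arrow_extension}; I'll follow the paper's convention that the relevant string is $v\,a\,w$) is a string. Such an arrow $a$ must satisfy $s(a)=t(v)$ and $t(a)=s(w)$, and must be composable with the last arrow of $v$ and the first arrow of $w$ without creating a relation or a non-reduced walk. By the gentleness axioms \ref{def gentle:itm1:degrees}, there are at most two arrows with source $t(v)$; but I would argue more sharply. Fix the endpoint vertex $x = t(v) = s(w)$. If $v$ is non-trivial, its last letter constrains $a$: by \ref{def gentle:itm2:string algebra} and \ref{def gentle:itm3:gentle algebra}, given the last letter of $v$ there is at most one arrow $a$ out of $x$ that can legally follow it, and similarly given the first letter of $w$ there is at most one arrow into... wait, $a$ has target $s(w)$, so $a$ precedes the first letter of $w$ — again at most one choice of $a$ relative to the start of $w$. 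So if either $v$ or $w$ is non-trivial, there is at most one admissible arrow $a$, hence $\dim\Ext^1(M,N)\le 1$. The only way to get two arrow extensions is when \emph{both} $v$ and $w$ are trivial strings $1_i$ and $1_j$; then the admissible arrows $a$ are simply \emph{all} arrows from $i$ to $j$ (the trivial-string constraints in Definition~\ref{def:arrow_extension} impose no further restriction since there are no neighboring letters), and by \ref{def gentle:itm1:degrees} there are at most two such arrows. This gives the bound of $2$.

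For part \ref{lem:dim-Exts:item3}, the analysis above shows equality $\dim\Ext^1(M,N)=2$ forces both $v=1_i$ and $w=1_j$, i.e. $M=S(i)$ and $N=S(j)$, together with the existence of two distinct arrows $i\to j$ in $Q$; since $\outdegree{i}\le 2$ and $\indegree{j}\le 2$ these are all the arrows at those vertices in the relevant direction, so $Q$ contains the subquiver $\begin{tikzcd} i \arrow[r,shift left] \ar[r,shift right] & j \end{tikzcd}$ (with $i\ne j$, as a loop would need $i=j$ but then two parallel loops violate \ref{def gentle:itm1:degrees} only if... actually I should rule out $i=j$ by noting two arrows from $i$ to $i$ would give $\indegree i=\outdegree i=2$, which is allowed, so I must instead observe that if $i=j$ then $1_i\,a\,1_i$ is not a reduced walk / or that the middle term would not be as required — more carefully, $M(1_i\,a\,1_i)$ with $a$ a loop still makes sense, so the real point is whether the resulting sequence is an arrow extension; I will check that the parallel-loops case is excluded because a gentle algebra with a loop $a$ requires $a^2\in I$ and $a$ not composable with itself, forcing at most... hmm). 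Conversely, if such a subquiver exists, the two parallel arrows give two linearly independent arrow extensions $0\to S(j)\to M(\text{arrow}_k)\to S(i)\to 0$, and there are no overlaps between simples at distinct vertices, so Theorem~\ref{thm:CPS21} gives $\dim\Ext^1(S(i),S(j))=2$.

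The main obstacle I anticipate is the careful bookkeeping in the "at most one arrow" step: one must correctly use the gentle conditions \ref{def gentle:itm2:string algebra} and \ref{def gentle:itm3:gentle algebra} to see that a non-trivial $v$ (or $w$) pins down $a$ uniquely, handling separately whether the last letter of $v$ is direct or inverse (and likewise for $w$), and verifying that no admissible $a$ can simultaneously be forced to differ by the two ends — i.e. that the constraints from $v$ and from $w$ don't independently allow two arrows. A secondary subtlety is the degenerate case $i=j$ (parallel loops) in part \ref{lem:dim-Exts:item3}, which must be excluded to match the stated form of the subquiver; I expect this follows since a loop $a$ at $i$ in a gentle algebra forces $a\,a\in I$, so $1_i\,a\,a\,1_i$-type obstructions prevent a second parallel loop from coexisting, but this needs a clean one-line justification.
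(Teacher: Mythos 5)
Your approach is essentially the same as the paper's: part~(1) from Theorem~\ref{thm:CPS21}, and then for~(2) and~(3) you analyze how many arrows can yield an arrow extension, using the gentle conditions to show that a nontrivial endpoint string pins the arrow down uniquely, and that two extensions force both strings to be trivial. Your refinement in~(2) (``at most one admissible arrow when either string is nontrivial'') is actually the paper's argument for~(3); the paper proves~(2) with the cruder bound by \ref{def gentle:itm1:degrees} alone. That reorganization is harmless.

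However, there is a genuine gap at the end of part~(3): you never complete the argument that $i \neq j$. You correctly notice that degree-counting via \ref{def gentle:itm1:degrees} does not exclude two parallel loops at a single vertex (two loops at $i$ gives $\indegree{i}=\outdegree{i}=2$, which is permitted), and you gesture at ``$a^2\in I$'' without landing the argument. The clean reason, which the paper invokes as a standing fact, is that a gentle algebra cannot have two loops at one vertex because it is required to be \emph{finite dimensional}: if $a,b$ are both loops at $i$, then \ref{def gentle:itm2:string algebra} and \ref{def gentle:itm3:gentle algebra} force exactly one of $aa,ab$ to lie in $I$ and exactly one of $ba,bb$ to lie in $I$; chasing the cases, in every configuration one obtains an arbitrarily long path avoiding relations (either $a^n$, or $b^n$, or $(ab)^n$), contradicting $\dim_\kb A < \infty$. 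You need this step, or an equivalent one, to conclude $i\neq j$ and match the stated form of the subquiver. Everything else in your proposal is sound.
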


\begin{proof}
Statement \ref{lem:dim-Exts:item1} follows from Theorem \ref{thm:CPS21}.

Let $w$ (respectively $v$) be the string associated to $M$ (respectively $N$). 
Since there are no overlap extensions between $M$ and $N$, the 
dimension of $\Ext^1 (M,N)$ is at most the number of arrows in $Q$ which start at $t(w)$ and end at $s(v)$. This number is at most $2$ since $A$ is a gentle algebra, proving part \ref{lem:dim-Exts:item2}.

Now suppose there are two distinct arrows $a,b$ in $Q$ from $t(w)$ to $s(v)$. Suppose $w$ is a non-trivial string, and write $w=w_1 \ldots w_r$, with each $w_i \in Q_1 \cup Q_1^{-1}$ and $r \geq 1$. We claim that $\Ext^1(M,N)$ is one-dimensional in this case. 
Indeed, if $w_r \in Q_1$, then either $w_r a \in I$ or $w_r b \in I$, since $A$ is gentle. Similarly, if $w_r \in Q_1^{-1}$, then $w_r = a$ or $w_r =b$. In both situations, there are either no arrows or there is a unique arrow, say $a$, for which $wav$ is a string, which proves our claim. 
Similarly,  $\Ext^1(M,N)$ is one-dimensional when $v$ is non-trivial. Thus if $\Ext^1(M,N)$ is two-dimensional then we must have $M=S(i)$ and $N=S(j)$, where $i \neq j$ since a gentle algebra cannot have two loops at a vertex. 

The converse is straightforward; we have the following non-equivalent arrow extensions of $M=S(i)$ by $N=S(j)$ 
\[0 \to S(j) \to M(a) \to S(i) \to 0 ~ \text{ and } ~
0 \to S(j) \to M(b) \to S(i) \to 0. 
\]
\end{proof}

The following proposition tells us that it is enough to consider  overlap extensions when checking whether a module is almost rigid or not. 

\begin{proposition}\label{prop:all-exts}
Let $M$ and $N$ be string $A$-modules.  Then  $\Ext^1(M,N)$ is generated by arrow extensions if and only if the middle term of any non-split extension $0 \to N \to E \to M \to 0$ is indecomposable. 
\end{proposition}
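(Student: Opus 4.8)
The statement is an ``if and only if'' between a homological condition (every element of $\Ext^1(M,N)$ is an arrow extension) and a module-theoretic condition (every non-split extension of $M$ by $N$ has indecomposable middle term). The plan is to analyze the middle terms of the two kinds of extensions classified by Theorem~\ref{thm:CPS21}: arrow extensions have indecomposable middle term $M(vaw)$ by Definition~\ref{def:arrow_extension}, whereas an overlap extension has middle term $E_1 \oplus E_2$ which is a priori decomposable. So the ``only if'' direction is essentially immediate: if all extensions are arrow extensions then all middle terms of non-split extensions are, up to iso, of the form $M(vaw)$ and hence indecomposable. The content is in the ``if'' direction, which I would prove by contrapositive: if there is an overlap extension, I must produce some non-split extension whose middle term is decomposable.

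For the ``if'' direction, suppose $\Ext^1(M,N)$ is \emph{not} generated by arrow extensions. By Theorem~\ref{thm:CPS21} there is an overlap extension of $M$ by $N$ (note $M$ plays the role of ``$v$'' and $N$ the role of ``$w$'' in Definition~\ref{def:overlap_extension}), say with overlap $e$, giving a non-split short exact sequence $0 \to N \to E_1 \oplus E_2 \to M \to 0$ with the notation of Definition~\ref{def:overlap_extension}. I would argue that for a genuine overlap extension both $E_1$ and $E_2$ are nonzero: indeed, by constraints~\ref{enum:a nonzero or c nonzero} and~\ref{enum:b nonzero or d nonzero} at least one of $a,c$ and at least one of $b,d$ is a nonzero arrow, and combined with $e$ being a nonzero string this forces each of the strings $e_1 = w_{\LEFT}\,a^{-1}\,e\,d^{-1}\,w_{\RIGHT}$ and $e_2 = v_{\LEFT}\,c\,e\,b\,w_{\RIGHT}$ to be strictly longer than a point, hence $E_1,E_2 \neq 0$; one must also check that at least one of the two is not isomorphic to $M$ or $N$, but this follows since $E_1\oplus E_2$ has dimension vector $\dim M + \dim N$ and a non-split sequence cannot have $E_1$ or $E_2$ equal to all of $M$ or all of $N$ unless it splits. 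Therefore $E_1\oplus E_2$ is a decomposable middle term of a non-split extension, contradicting the module-theoretic hypothesis. This proves the contrapositive.

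For the ``only if'' direction, assume every element of $\Ext^1(M,N)$ is generated by arrow extensions and let $0 \to N \to E \to M \to 0$ be any non-split extension. Since the arrow extensions together with the overlap extensions form a basis of $\Ext^1(M,N)$ (Theorem~\ref{thm:CPS21}) and by hypothesis there are no overlap extensions, the class of this sequence is a nonzero linear combination of arrow extension classes. The subtlety is that an arbitrary linear combination of arrow extensions need not obviously have indecomposable middle term. Here I would invoke Lemma~\ref{lem:dim-Exts}: the absence of overlap extensions puts us exactly in its hypotheses, so $\dim\Ext^1(M,N) \le 2$, with equality only in the very rigid situation $M = S(i)$, $N = S(j)$ and $i \rightrightarrows j$ a subquiver of $Q$. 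When $\dim \Ext^1(M,N) = 1$, the non-split extension is (up to scalar) the unique arrow extension $0\to N\to M(vaw)\to M\to 0$, whose middle term $M(vaw)$ is indecomposable. When $\dim\Ext^1(M,N) = 2$, a direct computation with the two arrow extensions $0\to S(j)\to M(a)\to S(i)\to 0$ and $0\to S(j)\to M(b)\to S(i)\to 0$ and an arbitrary scalar combination $\lambda a + \mu b$ shows the middle term is the two-dimensional representation with $(\lambda,\mu)^{T}$ as the pair of maps $i\rightrightarrows j$, which is indecomposable whenever $(\lambda,\mu)\neq 0$, i.e. whenever the extension is non-split. Hence in all cases the middle term is indecomposable.

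\textbf{Main obstacle.} The delicate point is the ``only if'' direction: one must rule out that some \emph{linear combination} of arrow extensions produces a decomposable middle term. This is handled by Lemma~\ref{lem:dim-Exts}, which forces $\Ext^1(M,N)$ to be at most two-dimensional in the no-overlap case and pins down the two-dimensional case completely, reducing the claim to the small explicit Kronecker-type computation above. A secondary bookkeeping point in the ``if'' direction is verifying that the two summands $E_1, E_2$ of an overlap extension are genuinely nonzero and that the sequence is genuinely non-split with $E_1\oplus E_2 \not\cong M, N$ — but this is forced by the dimension vector identity $\dim(E_1\oplus E_2) = \dim M + \dim N$ together with constraints~\ref{enum:a nonzero or c nonzero}--\ref{enum:c trivial overlap implies da and cb are strings} of Definition~\ref{def:overlap_extension}.
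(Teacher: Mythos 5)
Your proposal is correct and takes essentially the same route as the paper: the converse direction is handled via Theorem~\ref{thm:CPS21} and the observation that overlap middle terms $E_1\oplus E_2$ are decomposable, while the forward direction is reduced via Lemma~\ref{lem:dim-Exts} to the one- and two-dimensional cases, the latter being the Kronecker computation. One small framing slip in your opening paragraph: you first claim the ``only if'' direction is ``essentially immediate'' because middle terms are ``of the form $M(vaw)$,'' which is false for a general linear combination of arrow extensions; but you catch and correctly resolve this in your third paragraph, so the overall argument stands.
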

\begin{proof} First assume $\Ext^1(M,N)$ is generated by arrow extensions. We want to show that the middle term of any non-split extension $0 \to N \to E \to M \to 0$ is indecomposable.  This is clear when $\Ext^1 (M,N)$ is one-dimensional. If the Ext-space is two dimensional, then by Lemma~\ref{lem:dim-Exts}\ref{lem:dim-Exts:item3}, we have that $M$ and $N$ are two simple modules at different vertices of $Q$. Hence the dimension of $E$ is one at each of these two vertices and zero elsewhere, and it follows that $E$ is either $M \oplus N$ or indecomposable. 

The converse follows immediately from Theorem~\ref{thm:CPS21} 
and the fact that overlap extensions do not have an indecomposable middle term.
\end{proof}

\subsection{Summands of every MAR module}\label{sec:required summands}

We say that a string $A$-module $M$ is a \emph{required} summand if $M$ is a summand of every MAR module over $A$. 
Let $\required{Q,I}$ denote the set of required summands for $A= \kb Q/I$. In this section, we will give the list of elements in this set.

We need the following result, whose proof relies on a geometric model for $\textup{mod}\,{A}$ which is described in Section~\ref{sec:geometric model}.

\begin{proposition}\label{prop can complete ar to mar}
   If $\overline {T}$ is an almost rigid module over a gentle algebra $A$ then there exists a maximal almost rigid module  $T$ that contains $\overline{T}$ as a direct summand. 
\end{proposition}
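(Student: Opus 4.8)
The statement asserts that any almost rigid module $\overline T$ over a gentle algebra can be completed to a maximal almost rigid module. The natural plan is to pass through the geometric model of Section~\ref{sec:geometric model}. First I would translate the hypothesis: under the bijection from string modules to permissible arcs, the indecomposable summands of $\overline T$ correspond to a collection $\overline{\mathcal A}$ of permissible arcs on the surface $(S,M^*)$. The almost rigidity condition (M2) should translate, via Proposition~\ref{prop:all-exts} together with the combinatorial description of extensions (Theorem~\ref{thm:CPS21}, overlap extensions versus arrow extensions), into a geometric condition on $\overline{\mathcal A}$ — essentially that the arcs in $\overline{\mathcal A}$ have no ``forbidden'' pattern of crossings, i.e. they form a partial permissible triangulation (a collection of arcs that can be realized as a subset of the arcs of some permissible triangulation). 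The key point to isolate and prove is precisely this dictionary: \emph{a direct sum of string modules is almost rigid if and only if the corresponding set of permissible arcs can be completed to a permissible triangulation}, or at least the easier direction that almost rigid corresponds to a ``compatible'' set of arcs.

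\textbf{Key steps, in order.} (1) Recall from the geometric model how crossings of permissible arcs encode $\Ext^1$ between the associated string modules, distinguishing the crossings that give arrow extensions (indecomposable middle term, allowed) from those giving overlap extensions (decomposable middle term, forbidden). (2) Show that the summands of an almost rigid module correspond to arcs such that no two of them cross in the ``overlap'' way; conclude that $\overline{\mathcal A}$ is a set of pairwise non-badly-crossing permissible arcs. (3) Invoke (or prove) the combinatorial/topological fact that any such finite collection of permissible arcs on $(S,M^*)$ extends to a maximal such collection, i.e.\ a permissible triangulation $\mathcal T \supseteq \overline{\mathcal A}$ — this is an analogue of the classical statement that any partial triangulation of a surface extends to a triangulation, adapted to the permissibility constraint. (4) Transport $\mathcal T$ back through the bijection to obtain a module $T = \bigoplus_{\gamma \in \mathcal T} M(\gamma)$ containing $\overline T$ as a summand, and verify $T$ is almost rigid (no bad crossings, by construction) and maximal (no arc can be added, since $\mathcal T$ is a maximal collection, and by Theorem~\ref{thm intro 2}-type reasoning the count of summands is forced to be $|Q_0|+|Q_1|$, so maximality of the arc collection gives (M3)).

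\textbf{Main obstacle.} The crux is step (3) together with getting the dictionary in steps (1)--(2) exactly right: one must be sure that ``almost rigid module'' translates \emph{precisely} to ``arcs that form a subset of the arcs of a permissible triangulation,'' rather than to some strictly larger or smaller class of arc systems. In particular, one needs that almost rigidity is a pairwise condition on summands (which it is, by Definition~\ref{def:mar}\ref{item:M2}, since it only quantifies over pairs $M,N$), and that this pairwise geometric non-crossing/compatibility condition is exactly the one that guarantees extendability to a triangulation. Proving existence of the extension in step (3) may require an induction on the number of arcs that can still be added, using at each stage that the surface has a well-defined notion of permissible arcs and that a non-triangulated region always admits a new permissible arc compatible with those already chosen. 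Since the paper explicitly defers this to Section~\ref{sec:geometric model}, I would expect the actual proof to cite the geometric setup developed there and then run this completion argument; the routine verifications that arrow extensions correspond to allowed crossings and overlap extensions to forbidden ones (already essentially packaged in Proposition~\ref{prop:all-exts}) would be invoked rather than redone.
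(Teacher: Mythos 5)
Your strategy --- translate to arcs, identify almost rigidity with pairwise non-crossing, extend the arc system to a permissible triangulation, transport back --- is in the right spirit, but it proves substantially more than this proposition needs, and the paper's argument is leaner. The paper invokes the geometric model for exactly one purpose: to establish a uniform \emph{bound} on the number of indecomposable summands an almost rigid module can have. Via Theorem~\ref{thm:permissible} and Proposition~\ref{prop:overlap-cross}, the summands of an almost rigid module correspond to permissible arcs without transversal crossings (and, since a summand has no overlap self-extension, without self-crossings), so there can be at most $n$ of them, where $n$ is the fixed number of arcs in a triangulation of $(S,M^*)$. With that bound in hand, the rest is a short algebraic greedy argument: if $\overline{T}$ is not already maximal almost rigid, adjoin a string summand keeping almost rigidity, and repeat; the bound forces this process to stop after at most $n$ steps, and when it stops we have an MAR module containing $\overline{T}$.

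Your step (3) --- that a non-crossing family of permissible arcs extends to a permissible triangulation --- is a genuinely harder topological fact that the paper establishes later, in Lemma~\ref{lem:maximalistriangulation} together with Theorem~\ref{thm:mar=permissible-triangulation}, and crucially those later results \emph{use} the present proposition (via the Section~\ref{sec:MAR-algebra} lemmas it feeds into). Your step (4) compounds this by invoking ``Theorem~\ref{thm intro 2}-type reasoning'' and the count $|Q_0|+|Q_1|$ to certify maximality, both of which are proved downstream. So as written the proposal would introduce a circular dependency. The fix is to observe that you do not need the full triangulation dictionary here: the arc picture is only needed for the finiteness bound, after which the greedy completion is self-contained. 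Separately, a small imprecision in step (1): overlap extensions correspond to \emph{interior} crossings (Proposition~\ref{prop:overlap-cross}), while arrow extensions correspond to certain intersections at a shared endpoint in $M^*$ (Remark~\ref{rem:arrow extensions correspond to certain intersections at endpoints}); the dichotomy is not between two kinds of transversal interior crossings.
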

\begin{proof}
We first show that the number of summands in an almost rigid module is bounded. 
Let $S$ be the surface with marked points and tiling $P$ associated to the algebra $A$ in Section \ref{sec:tiled surfaces}.  
String $A$-modules correspond to certain arcs in the surface (see Theorem~\ref{thm:permissible}), and if two such arcs cross transversally then the corresponding string modules have an overlap extension (see Proposition \ref{prop:overlap-cross}). 
By Definition~\ref{def:overlap_extension}, such a pair of modules is not allowed in an almost rigid module.  In particular an almost rigid $A$-module with $t$ indecomposable direct summands corresponds to a set of $t$ arcs without any transversal crossings. Therefore, the number $t$ is at most the number $n$ of arcs in a triangulation of the surface. 
    
Now suppose $T_0=\overline{T}$ is an almost rigid module.
If there is no string module $T'$ such that $T_0\oplus T'$ is almost rigid, then $T_0$ is maximal almost rigid and we are done. Otherwise, the module $T_1=T_0\oplus T'$ is almost rigid and we can repeat the argument. 
This procedure will stop, because an almost rigid module cannot have more than $n$ summands. 
\end{proof}

\begin{lemma}\label{lem: required w is direct or inverse}
If $M(w)$ is required, then w is a direct or inverse string.
\end{lemma}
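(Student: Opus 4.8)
The statement claims that if $M(w)$ is a required summand (i.e.\ a direct summand of \emph{every} MAR module), then $w$ must be direct or inverse. The natural strategy is contrapositive: assume $w$ is neither direct nor inverse, and construct an MAR module that does not contain $M(w)$. By Proposition~\ref{prop can complete ar to mar}, it suffices to exhibit a \emph{single} nonzero string module $M(w')$ with $w' \neq w$, $w' \neq w^{-1}$, such that $M(w)\oplus M(w')$ is \emph{not} almost rigid; then extend $M(w')$ (or rather some almost rigid module containing it but not $M(w)$) to a maximal one, which by construction cannot contain $M(w)$. Actually the cleanest route: find $M(w')$ such that $\{M(w), M(w')\}$ cannot coexist in any almost rigid module, so that an MAR module containing $M(w')$ automatically excludes $M(w)$.

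\textbf{Constructing the obstructing module.} Since $w$ is neither direct nor inverse, $w$ contains a hill $a^{-1}b$ or a valley $ab^{-1}$ as a subwalk (with $a,b \in Q_1$, $a\neq b$), as recalled in Section~\ref{sect 2}. Focus on a valley: write $w = w_{\LEFT}\, a\, b^{-1}\, w_{\RIGHT}$ (a hill is handled dually, or by passing to $w^{-1}$). I want to produce an overlap extension involving $M(w)$. The idea is to take $v = w$ itself, or a suitable relative/subwalk of $w$, and set up strings in the shape of Definition~\ref{def:overlap_extension}: $w = w_{\overlapLEFT}\,a^{-1}\,e\,b\,w_{\overlapRIGHT}$ versus $v = v_{\overlapLEFT}\,c\,e\,d^{-1}\,v_{\overlapRIGHT}$. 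Concretely, at the valley one can choose the overlap $e$ to be a maximal direct subwalk of $w$ adjacent to the valley and then let $w'$ be the string obtained by reflecting $w$ across that overlap — i.e.\ $w' = w_{\overlapLEFT}\,a^{-1}\,e\,d^{-1}\,w_{\overlapRIGHT}$ or $v_{\overlapLEFT}\,c\,e\,b\,w_{\overlapRIGHT}$ using the arrows that gentleness makes available at the endpoints of $e$. One checks that the constraints \ref{enum:a nonzero or c nonzero}, \ref{enum:b nonzero or d nonzero}, \ref{enum:c trivial overlap implies da and cb are strings} of Definition~\ref{def:overlap_extension} are met, so there is a genuine overlap extension $0\to M(w)\to E_1\oplus E_2 \to M(w') \to 0$ (or the other way) with decomposable middle term. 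By Proposition~\ref{prop:all-exts}, $M(w)\oplus M(w')$ is not almost rigid. Then by Proposition~\ref{prop can complete ar to mar} complete the almost rigid module $M(w')$ to an MAR module $T$; since $M(w)$ and $M(w')$ cannot both be summands of an almost rigid module, $M(w)$ is not a summand of $T$, contradicting that $M(w)$ is required.

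\textbf{Main obstacle.} The delicate point is the existence of an appropriate ``reflected'' string $w'$ with a valid overlap: one must produce arrows $c$ (or $d$) at the source (or target) of the overlap $e$ that give a string, while simultaneously ensuring $w' \neq w$ and $w' \neq w^{-1}$. The valley/hill guarantees the \emph{middle} structure, but whether the outer data $w_{\overlapLEFT}, w_{\overlapRIGHT}$ and the arrows can be arranged into a bona fide overlap configuration needs care — in the worst case the overlap $e$ might have to be taken as just the single vertex at the bottom of the valley (a trivial string), and then condition \ref{enum:c trivial overlap implies da and cb are strings} about $da, cb \notin I$ must be verified. I expect one can always reduce to taking $e$ to be the maximal direct string sitting between the two arrows of the valley inside $w$, and then gentleness (conditions \ref{def gentle:itm1:degrees}–\ref{def gentle:itm3:gentle algebra}) supplies exactly one alternative arrow at each end, which is what defines $w'$; verifying $w' \ne w^{\pm 1}$ follows since the alternative arrows differ from $a$ and $b$. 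Spelling out these case distinctions carefully (hill vs.\ valley, trivial vs.\ nontrivial overlap, endpoints of degree one vs.\ two) is the bulk of the work, but each case is a short combinatorial check against the definitions in Section~\ref{sect 2}.
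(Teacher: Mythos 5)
Your overall plan — argue by contrapositive, produce a module that forms an overlap extension with $M(w)$, and complete it to an MAR module not containing $M(w)$ via Proposition~\ref{prop can complete ar to mar} — is indeed the paper's strategy. But the crucial step, choosing the obstructing module, is where your proposal has a genuine gap. You want to construct a ``reflected'' string $w'$ using ``the arrows that gentleness makes available at the endpoints of $e$,'' and you assert that gentleness ``supplies exactly one alternative arrow at each end.'' It does not: conditions \ref{def gentle:itm1:degrees}--\ref{def gentle:itm3:gentle algebra} guarantee \emph{at most} one such alternative arrow. If the apex of the valley or hill has indegree or outdegree equal to one, no alternative arrow exists and your reflected $w'$ simply is not there; you cannot always manufacture a nontrivial $w'$. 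There is also a secondary gap: to invoke Proposition~\ref{prop can complete ar to mar} you must first know that $M(w')$ itself is almost rigid, i.e.\ has no overlap self-extensions, and your sketch does not address this for your constructed $w'$.

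The paper avoids all of this by taking the obstructor to be the simple module $S(i)$ at the apex vertex $i$ of the hill or valley, with \emph{trivial} overlap $e=1_i$. For a hill $\alpha^{-1}\beta$ at $i$, set $a=\alpha$, $b=\beta$, $c=d=0$ and $v=1_i$ in Definition~\ref{def:overlap_extension}: conditions \ref{enum:a nonzero or c nonzero}, \ref{enum:b nonzero or d nonzero}, \ref{enum:c trivial overlap implies da and cb are strings} hold with nothing further to check, since $c$ and $d$ are absent, giving an overlap extension of $S(i)$ by $M(w)$; a valley is dual. This requires no additional arrows at all and no case analysis, and $S(i)$ has no overlap self-extensions (the trivial string admits no valid choice of $a,b,c,d$ in Definition~\ref{def:overlap_extension}), so Proposition~\ref{prop can complete ar to mar} applies directly. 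That is the missing idea in your proof: you do not need to reflect $w$ at all — the simple module at the apex is already the right obstructor.
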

\begin{proof}
Assume $w$ is not direct nor inverse. 
Then $w$ has a hill subwalk  $\za^{-1} \beta$, with $i=s(\beta)=s(\za)$, or a valley subwalk $\za \beta^{-1}$, with  $i = t(\beta) = t(\za)$; in either case, we have $\za , \beta \in Q_1$ and $\za \neq \beta$. 
If $w$ has a hill, then there is an overlap extension of the simple module $S(i)$ by $M$; if $w$ has a valley, there is an overlap extension of $M$ by $S(i)$. 
In either case, $M$ and $S(i)$ cannot both be summands of the same MAR module. 
Since $S(i)$ has no overlap self-extensions, Proposition~\ref{prop can complete ar to mar} implies that $S(i)$ can be completed to an MAR module $T$. 
Given that $M$ is not a summand of $T$, it follows that $M$ is not in $\required{Q,I}$. 
\end{proof}

\begin{lemma}
\label{lem:M in every mar iff at most one irreducible morphism starting and ending at M}
A string $A$-module $M$ is 
in $\required{Q,I}$ 
if and only if there is at most one irreducible morphism starting at $M$ and at most one irreducible morphism ending at~$M$.
\end{lemma}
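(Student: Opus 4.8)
The plan is to prove both implications by connecting the combinatorial condition (at most one irreducible morphism starting at $M$, at most one ending at $M$) with membership in $\required{Q,I}$, using the structure of the Auslander--Reiten quiver and the overlap-extension criterion for almost rigidity.

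\textbf{The forward direction ($M$ required $\Rightarrow$ the morphism condition).}
I would argue contrapositively. Suppose $M=M(w)$ admits two irreducible morphisms starting at it (the case of two arrows ending at $M$ is dual). By Lemma~\ref{lem: required w is direct or inverse} we may assume $w$ is direct or inverse; say $w$ is direct (the inverse case is symmetric). By Proposition~\ref{prop:Butler Ringel:at most two arrows starting at M and ending at M} and Lemma~\ref{lem:add-hook-remove-cohook}, having two arrows out of $M(w)$ means both $w_{\text{left}}\neq 0$ and $w_{\text{right}}\neq 0$. Since $w$ is direct, Lemma~\ref{lem:add-hook-remove-cohook}\ref{lem:add-hook-remove-cohook:itm3} rules out removing a cohook on the left, so $w_{\text{left}}$ is obtained by adding a hook on the left: there is an arrow $\za$ with $\za w$ a string. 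The idea is then to exhibit a string module $M(u)$ that has an overlap extension with $M(w)$ but no overlap self-extension, so that $M(u)$ completes to an MAR module $T$ (by Proposition~\ref{prop can complete ar to mar}) not containing $M(w)$; this shows $M(w)\notin\required{Q,I}$. The natural candidate is built from the arrow $\za$ and the hook: the two strings $w$ and (roughly) $\hook{\za}^{-1}\za w$ overlap along $w$, and more carefully one checks that removing a hook/cohook structure produces a genuine overlap extension in the sense of Definition~\ref{def:overlap_extension}. One must similarly handle the two-arrows-into-$M$ case, producing the obstruction on the other side.

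\textbf{The reverse direction (the morphism condition $\Rightarrow$ $M$ required).}
Suppose $M=M(w)$ has at most one irreducible morphism starting at it and at most one ending at it. First, by Lemma~\ref{lem:add-hook-remove-cohook}\ref{lem:add-hook-remove-cohook:itm5}--\ref{lem:add-hook-remove-cohook:itm6}, this forces $w$ to be both direct and inverse, i.e.\ $w$ is a trivial string $1_v$ or (more generally) a direct-and-inverse string; in fact direct-and-inverse means $w$ is trivial, so $M=S(v)$ for some vertex $v$ with $\outdegree v\le 1$ and $\indegree v\le 1$ — but actually one only gets ``at most one on the left and at most one on the right,'' so I should instead extract: $w$ is direct \emph{and} there is no arrow $\za$ with $\za w$ a string and no $\za$ with $w\za$ a string, together with the dual statements. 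The cleanest route: show that such an $M$ has \emph{no} overlap extension with any string module $M(v)$ in either direction. Indeed an overlap extension of $M(v)$ by $M(w)$ requires (Definition~\ref{def:overlap_extension}, Remark~\ref{rem:first string w is an up-set}) a nontrivial-in-a-suitable-sense common subwalk with the pattern $\cdots a^{-1}\, e\, b\cdots$ inside $w$; the rigidity of $w$ (no hook can be added on either side, combined with $w$ direct or inverse) prevents $w$ from sitting as the ``$w_\LEFT\, a^{-1}\, e\, b\, w_\RIGHT$'' piece of a string for nonzero $a$ or appropriate $b$, and symmetrically $w$ cannot play the role of $v$. Since $M$ has no overlap extensions with anything, $M\oplus L$ fails \ref{item:M2} only if $L$ itself causes a non-almost-rigid configuration, so $M$ can always be added; combined with Proposition~\ref{prop can complete ar to mar} and a maximality/exchange argument, $M$ lies in every MAR module.

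\textbf{Main obstacle.}
The delicate point is the forward direction: turning ``two irreducible morphisms starting at $M$'' into a concrete string $M(u)$ that (i) genuinely has an overlap extension with $M$ — verifying the three constraints of Definition~\ref{def:overlap_extension}, especially the trivial-overlap condition \ref{enum:c trivial overlap implies da and cb are strings} — and (ii) has no overlap self-extension, so that Proposition~\ref{prop can complete ar to mar} applies to it. Picking $u=S(i)$ where $i=s(w)$ (when $w$ is direct with a hook addable on the left) is the intended choice, mirroring the proof of Lemma~\ref{lem: required w is direct or inverse}: the hill/valley there is replaced by the hook configuration, and one must check the overlap patterns line up. Handling the remaining subcase — $w$ direct but with two arrows \emph{into} $M(w)$ — requires the dual construction with $S(t(w))$, and one must make sure these cases are exhaustive given $w$ is direct or inverse.
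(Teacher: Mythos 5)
Your overall strategy—arguing both directions contrapositively via Proposition~\ref{prop can complete ar to mar} and the characterization of almost rigidity through overlap extensions—matches the paper, but the crucial step of the forward direction fails. For $w=\alpha_1\cdots\alpha_r$ a nontrivial direct string with an arrow $\beta$ such that $\beta w$ is a string, you propose $u=S(s(w))$; however $S(s(w))$ has \emph{no} overlap extension with $M(w)$. Since $w$ is direct, it cannot revisit $s(w)$ (that would give an oriented cycle with no relation), so the only possible overlap with the trivial string is $e=1_{s(w)}$ at the left end of $w$. In the decomposition $w=w_\LEFT\,a^{-1}\,e\,b\,w_\RIGHT$ this forces $w_\LEFT$ trivial, $a=0$, $b=\alpha_1$; and in $1_{s(w)}=v_\LEFT\,c\,e\,d^{-1}\,v_\RIGHT$ one is forced to take $c=d=0$. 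Then condition~\ref{enum:a nonzero or c nonzero} of Definition~\ref{def:overlap_extension} fails in one ordering and condition~\ref{enum:b nonzero or d nonzero} in the other. Your analogy with Lemma~\ref{lem: required w is direct or inverse} breaks down because there the simple sat at a hill or valley of $w$, with an inverse arrow on one side and a direct arrow on the other, while here $s(w)$ is just an endpoint of a direct string. The paper's actual candidate is $N=M(\beta\alpha_1\cdots\alpha_{r-1})$ (shift $w$ one step left along $\beta$), with overlap $e=\alpha_1\cdots\alpha_{r-1}$, so that $b=\alpha_r\neq 0$ and $c=\beta\neq 0$; and $N=M(\alpha\beta^{-1})$ when $w$ is trivial with two distinct arrows $\alpha,\beta$ into $x$. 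Without a correct $N$ the contradiction does not materialize.

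The reverse direction is structurally fine and logically equivalent to the paper's argument (which assumes a nonsplit overlap extension $0\to M\to E_1\oplus E_2\to N\to 0$ and produces two irreducible morphisms starting at $M$), but the intermediate claim that the morphism condition ``forces $w$ to be both direct and inverse, i.e.\ trivial'' is simply false: Lemma~\ref{lem:add-hook-remove-cohook}\ref{lem:add-hook-remove-cohook:itm5}--\ref{lem:add-hook-remove-cohook:itm6} only gives that $w$ is direct or inverse, and a maximal direct string is a nontrivial example satisfying the morphism condition. You notice this, but the self-correction trails off into the statement that ``the rigidity of $w$ prevents the pattern,'' which is precisely the content that must be checked; the paper does this with an explicit case analysis on the letters $a,b,c,d$ appearing in Definition~\ref{def:overlap_extension}, splitting on whether $w$ is trivial, direct, inverse, or neither, and that calculation cannot be elided.
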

\begin{proof}
Let $M$ be a string module and $w$ the corresponding string. First, we prove the `only if' part of the statement. Suppose there are two irreducible morphisms starting at $M$. Using Proposition~\ref{prop can complete ar to mar}, in order to  show that $M$ is not 
in $\required{Q,I}$, it suffices to exhibit a string module $N$ without overlap self-extensions such that $M\oplus N$ is not almost rigid.

If $w$ is neither direct nor inverse, then $M$ is not required due to Lemma \ref{lem: required w is direct or inverse}. So assume, without loss of generality, that $w$ is direct. First suppose $w$ is trivial, that is, $w=1_x$, for some $x \in Q_0$. 
Any irreducible morphism starting at $M=S(x)$ must correspond to adding a hook. 
Since there are two irreducible morphisms starting at $M$, 
there must be two distinct arrows $\alpha,\beta \in Q_1$ with $t(\alpha)=t(\beta) = x$: 
\begin{center}
\begin{tikzpicture}[xscale=1,yscale=1,>=latex]
\def\posetedgecolor{blue}
\def\posetedgenegativeslope{red}
\node(1) at (1,1) {$1$}; 
\node(2) at (2,0) {$x$}; 
\node(3) at (3,1) {$3$}; 
\draw[->] (1) -- (2) node[pos=0.3,right=3pt]{\color{red}{${\alpha}$}};  
\draw[<-] (2) -- (3) node[pos=0.3,right=3pt]{\color{forGreen}{${\beta}$}}; 
\end{tikzpicture}
\end{center}

Then there is an overlap extension of $N\coloneqq M(\alpha \beta^{-1})$ by $M=S(x)$:
\[0 \to S(x) \to  M({\beta^{-1}}) \oplus M(\alpha) \to M(\alpha\beta^{-1}) \to 0,\] 
so $M\oplus N$ is not almost rigid. 
We will now prove that $N$ has no overlap self-extensions. 
By Remark \ref{rem:first string w is an up-set}, the overlap would have to be an up-set and a down-set of $\alpha \beta^{-1}$. Thus the overlap would be $\alpha \beta^{-1}$, but this is impossible, because the items \ref{enum:a nonzero or c nonzero} and  
\ref{enum:b nonzero or d nonzero} of 
Definition~\ref{def:overlap_extension} would not be satisfied.

Now suppose $w$ is not trivial, and write $w=\alpha_1 \ldots \alpha_r$, where $r\geq 1$. 
By assumption, $M=M(w)$ is the start of two irreducible morphisms, and each of them is given by adding a hook or removing a cohook as described in Definition~\ref{def:adding a hook}. 
Since $w$ is direct, it is impossible to remove a cohook on the left of $w$ (see Lemma~\ref{lem:add-hook-remove-cohook}\ref{lem:add-hook-remove-cohook:itm3}), so one of the two irreducible morphisms corresponds to adding a hook on the left of $w$ (see Lemma~\ref{lem:add-hook-remove-cohook}\ref{lem:add-hook-remove-cohook:itm1}). 
In particular, there exists an arrow $\beta \in Q_1$ such that $\beta w$ is a string.
Consider the string module $N$ given by $v=\beta \za_1 \ldots \za_{r-1}$ (if $r=1$, then the string $v$ is simply $\beta$). 
Then there is an overlap extension of $N$ by $M$: 
\[0 \to M(\alpha_1 \dots \alpha_r) \to  M(\alpha_1 \dots \alpha_{r-1})  \oplus M(\beta \alpha_1 \dots \alpha_{r-1} \alpha_r) \to 
M(\beta \alpha_1 \dots \alpha_{r-1})  \to 0,\]
so $M\oplus N$ is not almost rigid.

We will now prove that $N$ has no overlap self-extensions.
For contradiction, suppose $N$ has an overlap self-extension with overlap $e$. 
Then $e$ is a subwalk of $v$ which is both a down-set and an up-set by Remark \ref{rem:first string w is an up-set}. Since $v$ is direct, $e$ must be the entire string $v$ 
or $e=1_x$, with $x=t(\alpha_{r-1}) = s(\beta)$. 
But the former contradicts items 
 \ref{enum:a nonzero or c nonzero} and  
\ref{enum:b nonzero or d nonzero} of 
Definition~\ref{def:overlap_extension}. So suppose we are in the latter case. 
By item~\ref{enum:c trivial overlap implies da and cb are strings} in Definition~\ref{def:overlap_extension}, we must have $\alpha_{r-1} \beta \neq 0$. If $\beta = \alpha_r$, this would imply the existence of an oriented cycle with no relations, which is a contradiction since $A$ is finite dimensional. Therefore $\beta \neq \alpha_r$. But this contradicts the fact that $\alpha_{r-1} \alpha_r \neq 0$ and $A$ is gentle (condition~\ref{def gentle:itm2:string algebra}). 
Thus $N$ has no overlap self-extensions.

The proof that $M$ is not a required summand of every MAR module if there are two irreducible morphisms ending at $M$ is similar.
This completes the proof of the `only if' part of the statement.

\smallskip

Conversely, suppose $M$ is not in $\required{Q,I}$. Then there is an MAR module with an indecomposable summand $N$ such that $M \oplus N$ is not almost rigid. We can assume, without loss of generality, that there is an overlap extension of the form $0 \to M \to E_1 \oplus E_2 \to N \to 0$. 
We claim there are two irreducible morphisms starting at $M$.

Let $a,b,c,d$, $e$, $w_{\overlapLEFT}$, and $w_{\overlapRIGHT}$ be as in Definition \ref{def:overlap_extension}. 

Suppose first that $w$ is not a trivial string. 
If $w$ is not direct nor inverse, then Lemma \ref{lem:add-hook-remove-cohook}\ref{lem:add-hook-remove-cohook:itm5} guarantees two irreducible morphisms starting at $M$. 

Next, suppose $w$ is direct.  Then $w$ is not inverse and so there is an irreducible morphism on the right starting at $M$ by Lemma \ref{lem:add-hook-remove-cohook}\ref{lem:add-hook-remove-cohook:itm5}. 
In addition, since $w$ is direct, the inverse arrow $a^{-1}$ does not exist in $w$, and so the arrow $c$ must exist by Definition \ref{def:overlap_extension}. 
Thus $cw=cebw_{\overlapRIGHT}$ is a string, and therefore there is an irreducible morphism given by adding the hook of $c$ on the left of $w$ by Definition \ref{def:adding a hook}\ref{def:adding a hook:add hook on left:itm1}.

Similarly, if $w$ is inverse, then $w$ is not direct, and so there is an irreducible morphism on the left starting at $M$ by Lemma \ref{lem:add-hook-remove-cohook}\ref{lem:add-hook-remove-cohook:itm5}. 
Moreover, the arrow $b$ does not exist in $w$ since $w$ is inverse. Then the arrow $d$ exists, and thus $wd^{-1}$ is a string. Hence there is an irreducible morphism given by adding the hook of $d$ on the right of $w$ by Definition \ref{def:adding a hook}\ref{def:adding a hook:add hook on right:itm1}. 

Now suppose $w$ is a trivial string. Then the arrows $a$ and $b$ don't exist, and so the arrows $c$ and $d$ must exist and again give rise to two irreducible morphisms corresponding to adding their respective hooks.
\end{proof}

\begin{proposition}[Summands of every MAR module]
\label{prop:required summands}
A string $A$-module $M$ is 
a summand of every MAR module 
if and only if one of the following conditions holds:
\begin{enumerate}[(1)]
\item\label{prop:required summands:itm1} $M=M(w)$, where $w$ is a
maximal direct 
nontrivial string. 
\item\label{prop:required summands:itm2} $M=S(i)$, where $i$ has degree $1$ in $Q$. 
\item \label{prop:required summands:itm3}
$M=S(i)$, where $i$ has degree $2$ and there is a direct string $ab$ 
such that $i=t(a)=s(b)$.

\item \label{prop:required summands:itm4}
$M=S(i)$, where $i$ has degree $2$ and there is a path $ab \in I$ such that $i=t(a)=s(b)$.
\end{enumerate}
\end{proposition}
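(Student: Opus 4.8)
The plan is to use Lemma~\ref{lem:M in every mar iff at most one irreducible morphism starting and ending at M}, which reduces the statement to a purely combinatorial classification: $M = M(w)$ is a required summand if and only if there is at most one irreducible morphism starting at $M$ and at most one ending at $M$. The strategy is then to translate the conditions ``at most one irreducible morphism starting at $M$'' and ``at most one irreducible morphism ending at $M$'' into conditions on the string $w$ using Proposition~\ref{prop:Butler Ringel:at most two arrows starting at M and ending at M} and Lemma~\ref{lem:add-hook-remove-cohook}, and to check that the resulting combinatorial description matches the four cases in the statement.

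First I would observe, via Lemma~\ref{lem: required w is direct or inverse}, that a required $M(w)$ has $w$ direct or inverse; since $M(w) \cong M(w^{-1})$, we may assume $w$ is direct (the trivial string $1_i$ being both). Next, for a direct string $w$: by Lemma~\ref{lem:add-hook-remove-cohook}\ref{lem:add-hook-remove-cohook:itm3} one cannot remove a cohook on the left, and by parts \ref{lem:add-hook-remove-cohook:itm1}, \ref{lem:add-hook-remove-cohook:itm2} the irreducible morphisms are: on the left, adding a hook on the left (exists iff $\alpha w$ is a string for some arrow $\alpha$), and on the right, either adding a hook on the right (iff $w\alpha^{-1}$ is a string for some arrow $\alpha$) or removing a cohook on the right (iff $w\alpha$ is a string for some arrow $\alpha$). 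I would then analyze when there are strictly fewer than two such morphisms total in each ``direction'' — but actually the right way to organize this is: the condition is ``$\leq 1$ starting'' AND ``$\leq 1$ ending''. Using Proposition~\ref{prop:Butler Ringel:at most two arrows starting at M and ending at M}, the morphisms \emph{starting} at $M(w)$ are $M(w)\to M(w_{\text{left}})$ and $M(w)\to M(w_{\text{right}})$; for a direct string $w_{\text{left}}$ is never $0$ when $w$ is a proper direct string being extended on the left unless $\alpha w$ fails to be a string for all $\alpha$, which happens exactly when $s(\alpha_1)$ has no incoming arrow forming a string with $\alpha_1$. Dually for $w_{\text{right}}$: it is $0$ iff $w$ is direct and no arrow $\alpha$ has $w\alpha^{-1}$ a string, i.e.\ $t(w)$ has outdegree $\leq 1$ (the arrow $\alpha_r$ being the only one out of $t(\alpha_{r-1})$... more carefully: $w\alpha^{-1}$ is a string for some $\alpha$ iff $\outdegree{t(w)} = 2$, since one of the two out-arrows at $t(w)$ is the continuation and the other gives the hook, and by gentleness exactly the right combinatorics work out). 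So ``$\leq 1$ morphism starting at $M(w)$'' forces $w$ to be \emph{right maximal as a direct string} (no hook can be added on the right, i.e.\ $w_{\text{right}}=0$, which requires $w$ inverse — contradiction unless $w$ trivial — OR $w_{\text{left}}=0$). I would carefully case-split: either $w$ is nontrivial and then to kill one of the two potential morphisms starting at $M$ we need $w$ to be \emph{maximal} as a direct string (so that neither a hook on the left nor a hook on the right can be added, and cohooks can't be removed since $w$ is direct) — giving case \ref{prop:required summands:itm1}; or $w = 1_i$ is trivial, and then I analyze the hooks addable at $i$, which are governed by $\indegree{i}$ and $\outdegree{i}$: at most one irreducible morphism starting at $S(i)$ means $\indegree{i} \leq 1$...

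**The main obstacle** I expect is the trivial-string case $w = 1_i$, which splits into subcases according to $\degree{i}$ and, when $\degree{i} = 2$, according to whether the two arrows at $i$ sit as a path $ab$ through $i$ (with $ab \notin I$, case \ref{prop:required summands:itm3}; or $ab \in I$, case \ref{prop:required summands:itm4}) or both point in/both point out at $i$. For $S(i)$, an irreducible morphism starting at it corresponds to adding a hook, and there is one such for each arrow $\beta$ with $t(\beta) = i$ for which the hook construction applies — so ``$\leq 1$ starting'' means essentially $\indegree{i} \leq 1$; dually ``$\leq 1$ ending'' means $\outdegree{i} \leq 1$. Wait — that is not quite right either, because when $\indegree{i}=2$ but the two in-arrows $a, a'$ satisfy a relation the hook behavior changes; I will need to chase Definition~\ref{def:adding a hook} and the hook/cohook definitions carefully here. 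The upshot should be: $\indegree{i} \leq 1$ and $\outdegree{i} \leq 1$ (degree $\leq 1$ overall or degree $2$ with one in and one out), and then the path-through-$i$ configuration $i = t(a) = s(b)$ either has $ab$ a direct string (\ref{prop:required summands:itm3}) or $ab \in I$ (\ref{prop:required summands:itm4}), while degree $1$ is \ref{prop:required summands:itm2}; the configuration with both arrows out (or both in) at $i$ gives two irreducible morphisms on one side and is excluded. I would then verify the converse direction of each case by exhibiting, for a non-required $M$, the extra irreducible morphism — but this is exactly the content of the already-proven Lemma~\ref{lem:M in every mar iff at most one irreducible morphism starting and ending at M}, so the converse is automatic once the forward combinatorial translation is pinned down. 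The delicate bookkeeping is entirely in matching the hook/cohook existence conditions to the four listed configurations, using gentleness conditions \ref{def gentle:itm1:degrees}–\ref{def gentle:itm3:gentle algebra} at each step.
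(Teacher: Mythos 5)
Your plan matches the paper's proof exactly: reduce via Lemma~\ref{lem:M in every mar iff at most one irreducible morphism starting and ending at M}, dispose of the non-direct-non-inverse case via Lemma~\ref{lem: required w is direct or inverse}, and then case-split on whether $w$ is trivial or a nontrivial direct string, using Lemma~\ref{lem:add-hook-remove-cohook} to count irreducible morphisms. However, a few of your intermediate claims are wrong as written and should be corrected in the write-up.

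First, you assert that ``$\le 1$ morphism starting at $M(w)$'' forces $w$ to be \emph{right} maximal; this is backwards. For a nontrivial direct string $w$, Lemma~\ref{lem:add-hook-remove-cohook}\ref{lem:add-hook-remove-cohook:itm5} always produces an irreducible morphism on the right starting at $M(w)$, and Lemma~\ref{lem:add-hook-remove-cohook}\ref{lem:add-hook-remove-cohook:itm3} rules out removing a cohook on the left, so the only possible second morphism starting at $M(w)$ is ``add a hook on the left,'' which exists iff $w$ is not \emph{left} maximal. Thus $\le 1$ starting $\iff$ $w$ is left maximal. Dually (using \ref{lem:add-hook-remove-cohook:itm4} and \ref{lem:add-hook-remove-cohook:itm6}), $\le 1$ ending $\iff$ $w$ is right maximal. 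Your parenthetical ``so that neither a hook on the left nor a hook on the right can be added'' is also an overclaim: a maximal direct string $w$ may well admit a hook on the right (this happens whenever $\indegree{t(w)}=2$), but that produces only the single allowed morphism starting on the right.

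Second, your worry in the trivial-string case about ``when $\indegree{i}=2$ and the two in-arrows satisfy a relation the hook behavior changes'' is a non-issue. Whether you can add a hook at $1_i$ depends only on whether $\alpha 1_i$ is a string, which for $\alpha\in Q_1$ is just $t(\alpha)=i$; relations play no role. If $\indegree{i}=2$, the two arrows ending at $i$ attach to the two different ends of the trivial string (via the sign functions), giving two distinct irreducible morphisms starting at $S(i)$. So the clean statement you eventually guess --- $\le 1$ starting $\iff \indegree{i}\le 1$, $\le 1$ ending $\iff \outdegree{i}\le 1$ --- is exactly right and is what the paper uses; no extra case analysis on relations at $i$ is needed.
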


\begin{remark}
Cases \ref{prop:required summands:itm3} and \ref{prop:required summands:itm4} can be combined into the following single case. $M=S(i)$ where $\indegree{i} = 1 = \outdegree{i}$. We prefer to state them separately, because they have different geometric interpretations; 
see the list of bijections at the start of the proof of Theorem~\ref{thm:required-model}. 
\end{remark}

The following corollary is a reformulation of Proposition~\ref{prop:required summands}.

\begin{corollary}\label{cor:required summands}
A string module $M$ is a required summand if and only if $M$ is a simple module $M=S(i)$ where $\indegree{i} \leq 1$ and $\outdegree{i} \leq 1$, or $M=M(w)$ where $w$ is 
a maximal direct string which is not trivial.
\end{corollary}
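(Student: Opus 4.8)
The plan is to deduce Proposition~\ref{prop:required summands} from the characterization in Lemma~\ref{lem:M in every mar iff at most one irreducible morphism starting and ending at M}, namely that $M=M(w)$ is required if and only if there is at most one irreducible morphism starting at $M$ and at most one ending at $M$. So the whole argument reduces to translating that irreducibility condition into the combinatorial conditions on the string $w$ listed in the statement, using Proposition~\ref{prop:Butler Ringel:at most two arrows starting at M and ending at M} and Lemma~\ref{lem:add-hook-remove-cohook}.

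First I would observe that by Lemma~\ref{lem: required w is direct or inverse} a required $M(w)$ has $w$ direct or inverse, and since $M(w)\cong M(w^{-1})$ we may assume $w$ is direct. Now I split according to whether $w$ is trivial or not. If $w$ is nontrivial and direct, then by Lemma~\ref{lem:add-hook-remove-cohook}\ref{lem:add-hook-remove-cohook:itm3} we cannot remove a cohook on the left, and by part~\ref{lem:add-hook-remove-cohook:itm4} $w$ is not obtained by adding a hook on the right; combining with parts \ref{lem:add-hook-remove-cohook:itm1}, \ref{lem:add-hook-remove-cohook:itm5}, \ref{lem:add-hook-remove-cohook:itm6} one sees: the only irreducible morphism starting at $M(w)$ that could fail to be unique is "add a hook on the left", which exists precisely when there is $\alpha\in Q_1$ with $\alpha w$ a string; and the only irreducible morphism ending at $M(w)$ that could fail is "remove a cohook on the right" (equivalently, by part~\ref{lem:add-hook-remove-cohook:itm2}, there is $\alpha\in Q_1$ with $w\alpha$ a string). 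Hence $M(w)$ is required iff no such $\alpha$ exists on either side, i.e. $w$ cannot be extended on the left or right by an arrow to a longer string — which is exactly the condition that $w$ is a maximal direct string. This yields case~\ref{prop:required summands:itm1}.

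Next I would handle the trivial case $w=1_i$, so $M=S(i)$, and here I must look at all irreducible morphisms at $S(i)$: on the left, adding a hook uses an arrow $\beta$ with $t(\beta)=i$ (there are $\indegree{i}$ of these, but the hook construction cares whether $\outdegree{i}$ forces a nontrivial hook); on the right, adding a hook uses an arrow with $s(\cdot)=i$; there are also irreducible morphisms into $S(i)$ of the "remove a cohook" type governed by arrows out of, respectively into, $i$ together with the relations. Unwinding Definition~\ref{def:adding a hook} and Proposition~\ref{prop:Butler Ringel:at most two arrows starting at M and ending at M} carefully, the number of irreducible morphisms starting (and ending) at $S(i)$ turns out to be at most one on each side exactly when $\indegree{i}\le 1$ and $\outdegree{i}\le 1$. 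When $\degree i=1$ this is case~\ref{prop:required summands:itm2}; when $\degree i =2$ with $\indegree i = \outdegree i = 1$, the local picture at $i$ is an arrow $a$ into $i$ and an arrow $b$ out of $i$, and whether $ab\notin I$ or $ab\in I$ distinguishes cases~\ref{prop:required summands:itm3} and~\ref{prop:required summands:itm4}; the cases $\indegree i =2,\outdegree i=0$ and $\indegree i =0,\outdegree i=2$ are excluded because they produce two irreducible morphisms on one side (this is essentially the computation already carried out in the proof of Lemma~\ref{lem:M in every mar iff at most one irreducible morphism starting and ending at M}). Finally I would note that cases \ref{prop:required summands:itm3}–\ref{prop:required summands:itm4} together are precisely "$S(i)$ with $\indegree i =\outdegree i =1$", matching the reformulation in Corollary~\ref{cor:required summands}.

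I expect the main obstacle to be the bookkeeping in the trivial-string case: one has to carefully distinguish, for $S(i)$, between the "add a hook" and "remove a cohook" irreducible morphisms on each side and count them in terms of $\indegree i$, $\outdegree i$ and the relations at $i$, being careful that a hook can degenerate to a trivial string (which still gives an irreducible morphism via the alternative construction in Lemma~\ref{lem:add-hook-remove-cohook}). A secondary subtlety is verifying the "maximal direct string" condition is genuinely symmetric — that $M(w)$ being the target of at most one irreducible morphism forces left-maximality while being the source of at most one forces right-maximality — which follows from Proposition~\ref{prop:Butler Ringel:at most two arrows starting at M and ending at M} and Lemma~\ref{lem:add-hook-remove-cohook}\ref{lem:add-hook-remove-cohook:itm2}, but deserves to be spelled out.
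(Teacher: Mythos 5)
Your proposal follows essentially the same route as the paper: reduce to Lemma~\ref{lem:M in every mar iff at most one irreducible morphism starting and ending at M}, invoke Lemma~\ref{lem: required w is direct or inverse} to assume $w$ is direct, and split into the nontrivial case (handled via parts (1), (3), (4), (5), (6) of Lemma~\ref{lem:add-hook-remove-cohook} to show maximality is equivalent to the uniqueness of irreducible morphisms) and the trivial case $w=1_i$ (where the counts of irreducible morphisms starting and ending at $S(i)$ are $\indegree{i}$ and $\outdegree{i}$, respectively). The paper proves Proposition~\ref{prop:required summands} with precisely this decomposition and observes the corollary is a reformulation, so you have matched the intended argument. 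One small remark: your parenthetical about ``the hook construction cares whether $\outdegree{i}$ forces a nontrivial hook'' is off --- for an arrow $\beta$ with $t(\beta)=i$, the hook $\hook{\beta}$ is built at $s(\beta)$ and is governed by $\outdegree{s(\beta)}$, not $\outdegree{i}$; the cleaner statement, and the one the paper uses, is simply that irreducible morphisms starting at $S(i)$ are in bijection with arrows into $i$ (each one adds its hook), and dually for morphisms ending at $S(i)$ and arrows out of $i$. This imprecision does not affect your conclusion, since you still count $\indegree{i}$ and $\outdegree{i}$ correctly.
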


\begin{proof}
[Proof of Proposition~\ref{prop:required summands}]
By Lemma~\ref{lem:M in every mar iff at most one irreducible morphism starting and ending at M}, 
an indecomposable $A$-module $M$ is 
in $\required{Q,I}$
if and only if
there is at most one irreducible morphism starting at $M$ and at most one irreducible morphism ending at $M$.

First, assume $M$ is a simple module $S(i)$ at vertex $i$. 
An irreducible morphism starting at $S(i)$ is obtained by adding the hook of an arrow whose target is the vertex $i$.
Therefore, there is at most one irreducible morphism starting at $S(i)$ if and only if $\indegree{i} \leq 1$. 
Similarly, an irreducible morphism ending at $S(i)$ is obtained by removing the cohook of an arrow whose start is $i$.  
So there is at most one irreducible morphism ending at $S(i)$ if and only if $\outdegree{i} \leq 1$. 
This proves cases \ref{prop:required summands:itm2}, \ref{prop:required summands:itm3}, and \ref{prop:required summands:itm4} of the proposition. 

\smallskip 

Now, assume $M$ is not a simple module, and let
$w=w_1 w_2 \dots w_\stringlength$ be the string 
which corresponds to $M$, where $\stringlength \geq 1$. 
Lemma \ref{lem: required w is direct or inverse} tells us that if 
$w$ is neither direct nor inverse then 
$M$ is not in $\required{Q,I}$.
Without loss of generality, let $w$ be a direct string. 
Since $w$ is not a trivial string, $w$ is not an inverse string. 
In the rest of the proof, we will show that $w$ must be maximal.

Since $w$ is not an inverse string, Lemma \ref{lem:add-hook-remove-cohook}\ref{lem:add-hook-remove-cohook:itm5} guarantees an irreducible morphism on the right starting at $M$.  
Since $w$ is a direct string, we cannot remove a cohook on the left, by Lemma \ref{lem:add-hook-remove-cohook}\ref{lem:add-hook-remove-cohook:itm3}. By Lemma \ref{lem:add-hook-remove-cohook}\ref{lem:add-hook-remove-cohook:itm1}, we can add a hook on the left of $w$ if and only if there is $\alpha \in Q_1$ such that $\alpha w$ is a string, that is, $w$ is not left maximal. This shows that there are two irreducible morphisms starting at $M(w)$ if and only if $w$ is not left maximal. 

\smallskip 

Since $w$ is not an inverse string, Lemma 
\ref{lem:add-hook-remove-cohook}\ref{lem:add-hook-remove-cohook:itm6} guarantees a morphism on the left ending at $M$. 
Since $w$ is a direct string, $w$ cannot be the result of adding a hook on the right of another string, by Lemma \ref{lem:add-hook-remove-cohook}\ref{lem:add-hook-remove-cohook:itm4}. 
We can remove a cohook on right of a string $v$ to obtain $w$ if and only if there is an arrow $\beta$ such that $w\beta$ is a string (see Lemma \ref{lem:add-hook-remove-cohook}\ref{lem:add-hook-remove-cohook:itm2}), that is, $w$ is not right maximal.
This shows that there are two irreducible morphisms ending at $M(w)$ if and only if $w$ is not right maximal.

Hence, there is at most one irreducible morphism starting at $M(w)$ and at most one irreducible morphism ending at $M(w)$ 
if and only if $w$ is maximal. This proves case \ref{prop:required summands:itm1} of the proposition.
\end{proof}

\begin{example}
\label{ex:required summands:as modules}
Consider $A=\kb Q/I$ with $Q$ the quiver 
 $\vcenter{\vbox{\xymatrix@R1pt{&4\ar[dr]^\zg\\
 1\ar[ru]^\zb&&2\ar[ll]^\za\ar[r]^\zd&3
}}}$ and  $I=\langle\za\zb, \zg\za\rangle$. 
Then we have all types of required summands described in Proposition \ref{prop:required summands}:
$M(\alpha)$ and $M(\zb\zg\zd)$ are both of type \ref{prop:required summands:itm1}, 
and $S(3),S(4),S(1)$ are of types \ref{prop:required summands:itm2}, \ref{prop:required summands:itm3}, \ref{prop:required summands:itm4}, respectively. See Figure~\ref{fig:triangulations-proj-inj-mars} for an illustration of these summands in the geometric model we use in this paper.
\end{example}

\begin{remark} 
In type $A$, each hook and each cohook gives rise to a required summand of any MAR module \cite[Lemma 3.9]{BGMS19}. This is not true for a gentle algebra in general. For example, consider the algebra given by the quiver 
\begin{center}
\begin{tikzpicture}
\node (1) at (-1,1) {$1$};
\node (2) at (-1,-1) {$2$};
\node (3) at (0,0) {$3$};
\node (4) at (1.2,0) {$4$};

\draw[->,shorten <=-2pt, shorten >=-2pt] (1) -- (3) node[below, pos=0.4] {$\za$};
\draw[->,shorten <=-2pt, shorten >=-2pt] (2) -- (3) node[below, pos=0.6] {$\zb$};
\draw[->,shorten <=-2pt, shorten >=-2pt] (3) -- (4) node[below, pos=0.6] {$\zg$};

\draw[densely dashed] (-0.4,0.4) to [out=40,in=100]  (0.5,0);
\end{tikzpicture}
\end{center}
with $I=\langle \za \zg \rangle$. 
Then the cohook of $\za$ is $\zb$, but $M(\zb)$ is not a required summand, since $\zb$ is not a trivial string nor a maximal direct string.

However, if a string $w$ over a gentle algebra given by $(Q,I)$ is both a hook \emph{and} a cohook, then $M(w) \in \required{Q,I}$. Indeed, if $w$ is a trivial string, then $w=1_x$, where $x$ has indegree $1$ and outdegree $1$, and so $M = S(i)$ is a required summand. Otherwise, $w$ is right maximal (respectively left maximal) because $w$ is a hook (respectively cohook), and so $w$ is a maximal direct non-trivial string. 
\end{remark}

\subsection{Existence of MAR modules}
\label{sec:Mproj} 

In this section, we give a general construction for MAR modules over an arbitrary gentle algebra. 
In particular, we establish that the sum of all indecomposable projective (respectively, injective) $A$-modules can be completed to a maximal almost rigid module in a unique way. Throughout, $M_{\proj}$ will denote the basic sum of the union of the projectives $P(i)$, their radicals $\rad P(i)$, and the required summands $\required{Q,I}$.
Dually, $M_{\inj}$ will denote be the basic sum of the union of the injectives $I(i)$, their socle quotients $I(i)/S(i)$, and the required summands $\required{Q,I}$.

First, we need two preliminary lemmas.

\def\mystringmodule{N}
\begin{lemma} 
\label{lem:no overlap extensions of the following forms}
There are no overlap extensions of the following forms.
\begin{enumerate}[(1)]
\item \label{enum:ses X to E to P splits}
$0 \to \mystringmodule \to E \to P \to 0$, where $\mystringmodule$ is any indecomposable module and $P$ is an indecomposable projective module

\item \label{enum:X to E to R}
$0 \to \mystringmodule \to E \to R \to 0$, where $\mystringmodule$ is any string module and $R$ is an indecomposable summand of the radical of a projective module
\end{enumerate}
\end{lemma}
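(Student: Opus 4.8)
\textbf{Part (1)} should require no work. An overlap extension is, by Definition~\ref{def:overlap_extension}, a \emph{non-split} short exact sequence, and if $P$ is projective then $\Ext^1(P,N)=0$ for every $A$-module $N$, so every short exact sequence $0\to N\to E\to P\to 0$ splits. Hence there is no overlap extension of this form, regardless of $N$. (Implicitly one uses that $P$ is a string module, which is automatic since indecomposable projectives over a gentle algebra are string modules.) So the whole content of the lemma is in part (2), and the plan below concerns that.

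\textbf{Part (2), setup.} The first step is to record the shape of the modules $R$ involved. Recalling the standard description of projectives over a gentle algebra, $P(i)=M(v_i)$ where $v_i$ is $1_i$, a maximal direct string starting at $i$, or $p_\beta^{-1}p_\alpha$ for the two maximal direct strings $p_\alpha,p_\beta$ starting at $i$, according as $\outdegree i$ is $0,1,2$. Deleting the (unique) source $S(i)$ of the coefficient quiver of $M(v_i)$ exhibits every indecomposable summand $R$ of $\rad P(i)$ as $R=M(u)$ with $u$ a \emph{right maximal direct string} (possibly trivial); and if $u=1_x$ is trivial, then the corresponding arrow into $x$ is itself a maximal direct string, which by conditions~\ref{def gentle:itm2:string algebra} and \ref{def gentle:itm3:gentle algebra} forces $\outdegree x\le 1$. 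Then I would argue by contradiction: suppose $0\to M(w)\to E_1\oplus E_2\to R\to 0$ is an overlap extension with overlap $e$, in the notation of Definition~\ref{def:overlap_extension}, and write the quotient string as $v=v_\overlapLEFT\, c\, e\, d^{-1}v_\overlapRIGHT$ where, since $M(v)\cong R=M(u)$, the string $v$ is either $u$ or $u^{-1}$. The key external input is Remark~\ref{rem:first string w is an up-set} together with Remark~\ref{rem:string submodule and string quotient module}: $e$ is a down-set of $v$, hence the letter of $v$ just after $e$ (which is $d^{-1}$) is inverse or absent, and the letter just before $e$ (which is $c$) is direct or absent.

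\textbf{Part (2), the case analysis.} If $u=1_x$ is trivial, then $v=1_x$ forces $e=1_x$ and $c=d=0$, $v_\overlapLEFT=v_\overlapRIGHT=1_x$, and by items~\ref{enum:a nonzero or c nonzero},\ref{enum:b nonzero or d nonzero} we get $a,b\ne 0$ with $x=s(a)=s(b)$ and $a\ne b$, i.e.\ $\outdegree x=2$ — contradicting the setup. If $u$ is nontrivial, split according to the presentation of $v$. If $v=u$ is direct, then $v$ has no inverse letters, so $d=0$ and $v_\overlapRIGHT$ is trivial, whence $v=v_\overlapLEFT\, c\, e$ and, by item~\ref{enum:b nonzero or d nonzero}, $b\ne 0$; then $vb$ is again a direct string (when $e$ is nontrivial because $eb$ occurs inside $w$; when $e$ is trivial because then $c\ne d=0$ and item~\ref{enum:c trivial overlap implies da and cb are strings} gives $cb\notin I$), contradicting that $u=v$ is right maximal. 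If instead $v=u^{-1}$ is inverse, then $v$ has no direct letters, so $c=0$ and $v_\overlapLEFT$ is trivial, whence $v=e\, d^{-1}v_\overlapRIGHT$ and, by item~\ref{enum:a nonzero or c nonzero}, $a\ne 0$; since $a^{-1}e$ occurs inside $w$ and $e\, d^{-1}v_\overlapRIGHT=v$ is a string, the concatenation $a^{-1}v$ is a string, so $ua=(a^{-1}v)^{-1}$ is a string, again contradicting right maximality of $u$.

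\textbf{Expected obstacle.} The argument itself is short; the only delicate part is the bookkeeping, namely (i) correctly reducing the general overlap $e$ and the ``no inverse/no direct letter'' observations to the conclusions $d=0$ (resp.\ $c=0$) and the vanishing of the relevant affix; and (ii) the two residual trivial-overlap situations, where right maximality seems to give nothing because there is simply no arrow to append. Point (i) is handled cleanly by Remark~\ref{rem:string submodule and string quotient module}. For (ii), the trivial-overlap constraints are exactly what is needed: in the direct case the inequality $c\ne d$ (from $v,w$ being reduced, see the remark after Definition~\ref{def:overlap_extension}) forces $c\ne 0$ so that item~\ref{enum:c trivial overlap implies da and cb are strings} applies, and the only remaining possibility $R=S(x)$ is excluded because a radical summand $S(x)$ has $\outdegree x\le 1$ whereas a trivial overlap on the quotient side forces $\outdegree x=2$.
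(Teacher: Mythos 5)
Your proof is correct and follows essentially the same route as the paper's. Part (1) is identical (projectives have no extensions). For part (2) the paper also begins with the same classification of indecomposable radical summands $R=M(u)$ (either $u$ is a nontrivial right-maximal direct string, or $u=1_x$ with $\outdegree{x}\le 1$), also uses Remark~\ref{rem:first string w is an up-set} to conclude that the overlap $e$ sits at the appropriate end of the quotient string, and also derives a contradiction from right maximality in the nontrivial case and from $\outdegree{x}\le 1$ in the trivial case. The only difference is cosmetic: the paper tacitly normalizes the quotient string so that $v=u$ is direct (this is harmless, because an overlap extension written with $(w,v)$ can equally be written with $(w^{-1},v^{-1})$, and the two representations give the same extension), whereas you treat the $v=u^{-1}$ orientation as a separate case. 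Your extra care in the trivial-overlap subcase (invoking $c\ne d$ and condition~\ref{enum:c trivial overlap implies da and cb are strings} to get $cb\notin I$) fills in a step the paper compresses into ``in particular $vb$ is a string.''
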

\begin{proof}
Since $\Ext^1 (P, -)=0$ for every projective module $P$,  there is no overlap extension of the form~\ref{enum:ses X to E to P splits}.

We now prove that there is no overlap extension of the form \ref{enum:X to E to R}. 
Suppose, for the sake of contradiction, that there is an overlap extension 
$0 \to \mystringmodule \to E \to R \to 0$ with $\mystringmodule$ a string module and $R=M(v)$ an indecomposable summand of the radical of a projective module. 
Then there are two cases: either $v$ is a nontrivial direct string 
which is right maximal, or $v =1_x$ is a trivial string such that 
the outdegree of $x$ is at most one. 

We use the notation of Definition~\ref{def:overlap_extension}. 
By Remark \ref{rem:first string w is an up-set}, the overlap $e$ is a down-set of $v$ and is therefore a suffix of $v$. 
In both cases, $d=0$, so 
condition \ref{enum:b nonzero or d nonzero} of Definition~\ref{def:overlap_extension} implies that  $0 \neq b$, and in particular $vb$ is a string. 

If we are in the former case where $v$ is a nontrivial right maximal direct string, then this contradicts the fact that $v$ is right maximal. 

If we are in the latter case, then $v = 1_x=e$ and so $c=0$.  
Then condition \ref{enum:a nonzero or c nonzero} of Definition~\ref{def:overlap_extension} implies that $a\neq 0$ as well. Thus $a^{-1}b$ is a string, which means that $a\neq b$, and $s(a) = x= s(b)$. However, this contradicts the fact that $\outdegree{x}\leq 1$. 
This concludes the proof that there is no overlap extension of the form \ref{enum:X to E to R}. 
\end{proof}

\begin{lemma}\label{lem:Simple module in Mproj}
Let $i\in Q_0$.
Then $S(i)$ is a summand of $M_\proj$ 
if and only if 
$\outdegree{i}<2$.  
In particular, we have the following cases.
\begin{enumerate}
\item If $\outdegree{i}=0$, then $S(i)=P(i)$.
\item Let $\outdegree{i}=1$. 
\begin{enumerate}
\item If $\indegree{i}\leq 1$, then $S(i)$ is in $\required{Q,I}$. 
\item If $\indegree{i}=2$, then $S(i)$ is a summand of the radical of an indecomposable projective module.
\end{enumerate}
\end{enumerate}
\end{lemma}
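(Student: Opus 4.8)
The statement is \emph{iff}, so the plan is to analyze when the simple module $S(i)$ fails to be a summand of $M_\proj$ and to identify, in each remaining case, which piece of the defining union of $M_\proj$ (the projectives $P(j)$, their radicals $\rad P(j)$, or the required summands $\required{Q,I}$) contains $S(i)$. Recall that $\outdegree{i}$ is the number of arrows with source $i$, so the three cases $\outdegree{i}=0,1,2$ are exhaustive, and the claim is that $S(i)$ is a summand of $M_\proj$ precisely in the first two.

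First I would dispose of the ``$\Leftarrow$'' direction by splitting on $\outdegree{i}$. If $\outdegree{i}=0$, then no arrow leaves $i$, so the projective cover $P(i)$ has no radical, i.e.\ $P(i)=S(i)$; hence $S(i)=P(i)$ is visibly a summand of $M_\proj$, which also proves sub-case (1). If $\outdegree{i}=1$, I would further split on $\indegree{i}$. When $\indegree{i}\le 1$, Corollary~\ref{cor:required summands} (or Proposition~\ref{prop:required summands}, cases \ref{prop:required summands:itm2}--\ref{prop:required summands:itm4}) says $S(i)\in\required{Q,I}$, so it is a summand of $M_\proj$ by construction; this is sub-case (2)(a). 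When $\indegree{i}=2$, I would argue that $S(i)$ is a summand of $\rad P(j)$ for a suitable $j$: since two arrows, say $\beta_1,\beta_2$, end at $i$, by the gentle conditions \ref{def gentle:itm2:string algebra}--\ref{def gentle:itm3:gentle algebra} exactly one of them, say $\beta_1$, extends to the right by a composable non-relation; but because $\outdegree{i}=1$, letting $\alpha$ be the unique arrow out of $i$, one checks that at least one of $\beta_1\alpha$, $\beta_2\alpha$ lies in $I$ (gentle condition \ref{def gentle:itm2:string algebra} forbids both being non-relations once we account for the other arrow out, and here there is only one arrow out). Taking $\beta$ to be an arrow into $i$ with $\beta\alpha\in I$, the string starting with $\beta$ and then forced to stop (since $\beta\alpha\in I$ and there is no other arrow out of $i$) shows that the trivial string $1_i=S(i)$ appears as a direct summand of $\rad P(s(\beta))$. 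The precise bookkeeping here — describing $\rad P(j)$ as a direct sum of string modules and locating $S(i)$ among the summands — is the step I expect to require the most care, and is essentially the content of sub-case (2)(b).

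For the ``$\Rightarrow$'' direction (equivalently, the contrapositive), I would show that if $\outdegree{i}=2$ then $S(i)$ is \emph{not} a summand of $M_\proj$. It is not a $P(j)$, since $P(i)$ has nonzero radical (two arrows leave $i$) and $S(i)$ is not a composition factor in the top of any other $P(j)$ in a way that makes it equal to $P(j)$. It is not a summand of any $\rad P(j)$: a string-module summand $M(v)$ of $\rad P(j)$ has $v$ either a right-maximal nontrivial direct string or a trivial string $1_x$ with $\outdegree{x}\le 1$ (this is exactly the dichotomy already invoked in the proof of Lemma~\ref{lem:no overlap extensions of the following forms}), and $S(i)=M(1_i)$ with $\outdegree{i}=2$ fits neither. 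And it is not in $\required{Q,I}$: by Corollary~\ref{cor:required summands}, a required simple $S(i)$ must have $\outdegree{i}\le 1$. Since these three families exhaust the indecomposable summands of $M_\proj$, we conclude $S(i)$ is not among them, completing the equivalence.
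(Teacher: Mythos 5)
Your proof is correct and follows essentially the same structure as the paper's: both directions split by $\outdegree{i}$ and, where applicable, $\indegree{i}$, and both locate $S(i)$ among the projectives, the radical summands, or the required summands (or rule out each family when $\outdegree{i}=2$). The only differences are cosmetic — you phrase the forward implication as a contrapositive and cite the dichotomy from Lemma~\ref{lem:no overlap extensions of the following forms} explicitly (which the paper leaves implicit), while the paper additionally illustrates the loop subcase of (2)(b) with a figure — but the mathematical content and the key gentleness observation that forces one of the compositions $\beta_k\alpha$ into $I$ are identical.
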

\begin{proof}
Suppose $S(i)$ is a summand of $M_\proj$. Then $S(i)$ is a projective module, a summand of the radical $\rad P(j)$ of an indecomposable projective module $P(j)$, or it lies in $\required{Q,I}$. 
If $S(i)$ is projective, then $\outdegree{i}=0$. If $S(i)$ is a summand of $\rad P(j)$, for some $j \in Q_0$, then $\outdegree{i} \leq 1$. If $S(i)$ is in $\required{Q,I}$, then $\outdegree{i} \leq 1$ by Corollary \ref{cor:required summands}. Therefore, if $S(i)$ is a summand of $M_\proj$ then $\outdegree{i}<2$. 

Conversely, suppose $\outdegree{i}<2$, and we will show that $S(i)$ is a summand of $M_\proj$. 
We break this down into cases.
\begin{enumerate}
\item 
If $\outdegree{i}=0$, then $S(i)=P(i)$ is a summand of $M_\proj$.

\item 
Suppose $\outdegree{i}=1$. 
\begin{enumerate} 
\item 
If $\indegree{i} \leq 1$, then  $S(i)$ is in $\required{Q,I}$ by Corollary~\ref{cor:required summands}.
\item 
Suppose $\indegree{i}=2$. 

Let $a\colon j \to i$ and $b\colon j' \to i$ be the two arrows with target $i$ and let $c$ denote the arrow which starts at $i$. Since $A$ is gentle, exactly one of $ac$ and $bc$ is a relation - let us assume it is the former, as illustrated in Figure~\ref{fig:Simple module in Mproj}, where the picture on the right illustrates the case where $i = j = t(c)$, or in other words $a=c$ is a loop. Then $S(i)$ is a summand of the radical of the projective $P(j) = M(a)$.

\end{enumerate}
\end{enumerate} 
\end{proof}

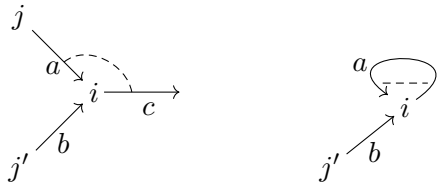
\begin{figure}[htbp!]
\begin{center}
\begin{tikzpicture}
\node (j) at (-1,1) {$j$};
\node (j') at (-1,-1) {$j'$};
\node (i) at (0,0) {$i$};
\node (ii) at (1.2,0) {};

\draw[->,shorten <=-2pt, shorten >=-2pt] (j) -- (i) node[below, pos=0.4] {$a$};
\draw[->,shorten <=-2pt, shorten >=-2pt] (j') -- (i) node[below, pos=0.6] {$b$};
\draw[->,shorten <=-2pt, shorten >=-2pt] (i) -- (ii) node[below, pos=0.6] {$c$};

\draw[densely dashed] (-0.4,0.4) to [out=40,in=100]  (0.5,0);
\end{tikzpicture}\qquad\qquad
\begin{tikzpicture}[xscale=1,yscale=0.8]
\node (a) at (-0.6,0.7) {$a$};
\node (j') at (-1,-1) {$j'$};
\node (i) at (0,0) {$i$};
\node (ii) at (1.2,0) {};

\draw[->] (i) edge[out=40,in=140,loop,shorten <=-2pt, shorten >=-2pt] ();

\draw[->,shorten <=-2pt, shorten >=-2pt] (j') -- (i) node[below, pos=0.6] {$b$};

\draw[densely dashed] (-0.3,0.4) to (0.3,0.4);
\end{tikzpicture}
\caption{Proof of Lemma \ref{lem:Simple module in Mproj}: case where $\indegree{i}=2$ and $\outdegree{i}=1.$}
\label{fig:Simple module in Mproj}
\end{center}
\end{figure}

\begin{theorem}
\label{thm:Mproj is unique mar containing proj}
\begin{enumerate}[(a)]
\item \label{thm:Mproj is unique mar containing proj:itm1}
$M_{{\proj}}$ is maximal almost rigid, and it   is the unique maximal almost rigid module that contains all indecomposable projectives as direct summands.

\item 
Similarly, $M_{\inj}$ is maximal almost rigid, and it   is the unique maximal almost rigid module that contains all indecomposable injectives as direct summands.
\end{enumerate}
\end{theorem}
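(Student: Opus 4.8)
The plan is to establish part (a) — that $M_{\proj}$ is maximal almost rigid and is the unique maximal almost rigid module containing every indecomposable projective — and then to deduce part (b) by transporting part (a) along the standard duality $D=\Hom_\kb(-,\kb)\colon\textup{mod}\,A\to\textup{mod}\,A^{\textup{op}}$. For part (a), note first that $M_{\proj}$ is basic by construction and satisfies \ref{item:M1} since it is a sum of string modules. For \ref{item:M2}, by Proposition~\ref{prop:all-exts} it suffices to rule out an overlap extension $0\to N\to E\to M\to 0$ between indecomposable summands $M,N$ of $M_{\proj}$ (running over all ordered pairs, which covers both directions). Every such summand is an indecomposable projective, a summand of the radical of an indecomposable projective, or a required summand, and in particular a string module. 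If $M$ is projective then $\Ext^1(M,-)=0$; if $M$ is a radical summand then Lemma~\ref{lem:no overlap extensions of the following forms}\ref{enum:X to E to R} forbids this since $N$ is a string module; and if $M$ is a required summand then by Lemma~\ref{lem:M in every mar iff at most one irreducible morphism starting and ending at M} (and the argument of its proof, together with the dual one) $M$ has at most one irreducible morphism starting at it and at most one ending at it, hence is neither the kernel term nor the cokernel term of an overlap extension. As every $M$ is of one of these three types, $M_{\proj}$ is almost rigid.

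Maximality of $M_{\proj}$ and the uniqueness statement both reduce to the following \textbf{Key Claim}: every indecomposable $A$-module not lying in $\add M_{\proj}$ admits an overlap extension with an indecomposable projective $A$-module or with a required summand. Granting this: since all projectives and all required summands are summands of $M_{\proj}$, for any nonzero string module $L\notin\add M_{\proj}$ the Key Claim yields an overlap extension between $L$ and a summand of $M_{\proj}$, so $M_{\proj}\oplus L$ violates \ref{item:M2} by Proposition~\ref{prop:all-exts}; thus $M_{\proj}$ is maximal almost rigid. For uniqueness, let $T$ be a maximal almost rigid module containing all indecomposable projectives; if $T$ had a summand $X\notin\add M_{\proj}$, the Key Claim would produce an overlap extension between $X$ and a projective or required summand $Y$, but $Y$ is again a summand of $T$ — projectives by hypothesis, required summands by the definition of a required summand — contradicting that $T$ is almost rigid. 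Hence $\add T\subseteq\add M_{\proj}$, and since $M_{\proj}$ is almost rigid and $T$ is maximal, this forces $T=M_{\proj}$.

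It remains to prove the Key Claim, which is the crux of the argument. Let $X=M(w)$ with $X\notin\add M_{\proj}$. If $w$ is a maximal direct or inverse string then $X$ is required by Proposition~\ref{prop:required summands}, and if $w=1_i$ with $\outdegree{i}<2$ then $S(i)$ is a summand of $M_{\proj}$ by Lemma~\ref{lem:Simple module in Mproj}; both contradict the hypothesis. In the remaining cases one exhibits the overlap extension directly. (i) If $w$ has a valley at a vertex $i$ (necessarily of in-degree $2$), then $i$ is a source of the string of $P(i)$ and a sink of the string of $w$, so the trivial string $1_i$ is an up-set of the former and a down-set of the latter; using the gentleness conditions to match the in-arrows and out-arrows at $i$, one obtains an overlap extension $0\to P(i)\to E\to M(w)\to 0$ with overlap $1_i$ meeting condition \ref{enum:c trivial overlap implies da and cb are strings} of Definition~\ref{def:overlap_extension} — provided the strings of $w$ and of $P(i)$ are written in the correct orientation. (ii) If $w=1_i$ with $\outdegree{i}=2$, then $1_i$ is a peak of the string of $P(i)$ and one gets $0\to P(i)\to E\to S(i)\to 0$. (iii) Otherwise $w$ has a hill (forcing out-degree $2$ at the corresponding peak), or its only valleys sit at out-degree-$2$ vertices, or $w$ is a non-maximal direct (respectively inverse) string; the partner is then $P(i)$ for a suitable $i$, or one of the maximal direct strings occurring in $\rad P(i)$ — which is a required summand — and the overlap extension is produced by matching the portion of $w$ around the relevant peak, valley, or endpoint with the corresponding portion of the projective or its radical. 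The hard part is precisely case (iii), together with the orientation bookkeeping in case (i): choosing the correct partner among $\{P(i)\}$, the radical summands, and the required summands, and the correct overlap string $e$, and then verifying all the constraints of Definition~\ref{def:overlap_extension} — above all the trivial-overlap condition \ref{enum:c trivial overlap implies da and cb are strings}, which in some configurations is satisfied only when $e$ is placed at an endpoint of $w^{-1}$ rather than of $w$. (Once almost rigidity is known, one could alternatively use Proposition~\ref{prop can complete ar to mar} to complete $M_{\proj}$ to a maximal almost rigid module and argue by hand that nothing larger is almost rigid; but the Key Claim is what simultaneously yields uniqueness.)

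\emph{Part (b).} The algebra $A^{\textup{op}}$ is again gentle, and $D$ is a contravariant equivalence carrying string modules to string modules, short exact sequences to short exact sequences, and hence overlap extensions to overlap extensions with kernel and cokernel interchanged. Thus $D$ identifies maximal almost rigid $A$-modules with maximal almost rigid $A^{\textup{op}}$-modules, indecomposable injectives with indecomposable projectives, the socle quotients $I(i)/S(i)$ with the radicals $\rad P(i)$, and required summands with required summands; hence $D$ car
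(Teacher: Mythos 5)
Your overall strategy coincides with the paper's: verify almost rigidity of $M_\proj$ by ruling out overlap extensions among its summand types, then prove maximality and uniqueness simultaneously by showing every string module $N$ outside $\add M_\proj$ admits an overlap extension with a projective or required summand (so $M_\proj'\oplus N$ fails \ref{item:M2}), and finally obtain part (b) by op-duality, as the paper does. The almost rigidity portion is sound: Lemma~\ref{lem:no overlap extensions of the following forms} dispatches the projective and radical summands, and your observation that the converse direction of the proof of Lemma~\ref{lem:M in every mar iff at most one irreducible morphism starting and ending at M} shows directly that an overlap extension forces two irreducible morphisms out of (respectively into) the kernel (respectively cokernel) term — so required summands cannot occur in one — is a correct reading of that argument, even if the paper itself instead appeals to the definition of required together with Proposition~\ref{prop can complete ar to mar}.

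The gap is in your Key Claim, and you flag it yourself: case (iii) (together with the orientation bookkeeping in case (i)) is left as a sketch — ``choosing the correct partner \dots and the correct overlap string $e$, and then verifying all the constraints of Definition~\ref{def:overlap_extension}'' is a plan, not a proof, and the trivial-overlap condition~\ref{enum:c trivial overlap implies da and cb are strings} is precisely where a hand-waved construction is likely to fail. The paper's proof settles this case with a tight organizing trick that your sketch does not reach: after normalizing so the last letter $c$ of $w$ is direct, Case 2.1 ($\exists\, b$ with $wb$ a string) and Case 2.2.1 ($w$ has a valley) each give an overlap with $P(t(c))$ and trivial overlap $1_{t(c)}$; and Case 2.2.2 ($w$ has no valley and exactly one hill, so $w=uv$ with $u$ inverse and $v$ direct) either reduces to Case 2.1 by replacing $w$ with $w^{-1}$, or forces $N=P(s(v))$, contradicting $N\notin\add M_\proj$. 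In particular the partner module is always a projective, never a radical summand — your Key Claim statement (``projective or required'') is inconsistent with your own case (iii), which proposes radical summands of $P(i)$ as partners; those are not required summands in general, and if you allowed them the uniqueness argument would break at the step ``$Y$ is again a summand of $T$.'' To close the gap, replace case (iii) with the paper's reduction-to-Case-2.1-via-$w^{-1}$ trick, and check that the only residual possibility is $N$ projective (hence already in $\add M_\proj$).
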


\begin{proof}
We will prove part \ref{thm:Mproj is unique mar containing proj:itm1} of the theorem; the proof of the second part is similar.

We know from the definition of required summands (Section \ref{sec:required summands}) and Proposition~\ref{prop can complete ar to mar} that there are no overlap extensions of the forms $0 \to X \to E \to Y \to 0$ or $0 \to Y \to E \to X \to 0$, where $Y$ is a required summand and $X$ is a string module. 

It follows from Lemma \ref{lem:no overlap extensions of the following forms} that there are no overlap extensions of the form
$0 \to X \to E \to Y \to 0$, where each of $X$ and $Y$ is an indecomposable summand of $M_\proj$ which is projective or a summand of the radical of a projective module. 
This concludes the proof that $M_\proj$ is almost rigid. 

\smallskip 

Let $M_\proj'$ denote the basic sum of $ \{ P(i) \mid i \in Q_0 \} \cup \required{Q,I}$, and let $N$ be a string module which is not a summand of $M_\proj$. 
We will now prove that $M_\proj' \oplus N$ is not almost rigid.

Case 1: $N$ is a simple module $S(j)$. Since $N$ is not a summand of $M_\proj$, the outdegree of $j$ is $2$ by Lemma \ref{lem:Simple module in Mproj}. Then there is an overlap extension of $N$ by $P(j)$ whose overlap is the trivial string $1_j$. 

Case 2: $N=M(w)$ where $w$ is a non-trivial string. Without loss of generality, assume that the last letter $c$ of $w$ is an arrow. 

Case 2.1: There is an arrow $b$ such that $wb$ is a string. 
Then there is an overlap extension of $N$ by $P(t(c))$ whose overlap is the trivial string $1_{t(c)}$. 

Case 2.2:  There is no arrow $b$ such that $wb$ is a string. 
We claim $w$ is not direct. Indeed,
if $w$ is direct, then it is right maximal, by assumption. If $w$ is also left maximal, then $w$ is a maximal direct string, which means that $N$ is a required summand; 
if $w$ is not left maximal, then there is an arrow $a$ such that $aw$ is a string, implying that $N$ is a summand of the radical of the projective  $P(s(a))$. Either way, $w$ being direct implies that $N$ is a summand of $M_\proj$, so $w$ is not direct. 

There are two possibilities: either $w$ has at least one valley, or $w$ has no valley and it has exactly one hill.

Case 2.2.1: Suppose $w$ has a valley $cd^{-1}$ for some $c,d \in Q_1$.  
Then there is an overlap extension of $N$ by $P(t(c))$ whose overlap is the trivial string $1_{t(c)}$.

Case 2.2.2: 
Suppose $w$ has no valley and it has exactly one hill. 
Then $w$ is of the form $w=u v$ where $u$ is inverse and $v$ is direct.
If there is $b\in Q_1$ such that $u^{-1}b$ is a string, then $w^{-1}$ belongs to Case 2.1. 
If there is no $b\in Q_1$ such that $u^{-1}b$ is a string, then $N=P(s(v))$ and so $N$ is a summand of $M_\proj$.
This concludes the proof that $M_\proj' \oplus N$ is not almost rigid.

Since $M_\proj' \oplus N$ being not almost rigid implies that $M_\proj \oplus N$ is not almost rigid, we conclude that $M_\proj$ is indeed a maximal almost rigid module. 

Furthermore, any almost rigid module containing $M_\proj'$ as a summand cannot contain another indecomposable summand $N$ not in $\{ \text{summand of $\rad P(x)$} \mid x \in Q_0\} \cup \{P(x) \mid x \in Q_0\} \cup \required{Q,I}$. Therefore, $M_\proj$ is the unique MAR module containing all indecomposable projectives.
\end{proof}

We refer the reader to Section~\ref{sect examples} for examples of $M_\proj$ and $M_\inj$; see also Figures~\ref{fig:triangulations-proj-inj-mars} and~\ref{fig orpheus} for a geometric interpretation of these MAR modules, which is explained in Section~\ref{sec:geometric model}.

\begin{proposition}\label{prop projective MAR summands}
The number of indecomposable summands in the module $M_\proj$ 
is $|Q_0| + |Q_1|$.
\end{proposition}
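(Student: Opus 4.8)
The strategy is to count the indecomposable summands of $M_\proj$ by partitioning them into three classes according to the description just before Theorem~\ref{thm:Mproj is unique mar containing proj}: the indecomposable projectives $P(i)$, the indecomposable summands of the radicals $\rad P(i)$, and the required summands in $\required{Q,I}$ that are not already of one of the first two types. Since $M_\proj$ is basic, we must be careful to count each isomorphism class once, so the bulk of the work is to identify the overlaps between these three families and then do an inclusion–exclusion type bookkeeping. The guiding principle is that $|Q_0|$ should be accounted for by the projectives $P(i)$ (one per vertex), and each arrow $\za$ should contribute exactly one further summand.

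\textbf{Step 1: the projectives give $|Q_0|$ summands.} There are exactly $|Q_0|$ indecomposable projectives $P(i)$, pairwise non-isomorphic, and none of them is isomorphic to a proper summand of a radical of a projective (a proper radical summand has strictly smaller length). So these contribute $|Q_0|$.

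\textbf{Step 2: assign to each arrow a unique ``new'' summand.} For an arrow $\za\colon i\to j$ I would look at $\rad P(i)$. Since $A$ is gentle, $\outdegree{i}\le 2$, and $\rad P(i)$ decomposes into at most two string modules, one starting with each arrow out of $i$; let $R_\za$ denote the indecomposable summand of $\rad P(i)$ whose string starts with $\za$ (this is the string module on the longest direct string starting with $\za$, trimmed by the relation that follows $\za$). The map $\za\mapsto R_\za$ is the candidate bijection between $Q_1$ and the remaining summands. One must check: (i) every indecomposable radical summand arises as some $R_\za$ — clear, since a summand of $\rad P(i)$ starts with an arrow out of $i$; (ii) $R_\za\cong R_\zb$ forces $\za=\zb$ — here one uses that $R_\za$ begins with $\za$ and is a \emph{direct} string, so the first letter is recoverable; (iii) when $R_\za$ is actually simple or projective, it coincides with one of the already-counted summands, and one must not double count, but then it must instead be replaced by the appropriate required summand (a maximal direct string extending $\za$, or the relevant simple). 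The cleanest way to handle (iii) is to re-describe the whole summand set uniformly: for each arrow $\za\colon i\to j$, let $N_\za$ be the string module $M(u_\za)$ where $u_\za$ is the maximal direct string that has $\za$ as a letter and such that $\za$ is "as far right as the relation following it permits" — i.e. $u_\za$ is obtained by extending $\za$ maximally to the left among direct strings, and to the right only until the next relation. Then $\{P(i): i\in Q_0\}\cup\{N_\za:\za\in Q_1\}$ should be exactly the summand set of $M_\proj$, and the $N_\za$ are pairwise distinct and distinct from the $P(i)$.

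\textbf{Step 3: reconcile with required summands and simples.} Using Corollary~\ref{cor:required summands} and Lemma~\ref{lem:Simple module in Mproj}, I would verify that the required summands that are simples $S(i)$ with $\outdegree{i}\le 1$ are precisely captured: if $\outdegree{i}=0$ then $S(i)=P(i)$ (counted in Step 1); if $\outdegree{i}=1$ then $S(i)=N_\za$ for the unique arrow $\za$ out of $i$, in the degenerate case where the direct string through $\za$ is trivial on both sides of the relevant vertex. Similarly, a required summand $M(w)$ with $w$ a maximal direct nontrivial string equals $N_{\za}$ for (any, hence by maximality a well-defined) arrow $\za$ occurring in $w$ — and here one checks the assignment is consistent, i.e. different arrows in the same maximal direct string do \emph{not} give different $N_\za$, because $N_\za$ by construction is the maximal direct string. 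This is the delicate point: the naive "one summand per arrow" count could overcount a long maximal direct string (which contains many arrows) unless the construction $\za\mapsto N_\za$ is set up so that all arrows of a maximal direct string map to the \emph{same} module. Resolving this tension — making the map $Q_1\to\{\text{non-projective summands}\}$ a genuine bijection rather than merely surjective — is the main obstacle.

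I expect the heaviest part of the argument to be Step 3: ensuring the correspondence $\za\mapsto N_\za$ is both well-defined and injective when maximal direct strings of length $>1$ are present, since such a string contains several arrows but contributes only one summand, and dually a vertex feeding such a string must be prevented from contributing a spurious radical summand. Once the bijection $Q_1\to\{\text{summands of }M_\proj\text{ not among the }P(i)\}$ is established, the count $|Q_0|+|Q_1|$ follows immediately, and — combined with Theorem~\ref{thm:permissible}, Proposition~\ref{prop:overlap-cross}, and Theorem~\ref{thm intro 2} — this also reproves Corollary~\ref{cor intro 3}.
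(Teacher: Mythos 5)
Your Step~2 starts in the right direction (the summand of $\rad P(s(\alpha))$ whose string starts with $\alpha$ is exactly what the paper assigns to the arrow $\alpha$), but you then abandon it in favour of the $N_\alpha$ construction, and that is where the argument goes wrong. You define $N_\alpha$ as the \emph{maximal} direct string through $\alpha$ and then, in Step~3, explicitly require that all arrows in a common maximal direct string be sent to the \emph{same} module, on the grounds that the map would otherwise overcount. This is backwards. The paper's map sends an arrow $a$ to the \emph{right maximal} direct string \emph{starting at} $a$. Consecutive arrows $\alpha_1,\ldots,\alpha_n$ in a maximal direct string therefore go to the strictly nested chain of modules $M(\alpha_1\cdots\alpha_n), M(\alpha_2\cdots\alpha_n),\ldots, M(\alpha_n)$, and \emph{all} of these are genuinely distinct summands of $M_{\proj}$: the first is the required maximal-direct-string summand, and $M(\alpha_i\cdots\alpha_n)$ for $i>1$ is a summand of $\rad P(s(\alpha_{i-1}))$. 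Concretely, for the linearly oriented $\mathbb A_4$ path algebra the maximal direct string has three arrows, and $M_\proj$ has all of $P(1),P(2),P(3)$ among its summands precisely because of this; your $N_\alpha$ would identify those three arrows with the single module $P(1)$ and produce at most $|Q_0|+1=5$ summands rather than the required $7$. So the ``tension'' you flag as the main obstacle is one you created by choosing the wrong construction, and it would not be resolvable with $N_\alpha$.

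Step~1 also has a false claim: indecomposable projectives \emph{can} coincide with radical summands. Again in linear $\mathbb A_4$, $\rad P(1)=P(2)$. The paper avoids this by not identifying $Q_0$ with $\{P(i)\}$; rather, $f(i)=P(i)$ only when $\outdegree{i}=2$ (exactly when $P(i)$ is not a direct string and hence cannot be hit by any arrow), and $f(i)=S(i)$ when $\outdegree{i}<2$ (Lemma~\ref{lem:Simple module in Mproj} guarantees $S(i)$ is then a summand of $M_\proj$; note $S(i)$ is a trivial string, which is never the image of an arrow). With that split, injectivity is immediate: distinct arrows yield distinct nontrivial direct strings because a nontrivial direct string determines its first letter, and the images of vertices are either simple or not direct strings. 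Surjectivity is then a short case analysis. I would suggest scrapping the $N_\alpha$ construction entirely, keeping $R_\alpha$ (the right maximal direct string starting at $\alpha$), and redoing the bookkeeping along the lines just described.
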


\begin{proof}
We define a bijective map $f$ from $Q_0 \cup Q_1$
to the indecomposable summands of $M_\proj$ as follows. 

If $a \in Q_1$, 
let $f(a)$ be the string module $M(w)$ where $w$ is the right maximal string starting at $a$. 
Note that $f$ is well defined because $A$ is gentle. 
Furthermore, $M(w)$ is an indecomposable summand of $M_\proj$, as we now explain. 
Given an arrow $a \in Q_1$, there are two possibilities: either there are no direct strings $ba$, or there is a direct string $ba$.
If there are no direct strings $ba$, then $w$ is a nontrivial maximal direct string, so $M(w)$ is in $\required{Q,I}$ by Proposition \ref{prop:required summands}.
Otherwise, if there is a direct string $ba$ with $b \neq a$, then $w$ is a nontrivial direct string which is a summand of $\rad P(s(b))$. Either way, $f(a)=M(w)$ is an indecomposable summand of $M_\proj$.

Suppose $i\in Q_0$. 
If $\outdegree{i}=2$, let $f(i)$ be the projective $P(i)$ at $i$. 
Then $f(i)$ is an indecomposable summand of $M_\proj$. 
Here we note that $P(i)$ is not a direct string, so no arrow is sent to $P(i)$ by $f$.

If $\outdegree{i} < 2$, we let $f(i)$ be the simple module $S(i)$ at $i$. 
Since $\outdegree{i}<2$, the simple module $S(i)$ is a summand of $M_\proj$ by Lemma \ref{lem:Simple module in Mproj}.  
We again note that no arrow is sent to $S(i)$ by~$f$. 
This concludes the argument that $f$ is an injective map from $Q_0\cup Q_1$ to the indecomposable summands of $M_\proj$. 

\def\mysummand{M}

We now show that $f$ is a surjection. Suppose $\mysummand$ is an indecomposable summand of $M_\proj$.

First, suppose $\mysummand = S(i)$ is simple at vertex $i$. Then by Lemma~\ref{lem:Simple module in Mproj}, $\outdegree{i} < 2$ and so $\mysummand = f(i)$. From now on assume $\mysummand$ is not simple. 

Suppose $\mysummand$ is a projective $P(i)$. Since $\mysummand$ is not simple, we only have to consider the cases illustrated in Figure~\ref{fig:Projectives}. If $P(i)$ corresponds to a direct string, then there is an arrow $a$ such that $f(a)=P(i)$.
If $P(i)$ does not correspond to a direct string, then $\outdegree{i}=2$, and we have $f(i)=P(i)$. 

Now, suppose $\mysummand$ is a summand of the radical of a projective. Then, since $\mysummand$ is not simple, its corresponding string is direct and its first letter $a$ is such that $f(a) = M$; see Figure~\ref{fig:A summand of the radical of a projective}.

Finally, suppose $\mysummand \in \required{Q,I}$. It follows from Proposition~\ref{prop:required summands} and the fact that $\mysummand$ is not simple, that the string $w$ corresponding to $\mysummand$ is direct of length $\geq 1$. Then $M= f(a)$, where $a$ is the first letter of $w$. 
\end{proof}

\begin{figure}
\begin{tikzpicture}[xscale=1,yscale=0.8]
\node (i) at (0,3) {$i$};
\node (j) at (1,2) {};
\node (k) at (2,1) {};
\node (end) at (3,0) {$.$};

\draw[->,shorten <=-2pt, shorten >=-2pt] (i) -- (j) node[pos=0.3,right] {$a$};
\draw[densely dashed, shorten <=-2pt, shorten >=-2pt] (j) -- (k);
\draw[->,shorten <=-2pt, shorten >=-2pt] (k) -- (end);
\end{tikzpicture}
\qquad
\begin{tikzpicture}[xscale=1,yscale=0.8]
\node (i) at (0,3) {$i$};
\node (jleft) at (-1,2) {};
\node (j) at (1,2) {};
\node (k) at (2,1) {};
\node (kleft) at (-2,1) {};
\node (end) at (3,0) {$.$};

\draw[->,shorten <=-2pt, shorten >=-2pt] (i) -- (j);
\draw[->,shorten <=-2pt, shorten >=-2pt] (i) -- (jleft);
\draw[densely dashed, shorten <=-2pt, shorten >=-2pt] (j) -- (k);
\draw[densely dashed, shorten <=-2pt, shorten >=-2pt] (jleft) -- (kleft);
\draw[->,shorten <=-2pt, shorten >=-2pt] (k) -- (end);
\end{tikzpicture}
\caption{Proof of Proposition~\ref{prop projective MAR summands}, case where $M =P(i)$. Left: $P(i)$ corresponds to a direct string. Right: $P(i)$ does not correspond to a direct string.}
\label{fig:Projectives}

\begin{tikzpicture}[xscale=1,yscale=0.8]
\node (j) at (0,3) {$j$};
\node (i) at (1,2) {$i$};
\node (k) at (2,1) {};
\node (end) at (3,0) {$.$};

\draw[->,shorten <=-2pt, shorten >=-2pt] (j) -- (i);
\draw[->,shorten <=-2pt, shorten >=-2pt] (i) -- (k)  node[pos=0.5,right] {$a$};
\draw[densely dashed,shorten <=-2pt, shorten >=-2pt] (k) -- (end);

\draw[densely dotted,rotate=-45] (k) ellipse (16mm and 6mm);
\end{tikzpicture}
\caption{Proof of Proposition~\ref{prop projective MAR summands}, case where $M$ is a summand of the radical of a projective $P(j)$.}
\label{fig:A summand of the radical of a projective}
\end{figure}


\section{Geometric model for gentle algebra} \label{sec:geometric model}
In this section, we set up our notation for a geometric model for the module category of a gentle algebra $A$, following~\cite{BCS21,Cha23,BCS24}.

\subsection{Tiled  surfaces}
\label{sec:tiled surfaces}

Let $S$ be a connected oriented Riemann surface without punctures with non-empty boundary. 
A \emph{boundary segment} in a marked surface $(S,N)$ is a segment of the boundary of $S$ between two neighboring points of $N$.

To the surface $S$ we add two disjoint finite sets of marked points $M$ and $M^*$ according to the following conditions:
\begin{itemize} 
\item 
Each point $p\in M$ belongs to the boundary $\partial S$ of $S$, and every boundary component  of $S$ has at least one point of $M$.
\item The points $p^*\in M^*$ may be on the boundary or in the interior of $S$ such that every boundary segment in $(S,M)$ has exactly one point of $M^*$. 
\end{itemize} 

There is no condition on the points of $M^*$ that lie in the interior of $S$.
We sometimes refer to the points in $M$ as black marked points and to the points in $M^*$ as red marked points.
To differentiate between the marked points in $M$ and $M^*$, we will illustrate a marked point in $M^*$ using a star \textcolor{red}{ $\star$}.
The points $p^*\in M^*$ which are in the interior of $S$ are called \emph{punctures}.

We thus obtain three marked surfaces $(S,M),(S,M^*)$ and $(S,M,M^*)=(S,M\cup M^*)$.

By isotopy we always mean isotopy in the marked surface  $(S,M, M^*)$. In particular an isotopy may not move a curve over a point of $M^*$.
A \emph{crossing} of two curves will always mean a transversal crossing in the interior of $S$. The \emph{intersection} of two curves is the set-theoretic intersection, hence it  can contain crossings and common endpoints.

In order to define a tiling of  $(S,M, M^*)$ we need the notion of a $P$-arc.

\begin{definition}
    A \emph{$P$-arc} in $(S,M,M^*)$ is the isotopy class of a curve $\tau$ in $S$ satisfying the following conditions:
\begin{itemize}
    \item The endpoints of $\tau$ are in $M$;
    \item $\tau$ intersects the boundary of $S$ only in its endpoints;
    \item $\tau$ has no self-crossings (the endpoints of $\tau$ may coincide);
    \item $\tau $ does not go through a point of $M^*$;
    \item $\tau$ is not isotopic to a point.
\end{itemize}
\end{definition}

Let us emphasize that by our notion of isotopy the two arcs shown in Figure~\ref{fig:bigon-monogon} are $P$-arcs.
\begin{figure}
    \centering
\begingroup%
  \makeatletter%
  \providecommand\color[2][]{%
    \errmessage{(Inkscape) Color is used for the text in Inkscape, but the package 'color.sty' is not loaded}%
    \renewcommand\color[2][]{}%
  }%
  \providecommand\transparent[1]{%
    \errmessage{(Inkscape) Transparency is used (non-zero) for the text in Inkscape, but the package 'transparent.sty' is not loaded}%
    \renewcommand\transparent[1]{}%
  }%
  \providecommand\rotatebox[2]{#2}%
  \newcommand*\fsize{\dimexpr\f@size pt\relax}%
  \newcommand*\lineheight[1]{\fontsize{\fsize}{#1\fsize}\selectfont}%
  \ifx\svgwidth\undefined%
    \setlength{\unitlength}{255.00001039bp}%
    \ifx\svgscale\undefined%
      \relax%
    \else%
      \setlength{\unitlength}{\unitlength * \real{\svgscale}}%
    \fi%
  \else%
    \setlength{\unitlength}{\svgwidth}%
  \fi%
  \global\let\svgwidth\undefined%
  \global\let\svgscale\undefined%
  \makeatother%
  \begin{picture}(1,0.13519593)%
    \lineheight{1}%
    \setlength\tabcolsep{0pt}%
    \put(0,0){\includegraphics[width=\unitlength,page=1]{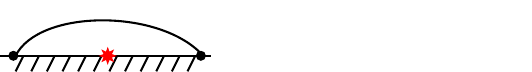}}%
    \put(0.18823541,0.10381861){\makebox(0,0)[lt]{\lineheight{1.25}\smash{\begin{tabular}[t]{l}$\tau$\end{tabular}}}}%
    \put(0,0){\includegraphics[width=\unitlength,page=2]{figbigonmonogon.pdf}}%
    \put(0.72941179,0.10381861){\makebox(0,0)[lt]{\lineheight{1.25}\smash{\begin{tabular}[t]{l}$\tau$\end{tabular}}}}%
    \put(0,0){\includegraphics[width=\unitlength,page=3]{figbigonmonogon.pdf}}%
  \end{picture}%
\endgroup%

    \caption{Two examples of $P$-arcs.}
    \label{fig:bigon-monogon}
\end{figure}
 
\begin{definition}\label{def:tiling}
A \emph{tiling} of $(S,M,M^*)$ 
is a set $P$ of pairwise noncrossing $P$-arcs that subdivides the surface into polygons and once-punctured polygons, called \emph{tiles}, such that each tile contains exactly one marked point of $M^*$. 

The quadruple $\TiledSurface$ is called a \emph{tiled surface}. 
\end{definition}

We call a tile \emph{internal} if each of its sides is a $P$-arc, and \emph{external} otherwise. 
By definition each tile $\zD$ is of one of the following two types. 

\begin{enumerate}
[label=(\roman*)]
\item If $\Delta$ is a once-punctured polygon, then $\Delta$ is an internal $m$-gon with $m\ge 1$ containing a unique red point $p^*\in M^*$ in its interior.
\item  If $\Delta$ is a polygon, then $\Delta$ is an external $m$-gon where $m\ge 2$,  with exactly one boundary edge (and a unique red marked point on this edge); and $\Delta$ has no red points or boundary components in its interior. 
\end{enumerate}
Examples of tiles are shown in Figure~\ref{fig:tiles}.
\begin{figure}
\centering
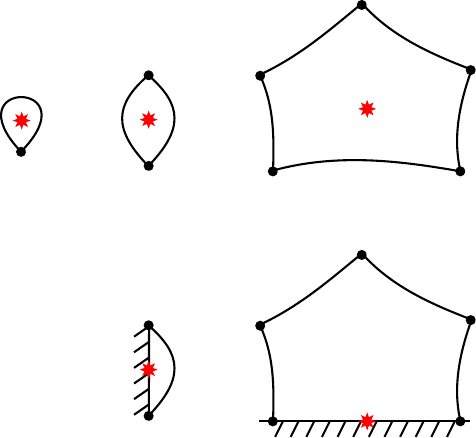
\caption{Examples of internal tiles (top) and external tiles (bottom) of a tiled surface $\TiledSurface$}
\label{fig:tiles}
\end{figure}
Note that the external tiles shown in Figure \ref{fig:tiles} (bottom) are considered to be a $2$-gon and a $5$-gon, respectively. 
Also note that a tile may be a self-folded internal or external $m$-gon (necessarily $m \geq 4$); for example, Figure \ref{fig ex 7.5} (top left) contains a self-folded external $5$-gon.

The next lemma follows immediately from the description of the tiles given above. 
\begin{lemma}\label{lem:each black point has an arc}
Let $\TiledSurface$ be a tiled surface and $x\in M$. Then there is an arc in $P$ with endpoint $x.$
\end{lemma}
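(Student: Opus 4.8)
The plan is to argue that every black point $x\in M$ lies on the boundary of at least one tile, and that this tile must have a $P$-arc as one of its sides incident to $x$. First I would recall from Definition~\ref{def:tiling} that the $P$-arcs in $P$ subdivide $S$ into tiles, so $x$, being a point of $M\subseteq\partial S$, lies on the boundary of at least one tile $\Delta$. (If $x$ is incident to several tiles, pick any one.) The boundary of $\Delta$ is a cyclic sequence of sides, each of which is either a $P$-arc or a boundary segment of $(S,M)$, and $x$ is a vertex of this polygon, hence an endpoint of exactly two consecutive sides of $\Delta$.

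Next I would rule out the possibility that both sides of $\Delta$ at $x$ are boundary segments. If $\Delta$ is an internal tile, all of its sides are $P$-arcs by definition, so there is nothing to prove. If $\Delta$ is external, then by the classification of tiles recalled before the lemma, $\Delta$ is an $m$-gon with $m\ge 2$ having exactly \emph{one} boundary edge, all other sides being $P$-arcs. Since $x$ is incident to two consecutive sides of $\Delta$ and at most one of them is the unique boundary edge, at least one of the two sides at $x$ is a $P$-arc. That $P$-arc has $x$ as an endpoint, which is exactly what we need.

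The only subtlety — and the step I expect to require the most care — is the degenerate/self-folded situation: a tile may be self-folded (as noted, e.g.\ Figure~\ref{fig ex 7.5}), and a $P$-arc may be a loop at $x$, or the two sides of $\Delta$ at $x$ may be identified. None of this causes a problem, however: in the external case the count ``exactly one boundary edge'' still forces one of the (possibly identified) sides at $x$ to be a $P$-arc, and a loop at $x$ is still a $P$-arc with endpoint $x$; in the internal case every side is a $P$-arc regardless of folding. Hence in all cases there is an arc in $P$ with endpoint $x$, which completes the proof. I would write this up in just a few lines, since once the tile classification is invoked the argument is essentially a counting observation.
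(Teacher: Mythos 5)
Your argument is correct and is essentially the same as the paper's, which simply asserts that the lemma "follows immediately from the description of the tiles given above"; you are just spelling out the counting observation (external tiles have exactly one boundary edge, so at most one of the two sides at $x$ can be non-$P$-arc). The extra care you take with self-folded tiles and loops is sound but not a different approach.
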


To every tiled surface, we will associate an algebra. To do so, we need the notions of complete fans and $P$-angles.

\begin{definition} 
Let $\TiledSurface$ be a tiled surface and $x\in M$.
\begin{enumerate}[label=(\roman*)]
    \item  The \emph{complete fan at $x$} is the sequence $\tau_1,\tau_2,\ldots,\tau_k$ of $P$-arcs in $P$ that are crossed when traveling around $x$ in clockwise direction. It is possible that $\tau_i=\tau_j$ with $i\ne j$. An example is shown in Figure~\ref{fig:fan}.
\item A \emph{$P$-angle at $x$} is an ordered triple $(x,\tau_i,\tau_{i+1})$, where $\tau_i,\tau_{i+1}$ are consecutive $P$-arcs in the complete fan  at $x$. 
\end{enumerate}
\end{definition}
\begin{figure}
    \centering   
\small\scalebox{1.2}{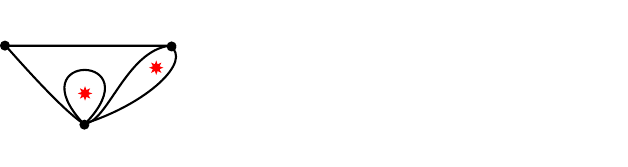}
\caption{Two examples of complete fans. In the left picture the complete fan at $x$ is $\tau_1,\tau_2,\tau_2,\tau_3,\tau_4$. Note that $\tau_2$ appears twice. The right picture is on the torus with one boundary component. The complete fan at $x$ is $\tau_1,\tau_2,\tau_3,\tau_1,\tau_2,\tau_3$.}
    \label{fig:fan}
\end{figure}
We sometimes consider the complete fan at $x$ as a sequence of $P$-angles at $x$.

\begin{remark}
    \label{rem:angle tile}
    Since the $P$-angle $(x,\tau_i,\tau_{i+1})$ is ordered, it lies inside a unique tile $\zD$ in $P$. We say $(x,\tau_i,\tau_{i+1})$ is a $P$-angle in the tile $\zD.$
\end{remark}

\begin{definition}\label{def:tiling algebra}
Let $\TiledSurface$ be a tiled surface. Its \emph{tiling algebra} $A_P$ is the bound quiver algebra $\kb Q_P/I_P$
defined as follows.

The vertices in $(Q_P)_0$ are in bijection with $P$ 
and the arrows
$\za: v\to v'$ in $(Q_P)_1$ correspond bijectively to the $P$-angles $(x, \tau_v, \tau_{v'})$ in $P$.

The ideal $I_P$ is the two-sided ideal generated by all paths  
$\za\zb$, with  arrows $\za\colon v\to v', \zb\colon v'\to v''$ and corresponding  $P$-angles $\za=(x,\tau_v, \tau_{v'})$ and $\zb=(y,\tau_{v'}, \tau_{v''})$,  that satisfy one of the following conditions:
\begin{enumerate}[label={(\roman*)}]
\item 
$\za=\zb$, (then $\za $ is a loop, see Figure \ref{fig:relations:a equals b}), or
\item 
$\za\neq \zb$ and the corresponding sequence of $P$-angles is not a contiguous subsequence of a complete fan (see  Figures~\ref{fig:relations:case 2}
and \ref{fig:relations:case 3}).
\end{enumerate}
\end{definition}

\begin{figure}
\begin{center}
\begin{tikzpicture}[xscale=1,yscale=1]
 \draw[thick, ] (0,0) .. controls (2.5,1.7) and (-2.5,1.7) .. (0,0) ; 
 \node at (0,.7) [red] {\ding{88}};
 \node at (-0.6,1.45) {$\tau_v=\tau_{v'}=\tau_{v''}$};
\node at (0,0) [fill,circle,inner sep=1.1] {};
\node at (0,-0.3) {$x=y$};
\end{tikzpicture}
\begin{tikzpicture}
\node at (-2,-1) {}; 
\node (i) at (-1,0) {$v$};
\node (j) at (0,0) {$v$};
\node (k) at (1.2,0) {$v$};

\draw[thick, ->,shorten <=-2pt, shorten >=-2pt] (i) -- (j) node[below, pos=0.6] {$\za$};
\draw[thick, ->,shorten <=-2pt, shorten >=-2pt] (j) -- (k) node[below, pos=0.6] {$\za$};

\draw[densely dashed] (-0.5,0) to [out=90,in=90]  (0.5,0);
\end{tikzpicture}
\caption{Local configuration of a tiling and the path $\za \za \in I_P$, when $\za=\zb$.}
\label{fig:relations:a equals b}
\begin{tikzpicture}
\draw[thick, ] (0,0) -- (-1,1.5) node[right, pos=0.5] {$\tau_v$};
\draw[thick, ] (0,0) -- (2,0) node[above, pos=0.5] {$\tau_{v'}$};
\draw[thick, ] (2,0) -- (3,1.5) node[left, pos=0.5] {$\tau_{v''}$};
\node at (0,0) [fill,circle,inner sep=1.1] {};
\node at (-0.25,-0.25) [] {$x$};
\node at (2,0) [fill,circle,inner sep=1.1] {};
\node at (2+.25,-0.25) [] {$y$};
\end{tikzpicture}
\begin{tikzpicture}
\node at (0,0) [fill,circle,inner sep=1.1] {};
    \node at (-0.25,-0.25) [] {$x$};
    \node at (2,0) [fill,circle,inner sep=1.1] {};
    \node at (2+.25,-0.25) [] {$y$};

\node at (1,0.05) [red] {\ding{88}};

\draw[thick, ] (0,0) .. controls (1,0.5) .. (2,0) node[midway, above] {$\tau_{v}=\tau_{v''}$};
    \draw[thick, ] (2,0) .. controls (1,-.5) .. (0,0) node[midway, below] {$\tau_{v'}$};
\end{tikzpicture}
\begin{tikzpicture}
\node at (-1,-1) {}; 
\node (i) at (-1,0) {$v$};
\node (j) at (0,0) {$v'$};
\node (k) at (1.2,0) {$v''$};

\draw[thick, ->,shorten <=-2pt, shorten >=-2pt] (i) -- (j) node[below, pos=0.6] {$\za$};
\draw[thick, ->,shorten <=-2pt, shorten >=-2pt] (j) -- (k) node[below, pos=0.6] {$\zb$};

\draw[densely dashed] (-0.5,0) to [out=90,in=90]  (0.5,0);
\end{tikzpicture}
\caption{Local configuration of a tiling when $\za\neq \zb$, and  
the points $x$ and $y$ are distinct; the $P$-arcs $\tau_v$ and $\tau_{v''}$ may be distinct or equal. Right: the path $\za\zb \in I_P$ }\label{fig:relations:case 2}
\begin{tikzpicture}
\node at (0,0) [fill,circle,inner sep=1.1] {};
\node at (-0.25,-0.25) [] {$x = y$};
\draw[thick, ] (0,0) .. controls (0.9,1) and (-0.9,1) .. (0,0) node[above, pos=0.25] {$\tau_{v'}$};

\draw[thick, ] (0,0) -- (-1.3,1.5) node[left, pos=0.5] {$\tau_v$};
\draw[thick, ] (0,0) -- (1.3,1.5) node[right, pos=0.5] {$\tau_{v''}$};
\end{tikzpicture}
\begin{tikzpicture}
\node at (0,0) [fill,circle,inner sep=1.1] {};
\node at (-0.25,-0.25) [] {$x = y$};
    
\draw[thick, ] (0,0) -- (.5,.9) node[above, pos=0.9] {$\tau_{v}$};
\draw[thick, ] (0,0) -- (-.5,.9) node[above, pos=0.9] {$\tau_{v''}$};

\draw[thick, ] (0,0) .. controls (4,2.5) and (-4,2) .. (0,0) node[above, pos=0.4] {$\tau_{v'}$};
    
\end{tikzpicture}
\begin{tikzpicture}
\node at (0,0) [fill,circle,inner sep=1.1] {};
    \node at (-0.25,-0.25) [] {$x = y$};

\draw[thick, ] (0,0) .. controls (2.5,1.7) and (-2.5,1.7) .. (0,0) node[above=-1pt, pos=0.5] {$\tau_v=\tau_{v''}$};
    
\draw[thick, ] (0,0) .. controls (0.9,1) and (-0.9,1) .. (0,0) node[above, pos=0.25] {$\tau_{v'}$};
    
\end{tikzpicture}
\caption{Local configuration of a tiling and the path $\za\zb \in I_P$, when $\za\neq \zb$, and the points $x$ and $y$ are equal; the $P$-arcs $\tau_v$ and $\tau_{v''}$ may be distinct or equal. }
\label{fig:relations:case 3}
\end{center}
\end{figure}

The following  is a combination  of Theorems~2.9 and 2.10 from \cite{BCS21}.

\begin{theorem}\label{thm:tiling algebra iff gentle}
The tiling algebra $A_P$ of a tiled surface $\TiledSurface$ is  gentle, and every gentle algebra arises in this way.
\end{theorem}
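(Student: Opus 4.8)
The plan is to prove the two halves separately, in effect reproving \cite[Thms.~2.9 and~2.10]{BCS21}. For the first half --- that $A_P$ is gentle --- I would verify conditions \ref{def gentle:itm1:degrees}--\ref{def gentle:itm4:ideal} of Definition~\ref{def gentle} straight from Definition~\ref{def:tiling algebra}. An arrow out of $v$ is a $P$-angle $(x,\tau_v,\tau_{v'})$, i.e.\ an endpoint $x$ of $\tau_v$ together with the $P$-arc met immediately after $\tau_v$ when turning clockwise about $x$; since $\tau_v$ has exactly two ends (possibly equal) this produces at most two arrows out of $v$, and dually at most two in, giving \ref{def gentle:itm1:degrees}. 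For \ref{def gentle:itm2:string algebra} I would fix $\za=(x,\tau_v,\tau_{v'})$ and note that an arrow $\zb$ starting at $v'$ with $\za\zb\notin I_P$ must prolong the complete fan at $x$ past $\tau_{v'}$; the next arc of that fan is uniquely determined, so there is at most one such $\zb$, and dually at most one $\zg$ with $\zg\za\notin I_P$. Since $v'$ has at most two outgoing arrows, at most one of them composes with $\za$ inside $I_P$, which is \ref{def gentle:itm3:gentle algebra} (and dually). Condition \ref{def gentle:itm4:ideal} is built into the definition of $I_P$; admissibility I would get by observing that a path avoiding $I_P$ is a contiguous subword of a single complete fan --- any two successive letters lie in the fan at the black point defining the shared $P$-angle, so all successive letters lie in one common fan --- and there are finitely many fans, each finite, whence $A_P$ is finite-dimensional. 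Loops and self-folded tiles only need a quick separate look.

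For the second half --- every gentle algebra is a tiling algebra --- I would start from $A=\kb Q/I$ and use \ref{def gentle:itm2:string algebra}--\ref{def gentle:itm3:gentle algebra}: sending an arrow $\za$ to the unique arrow $\zb$ (when it exists) with $\za\zb\notin I$ is a partial injection on $Q_1$, and so is $\za\mapsto\zb'$ with $\za\zb'\in I$. The orbits of the first partial injection --- oriented chains and cycles of arrows --- should become the complete fans, one per new black marked point, and this already prescribes all the local incidence data (vertices, arrows, length-two relations) of a prospective tiled surface reproducing $(Q,I)$. I would then realize that data geometrically: glue an internal polygon for each orbit of the second partial injection and an external polygon for each chain, place one point of $M^{*}$ inside each tile and one on every resulting boundary segment, and verify that the outcome is an oriented marked surface with a tiling $P$ whose tiling algebra $A_P$ is isomorphic to $A$. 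This is exactly the construction of \cite[Thm.~2.10]{BCS21}; the ribbon-graph picture of \cite{OPS} gives an equivalent route.

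The routine half is the first one --- a finite, if slightly tedious, check of \ref{def gentle:itm1:degrees}--\ref{def gentle:itm4:ideal}. I expect the real obstacle to be the converse: showing that the polygons glued according to the orbit data genuinely assemble into an oriented surface rather than a mere CW complex, pinning down the isotopy classes of the $P$-arcs so that the complete fans come out exactly as prescribed, and then matching the tiling algebra of the constructed surface with $A$ on the nose. The delicate points are the bookkeeping for loops, for self-folded internal and external tiles, and for the ``wrap-around'' complete fans occurring on surfaces of positive genus, as in the right-hand picture of Figure~\ref{fig:fan}.
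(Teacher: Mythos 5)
The paper does not supply its own proof here: the sentence immediately preceding the statement cites Theorems~2.9 and~2.10 of \cite{BCS21}, so there is no in-paper argument to compare yours against. Your sketch follows the expected route from that reference, and your assessment of where the real work sits (the surface-gluing in the converse direction) is accurate.

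One step in the forward direction needs more than you give it. Your deduction of \ref{def gentle:itm3:gentle algebra} --- ``since $v'$ has at most two outgoing arrows, at most one of them composes with $\za$ inside $I_P$'' --- does not follow from the outdegree bound together with \ref{def gentle:itm2:string algebra}: with two outgoing arrows from $v'$ it is conceivable a priori that neither continues the fan and both compositions lie in $I_P$. The correct reason reuses the end-pairing you set up for \ref{def gentle:itm1:degrees}. The (at most) two outgoing arrows of $v'$ correspond to the two occurrences of $\tau_{v'}$ in the complete fans, one per end of $\tau_{v'}$. The $P$-angle $\za$ fixes one of those occurrences, namely the one at $x$ immediately after $\tau_v$; the outgoing arrow based at that occurrence, when it exists, is the unique $\zb$ with $\za\zb\notin I_P$, while the outgoing arrow based at the other occurrence, when it exists, is the unique $\zb'$ with $\za\zb'\in I_P$. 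This gives at most one of each, hence \ref{def gentle:itm3:gentle algebra}. The same fan-position bookkeeping, which you already implicitly invoke when you call ``the next arc of that fan uniquely determined,'' is also what makes the admissibility argument go through on positive-genus surfaces, where the same $P$-arc, and even the same ordered triple $(x,\tau_i,\tau_{i+1})$, may recur inside one complete fan as in the right-hand picture of Figure~\ref{fig:fan}.
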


Examples are given in section~\ref{sect examples}.

\subsection{Permissible arcs}\label{sec:permissible arcs}

We give an overview of the correspondence between string modules and certain arcs called permissible arcs, following
\cite[Section 3.1]{BCS21} with slight modifications, as given in~\cite{Cha23}.  We will need another notion of arcs in the surface, different from the $P$-arcs in Section~\ref{sec:tiled surfaces}

\begin{definition}
[arcs]
An \emph{arc} of $\TiledSurface$ is a curve $\gamma$ in $S$, considered up to isotopy of immersed arcs, such that:
\begin{enumerate} 
\item the endpoints of $\gamma$ are in $M^*$;
\item except for the endpoints, $\gamma$ is disjoint from $M^*$ and from the boundary of $S$;
\item $\gamma$ does not cut out an unpunctured monogon;
\item $\gamma$ does not contain any contractible kinks.
\end{enumerate}
\end{definition}

Note that we allow an arc to cross itself a finite number of times. Further note that  whenever an arc intersects a $P$-arc, the intersection is a transversal crossing.

Each indecomposable string module can be mapped to an arc which is unique up to isotopy. 
However, not every arc of $(S,M^*)$ corresponds to an indecomposable module. 
Those which correspond to indecomposable modules are called ``permissible", and are defined below. We note that these arcs are called ``zigzag arcs'' in \cite{Cha23}.

\begin{definition}[A variation of {\cite[Definition 3.1]{BCS21}}]\label{def:permissible arc}
Let $\gamma$ be an arc of $\TiledSurface$, and assume that $\gamma$ is chosen such that the number of crossings with $P$ is minimal.
Choose an orientation of $\gamma$, and 
let $p_1, p_2, \ldots, p_{\stringlength+1}$ be the  sequence of crossing points between $\gamma$ and $P$ in order along $\gamma$. Also let $p_0$ and $p_{\stringlength+2}$ denote the starting and ending points of $\zg$. 
Let $\tau_i$ be the $P$-arc in $P$ containing $p_i$; note that in general it is possible to have $i\neq j$ and $\tau_i=\tau_j$.

We subdivide $\gamma$ into a sequence of segments $\gamma^0, \gamma^1, \ldots, \gamma^{\stringlength+1}$, where $\gamma^i$ is the segment of $\gamma$ between 
$p_i$ and $p_{i+1}$, where $0\le i\le \stringlength+1$. See Figure~\ref{fig:arc-segments}. 
The segments $\gamma^1,\dots,\gamma^{\stringlength}$ are called the \emph{interior segments}, and the segments $\gamma^0$ and $\gamma^{\stringlength+1}$ are called the \emph{end segments}.

An interior segment $\gamma^i$ is called a \emph{permissible segment} if it satisfies the following:
\begin{itemize}
\item The pair of consecutive $P$-arcs $\tau_i$ and $\tau_{i+1}$ share an endpoint $x_i\in M$; and 
\item 
$\tau_i, \tau_{i+1}$, and $\gamma^{i}$ bound a simply-connected triangle 
such that the angle opposite of $\gamma^i$ is a $P$-angle $(x,\tau_i,\tau_{i+1})$ or $(x,\tau_{i+1},\tau_{i})$, and we write $\angle\gamma^i$ to denote this $P$-angle.
    \end{itemize}
Otherwise, $\gamma^i$ is called \emph{non-permissible}.
A permissible segment $\gamma^i$ is illustrated in Figure~\ref{fig:permissible-segment}. 
When $\gamma^i$ is a permissible segment, we  
say that $\gamma^i$ (or $\gamma$) traverses the $P$-angle $\angle\gamma^i$.

We say that $\gamma$ is a \emph{permissible arc} provided that each of its interior segments $\gamma^i$ is permissible for $i=1,2,\ldots \stringlength$.
\end{definition}

\begin{remark}
Two curves (with the minimum number of crossings with $P$) are representatives of the same permissible arc if and only if 
their endpoints in $M^*$ are equal, and they traverse the same (ordered) sequence of $P$-angles.

An arc of $\TiledSurface$ which crosses $P$ exactly once is permissible.
\label{rem:an arc which crosses P exactly once in permissible}
\end{remark}

\begin{figure}[htbp]
\begingroup%
  \makeatletter%
  \providecommand\color[2][]{%
    \errmessage{(Inkscape) Color is used for the text in Inkscape, but the package 'color.sty' is not loaded}%
    \renewcommand\color[2][]{}%
  }%
  \providecommand\transparent[1]{%
    \errmessage{(Inkscape) Transparency is used (non-zero) for the text in Inkscape, but the package 'transparent.sty' is not loaded}%
    \renewcommand\transparent[1]{}%
  }%
  \providecommand\rotatebox[2]{#2}%
  \newcommand*\fsize{\dimexpr\f@size pt\relax}%
  \newcommand*\lineheight[1]{\fontsize{\fsize}{#1\fsize}\selectfont}%
  \ifx\svgwidth\undefined%
    \setlength{\unitlength}{250.69722843bp}%
    \ifx\svgscale\undefined%
      \relax%
    \else%
      \setlength{\unitlength}{\unitlength * \real{\svgscale}}%
    \fi%
  \else%
    \setlength{\unitlength}{\svgwidth}%
  \fi%
  \global\let\svgwidth\undefined%
  \global\let\svgscale\undefined%
  \makeatother%
  \begin{picture}(1,0.60690716)%
    \lineheight{1}%
    \setlength\tabcolsep{0pt}%
    \put(0,0){\includegraphics[width=\unitlength,page=1]{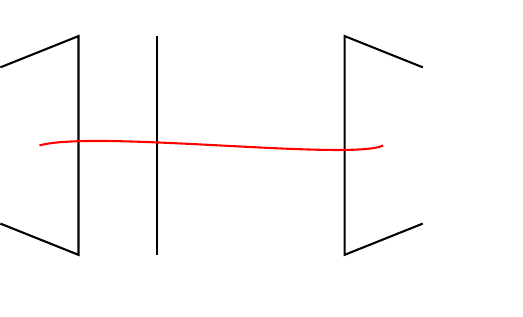}}%
    \put(0.66788012,0.46300629){\makebox(0,0)[lt]{\lineheight{1.25}\smash{\begin{tabular}[t]{l}$\tau_{\ell+1}$\end{tabular}}}}%
    \put(0.30888133,0.46300629){\makebox(0,0)[lt]{\lineheight{1.25}\smash{\begin{tabular}[t]{l}$\tau_2$\end{tabular}}}}%
    \put(0.15929849,0.46300629){\makebox(0,0)[lt]{\lineheight{1.25}\smash{\begin{tabular}[t]{l}$\tau_1$\end{tabular}}}}%
    \put(0.09049041,0.352315){\makebox(0,0)[lt]{\lineheight{1.25}\smash{\begin{tabular}[t]{l}$\zg^0$\end{tabular}}}}%
    \put(0.68283837,0.33436505){\makebox(0,0)[lt]{\lineheight{1.25}\smash{\begin{tabular}[t]{l}$\zg^{\ell+1}$\end{tabular}}}}%
    \put(0.2101567,0.352315){\makebox(0,0)[lt]{\lineheight{1.25}\smash{\begin{tabular}[t]{l}$\zg^1$\end{tabular}}}}%
    \put(0,0){\includegraphics[width=\unitlength,page=2]{figarcsegments.pdf}}%
  \end{picture}%
\endgroup%

\caption{Arc segments.}.\label{fig:arc-segments}
\end{figure}

\begin{figure}[htbp]
\def\svgwidth{4cm}
\begingroup%
  \makeatletter%
  \providecommand\color[2][]{%
    \errmessage{(Inkscape) Color is used for the text in Inkscape, but the package 'color.sty' is not loaded}%
    \renewcommand\color[2][]{}%
  }%
  \providecommand\transparent[1]{%
    \errmessage{(Inkscape) Transparency is used (non-zero) for the text in Inkscape, but the package 'transparent.sty' is not loaded}%
    \renewcommand\transparent[1]{}%
  }%
  \providecommand\rotatebox[2]{#2}%
  \newcommand*\fsize{\dimexpr\f@size pt\relax}%
  \newcommand*\lineheight[1]{\fontsize{\fsize}{#1\fsize}\selectfont}%
  \ifx\svgwidth\undefined%
    \setlength{\unitlength}{152.93533717bp}%
    \ifx\svgscale\undefined%
      \relax%
    \else%
      \setlength{\unitlength}{\unitlength * \real{\svgscale}}%
    \fi%
  \else%
    \setlength{\unitlength}{\svgwidth}%
  \fi%
  \global\let\svgwidth\undefined%
  \global\let\svgscale\undefined%
  \makeatother%
  \begin{picture}(1,0.8854651)%
    \lineheight{1}%
    \setlength\tabcolsep{0pt}%
    \put(0,0){\includegraphics[width=\unitlength,page=1]{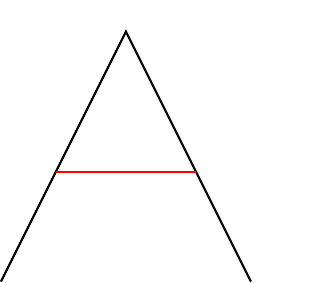}}%
    \put(0.36582267,0.25647178){\makebox(0,0)[lt]{\lineheight{1.25}\smash{\begin{tabular}[t]{l}$\zg^i$\end{tabular}}}}%
    \put(0.54727186,0.54090578){\makebox(0,0)[lt]{\lineheight{1.25}\smash{\begin{tabular}[t]{l}$\tau_{i+1}$\end{tabular}}}}%
    \put(0.15494917,0.54090578){\makebox(0,0)[lt]{\lineheight{1.25}\smash{\begin{tabular}[t]{l}$\tau_{i}$\end{tabular}}}}%
    \put(0,0){\includegraphics[width=\unitlength,page=2]{figpermissiblesegment.pdf}}%
    \put(0.3707267,0.83514778){\makebox(0,0)[lt]{\lineheight{1.25}\smash{\begin{tabular}[t]{l}$x_i$\end{tabular}}}}%
  \end{picture}%
\endgroup%

\caption{A permissible segment corresponding to the $P$-angle $\angle\gamma^i$ which is equal to $(x_i,\tau_{i+1},\tau_i)$.}
\label{fig:permissible-segment} 
\end{figure}

The following theorem follows immediately from \cite[Theorem 3.8]{BCS21}.

\begin{theorem}[A variation of {\cite[Theorem 3.8]{BCS21}}]\label{thm:permissible}
Let $A$ be the tiling algebra of a tiled surface $\TiledSurface$. Then there is a bijection between isotopy classes of permissible arcs  
and isomorphism classes of string $A$-modules.
\end{theorem}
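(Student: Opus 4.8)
The plan is to reduce the statement to \cite[Theorem~3.8]{BCS21} by making the bijection explicit and reconciling our notion of permissible arc (taken from \cite{Cha23}) with the one used there. First I would describe the map $\Phi$ sending a permissible arc to a string module. Given a permissible arc $\gamma$, fix a representative in minimal position with respect to $P$ and an orientation, and let $\gamma^1,\dots,\gamma^\ell$ be its interior segments, so that $\gamma^i$ traverses the $P$-angle $\angle\gamma^i$. By Definition~\ref{def:tiling algebra} each $P$-angle is an arrow of $Q_P$; reading them off in order, and recording $\alpha_i$ or its formal inverse $\alpha_i^{-1}$ according to whether $\gamma$ runs along $\gamma^i$ in the direction $v\to v'$ of the arrow $\alpha_i=(x,\tau_v,\tau_{v'})$ or against it, produces a walk $w_\gamma=\alpha_1\cdots\alpha_\ell$ in $Q_P$; if $\gamma$ crosses $P$ exactly once (so $\ell=0$), set $w_\gamma=1_v$ for the unique $P$-arc $\tau_v$ crossed. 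Put $\Phi(\gamma)=M(w_\gamma)$.

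The next step is to check that $w_\gamma$ is a string and that $\Phi$ is well defined on isotopy classes. Consecutive interior segments $\gamma^i$ and $\gamma^{i+1}$ share the $P$-arc $\tau_{i+1}$, which forces $t(\alpha_i)=s(\alpha_{i+1})$ once inverses are accounted for, so $w_\gamma$ is a walk; it is reduced because a subwalk $\alpha\alpha^{-1}$ would make the union of two consecutive segments bound a contractible bigon with $\tau_{i+1}$, contradicting minimality of the crossing number (or the absence of contractible kinks); and it avoids every generator of $I_P$, because two consecutive permissible segments occupy adjacent tiles glued along $\tau_{i+1}$, so $\angle\gamma^i$ and $\angle\gamma^{i+1}$ form a contiguous subsequence of the complete fan at their common marked point, which is exactly the condition in Definition~\ref{def:tiling algebra}(ii) guaranteeing $\alpha_i\alpha_{i+1}\notin I_P$. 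Since by the characterization of representatives (Remark~\ref{rem:an arc which crosses P exactly once in permissible}) a permissible arc is determined up to isotopy by its $M^*$-endpoints and its ordered sequence of $P$-angles, $\Phi$ descends to isotopy classes, and reversing the orientation of $\gamma$ replaces $w_\gamma$ by $w_\gamma^{-1}$, so $\Phi(\gamma)\cong M(w_\gamma)\cong M(w_\gamma^{-1})$ is well defined.

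Then I would build the inverse $\Psi$. Given a string $w=\alpha_1\cdots\alpha_\ell$ of $A$, concatenate, for each letter, a permissible segment inside the tile containing the corresponding $P$-angle; the gluings along $\tau_2,\dots,\tau_\ell$ exist because $w$ is a walk, they do not backtrack because $w$ is reduced, and the successive tiles are correctly positioned because $w$ avoids $I_P$ (again Definition~\ref{def:tiling algebra}(ii)). Capping off with two end segments running to the unique $M^*$-point of the two outermost tiles, and taking the curve crossing only $\tau_v$ in the case $w=1_v$ (permissible by Remark~\ref{rem:an arc which crosses P exactly once in permissible}), yields a curve that cuts out no unpunctured monogon and has no contractible kink (otherwise $w$ would fail to be reduced or to avoid $I_P$) and is already in minimal position, hence a permissible arc $\Psi(w)$. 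The segment data of $\Psi(w)$ is $w$ by construction, and reassembling a permissible arc from its segment data recovers it up to isotopy, so $\Phi$ and $\Psi$ are mutually inverse; moreover $\Psi(w)=\Psi(w')$ forces $w'\in\{w,w^{-1}\}$, matching the isomorphism criterion $M(w)\cong M(w')\iff w'\in\{w,w^{-1}\}$, which yields the desired bijection on isomorphism/isotopy classes.

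The step I expect to be the main obstacle is the precise match between the relation ideal $I_P$ and the permissibility condition on segments: one must show that a reduced walk avoids all generators of $I_P$ exactly when the associated chain of triangles can be realized as permissible segments in consecutive tiles, and in the other direction one must rule out that such a chain winds the wrong way around a puncture or across a self-folded tile. This is the local analysis underlying Definition~\ref{def:tiling algebra} and \cite{BCS21}, and it is where the cases with $\tau_i=\tau_j$ for $i\ne j$, loops in $Q_P$, once-punctured tiles, and self-folded tiles must be handled with care. Once this reconciliation with the framework of \cite{Cha23} and \cite[Theorem~3.8]{BCS21} is in place, the remaining verifications — minimal position, absence of kinks and monogons, and behaviour under orientation reversal — are routine.
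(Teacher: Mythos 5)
Your plan — read the bijection off from the sequence of $P$-angles traversed by the interior segments, reduce the verification to \cite[Theorem~3.8]{BCS21}, and reconcile the definition of permissible arc with the one in \cite{Cha23} — is exactly what the paper does: the paper states the theorem, declares that it ``follows immediately from \cite[Theorem~3.8]{BCS21},'' and then lays out the two correspondences $\gamma\mapsto w(\gamma)$ and $w\mapsto \gamma_w$ in Definitions~\ref{def:string-from-arc} and the one after, without further verification.

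One step as written is imprecise, and it is worth making explicit since you flag it yourself as the main obstacle. You claim that for any two consecutive interior segments, ``$\angle\gamma^i$ and $\angle\gamma^{i+1}$ form a contiguous subsequence of the complete fan at their common marked point.'' This is only true when the corresponding letters of $w_\gamma$ are both direct (or both inverse); in that case the two angles sit at the same endpoint of $\tau_{i+1}$, and they are consecutive in the fan there precisely because $\gamma^i$ and $\gamma^{i+1}$ occupy the two tiles glued along $\tau_{i+1}$, which rules out the relation $\alpha_i\alpha_{i+1}\in I_P$ by Definition~\ref{def:tiling algebra}(ii). When the walk has a hill or valley at position $i$, the two angles live at \emph{opposite} endpoints of $\tau_{i+1}$ and do not form a contiguous fan subsequence; but then the letters have opposite directions, so $\alpha_i\alpha_{i+1}$ is not a subwalk of $w_\gamma$ at all, and the only thing to check is that $w_\gamma$ is reduced (i.e. the two letters are not $\alpha\alpha^{-1}$), which you handle via minimality of crossing number. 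So the argument needs a case split on whether consecutive letters share a direction, but it closes cleanly with the surface orientation forcing the hill/valley pattern exactly when the angles land at opposite endpoints of the shared $P$-arc. Beyond this, the construction of $\Psi$ and the inverse verification match the paper's Definitions~\ref{def:string-from-arc} and 4.11.
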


The correspondence between oriented permissible arcs $\gamma$ and strings $w(\gamma)$ is described in the following two definitions; following the proof of \cite[Theorem 3.8]{BCS21}.

\begin{definition}\label{def:string-from-arc}
[String $w(\gamma)$ corresponding to a permissible arc $\gamma$] \label{def:string associated to oriented permissible arc}
Let $\gamma$ be an orientation of a permissible arc, chosen so that its crossings with $P$ are minimal, and let $\gamma^0, \gamma^1, \dots, \gamma^{\stringlength+1}$ be the sequence of segments defined above. 

If $\stringlength=0$ (that is, $\gamma$ crosses $P$ exactly once), then we associate to $\gamma$ the trivial string $w(\gamma)=1_v$, where the vertex $v\in Q_0$ corresponds to the $P$-arc crossed by $\gamma$.

Otherwise, $\stringlength\geq 1$, 
and each of the interior segments $\gamma^1, \gamma^2, \dots, \gamma^{\stringlength}$ corresponds to an arrow or inverse arrow $\za_i$ in $Q_1 \cup Q_1^{-1}$.  
Then we associate to $\gamma$ the string $w(\gamma) = \za_1 \cdots \za_\stringlength$.
\end{definition}

\begin{definition}[Permissible arc $\gamma_w$ corresponding to a string]
Suppose $w=\za_1 \dots \za_\stringlength$ is a nonzero and nontrivial string. 
Let $v_1=s(\za_1)$ and $v_i=t(\za_{i-1})$, for $i=2,\ldots,\stringlength+1.$ 
By Definition \ref{def:tiling algebra}, the vertices in $Q_0$ correspond bijectively to the $P$-arcs in $P$, so each of $v_1, \dots, v_{\stringlength+1}$ corresponds to a $P$-arc (possibly $v_i$ and $v_j$ correspond to the same $P$-arc for some $i\neq j$). By abuse of notation, we will use $v_i$ to refer to the $P$-arc corresponding to the vertex $v_i$.

Since there is an arrow or inverse arrow $\za_1$ from $P$-arc $v_1$ to $P$-arc $v_2$, there is a $P$-angle $(p,v_1, v_2)$ or $(p,v_2, v_1)$; let $\Delta_1$ denote the tile containing this $P$-angle. We can choose a point $p_1$ in the interior of the $P$-arc $v_1$ and a point $p_2$ in the interior of the $P$-arc $v_2$ and connect them by a curve $\gamma^1$ in the tile $\Delta_1$ in such a way that $v_1$, $v_2$, and $\gamma^1$ bound a simply-connected triangle, that is, $\gamma^1$ is a permissible segment (defined in Definition \ref{def:permissible arc}). 
Proceed in this way with the remaining $P$-arcs $v_2,\dots, v_{\stringlength}$ to obtain curves $\gamma^2, \dots,\gamma^\stringlength$ in the tiles $\Delta_2,\dots,\Delta_\stringlength$. 

The $P$-arc $v_1$ is a side of two tiles (which may coincide if they are a self-folded tile): the tile $\Delta_1$ above and a tile $\Delta_0$. By definition of tiles, there is exactly one red point $p_0^*\in M^*$ in $\Delta_0$.  Let $\gamma^0$ be a curve in the interior of $\Delta_0$ which connects $p_1$ and $p_0^*$.
On the right end of the string, we can similarly define a curve $\gamma^{\stringlength+1}$ in  the tile  $\Delta_{\stringlength+1}$ whose endpoint is the unique red point $p_{\stringlength+2}^*\in M^*$ of the tile. 

The curve $\gamma_w$ is defined to be the concatenation of the curves $\gamma^0,\dots,\gamma^{\stringlength+1}$. Similarly, if $w = 1_v$ is a trivial string, then the corresponding arc $\gamma_w$ is a curve with no self-crossings which crosses only  $v$, and whose endpoints are the red points of the tiles for which $v$ is a side.
\end{definition}


\subsection{Required permissible arcs}

We now describe the permissible arcs which correspond to the required summands of any MAR module (see Proposition \ref{prop:required summands}). First, let us distinguish two types of permissible arcs.

\begin{definition} Let $\zg$ be  a permissible arc of the tiled surface $\TiledSurface$.
\begin{enumerate}
\item 
We say that $\zg$ is a \emph{permissible boundary segment} if $\gamma$ is a boundary segment of  the marked  surface $(S,M^*)$. 
\item 
We say that $\zg$  is a \emph{bigon connector} if $\gamma$ 
satisfies the following conditions: 
\begin{itemize}
\item $\gamma$ crosses $P$ at exactly one point;
\item one of the endpoints of $\gamma$ lies in an external bigon tile, and
\item $\gamma$ is not a boundary segment of $(S,M^*)$.
\end{itemize}
\end{enumerate}
\end{definition}

See Figure~\ref{fig:connector} for an illustration of a bigon connector. See Example \ref{ex:required permissible arcs} for examples of permissible boundary segments and a bigon connector.  

\begin{figure}
\centering
\begingroup%
  \makeatletter%
  \providecommand\color[2][]{%
    \errmessage{(Inkscape) Color is used for the text in Inkscape, but the package 'color.sty' is not loaded}%
    \renewcommand\color[2][]{}%
  }%
  \providecommand\transparent[1]{%
    \errmessage{(Inkscape) Transparency is used (non-zero) for the text in Inkscape, but the package 'transparent.sty' is not loaded}%
    \renewcommand\transparent[1]{}%
  }%
  \providecommand\rotatebox[2]{#2}%
  \newcommand*\fsize{\dimexpr\f@size pt\relax}%
  \newcommand*\lineheight[1]{\fontsize{\fsize}{#1\fsize}\selectfont}%
  \ifx\svgwidth\undefined%
    \setlength{\unitlength}{159.46057892bp}%
    \ifx\svgscale\undefined%
      \relax%
    \else%
      \setlength{\unitlength}{\unitlength * \real{\svgscale}}%
    \fi%
  \else%
    \setlength{\unitlength}{\svgwidth}%
  \fi%
  \global\let\svgwidth\undefined%
  \global\let\svgscale\undefined%
  \makeatother%
  \begin{picture}(1,0.50219073)%
    \lineheight{1}%
    \setlength\tabcolsep{0pt}%
    \put(0,0){\includegraphics[width=\unitlength,page=1]{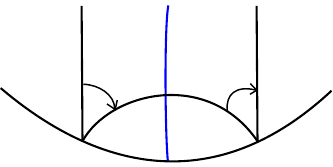}}%
    \put(0.29685492,0.25309612){\makebox(0,0)[lt]{\lineheight{1.25}\smash{\begin{tabular}[t]{l}$\za$\end{tabular}}}}%
    \put(0.66927564,0.23547013){\makebox(0,0)[lt]{\lineheight{1.25}\smash{\begin{tabular}[t]{l}$\zb$\end{tabular}}}}%
    \put(0,0){\includegraphics[width=\unitlength,page=2]{figconnector.pdf}}%
  \end{picture}%
\endgroup%

\caption{A bigon connector. Note that the arrows $\alpha$ and $\beta$ may coincide. }
\label{fig:connector}
\end{figure}

\begin{theorem}\label{thm:required-model}  
The required summands of any maximal almost rigid module are precisely those corresponding to the permissible boundary segments and the bigon connectors.
\end{theorem}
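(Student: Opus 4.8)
The plan is to transport the algebraic classification of required summands through the bijection of Theorem~\ref{thm:permissible}. By Corollary~\ref{cor:required summands}, a string module $M$ is a required summand if and only if $M = M(w)$ for a maximal direct nontrivial string $w$, or $M = S(i)$ with $\indegree{i}\le 1$ and $\outdegree{i}\le 1$; in the latter case, Proposition~\ref{prop:required summands}(2)--(4) records the local shape of $Q$ at $i$. I will establish, under Theorem~\ref{thm:permissible}, the following four matchings: (i)~maximal direct nontrivial strings correspond to permissible boundary segments crossing $P$ at least twice; (ii)~$S(i)$ with $i$ of type~(2) or~(3) in Proposition~\ref{prop:required summands} corresponds to a permissible boundary segment crossing $P$ exactly once; (iii)~$S(i)$ with $i$ of type~(4) corresponds to a bigon connector; and (iv)~if $\degree{i}\ge 3$, then $\gamma_{1_i}$ is neither a permissible boundary segment nor a bigon connector. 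Since a permissible boundary segment passes exactly one point of $M$ (by the definition of boundary segments of $(S,M^*)$), and a bigon connector crosses $P$ exactly once, every object on the right side of the theorem occurs in (i)--(iii); together with (iv) this yields the statement.

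For (i): a direct string $\alpha_1\cdots\alpha_\ell$ avoids relations, so by Definition~\ref{def:tiling algebra} every consecutive pair of $P$-angles $\angle\alpha_i,\angle\alpha_{i+1}$ is a contiguous subsequence of a complete fan; since a complete fan lies at one marked point, all of $\angle\alpha_1,\dots,\angle\alpha_\ell$ belong to the complete fan at a single $x\in M$, and $\alpha_1\cdots\alpha_\ell$ reads off a contiguous run of that fan. The string is left and right maximal exactly when the run is the entire complete fan at $x$; in that case $\gamma_w$ crosses precisely the arcs of the fan at $x$ and, by Definition~\ref{def:string-from-arc} (inspecting the two end segments), is isotopic to the boundary segment of $(S,M^*)$ through $x$. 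Conversely, a permissible boundary segment through $x\in M$ crossing $P$ at $\tau_1,\dots,\tau_k$ with $k\ge 2$ traverses exactly the complete fan at $x$, hence is assigned the maximal direct string of length $k-1\ge 1$. This is a bijection, as a permissible boundary segment is determined by the point of $M$ it passes and the complete fan there is unique.

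For (ii)--(iv): $S(i)$ corresponds to the trivial string $1_i$, whose arc $\gamma$ crosses $P$ only in $\tau_i$, and $\degree{i}$ equals the total number of $P$-angles incident to $\tau_i$ at its two endpoints in $M$. A local analysis using Definition~\ref{def:tiling algebra} gives: if $\degree{i}\le 1$, then $\tau_i$ lies at an extreme position of every complete fan it belongs to, so $\gamma$ isotopes onto a boundary segment of $(S,M^*)$ (case~(2)); if $\degree{i}=2$, with $a$ the unique in-arrow and $b$ the unique out-arrow at $i$, then $ab\notin I$ exactly when $\angle a$ and $\angle b$ are consecutive inside one complete fan, which forces $\tau_i$ to be the only arc of the complete fan at its other endpoint and hence forces $\gamma$ onto the boundary (case~(3)), whereas $ab\in I$ forces $\tau_i$ to bound an external bigon tile and $\gamma$ to run through it without being isotopic to the boundary, so $\gamma$ is the bigon connector of that tile (case~(4)); finally, if $\degree{i}\ge 3$, then $\tau_i$ carries at least two $P$-angles at one of its endpoints, so by Proposition~\ref{prop:Butler Ringel:at most two arrows starting at M and ending at M} and Lemma~\ref{lem:M in every mar iff at most one irreducible morphism starting and ending at M} the module $S(i)$ is not required, and a direct check shows $\gamma$ is neither a boundary segment nor a bigon connector.

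The main obstacle will be this last step: converting the combinatorial conditions ``$\indegree{i}\le 1$ and $\outdegree{i}\le 1$'' and ``$ab\in I$ versus $ab\notin I$'' at a degree-two vertex into the geometric dichotomy ``$\gamma_{1_i}$ isotopes onto the boundary of $(S,M^*)$'' versus ``$\gamma_{1_i}$ is trapped in an external bigon tile''. This amounts to enumerating the possible local pictures of $\tau_i$ together with its two endpoints and the two tiles adjacent to it, and reading off from Definition~\ref{def:tiling algebra} whether a bigon is forced; the degenerate configurations --- a $P$-arc that is a loop, the two endpoints of $\tau_i$ coinciding, or $\tau_i$ bounding a self-folded tile --- must be treated individually.
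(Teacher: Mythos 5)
Your proposal follows the same strategy as the paper: translate the algebraic classification of Proposition~\ref{prop:required summands} into geometric types via the arc correspondence of Theorem~\ref{thm:permissible}, matching maximal direct nontrivial strings to boundary segments crossing $P$ at least twice, and the simple required summands to the remaining boundary segments and to bigon connectors. Your matchings (i)--(iii) are the paper's cases (1)--(4). The ``main obstacle'' you identify --- that $ab\in I$ at a degree-two vertex forces an external bigon, and conversely --- is exactly where the paper concentrates most of its proof, including the cases of a loop, coinciding endpoints, and the one-vertex-one-loop quiver; your sketch there is directionally consistent but is not yet a proof.

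There is also a genuine gap in your case analysis of simple modules. When you split by $\degree{i}\le 1$, then ``$\degree{i}=2$ with $a$ the unique in-arrow and $b$ the unique out-arrow'', then $\degree{i}\ge 3$, you silently assume that a degree-two vertex has one in-arrow and one out-arrow. But $\degree{i}=2$ can also occur with $\indegree{i}=2$, $\outdegree{i}=0$ (or dually), and such an $S(i)$ is not required by Corollary~\ref{cor:required summands}, so you still owe the verification that $\gamma_{1_i}$ is then neither a boundary segment nor a bigon connector. It is indeed neither: the two incoming $P$-angles are based at the two distinct endpoints of $\tau_i$, and the clockwise orders at those two endpoints place the preceding arc on opposite sides of $\tau_i$, so both tiles adjacent to $\tau_i$ contain a $P$-angle and hence neither is an external bigon; moreover $\tau_i$ is not alone in the complete fan at either endpoint, so $\gamma_{1_i}$ cannot isotope onto the boundary. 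Separately, note that once (i)--(iii) are established as bijections onto the sets of boundary segments and of bigon connectors respectively, your step (iv) and this missing subcase become superfluous: injectivity of $M\mapsto\gamma_M$ in Theorem~\ref{thm:permissible} already yields the converse implication, which is how the paper closes the argument.
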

\begin{proof}
In each of the following four cases,  there is a bijection  between the  set of elements in (a) and the set of elements in (b). 

\begin{enumerate}
\item 

\begin{enumerate}
    \item maximal direct strings in $A$ containing at least one arrow.
    \item complete fans with at least one $P$-angle.
\end{enumerate}

\item 
\begin{enumerate}
    \item vertices $i$ of degree $1$ in $Q$.
    \item $P$-arcs $\tau_i$ in $P$  with endpoints $x,y$ such that $\tau_i$ is the only element in the complete fan at $x$, and $\tau_i$ is the first or the last element in the complete fan at $y$.
\end{enumerate}

\item 
\begin{enumerate}
    \item vertices $i$ of degree $2$ in $Q$ such that there is a direct string $\za\zb$ 
with $i=t(\za)=s(\zb)$.
    \item  $P$-arcs $\tau_i$ in $P$  with endpoints $x,y$ such that $\tau_i$ is the only element in the complete fan at $x$, and $\tau_i$ is not the first nor the last element in the complete fan at $y$.
\end{enumerate}

\item 
\begin{enumerate}
    \item vertices $i$ of degree $2$ in $Q$ such that there is a path $\za\zb \in I$ with $i=t(\za)=s(\zb)$.
    \item $P$-arcs $\tau_i$ that are crossed by a bigon connector
\end{enumerate}

\end{enumerate}
Indeed, the existence of the bijection in cases (1)--(3) follows directly from the definition of the tiling algebra; we point out that \cite{Schroll} gave a bijection between the set $1(a) \cup 2(a) \cup 3(a)$ and the complete fans in the Brauer graph associated to $A$.  
We will prove the  case (4) below. This will complete the proof of the theorem, because, on the one hand, the cases (1)--(4)(a) are precisely those of Proposition~\ref{prop:required summands}, and on the other hand, the cases (1)--(3)(b) form a partition of the set of permissible boundary segments, and (4b)  is in bijection with the set of bigon connectors.

It remains to show that there is a bijection between (4a) and (4b).
Assume first that $\za=\zb$. Then $\za$ is a loop in $Q$ and the $P$-arc $\tau_i$ is a loop in the surface. 
The condition that the degree of $i$ is 2 implies that $\za$ is the only arrow at vertex $i$. Since $Q$ is connected, it follows that $Q$ has only one vertex $i$ and only one arrow $\za$. The corresponding tiled surface is a disk with one black point and one red point on the boundary, and one red point in the interior, see Figure~\ref{fig:one loop}. There are precisely two permissible arcs, the permissible boundary segment $\zd$ and the arc $\zg$ connecting the two red points. The latter is a bigon connector.

Now assume that $\za\ne \zb$. The condition that the path $\za\zb$ lies in the ideal $I$ implies that the corresponding $P$-angles are not consecutive angles of a complete fan. Since the degree of $i$ in $Q$ is 2, it follows that the two $P$-angles are at two distinct points $x,y\in M$, and the $P$-arc $\tau_i$ connects $x$ and $y.$  Consider the two tiles $\zD_1,\zD_2$ of $P$ incident to $\tau_i$ and let $z_1^*,z_2^*$ denote their respective red points. Again using the condition that $i$ has degree 2, we see that one of the tiles, say $\zD_1$, must by an external bigon tile containing the boundary points $x,y$ and $z_1^*$. The other tile can be interior or not. In either case, there is a unique permissible arc $\zg_i$ connecting $z_1^*$ and $z_2^*$. This arc $\zg_i$ is a bigon connector. This shows that there is an injective map $i\mapsto \zg_i$ from the set (4a) to the set (4b).

Conversely, since each bigon connector $\zg$ crosses $P$ at exactly one point, there is a unique $\tau_i$ in $P$ that is crossed by $\zg$. Let $x,y\in M$ denote the endpoints of $\tau_i$. 
We first consider the case where $x=y$, and $\tau_i$ is a loop.
Let $\Delta_1$ be the internal tile bounded by $\tau_i$ containing one endpoint of $\gamma$.
Since $\gamma$ is a bigon connector, it travels through precisely one other tile $\Delta_2$, which is an external bigon.
Thus, $\Delta_2$ is bounded by  $\tau_i$ and all of $\partial S$.
In particular, $M=\{x\}$.

Now we break into cases depending on the number $k$ of distinct $P$-arcs in $P$ incident to $x$.
The case where $k=1$ is shown on the left in Figure~\ref{fig:one loop}.
In this case, $Q$ consists of a single vertex $i$ with one arrow, which is a loop.
Thus, $\gamma$ is the unique bigon connector, and it is mapped to the unique vertex $i$ with degree $2$ and path $\alpha\beta\in I$ (where $\alpha=\beta)$.

Suppose that $k>1$. An example is shown on the right in Figure~\ref{fig:one loop}.
As above, since $M=\{x\}$, each $P$-arc is a loop with endpoint $x$.
Note that in the complete fan at $x$, $\tau_i$ is the first arc in clockwise order (because $\tau_i$ is the unique arc that is a boundary segment for an external tile).
For convenience, we rename it $\tau_1$.
In the complete fan at $x$, following $\tau_1$ in clockwise order, we have $\tau_2, \tau_3, \ldots, \tau_{2k}$, where $\tau_i$ and $\tau_j$ are distinct for $i,j\in [k]$, and $\tau_{2k-i+1} = \tau_i$, for $i\in[k]$.
(In particular, $\tau_{k+1}= \tau_k$ and $\tau_{2k}=\tau_1$.)

We write $\alpha_i$ for the arrow corresponding to the $P$-angle $(x, \tau_i, \tau_{i+1})$ where $i\in [k]$.
Observe that only $\alpha_k$ is a loop, and $\alpha_i$ for $i\in [k-1]$ has distinct vertices in $Q$.
Finally, we write $\bar{\alpha_i}$ for the arrow corresponding to the $P$-angle $(x, \tau_{2k-i},\tau_{2k-i+1})$ for $i\in[k-1]$.
Since $x$ is the unique element of $S$, this is a complete description of $Q$.
Following Definition~\ref{def:tiling algebra}, the ideal $I$ is generated by $\alpha_i\bar{\alpha_i}$, $\bar{\alpha_i}\alpha_i$ and $(\alpha_k)^2$.
We observe that precisely one vertex of $Q$ has degree 2, and it is the vertex $1$ corresponding to $\tau_1$.
Moreover, $t(\bar{\alpha_1})=1=s(\alpha_1)$, and $\bar{\alpha_1}\alpha_1 \in I$ as desired.
This completes the proof in the case where $x=y$.

Now we assume $x\ne y$, and let recall that we let $\tau_i$ be the unique $P$-arc in $P$ crossed by $\gamma$.
Since one of the endpoints of $\gamma$ is in an external tile, we have that the vertex $i\in Q$ has degree at most 2. 
Moreover if the degree of $i$ is strictly less than 2, then the complete fan at $x$ or $y$ contains only $\tau_i$.
This implies that both tiles incident to $\tau_i$ contain a boundary segment, contradicting the definition of a bigon connector.

Thus $i$ has degree 2 and each point $x,y$ has a non-trivial complete fan. Thus there exist $P$-angles $\za=(x,\tau_h,\tau_i)$ and $\zb=(y,\tau_i,\tau_j)$ which give rise to a path $\xymatrix{h\ar[r]^\za&i\ar[r]^\zb&j}$. Since $x\ne y$ this path is in the ideal $I$. 
This shows that the vertex $i$ is an element of the set (4a), and we have an injective map $\zg\mapsto i$ from the set (4b) to the set (4a). 
\begin{figure}
    \centering
   \scriptsize 
\begingroup%
  \makeatletter%
  \providecommand\color[2][]{%
    \errmessage{(Inkscape) Color is used for the text in Inkscape, but the package 'color.sty' is not loaded}%
    \renewcommand\color[2][]{}%
  }%
  \providecommand\transparent[1]{%
    \errmessage{(Inkscape) Transparency is used (non-zero) for the text in Inkscape, but the package 'transparent.sty' is not loaded}%
    \renewcommand\transparent[1]{}%
  }%
  \providecommand\rotatebox[2]{#2}%
  \newcommand*\fsize{\dimexpr\f@size pt\relax}%
  \newcommand*\lineheight[1]{\fontsize{\fsize}{#1\fsize}\selectfont}%
  \ifx\svgwidth\undefined%
    \setlength{\unitlength}{346.39280316bp}%
    \ifx\svgscale\undefined%
      \relax%
    \else%
      \setlength{\unitlength}{\unitlength * \real{\svgscale}}%
    \fi%
  \else%
    \setlength{\unitlength}{\svgwidth}%
  \fi%
  \global\let\svgwidth\undefined%
  \global\let\svgscale\undefined%
  \makeatother%
  \begin{picture}(1,0.2724781)%
    \lineheight{1}%
    \setlength\tabcolsep{0pt}%
    \put(0,0){\includegraphics[width=\unitlength,page=1]{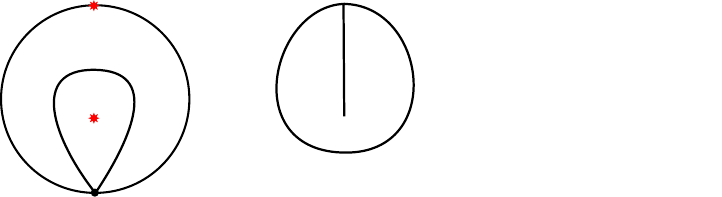}}%
    \put(0.04624883,0.09766002){\makebox(0,0)[lt]{\lineheight{1.25}\smash{\begin{tabular}[t]{l}$\tau_i$\end{tabular}}}}%
    \put(0,0){\includegraphics[width=\unitlength,page=2]{figoneloop.pdf}}%
    \put(0.48518504,0.17050985){\makebox(0,0)[lt]{\lineheight{1.25}\smash{\begin{tabular}[t]{l}$\zg$\end{tabular}}}}%
    \put(0.43109506,0.04491105){\makebox(0,0)[lt]{\lineheight{1.25}\smash{\begin{tabular}[t]{l}$\zd$\end{tabular}}}}%
    \put(0,0){\includegraphics[width=\unitlength,page=3]{figoneloop.pdf}}%
    \put(0.82144536,0.13758521){\makebox(0,0)[lt]{\lineheight{1.25}\smash{\begin{tabular}[t]{l}3\end{tabular}}}}%
    \put(0.8215299,0.18082403){\makebox(0,0)[lt]{\lineheight{1.25}\smash{\begin{tabular}[t]{l}2\end{tabular}}}}%
    \put(0.81988064,0.22879573){\makebox(0,0)[lt]{\lineheight{1.25}\smash{\begin{tabular}[t]{l}1\end{tabular}}}}%
    \put(0,0){\includegraphics[width=\unitlength,page=4]{figoneloop.pdf}}%
  \end{picture}%
\endgroup%

\caption{Left: The case where $Q$ has only one vertex $i$ and one loop arrow $\za\colon i\to i$, 
    and $I$ is generated by $\za^2$. The first picture shows the tiled surface and the second picture shows the two permissible arcs $\zd$ and $\zg$. The arc $\zg$ is a bigon connector. Right: The case when $k=3$. Here the quiver has three vertices $1,2,3$ and five arrows $1\to 2, \,1\ot 2,\, 2\to 3, \, 2\ot 3 $ and a loop at 3. }
    \label{fig:one loop}
\end{figure}

\end{proof}

\begin{example}\label{ex:required permissible arcs}
Consider the tiled annulus of Figure \ref{fig ex 7.5} (top left).
There are four permissible boundary segments (three on the outer boundary and one on the inner boundary), 
and there is one bigon connector, the permissible arc between red points $y_3, y_4 \in M^*$ which cross the $P$-arc $\tau_1$; these five permissible arcs are labeled $\zg_1, \dots, \zg_5$ in Figure \ref{fig ex 7.5} (top right). In addition, Figure~\ref{fig:triangulations-proj-inj-mars} illustrates these five permissible arcs in red.

These five permissible arcs correspond to the five required summands listed in Example \ref{ex:required summands:as modules}:
\begin{enumerate}
\item $M(\za)$ corresponds to the outer boundary segment ($\zg_2$) between $y_2$ and $y_3$; $M(\zb \zg \zd)$ corresponds to the outer boundary segment ($\zg_5$) between $y_1$ and $y_3$.
\item $S(3)$ corresponds to the outer boundary segment ($\zg_1$) between $y_1$ and $y_2$
\item $S(4)$ corresponds to the sole inner boundary segment ($\zg_3$)
\item $S(1)$ corresponds to the bigon connector ($\zg_4$) between $y_3$ and $y_4$.
\end{enumerate}

\end{example}

\subsection{Extensions via crossings and intersections}
We close this section  recalling  two results that describe morphisms and overlap extensions combinatorially in terms of intersections of permissible arcs.

Let $\alpha$ and $\beta$ be two distinct permissible arcs.
We say that $\alpha$ and $\beta$ have a \emph{nontrivial intersection} if each isotopy representative of $\alpha$ and $\beta$ intersect (either in their interior or at shared endpoints).
Throughout we choose representatives of $\alpha$ and $\beta$ with the fewest possible intersections.

\begin{definition}\label{def:M_angle}
Let $\alpha$ and $\beta$ be distinct permissible arcs, with nontrivial intersection $x=\alpha(t)=\beta(t)$ for some $t\in[0,1]$.
\begin{enumerate}[(i)]
\item 
If $x$ is an endpoint, choose an orientation of $\alpha$ and $\beta$ so that  then $\alpha(1) = x = \beta(1)$. 
\item 
If $x$ is in the interior of $\alpha$ and $\beta$, choose any orientations of $\alpha$ and $\beta$ (note that there are four choices).
\end{enumerate}
Let $\alpha^\circ, \beta^\circ$ denote the restriction of $\alpha$ and $\beta$ to the interval $[t-\epsilon, t]$ for $\epsilon>0$ sufficiently small.
The \emph{angle} is the ordered triple $(x,\za^\circ,\zb^\circ)$ such that the direction of the angle from $\za^\circ $ to $ \zb^\circ$ at $x$ 
 is counterclockwise.

Given an orientation $\za$ of an arc, let $\overline{\za}$ denote the opposite orientation of the arc.
We define an angle $(x,\za^\circ,\zb^\circ)$ 
to be equivalent to itself and to the ``opposite" angle $(x,\overline{\za}^\circ,\overline{\zb}^\circ)$. 
This means that, if $x$ is a crossing in the interior of the arcs, there are two angles up to equivalence.
\end{definition}
\begin{definition}\label{def:permissible-angle} 
A $P$-arc $\tau$ of the tiling $P$ is said to \emph{traverse the angle} $(x,\za^\circ,\zb^\circ)$ if $\tau$ crosses $\za$ and $\zb$ in such a way that the walk along $x,\za,\tau,\zb,x$ encloses a simply connected region in $S$. 
We say that the angle $(x,\za^\circ,\zb^\circ)$ is \emph{permissible} if it is traversed by a $P$-arc of $P$. 
\end{definition}

An illustration of the angles of a crossing
is given in Figure~\ref{fig anglecrossing}. The orientation of the arc $\za$ in the left picture is opposite to its orientation on the right. This leads to two different angles $(x,\za^\circ,\zb^\circ)$, $(x,\zb^\circ,\overline{\za}^\circ)$, which are indicated by curved blue arrows. 
The angle in the left picture is permissible and the one in the right picture isn't. 
The case where the arcs intersect at the endpoints  is shown in Figure~\ref{fig angles 1 later}.
\begin{figure}[ht!]
    \centering
    \Large
\scalebox{.7}{
\begingroup%
  \makeatletter%
  \providecommand\color[2][]{%
    \errmessage{(Inkscape) Color is used for the text in Inkscape, but the package 'color.sty' is not loaded}%
    \renewcommand\color[2][]{}%
  }%
  \providecommand\transparent[1]{%
    \errmessage{(Inkscape) Transparency is used (non-zero) for the text in Inkscape, but the package 'transparent.sty' is not loaded}%
    \renewcommand\transparent[1]{}%
  }%
  \providecommand\rotatebox[2]{#2}%
  \newcommand*\fsize{\dimexpr\f@size pt\relax}%
  \newcommand*\lineheight[1]{\fontsize{\fsize}{#1\fsize}\selectfont}%
  \ifx\svgwidth\undefined%
    \setlength{\unitlength}{547.29520556bp}%
    \ifx\svgscale\undefined%
      \relax%
    \else%
      \setlength{\unitlength}{\unitlength * \real{\svgscale}}%
    \fi%
  \else%
    \setlength{\unitlength}{\svgwidth}%
  \fi%
  \global\let\svgwidth\undefined%
  \global\let\svgscale\undefined%
  \makeatother%
  \begin{picture}(1,0.19048221)%
    \lineheight{1}%
    \setlength\tabcolsep{0pt}%
    \put(0,0){\includegraphics[width=\unitlength,page=1]{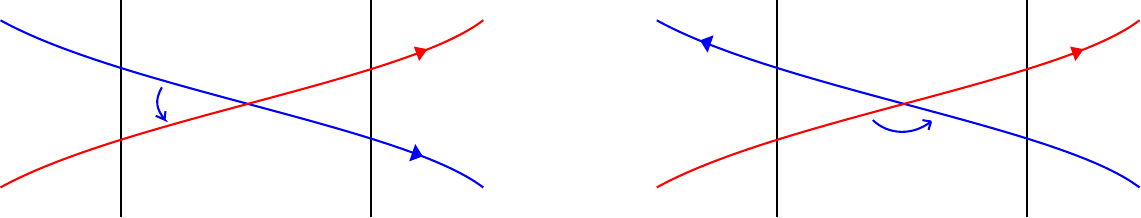}}%
    \put(0.21014037,0.10868742){\makebox(0,0)[lt]{\lineheight{1.25}\smash{\begin{tabular}[t]{l}$x$\end{tabular}}}}%
    \put(0.78569818,0.10868742){\makebox(0,0)[lt]{\lineheight{1.25}\smash{\begin{tabular}[t]{l}$x$\end{tabular}}}}%
    \put(0.00975589,0.1743585){\makebox(0,0)[lt]{\lineheight{1.25}\smash{\begin{tabular}[t]{l}$\za$\end{tabular}}}}%
    \put(0.58531351,0.1743585){\makebox(0,0)[lt]{\lineheight{1.25}\smash{\begin{tabular}[t]{l}$\overline\za$\end{tabular}}}}%
    \put(0.00998469,0.04455706){\makebox(0,0)[lt]{\lineheight{1.25}\smash{\begin{tabular}[t]{l}$\zb$\end{tabular}}}}%
    \put(0.58554239,0.04455706){\makebox(0,0)[lt]{\lineheight{1.25}\smash{\begin{tabular}[t]{l}$\zb$\end{tabular}}}}%
  \end{picture}%
\endgroup%
}
    \caption{Two examples of angles  at a crossing point $x$ of two permissible arcs $\za,\zb$ which illustrate the dependence on the chosen orientation. The angle on the left is $(x,\za^\circ,\zb^\circ)$ and the one on the right is $(x,\zb^\circ,\overline{\za}^\circ)$, where the bar on $\overline{\za}$ indicates the change of orientation. }
    \label{fig anglecrossing}
\end{figure}

The following result concerns morphisms between string modules. This has been stated in \cite[Proposition 3.21]{BCS21},   
with different terminology. Subsequently, this result has been generalized to encompass all higher extensions in \cite[Theorem 2.28]{Cha23}, and to all string algebras in \cite[Theorem 4.11]{BCS24}.

\begin{theorem}
[Morphisms and intersections] \label{thm hom basis} 
Let $\gamma_1$ and $\gamma_2$ be distinct permissible arcs, and let $M_1$ and $M_2$ be the corresponding indecomposable $A$-modules. 
Let $\basis(\gamma_1, \gamma_2)$ be the basis of $\Hom_A(M_1, M_2$ as described in Proposition~\ref{prop: string_hom_basis}.
Let $\mathcal{I}(\gamma_1, \gamma_2)$ be the set of equivalence classes of permissible angles $(x, \gamma_1^\circ, \gamma_2^\circ)$ of $\gamma_1$ and~$\gamma_2$.
There is a bijective map $\phi: \mathcal{I}(\gamma_1,\gamma_2) \to \basis(\gamma_1, \gamma_2)$.
\end{theorem}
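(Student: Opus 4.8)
The plan is to transport the combinatorial description of $\basis(\gamma_1,\gamma_2)$ from Proposition~\ref{prop: string_hom_basis} to the geometry through the dictionary of Theorem~\ref{thm:permissible} and Definition~\ref{def:string-from-arc}, and to realize the bijection $\phi$ by sending a basis element to the permissible angle at which the two arcs $\gamma_1,\gamma_2$ ``branch off'' from a common run. Fix the orientations and let $w=w(\gamma_1)$ and $w'=w(\gamma_2)$. By Proposition~\ref{prop: string_hom_basis} together with Remark~\ref{rem:string submodule and string quotient module}, an element of $\basis(\gamma_1,\gamma_2)$ is the same datum as a string $w''$ together with an occurrence of $w''$ (or $(w'')^{-1}$) as a subwalk of $w$ and as a subwalk of $w'$, subject to the up-set condition on the $w$-side (when the map is epi or of mixed type) and the down-set condition on the $w'$-side (when it is mono or of mixed type). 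Since, by Definition~\ref{def:string-from-arc}, each interior segment of an arc records exactly one letter of its string, i.e. one $P$-angle it traverses, this datum amounts to a finite common run of $P$-angles of $\gamma_1$ and $\gamma_2$, each equal to some $\angle\gamma_1^a$ and to some $\angle\gamma_2^b$. Choosing representatives with the minimum number of crossings with $P$, such a common run is realized by sub-segments of $\gamma_1$ and of $\gamma_2$ that are isotopic in $(S,M,M^*)$, and after an isotopy we may take them to coincide along a common sub-segment $\delta$.

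I would first build a map $\psi\colon \basis(\gamma_1,\gamma_2)\to\mathcal I(\gamma_1,\gamma_2)$ and later check $\psi=\phi^{-1}$. Given a basis element with overlap datum $w''$ and common sub-segment $\delta$, look at the end of $\delta$ corresponding to the start of $w''$. There, either $\gamma_1$ and $\gamma_2$ both terminate at a common point $x\in M^*$ (forced exactly when the occurrence of $w''$ starts at the beginning of both $w$ and $w'$, which by Definition~\ref{def:M_angle}(i) is the shared-endpoint case), or $\gamma_1$ and $\gamma_2$ leave the last tile of the common run and cross transversally at a point $x$ in the interior of $S$. In both situations the up-set condition on the $w$-side and the down-set condition on the $w'$-side say precisely that $\gamma_1$ and $\gamma_2$ leave the common run on opposite sides of the $P$-arc $\tau$ bounding that end of $\delta$, so that $x$, a piece of $\gamma_1$, a piece of $\tau$, and a piece of $\gamma_2$ bound a simply connected triangle; in other words $\tau$ traverses $(x,\gamma_1^\circ,\gamma_2^\circ)$ and this angle is permissible. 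Set $\psi$ of the basis element equal to the equivalence class of this angle (the two permitted orientation choices in Definition~\ref{def:M_angle} collapse to one class). The degenerate overlap $w''=1_v$ is included: then $\delta$ is a single point on the $P$-arc $v$ and the triangle lies in one tile adjacent to $v$.

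For the inverse I would start from a permissible angle $(x,\gamma_1^\circ,\gamma_2^\circ)$ with a $P$-arc $\tau$ traversing it, follow $\gamma_1$ and $\gamma_2$ away from the germs $\gamma_1^\circ,\gamma_2^\circ$, and record the maximal stretch over which their sequences of crossings with $P$ agree; by minimality and permissibility this is well defined and yields a common subwalk $w''$ together with its positions in $w$ and in $w'$. Inspecting the local pictures of the tiling relations (Figures~\ref{fig:relations:case 2} and~\ref{fig:relations:case 3}) near $\tau$, the way $\gamma_1$ and $\gamma_2$ approach $x$ forces the letter of $w$ just before $w''$ to be inverse and the letter of $w'$ just before $w''$ to be direct (or the run to start at an endpoint), so $w''$ satisfies the up-set/down-set conditions and hence indexes a basis element of $\Hom_A(M_1,M_2)$ via Proposition~\ref{prop: string_hom_basis}; this defines $\phi$, and $\phi$ and $\psi$ are mutually inverse because both are controlled by the same datum, namely the maximal common run of $P$-angles emanating from $x$ in the prescribed direction. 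Finiteness of both sides follows from minimality of the crossing numbers, and the three cases of Proposition~\ref{prop: string_hom_basis}---mono, epi, mixed---match the three possibilities $\delta=\gamma_1$, $\delta=\gamma_2$, and $\delta$ a proper common sub-segment of both.

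The main obstacle is the uniform verification, over all configurations, that the up-set/down-set conditions on $w''$ are equivalent to permissibility of the resulting angle. One has to handle: a self-intersecting $\gamma_1$ or $\gamma_2$ (so a single arc plays both roles and $x$ may be a self-crossing); a common run that revisits a $P$-arc, so $\tau_a=\tau_b$ for $a\ne b$; the trivial-overlap case $w''=1_v$; and the boundary cases where a run ends at a red point, so that the angle is a shared endpoint rather than an interior crossing. Each reduces to a local check on tiles and $P$-angles, and this is exactly the bookkeeping carried out, in slightly different language, in \cite[Proposition~3.21]{BCS21} and \cite[Theorem~2.28]{Cha23}; the present statement then follows directly once ``permissible angle'' is matched up with the terminology there.
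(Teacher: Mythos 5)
The paper does not actually prove this theorem: it is stated as a recalled background result, attributed to \cite[Proposition 3.21]{BCS21} (``with different terminology''), to \cite[Theorem 2.28]{Cha23}, and to \cite[Theorem 4.11]{BCS24}, with no proof supplied. Your proposal likewise ends by deferring to exactly those references for the case analysis, so in that sense your treatment is commensurate with the paper's. The extra material in your write-up --- the dictionary between overlaps $w''$ and common runs of $P$-angles, and the construction of $\psi$ sending a basis element to an angle --- is a reasonable sketch of what the bijection must do, and it invokes the right ingredients (Proposition~\ref{prop: string_hom_basis}, Remark~\ref{rem:string submodule and string quotient module}, Definition~\ref{def:string-from-arc}, Definition~\ref{def:M_angle}).

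Two points in the sketch need more care before it would count as a proof, and you should be aware that they are genuinely nontrivial rather than notational. First, you always look at ``the end of $\delta$ corresponding to the start of $w''$.'' The overlap has two ends, and it is not a priori clear at which end the relevant intersection sits, nor that the resulting ordered angle has $\gamma_1$ in the first slot rather than $\gamma_2$; this is governed by the asymmetry between the up-set condition on the $w$-side and the down-set condition on the $w'$-side, which you assert but do not verify. Second, your dichotomy ``both arcs terminate at a common red point, or they cross transversally'' silently absorbs the cases where $w''$ is a prefix/suffix of exactly one of the two strings, i.e.\ one arc ends at its red endpoint while the other continues; one must check that the up-set/down-set conditions still force a permissible intersection there and not a disjoint configuration. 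Both of these are precisely the kind of local-tile bookkeeping that \cite{BCS21} and \cite{Cha23} carry out, and you are explicit that you are leaning on them, so there is no hidden error --- but someone reading your sketch in isolation should not come away thinking the bijection has been established.
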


The following result concerns overlap extensions and is a special case of \cite[Theorem 2.28]{Cha23}. 
For the reader's convenience, we include a proof here. We note that our proof relies on string combinatorics whereas the proof in \cite{Cha23} is via the bounded derived category of the algebra.

\begin{proposition}[Overlap extensions and crossings]\label{prop:overlap-cross}
Let $M_1$ and $M_2$ be two (possibly isomorphic) string modules. 
Denote by $\gamma_1$ and $\gamma_2$ the permissible arcs corresponding to $M_1$ and $M_2$, respectively. The following statements are equivalent:
\begin{enumerate}[(1)]
\item \label{prop:overlap-cross:itm1} There is an overlap extension between $M_1$ and $M_2$.
\item \label{prop:overlap-cross:itm2} $\gamma_1$ and $\gamma_2$ cross each other in their interior.
\end{enumerate}

Moreover, the overlap extensions of $M_2$ by $M_1$ are in bijection with the equivalence classes of permissible angles  of the form $(x, \gamma_1^\circ, \gamma_2^\circ)$, where $x = \zg_1 (t) = \zg_2 (t)$, for some $t \in (0,1)$.
\end{proposition}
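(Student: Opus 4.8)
The plan is to transport the combinatorics of overlap extensions (Definition~\ref{def:overlap_extension}, Theorem~\ref{thm:CPS21}) through the dictionary between strings and permissible arcs (Theorem~\ref{thm:permissible}, with the explicit recipe of Definition~\ref{def:string-from-arc}) and through Theorem~\ref{thm hom basis}. Write $w=w(\gamma_1)$ and $v=w(\gamma_2)$. The observation underpinning everything is that a subwalk $e$ which is an up-set of $w$ and a down-set of $v$ corresponds to a maximal common stretch $\delta_e$ along which $\gamma_1$ and $\gamma_2$ traverse the same ordered sequence of $P$-angles; after choosing representatives in minimal position (with respect to $P$ \emph{and} with respect to each other) I may assume $\gamma_1$ and $\gamma_2$ literally coincide along $\delta_e$. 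By Remark~\ref{rem:first string w is an up-set} such an $e$ produces a nonzero morphism $M(w)\to M(v)$, which under Theorem~\ref{thm hom basis} is the basis element attached to a permissible angle of $\gamma_1$ and $\gamma_2$; the whole problem is to decide when that angle sits at an interior crossing.

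For \ref{prop:overlap-cross:itm1}$\Rightarrow$\ref{prop:overlap-cross:itm2}, suppose there is an overlap extension of $M_2=M(v)$ by $M_1=M(w)$ with overlap $e$, written $w=w_{\overlapLEFT}\,a^{-1}\,e\,b\,w_{\overlapRIGHT}$ and $v=v_{\overlapLEFT}\,c\,e\,d^{-1}\,v_{\overlapRIGHT}$ as in Definition~\ref{def:overlap_extension}. The two arcs coincide along $\delta_e$, so I would examine its two ends. At the left end, on the $P$-arc indexed by the source vertex $s(e)$ of $e$, the arc $\gamma_1$ departs through the $P$-angle of the \emph{inverse} letter $a^{-1}$ (or reaches its endpoint in $M^\ast$ if $a=0$), whereas $\gamma_2$ departs through the $P$-angle of the \emph{direct} letter $c$ (or reaches its endpoint if $c=0$); a local inspection of the tiles around that $P$-arc---using Definition~\ref{def:string-from-arc} together with the fact from Remark~\ref{rem:string submodule and string quotient module} that $e$ is an up-set of $w$ but a down-set of $v$---shows that $\gamma_1$ and $\gamma_2$ emerge from $\delta_e$ on opposite sides there, where they emerge at all; condition~\ref{enum:a nonzero or c nonzero} guarantees at least one of them does. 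The symmetric inspection at the right end (on the $P$-arc indexed by $t(e)$) uses $b$ against $d^{-1}$ and condition~\ref{enum:b nonzero or d nonzero}. Since the relative side of $\gamma_1$ and $\gamma_2$ is reversed between the two ends of $\delta_e$, the two arcs must cross transversally in the interior of $S$, and resolving that crossing reproduces exactly the arcs of the strings $e_1=w_{\overlapLEFT}\,a^{-1}\,e\,d^{-1}\,w_{\overlapRIGHT}$ and $e_2=v_{\overlapLEFT}\,c\,e\,b\,w_{\overlapRIGHT}$, i.e.\ $E_1\oplus E_2$, which pins down the correspondence. The trivial-overlap case $e=1_x$ collapses $\delta_e$ to a point inside the tile around $x$, where condition~\ref{enum:c trivial overlap implies da and cb are strings} is precisely the hypothesis that forbids isotoping $\gamma_1$ past $\gamma_2$, so the crossing persists; I would record this as a separate subcase.

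For \ref{prop:overlap-cross:itm2}$\Rightarrow$\ref{prop:overlap-cross:itm1} I would run the construction backwards: put $\gamma_1,\gamma_2$ in minimal position, take an interior crossing point $p$, let $\delta$ be a maximal stretch through $p$ along which the arcs coincide (a single point if there is none), and read the $P$-angles along $\delta$ to obtain a subwalk $e$ that is an up-set of $w$ and a down-set of $v$ (with $e=1_x$ when $\delta$ is a point). Because the arcs genuinely cross at $p$---rather than bounding a removable bigon or meeting only at a shared $M^\ast$-endpoint---the letters flanking $e$ must realize the pattern $w=w_{\overlapLEFT}\,a^{-1}\,e\,b\,w_{\overlapRIGHT}$, $v=v_{\overlapLEFT}\,c\,e\,d^{-1}\,v_{\overlapRIGHT}$, and minimality of position forces conditions~\ref{enum:a nonzero or c nonzero}--\ref{enum:c trivial overlap implies da and cb are strings}; hence $(w,v,e)$ is an overlap extension of $M_2$ by $M_1$. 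Combining the two directions, the assignment sending an overlap extension of $M_2$ by $M_1$ to the equivalence class of the permissible angle $(x,\gamma_1^\circ,\gamma_2^\circ)$ at the crossing it produces is well defined and surjective onto the permissible angles of that form at interior crossings, and it is injective because an overlap extension is determined by its overlap string $e$ (Theorem~\ref{thm:CPS21}, Definition~\ref{def:overlap_extension}), which is recovered from $\delta$. This gives the ``moreover'' clause, and \ref{prop:overlap-cross:itm1}$\Leftrightarrow$\ref{prop:overlap-cross:itm2} follows: for \ref{prop:overlap-cross:itm2}$\Rightarrow$\ref{prop:overlap-cross:itm1}, an interior crossing yields an overlap extension of $M_2$ by $M_1$, or, via the opposite angle class at the same point, of $M_1$ by $M_2$.

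The hard part will be the local analysis in the two implications above: for every configuration of the letters $a,b,c,d$ (including the cases where some vanish, and the degenerate case $e=1_x$) one must verify that ``$\gamma_1$ and $\gamma_2$ coincide along $\delta_e$, with the direct/inverse discrepancy at its two ends'' genuinely forces a \emph{transversal} crossing and not a tangency that can be isotoped away; this is exactly where minimal position with respect to both $P$ and the other arc is indispensable, and it is also where the non-obvious condition~\ref{enum:c trivial overlap implies da and cb are strings} of Definition~\ref{def:overlap_extension} enters. A secondary point to get right is the orientation bookkeeping, so that the angle appearing at the crossing is $(x,\gamma_1^\circ,\gamma_2^\circ)$ rather than its opposite, matching Theorem~\ref{thm hom basis} and the statement of the ``moreover'' clause.
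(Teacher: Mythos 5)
Your proposal takes essentially the same route as the paper's proof: both identify the overlap string $e$ with the stretch along which $\gamma_1$ and $\gamma_2$ traverse the same sequence of $P$-angles, and both exploit the inverse/direct asymmetry of the flanking letters ($a^{-1}$ vs.\ $c$, and $b$ vs.\ $d^{-1}$) to force a transversal crossing. The one structural difference in $(1)\Rightarrow(2)$ is cosmetic: the paper draws the two arcs without crossings through the tiles $\Delta_0,\ldots,\Delta_{t-1}$ and then locates the forced crossing inside the single tile $\Delta_t$ containing $b$ and $d$, whereas you argue more globally that the relative position of $\gamma_1$ and $\gamma_2$ is reversed between the two ends of the common stretch; these are two phrasings of the same topological fact, and neither is fully spelled out (both lean on a figure-level ``local inspection'').

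One step of your $(2)\Rightarrow(1)$ is genuinely thinner than it should be and is the part the paper argues explicitly. Starting from an interior crossing $p$, you write ``let $\delta$ be a maximal stretch through $p$ along which the arcs coincide (a single point if there is none), and read the $P$-angles along $\delta$ to obtain a subwalk $e$.'' But before you can read off $e$ — even the trivial overlap $e=1_x$ — you must know that $\gamma_1$ and $\gamma_2$ share a crossing with some $P$-arc near $p$; a priori $p$ might sit in a tile where there is no common $P$-crossing at all, in which case no string $e$ exists. The paper supplies exactly this: at least one of the two local segments, say $(\gamma_1)_\Delta$, is an interior (hence permissible) segment, so the triangle it cuts out with its $P$-angle does not contain the red point of $\Delta$; since $(\gamma_2)_\Delta$ enters that triangle through the crossing at $p$ and cannot exit through the red point, it must exit through one of the two $P$-arcs bounding the $P$-angle, producing the shared $P$-crossing $\rho$. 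You should add this argument (or an equivalent) before invoking ``maximality of $\delta$.'' The rest — maximality forcing the pattern $p_{\text{pre}}\,a\,e\,b^{-1}\,p_{\text{suff}}$ versus $q_{\text{pre}}\,c^{-1}\,e\,d\,q_{\text{suff}}$, with at least one of $a,c$ and one of $b,d$ nonzero because otherwise the crossing could be isotoped away, and condition~\ref{enum:c trivial overlap implies da and cb are strings} in the trivial-overlap case — matches the paper and is fine.
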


\def\myPangle{\mathbf{a}}

\begin{proof}
First, we show  \ref{prop:overlap-cross:itm1} implies \ref{prop:overlap-cross:itm2}. 
Let $w_1$ and $w_2$ be two strings with an overlap extension of $M_1$ by $M_2$. 
By Definition~\ref{def:overlap_extension}, there are $a,b,c,d\in Q_1\cup\{0\}$, such that 
$w_1=w_{1_\LEFT} a^{-1} e b w_{1_\RIGHT}$ and $w_2= w_{2_\LEFT} c e d^{-1} w_{2_\RIGHT}$, where at least one of $a$ and $c$ (respectively $b$ and $d$) is nonzero. 

We want to show that the corresponding arcs $\gamma_{w_1}, \gamma_{w_2}$ cross in the interior of $S\setminus M^*$.

As in earlier proofs, we abuse notation here and write $a$ when we mean both the arrow in $Q_1$ and the correspond angle in the surface $(S, M, M^*, P)$.
Let $\Delta_0$ (respectively $\Delta_t$) denote the tile containing $a$ and $c$ (respectively $b$ and $d$).
Let $\Delta_1, \ldots, \Delta_{t-1}$ be the tiles containing each arrow in the string $e$, in order.

We can draw the segments of $\gamma_{w_1}$ and $\gamma_{w_2}$ in $\Delta_0, \ldots, \Delta_{t-1}$ in such a way that there are no crossings, since the strings of $w_1$ and $w_2$ coincide on $e$.
Now, suppose without loss of generality, that $d\neq 0$. This means that in tile $\Delta_t$, the arc $\gamma_{w_2}$ crosses the $P$-angle associated to the arrow $d$. 
The arc $\gamma_{w_1}$ either finishes at the red point of 
$\Delta_t$ or it crosses the $P$-angle corresponding to $b$, which is distinct from the {$P$-}angle corresponding to $d$. 
Either way, the segments of $\gamma_{w_1}$ and $\gamma_{w_2}$ in $\Delta_t$ must cross 
(see Figure \ref{fig:prop:overlap-cross:overlapimpliescross}), and thus \ref{prop:overlap-cross:itm2} holds.

Next, we show \ref{prop:overlap-cross:itm2} implies \ref{prop:overlap-cross:itm1}. 
Suppose $\gamma_1$ and $\gamma_2$ cross at a point $x$.
Without loss of generality, we may assume that $x$ lies in the interior of a tile which we  denote by $\Delta$. Denote by $(\gamma_1)_\Delta$ and $(\gamma_2)_\Delta$ the respective segments of $\gamma_1$ and   $\gamma_2$ in $\Delta$ which include the point $x$. Since $x \not\in M^*$, we see that at least one of $(\gamma_1)_\Delta$ and $(\gamma_2)_\Delta$ is not an end segment.

Suppose, without loss of generality, that $(\gamma_1)_\Delta$ is not an
end segment. 
Then, this segment corresponds to an $P$-angle $\myPangle$ 
at $\Delta$.

Since the segment $\gamma_1$ is permissible, the red point of $\Delta$ does not lie in the triangle formed by $(\gamma_1)_\Delta$ and the angle $\myPangle$. 
Hence, since $(\gamma_1)_\Delta$ and $(\gamma_2)_\Delta$ cross each other, it follows that $\gamma_1$ and $\gamma_2$ have a common intersection with $P$, say at $P$-arc $\rho$ 
(see Figure \ref{fig:prop:overlap-cross:crossing_c}).

This $P$-arc $\rho$ is one of the $P$-arcs in the maximal sequence $\mathcal{E}= (\rho_1, \ldots, \rho_r)$ of consecutive $P$-arcs of $P$ crossed by $\gamma_1$ and $\gamma_2$. 
Let $w_1$ and $w_2$ be the strings corresponding to $\zg_1,\zg_2$, respectively, and let $e$ the string corresponding to $\mathcal{E}$. 

By maximality of $\mathcal{E}$, it follows that one of $w_1$ and $w_2$ must be of the form 
$p_\LEFT aeb^{-1}p_\RIGHT$, and the other must be of the form 
$q_\LEFT c^{-1}edq_\RIGHT$, where 
$p_\LEFT, p_\RIGHT, q_\LEFT, q_\RIGHT$ are strings such that one of 
$p_\LEFT a$ and 
$q_\LEFT c^{-1}$ (respectively 
$b^{-1} p_\RIGHT$ and 
$dq_\RIGHT$) are nonzero. 
Indeed, if this were not the case, then we would be able to undo the crossing at $x$ by considering isotopic arcs. 
(For example, $a$ is zero precisely when $\gamma_v$ and $\gamma_e$ share a common endpoint at the tile containing $\rho_1$. See Figure \ref{fig:prop:overlap-cross:case_all_nonempty} for an illustration of the case where 
$p_\LEFT a, q_\LEFT c^{-1}, b^{-1} p_\RIGHT$ and $dq_\RIGHT$ are all nonzero. )

Thus it follows that there is an overlap extension between the string modules $M(w_1)$ and $M(w_2)$, as required. The last statement of the proposition
follows from above. 
\end{proof}

\begin{figure}[htbp!]
\small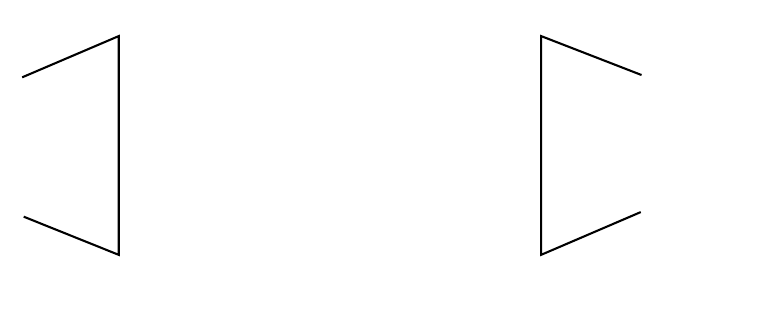
\caption{Proof of Proposition \ref{prop:overlap-cross}, \ref{prop:overlap-cross:itm1} implies \ref{prop:overlap-cross:itm2}: the segments of $\gamma_{w_1}$ and $\gamma_{w_2}$ must cross in $\Delta_t$.}
\label{fig:prop:overlap-cross:overlapimpliescross}
 
\begingroup%
  \makeatletter%
  \providecommand\color[2][]{%
    \errmessage{(Inkscape) Color is used for the text in Inkscape, but the package 'color.sty' is not loaded}%
    \renewcommand\color[2][]{}%
  }%
  \providecommand\transparent[1]{%
    \errmessage{(Inkscape) Transparency is used (non-zero) for the text in Inkscape, but the package 'transparent.sty' is not loaded}%
    \renewcommand\transparent[1]{}%
  }%
  \providecommand\rotatebox[2]{#2}%
  \newcommand*\fsize{\dimexpr\f@size pt\relax}%
  \newcommand*\lineheight[1]{\fontsize{\fsize}{#1\fsize}\selectfont}%
  \ifx\svgwidth\undefined%
    \setlength{\unitlength}{197.36012857bp}%
    \ifx\svgscale\undefined%
      \relax%
    \else%
      \setlength{\unitlength}{\unitlength * \real{\svgscale}}%
    \fi%
  \else%
    \setlength{\unitlength}{\svgwidth}%
  \fi%
  \global\let\svgwidth\undefined%
  \global\let\svgscale\undefined%
  \makeatother%
  \begin{picture}(1,0.7242012)%
    \lineheight{1}%
    \setlength\tabcolsep{0pt}%
    \put(0,0){\includegraphics[width=\unitlength,page=1]{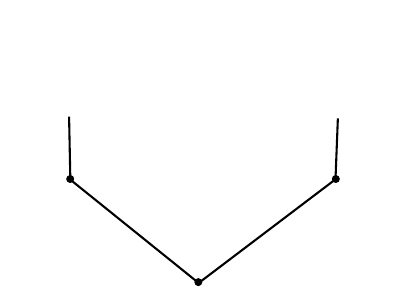}}%
    \put(0.46465914,0.08114478){\makebox(0,0)[lt]{\lineheight{1.25}\smash{\begin{tabular}[t]{l}$\myPangle$\end{tabular}}}}%
    \put(0,0){\includegraphics[width=\unitlength,page=2]{fig_overlap2.pdf}}%
    \put(0.22238943,0.32407679){\makebox(0,0)[lt]{\lineheight{1.25}\smash{\begin{tabular}[t]{l}$\zD$\end{tabular}}}}%
    \put(0,0){\includegraphics[width=\unitlength,page=3]{fig_overlap2.pdf}}%
    \put(0.6438163,0.12809293){\makebox(0,0)[lt]{\lineheight{1.25}\smash{\begin{tabular}[t]{l}$\rho$\end{tabular}}}}%
    \put(0.56969331,0.30553561){\makebox(0,0)[lt]{\lineheight{1.25}\smash{\begin{tabular}[t]{l}$\zg_1$\end{tabular}}}}%
    \put(0.51771492,0.48924317){\makebox(0,0)[lt]{\lineheight{1.25}\smash{\begin{tabular}[t]{l}$\zg_2$\end{tabular}}}}%
  \end{picture}%
\endgroup%

\caption{Proof of Proposition \ref{prop:overlap-cross},~\ref{prop:overlap-cross:itm2} implies~\ref{prop:overlap-cross:itm1}: the arcs $\zg_1$ and $\zg_2$ cross in the interior of $\zD$ and both cross the $P$-arc $\rho$ of $P$.}\label{fig:prop:overlap-cross:crossing_c}

\end{figure}

\begin{figure}
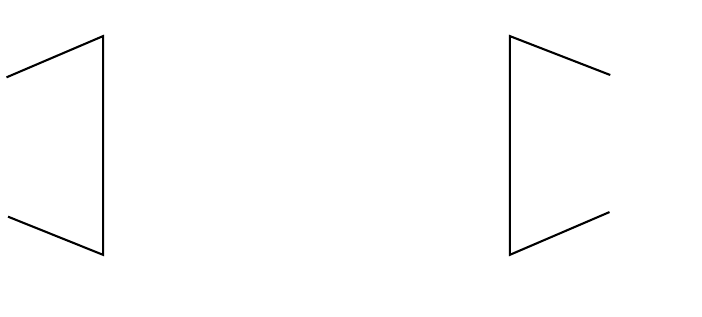
\caption{Proof of Proposition~\ref{prop:overlap-cross},~\ref{prop:overlap-cross:itm2} implies~\ref{prop:overlap-cross:itm1}: case where all $a,b,c,d$ are nonzero.}\label{fig:prop:overlap-cross:case_all_nonempty}
\end{figure}

Arrow extensions between string modules can also be described in terms of intersections of permissible arcs, as the following remark describes. For more details, see \cite[Theorem 2.28]{Cha23}.

\begin{remark}[Arrow extensions and certain non-permissible intersections at endpoints]\label{rem:arrow extensions correspond to certain intersections at endpoints}
Let $M_1, M_2$ be two string modules, possibly isomorphic, and let  $\gamma_1, \gamma_2$ denote the permissible arcs corresponding to $M_1$ and $M_2$, respectively. The following conditions are equivalent.
\begin{enumerate}
\item There is an arrow extension of $M_2$ by $M_1$.
\item $\gamma_1$ and $\gamma_2$ intersect at an endpoint $x\in M^*$ in such a way that there is a marked point $x' \in M$ and a quadrilateral as in Figure~\ref{fig:arrow-extension}, where $\tau_2$ (respectively $\tau_1$) is the first $P$-arc in $P$ crossed by the end segment $\gamma_2^\circ$ (respectively $\gamma_1^\circ$) at $x$. 
In other words, 
we have 
a (non-permissible) angle $(x,\gamma_2^\circ,\gamma_1^\circ)$ and a $P$-angle $(x',\tau_2,\tau_1)$ which form a quadrilateral as in Figure~\ref{fig:arrow-extension}. 
\end{enumerate}
\end{remark}

\begin{figure}[htbp!]
\centering
\begingroup%
  \makeatletter%
  \providecommand\color[2][]{%
    \errmessage{(Inkscape) Color is used for the text in Inkscape, but the package 'color.sty' is not loaded}%
    \renewcommand\color[2][]{}%
  }%
  \providecommand\transparent[1]{%
    \errmessage{(Inkscape) Transparency is used (non-zero) for the text in Inkscape, but the package 'transparent.sty' is not loaded}%
    \renewcommand\transparent[1]{}%
  }%
  \providecommand\rotatebox[2]{#2}%
  \newcommand*\fsize{\dimexpr\f@size pt\relax}%
  \newcommand*\lineheight[1]{\fontsize{\fsize}{#1\fsize}\selectfont}%
  \ifx\svgwidth\undefined%
    \setlength{\unitlength}{133.29478346bp}%
    \ifx\svgscale\undefined%
      \relax%
    \else%
      \setlength{\unitlength}{\unitlength * \real{\svgscale}}%
    \fi%
  \else%
    \setlength{\unitlength}{\svgwidth}%
  \fi%
  \global\let\svgwidth\undefined%
  \global\let\svgscale\undefined%
  \makeatother%
  \begin{picture}(1,0.65664767)%
    \lineheight{1}%
    \setlength\tabcolsep{0pt}%
    \put(0,0){\includegraphics[width=\unitlength,page=1]{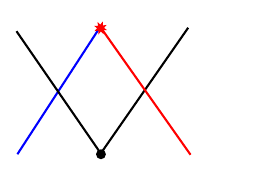}}%
    \put(0.34350444,0.59891626){\makebox(0,0)[lt]{\lineheight{1.25}\smash{\begin{tabular}[t]{l}$x$\end{tabular}}}}%
    \put(0.34451516,0.013224){\makebox(0,0)[lt]{\lineheight{1.25}\smash{\begin{tabular}[t]{l}$x'$\end{tabular}}}}%
    \put(-0.0056779,0.45854469){\makebox(0,0)[lt]{\lineheight{1.25}\smash{\begin{tabular}[t]{l}$\tau_2$\end{tabular}}}}%
    \put(0.65826373,0.45854469){\makebox(0,0)[lt]{\lineheight{1.25}\smash{\begin{tabular}[t]{l}$\tau_1$\end{tabular}}}}%
    \put(-0.0056779,0.17721332){\makebox(0,0)[lt]{\lineheight{1.25}\smash{\begin{tabular}[t]{l}$\zg_2$\end{tabular}}}}%
    \put(0.66951748,0.17721332){\makebox(0,0)[lt]{\lineheight{1.25}\smash{\begin{tabular}[t]{l}$\zg_1$\end{tabular}}}}%
  \end{picture}%
\endgroup%

\caption{Intersection corresponding to an arrow extension  as described in Remark \ref{rem:arrow extensions correspond to certain intersections at endpoints}.} 
    \label{fig:arrow-extension}
\end{figure}

\section{Triangulations correspond to MAR modules}
\label{sect 5}

In this section we give a geometric characterization of maximal almost rigid modules over a gentle algebra. The first step for this characterization is  Proposition~\ref{prop:mar-maximal-noncrossing}, which is a consequence of the geometric interpretation of overlap  extensions (Proposition~\ref{prop:overlap-cross}), and the fact that almost rigidity is equivalent to non-existence of overlap extensions (Proposition~\ref{prop:all-exts}). 

\begin{proposition}\label{prop:mar-maximal-noncrossing}
Let $A$ be a gentle algebra and $\TiledSurface$ be the corresponding tiled surface. 
 The bijection from string modules to permissible arcs described in Section \ref{sec:permissible arcs} induces a bijection from the set of maximal almost rigid $A$-modules to the set of maximal collections of noncrossing permissible arcs of $\TiledSurface$.
\end{proposition}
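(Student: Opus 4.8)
The plan is to upgrade the bijection $M(w)\mapsto\gamma_w$ of Theorem~\ref{thm:permissible} to an isomorphism of posets --- almost rigid $A$-modules ordered by ``is a direct summand of'' on one side, finite noncrossing collections of permissible arcs ordered by inclusion on the other --- and then simply pass to maximal elements.

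First I would dispose of two elementary reductions. On the algebra side, a direct summand of an almost rigid module is again almost rigid: condition~\ref{item:M1} passes to summands by Krull--Schmidt, and condition~\ref{item:M2} is a condition on ordered pairs of indecomposable summands and hence is inherited. It follows that an almost rigid module $T$ is maximal almost rigid if and only if $T\oplus L$ fails to be almost rigid for every string module $L\notin\add T$ --- that is, condition (M3) is precisely maximality of $T$ in the summand order. On the geometric side, a subset of a noncrossing collection of permissible arcs is again noncrossing, so a noncrossing collection is maximal exactly when it admits no enlargement by a single permissible arc. (One could add that every noncrossing collection is finite, bounded by the number of arcs in a triangulation, as in the proof of Proposition~\ref{prop can complete ar to mar}, but this is not needed for what follows.) By Theorem~\ref{thm:permissible}, the assignment $\bigoplus_i M(w_i)\mapsto\{\gamma_{w_i}\}$ is a bijection $\Phi$ between basic direct sums of string modules and finite sets of pairwise distinct permissible arcs, and $\Phi$ and $\Phi^{-1}$ manifestly respect the two orders.

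The substantive step is the equivalence: $T=\bigoplus_i M(w_i)$ is almost rigid if and only if $\Phi(T)$ is a noncrossing collection of arcs. By Proposition~\ref{prop:all-exts}, $T$ satisfies~\ref{item:M2} if and only if, for every ordered pair $(M,N)$ of indecomposable summands of $T$ --- including the case $M\cong N$ --- the space $\Ext^1(M,N)$ is generated by arrow extensions; by Theorem~\ref{thm:CPS21} this is equivalent to the absence of an overlap extension of $M$ by $N$. By Proposition~\ref{prop:overlap-cross}, there is no overlap extension between $M(w_i)$ and $M(w_j)$ exactly when the permissible arcs $\gamma_{w_i}$ and $\gamma_{w_j}$ have no transversal crossing in the interior of $S$; applying this with $i=j$ rules out self-crossings of each $\gamma_{w_i}$. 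Hence $T$ is almost rigid precisely when the arcs $\gamma_{w_i}$ are pairwise noncrossing and individually without self-crossings, i.e.\ precisely when $\Phi(T)$ is noncrossing.

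Putting these together, $\Phi$ restricts to an isomorphism between the poset of almost rigid $A$-modules and the poset of finite noncrossing collections of permissible arcs; an order isomorphism sends maximal elements to maximal elements, so $\Phi$ induces the claimed bijection from maximal almost rigid $A$-modules to maximal collections of noncrossing permissible arcs. The only genuine inputs are Propositions~\ref{prop:all-exts} and~\ref{prop:overlap-cross}, which are already proved; the main thing to be careful about is purely bookkeeping --- that the case of self-extensions ($M\cong N$) is covered, that ``noncrossing collection'' is read as forbidding self-crossings as well as mutual crossings, and that maximality against a single added summand coincides with maximality in the full summand order.
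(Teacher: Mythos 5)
Your proof is correct and follows essentially the same route as the paper's: both reduce condition \ref{item:M2} to the absence of overlap extensions via Proposition~\ref{prop:all-exts}, translate this to noncrossing arcs via Proposition~\ref{prop:overlap-cross}, and then observe that the bijection of Theorem~\ref{thm:permissible} therefore matches maximal almost rigid modules with maximal noncrossing collections. Your framing in terms of a poset isomorphism and your explicit handling of the $i=j$ case (self-extensions versus self-crossings) are cleaner bookkeeping than the paper's two-sentence proof, but the mathematical content and the cited ingredients are identical.
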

\begin{proof}
Let $M(\gamma), M(\gamma')$ be string modules, with corresponding permissible arcs $\gamma$ and $\gamma'$, respectively. It follows from Proposition~\ref{prop:all-exts}, Proposition~\ref{prop:overlap-cross} and the fact that every overlap extension has non-indecomposable middle term, that $M(\gamma) \oplus M(\gamma')$ is almost rigid if and only if $\gamma$ and $\gamma'$ admit no crossings. 
Therefore, a module lies in $\mar{A}$ if and only if the corresponding set of permissible arcs admits no crossings and is maximal with respect to this property. 
\end{proof}

\begin{lemma}\label{lem:maximalistriangulation} 
A maximal set of noncrossing permissible arcs of $\TiledSurface$ is an ideal triangulation of the surface $(S,M^*)$.
\end{lemma}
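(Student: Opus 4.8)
The plan is to show that the maximal noncrossing permissible collection $\calt$ is in fact a maximal noncrossing collection of arcs of the marked surface $(S,M^*)$, which is by definition an ideal triangulation. This requires two things: (i) every arc of $\calt$ is simple, and (ii) no arc of $(S,M^*)$ can be adjoined to $\calt$ without creating a crossing.

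For (i): if some $\gamma\in\calt$ had a self-crossing, then Proposition~\ref{prop:overlap-cross}, applied with $\gamma_1=\gamma_2=\gamma$, would yield an overlap extension $0\to M(\gamma)\to E_1\oplus E_2\to M(\gamma)\to 0$ whose middle term is decomposable. This violates condition (M2) for the maximal almost rigid module attached to $\calt$ by Proposition~\ref{prop:mar-maximal-noncrossing}; equivalently, it shows $\{\gamma\}$ already fails to be a noncrossing collection. Hence every arc of $\calt$ is simple.

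For (ii), suppose for contradiction that $\delta$ is an arc of $(S,M^*)$ whose interior is disjoint from $\bigcup_{\gamma\in\calt}\gamma$, which is not isotopic to any element of $\calt$, and which is chosen with the minimal number of transversal crossings with $P$. Since $\calt$ is maximal among noncrossing collections of \emph{permissible} arcs and $\calt\cup\{\delta\}$ is noncrossing, it suffices to show that $\delta$ is permissible. Here is the geometric heart of the matter. Because the permissible boundary segments are required summands (Theorem~\ref{thm:required-model}), all of them lie in $\calt$, so the closure of the component $R$ of $S\setminus\bigcup_{\gamma\in\calt}\gamma$ containing $\delta$ meets $\partial S$ only along such arcs, and no $P$-arc can terminate inside $R$. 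Thus every $P$-arc meeting $R$ crosses it from one side to another, and inside each tile $\Delta$ of $P$ meeting $R$ the arcs of $\calt$ only cut off the corners of $\Delta$ at its black vertices while avoiding the red point of $\Delta$ (compare the local analysis in the proof of Proposition~\ref{prop:overlap-cross}). Tracking $\delta$ through this configuration and using minimality of $\delta\cap P$, one checks that each interior segment $\delta^i$ lies in a single tile $\Delta$ and joins two consecutive sides $\tau_i,\tau_{i+1}$ of $\Delta$, cutting off the corner at their common black endpoint $x_i$ without enclosing the red point of $\Delta$; that is, $\delta^i$ traverses the $P$-angle $(x_i,\tau_i,\tau_{i+1})$, as required in Definition~\ref{def:permissible arc}. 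Hence $\delta$ is permissible, the desired contradiction, and $\calt$ is an ideal triangulation of $(S,M^*)$.

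The hard part is precisely the geometric claim in the previous paragraph: ruling out the two ways an interior segment of $\delta$ can fail to be permissible — joining non-adjacent sides of a tile, or surrounding a tile's red point — for an arc $\delta$ trapped in a complement region of $\calt$ with minimal intersection with $P$. This is where one must exploit that $R$ avoids $\partial S$, that $\calt$ already contains all required summands (so that each marked point on $\partial R$ carries an arc of $\calt$), and the minimality of the crossing number. The once-punctured regions, where "adjoining a diagonal" means adding a radius from the puncture forming a self-folded triangle, have to be handled as a separate case by the same local analysis.
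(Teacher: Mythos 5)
Your overall strategy is sound but there is a genuine gap where you write ``one checks,'' and you acknowledge this yourself. That sentence is precisely where the paper does all the work, via a two-case analysis that you do not reproduce.

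Let me be concrete about why the gap matters. You aim to show that any arc $\delta$ of $(S,M^*)$ disjoint from $\calt$ is \emph{itself} permissible. The paper's proof does not establish this claim directly: given a non-permissible $\alpha$ disjoint from $\calt$, it constructs a \emph{different} permissible arc $\gamma$ disjoint from $\calt$ (either a short arc crossing a single $P$-arc, or an arc obtained by routing a piece of $\delta$ along an existing arc of $\calt$ that is ``closest'' to the red point of the offending tile), and this yields the contradiction with maximality. Your reformulation — that $\delta$ itself must be permissible — is a true statement a posteriori, but proving it directly requires ruling out exactly the configurations the paper handles in Cases 1 and 2: a non-permissible segment $\delta^i$ in a tile $\Delta$ cuts off a subregion $\Delta'$ free of red points, and one must argue about how the arcs of $\calt$ can sit relative to the $P$-arcs bounding $\Delta'$. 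The paper branches on whether a particular $P$-arc $j$ of $\Delta'$ is crossed by some arc of $\calt$, and in each branch explicitly exhibits a new permissible arc; your proposal replaces that branching with the assertion that minimality of $|\delta \cap P|$ plus the presence of the required boundary-segment arcs forces each $\delta^i$ to cut off a single corner. You give no argument for why a $\delta^i$ joining non-consecutive sides of $\Delta$, or one enclosing a puncture, cannot coexist with minimality and disjointness from $\calt$. That is the entire content of the lemma.

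Two smaller points. First, your part (i) is essentially automatic: a collection of arcs is noncrossing only if each member is self-crossing-free (self-crossings are crossings, cf.~Proposition~\ref{prop:overlap-cross} with $M_1 = M_2$), so this observation is not really a separate step. Second, your use of Theorem~\ref{thm:required-model} to conclude that the permissible boundary segments lie in $\calt$ is legitimate (it is proved earlier and depends only on Proposition~\ref{prop:mar-maximal-noncrossing}, not on the lemma being proved), and it is a genuinely nice observation. But the inference you draw from it — that ``no $P$-arc can terminate inside $R$'' and that arcs of $\calt$ inside each tile only ``cut off corners at black vertices'' — is not worked out carefully enough to close the argument; in particular, you need to account for end segments of arcs of $\calt$ (which go to red points rather than cutting off $P$-angles) and for boundary components of $S$ lying inside $R$, which the paper handles implicitly in its construction of $\gamma$. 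As written, your proof is an outline with the decisive step deferred.
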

\begin{proof}
Let $\calt$ be a maximal set of noncrossing permissible arcs and suppose it is not a triangulation. 
Then there is a non-permissible arc $\alpha$ such that $\calt \cup \{\alpha\}$ is noncrossing. Let $\alpha^i$ be an interior segment of $\alpha$ which is not permissible, and let $\zD$ be the tile containing $\alpha^i$. Let $p$ denote the red point of $\zD$.

Since $\calt \cup \{\alpha\}$ is noncrossing, $\alpha$ does not admit self-crossings. 
Hence $\alpha^i$ crosses through non-consecutive $P$-arcs forming the boundary of $\Delta$, as shown in Figure~\ref{fig:segment}.
The tile $\zD$ is subdivided into two parts $\zD',\zD''$ by $\za^i$, where we agree that the red point $p$ lies in $\zD''$. Since each tile of $\TiledSurface$ contains exactly one red point by Definition \ref{def:tiling}, the region $\zD'$ has no red point.

\begin{figure}[ht!]
 \centering 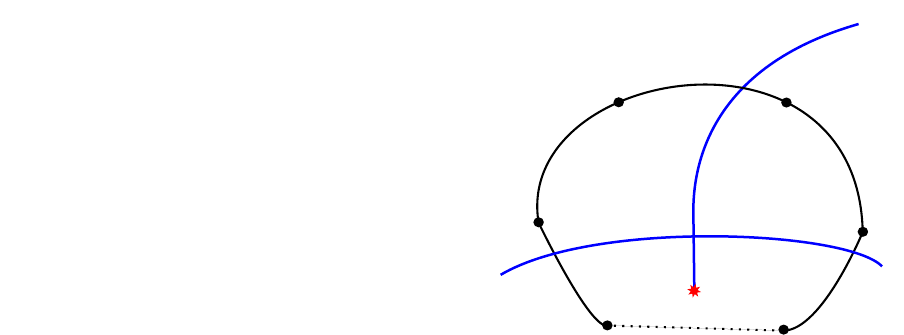
\caption{Non-permissible segment $\alpha^i$.}
\label{fig:segment}
\end{figure}

Since $\za^i$ is not permissible, the region $\zD'$  has at least two $P$-angles. 
So there is at least one $P$-arc $j$ such that $j$ is a side of $\zD'$ and $j$ does not cross $\za^i$.

Case 1: 
First, suppose there are no arcs in $\calt$ which cross the $P$-arc $j$ of $P$. This situation is illustrated in the left picture in Figure~\ref{fig:segment}. 
The $P$-arc $j$ is a side of another tile $\zG$ (which may be equal to $\zD$ if $\zD$ is a self-folded tile). Let $q$ denote the red point of the tile $\zG$, and  
let $\gamma$ denote the arc connecting $q$ with the red point $p$ of $\zD$ 
and such that it crosses $P$ only once and at arc $j$. By Remark \ref{rem:an arc which crosses P exactly once in permissible}, $\gamma$ is permissible. 
We know $\gamma$ does not belong to $\calt$, since  $\calt \cup \{\alpha\}$ is noncrossing and $\gamma$ crosses $\alpha$. 
Furthermore, $\gamma$ does not cross any arc in $\calt$, because a permissible arc that crosses $\gamma$ must also cross $j$, but by assumption no arcs in $\calt$ cross $j$. 
Therefore, $\calt \cup \{\gamma\}$ is a set of noncrossing permissible arcs, 
contradicting the fact that $\calt$ is a maximal set of noncrossing permissible arcs.

Case 2:
There are arcs in $\calt$ crossing the $P$-arc $j$.
This situation is illustrated in the right picture in Figure~\ref{fig:segment}. 
By~\cite[Proposition 2.10]{FST08}, there are finitely many arcs in a triangulation. Since $\calt$ is a subset of a triangulation, there are finitely many arcs in $\calt$, and so in particular there are finitely many arcs in $\calt$ which cross $j$.

We can assume without loss of generality that some of these arcs traverse
the $P$-angle $(c,j,j')$ at marked point $c \in M$ defined by $j$ and its predecessor $j'$ in the clockwise order around $\Delta'$. 
Let $\beta^1, \ldots, \beta^s$ denote the segments of arcs in $\calt$ which traverse the $P$-angle $(c,j,j')$, listed in increasing order 
with respect to their distance from the point $c$. 
Thus $\zb^s$ is the segment furthest away from the point~$c.$

Let $\zeta$ be the arc in $\mathcal{T}$ containing the segment $\zb^s$, and suppose it is oriented in such a way that the segment $\zb^s$ crosses $j'$ before $j$. Let $q$ denote the terminal point of $\zeta$. Consider the arc $\gamma$ whose initial endpoint is $p$ (the red point in $\zD$), whose interior segments coincide with the segments of $\zeta$ after $\zb^s$, and whose terminal point is $q$.
See the right picture in Figure~\ref{fig:segment}.

Since $\zeta$ is permissible, so is $\gamma$. Moreover, given the order defined on the set $\beta^1, \ldots, \beta^s$, we have that $\gamma$ does not cross any arc in $\calt$. Thus, $\calt \cup \{\gamma\}$ is a set of permissible arcs with no crossings, contradicting the fact that $\calt$ is a maximal set of noncrossing permissible arcs.
\end{proof}

\begin{definition}\label{def permissible triangulation}
A triangulation $\calt$ of $(S,M^*)$ is said to be a {\it permissible triangulation} if all the arcs in $\calt$ are permissible arcs of $\TiledSurface$.
\end{definition} 

The following result follows immediately from Proposition~\ref{prop:mar-maximal-noncrossing} and Lemma~\ref{lem:maximalistriangulation}. It generalizes \cite[Theorem 6.8]{BGMS19}, which considers the case where the algebra is of Dynkin type $\mathbb{A}$. Examples are given in section~\ref{sect examples}.

\begin{theorem}\label{thm:mar=permissible-triangulation}
The bijection from string modules to permissible arcs   described in Section \ref{sec:permissible arcs} induces a bijection from maximal almost rigid $A$-modules to permissible triangulations of $(S,M^*)$.
\end{theorem}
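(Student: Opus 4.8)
The plan is to deduce the statement by combining Proposition~\ref{prop:mar-maximal-noncrossing}, Lemma~\ref{lem:maximalistriangulation}, and Definition~\ref{def permissible triangulation}. By Proposition~\ref{prop:mar-maximal-noncrossing}, the bijection $M\leftrightarrow\gamma$ from string $A$-modules to permissible arcs restricts to a bijection between $\mar{A}$ and the set of \emph{maximal} collections of pairwise noncrossing permissible arcs of $\TiledSurface$. So the task reduces to identifying this latter set with the set of permissible triangulations of $(S,M^*)$.

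First I would check the inclusion ``maximal noncrossing collection of permissible arcs $\implies$ permissible triangulation''. Given such a collection $\calt$, Lemma~\ref{lem:maximalistriangulation} says that $\calt$ is an ideal triangulation of $(S,M^*)$; since every arc of $\calt$ is permissible by hypothesis, $\calt$ is a permissible triangulation in the sense of Definition~\ref{def permissible triangulation}.

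Conversely, I would check ``permissible triangulation $\implies$ maximal noncrossing collection of permissible arcs''. If $\calt$ is a permissible triangulation, then it is a set of pairwise noncrossing arcs, all of them permissible. Moreover, being an ideal triangulation of $(S,M^*)$, it is already maximal among \emph{all} collections of pairwise noncrossing arcs of $(S,M^*)$: no further arc can be added without introducing a crossing (this is the standard maximality of triangulations; compare the finiteness statement \cite[Proposition 2.10]{FST08} invoked in the proof of Lemma~\ref{lem:maximalistriangulation}). In particular, no permissible arc can be added, so $\calt$ is maximal among noncrossing collections of permissible arcs. The two inclusions give an equality of sets, and composing with the bijection of Proposition~\ref{prop:mar-maximal-noncrossing} yields the claimed bijection between $\mar{A}$ and the permissible triangulations of $(S,M^*)$.

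The genuine content of the argument has already been carried out in the preceding results: the geometric translation of almost rigidity (Propositions~\ref{prop:all-exts} and~\ref{prop:overlap-cross}, feeding Proposition~\ref{prop:mar-maximal-noncrossing}) and the combinatorial completion argument in Lemma~\ref{lem:maximalistriangulation}. The only step of the present proof that requires a moment's care is the converse inclusion, where one must use that an ideal triangulation is maximal not merely among \emph{permissible} noncrossing collections but among \emph{arbitrary} noncrossing collections of arcs of $(S,M^*)$; restricting attention to permissible arcs only weakens the maximality condition, so no additional verification is needed.
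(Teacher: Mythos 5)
Your proof is correct and follows the same route as the paper, which derives Theorem~\ref{thm:mar=permissible-triangulation} directly from Proposition~\ref{prop:mar-maximal-noncrossing} and Lemma~\ref{lem:maximalistriangulation}; you have simply spelled out the two set-inclusions that the paper compresses into the phrase ``follows immediately,'' and your observation that the converse inclusion is automatic because ideal triangulations are maximal among \emph{all} noncrossing arc collections (so \emph{a fortiori} among permissible ones) is exactly the right point to flag.
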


This characterization of maximal almost rigid modules allows us to deduce that the number of indecomposable summands is fixed for all maximal almost rigid modules. 

\begin{corollary}\label{cor MAR summands} 
The number of indecomposable direct summands in any MAR $A$-module is $|Q_0| + |Q_1|$.
\end{corollary}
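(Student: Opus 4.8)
The plan is to combine the geometric characterization of Theorem~\ref{thm:mar=permissible-triangulation} with the standard topological fact that all ideal triangulations of a fixed marked surface have the same number of arcs, and then pin down this common number by evaluating it on the explicit MAR module $M_\proj$.

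First, by Theorem~\ref{thm:mar=permissible-triangulation}, every maximal almost rigid $A$-module $T$ corresponds, under the bijection of Section~\ref{sec:permissible arcs}, to a permissible triangulation $\calt$ of the surface $(S,M^*)$. Since that bijection is induced by the bijection between string modules and permissible arcs, it sends the indecomposable direct summands of $T$ bijectively to the arcs of $\calt$; hence the number of indecomposable summands of $T$ equals the number of arcs of $\calt$. Now $\calt$ is in particular an ideal triangulation of the marked surface $(S,M^*)$ (this is exactly Lemma~\ref{lem:maximalistriangulation}), and the number of arcs in an ideal triangulation of a fixed surface with marked points depends only on its topological data (genus, number of boundary components, and number of marked points, counting punctures) --- this is part of \cite[Proposition~2.10]{FST08}, already invoked in the proof of Lemma~\ref{lem:maximalistriangulation}. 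Consequently this number is the same for every permissible triangulation, hence the same for every MAR $A$-module.

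It therefore suffices to compute the number of indecomposable summands of one MAR module. By Theorem~\ref{thm:Mproj is unique mar containing proj}, the module $M_\proj$ is maximal almost rigid, and by Proposition~\ref{prop projective MAR summands} it has exactly $|Q_0|+|Q_1|$ indecomposable direct summands. Combining this with the previous paragraph yields that every MAR $A$-module has $|Q_0|+|Q_1|$ indecomposable summands, which is the claim.

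The argument is essentially bookkeeping assembled from results established earlier in the paper; the only point that requires care is the appeal to the invariance of the arc count across all ideal triangulations of $(S,M^*)$, but this is classical and has already been cited in the proof of Lemma~\ref{lem:maximalistriangulation}, so I do not anticipate a genuine obstacle. (One could alternatively bypass Proposition~\ref{prop projective MAR summands} and compute the arc count directly from the Euler characteristic of the tiled surface, matching it against $|Q_0|+|Q_1|$ via the description of the tiling algebra in Definition~\ref{def:tiling algebra}, but using $M_\proj$ is shorter.)
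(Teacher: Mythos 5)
Your proof matches the paper's argument exactly: use Theorem~\ref{thm:mar=permissible-triangulation} to pass to permissible triangulations, invoke \cite[Proposition~2.10]{FST08} to conclude that the arc count is independent of the triangulation, and then evaluate on $M_\proj$ via Proposition~\ref{prop projective MAR summands}. Nothing to add.
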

\begin{proof}
The number of indecomposable summands of an MAR $A$-module is equal to the number of arcs in the permissible triangulation of $(S,M^*)$ corresponding to it, by Theorem \ref{thm:mar=permissible-triangulation}.
Since every triangulation of the same marked surface has the same number of arcs, by \cite[Proposition 2.10]{FST08}, it follows that every MAR $A$-module has the same number of indecomposable summands. 
Hence, it suffices to count the number of summands in one particular MAR module, say $M_\proj$. 
In Proposition \ref{prop projective MAR summands}, we proved that this number is $|Q_0| + |Q_1|$.
\end{proof}

\begin{question}
    Recall that the cardinality of $Q_0$ is equal to the number of summands of a (basic) tilting $A$-module. By Corollary~\ref{cor MAR summands}, the cardinality of $Q_0\cup Q_1$ is the number of summands of a maximal almost rigid module. If we let $S=\oplus_{i\in Q_0} S(i)$ be the sum of all simple modules, we have 
    $|Q_0|=\dim\,\Hom(S,S)$, and
    $|Q_1|=\dim\,\Ext^1(S,S)$. So it is natural to ask if there are other classes of modules, which one might call maximal $m$-almost rigid modules, that capture the dimension of $\cup_{i=0}^{m}\, \Ext^i(S,S). $ 
\end{question}

\section{Endomorphism algebras of maximal almost rigid modules}\label{sect 6}

Let $A=\kb Q/I$ be a gentle algebra.
This section is devoted to the study of the endomorphism algebra $C$ of an MAR $A$-module $T$. 
We will prove that $C$ can be realized as the endomorphism algebra of a tilting module over a different gentle algebra, denoted by $\Abar$ (see Corollary~\ref{cor:end-alg-tilt}). 
We will also see that $C$ is gentle of global dimension at most $2$ (Corollary~\ref{cor endoalg}), and we explicitly describe the corresponding tiled surface (Theorem~\ref{thm endoalg}).
Finally, we will describe the tensor algebra of $C$ via the triangulation corresponding to the MAR module (Theorem~\ref{thm:tensor}).

Throughout this section, $\TiledSurface$ denotes the tiled surface associated to $A$, and $T= T_1 \oplus \cdots \oplus T_m$ denotes a MAR $A$-module, with $m=|Q_0|+|Q_1|$. So, in particular, the $T_i$'s are pairwise non-isomorphic. The triangulation in $(S,M^*)$ associated to $T$ will be denoted by $\calt$.

\subsection{The gentle algebra \texorpdfstring{$\Abar$}{Abar}}\label{sect 7.1}

We define a new algebra $\Abar=\kb\Qbar/\Ibar$ from $A$ as follows. The quiver $\Qbar$ is obtained from $Q$ by replacing every arrow $\za \colon i\to j$ with a path of length two $\xymatrix{i\ar[r]^{\za_a}&v_\za\ar[r]^{\za_b}&j}$. More precisely, $\Qbar_0=Q_0\cup Q_1$, where we use the notation $v_\za$ for the vertex in $\Qbar$ corresponding to the arrow $\za$ in $Q$. Moreover $\Qbar_1=Q_1\times\{a,b\}$, where  $\za_a=(\za,a)$ is an arrow from $s(\za)$ to $v_\za$ and $\za_b=(\za,b)$ is an arrow from $v_\za$ to $t(\za)$. 

Since $A$ is gentle, the ideal $I$ is generated by paths of length two. For every path $\za\zb\in I$, we obtain a new path $\za_b\zb_a$ from $v_\za$ to $v_\zb$ in $\Qbar$. We define $\Ibar$ as the 2-sided ideal generated by all paths of the form $\za_b\zb_a$, with $\za\zb\in I$.

We illustrate the definition in an example.
\begin{example}
 \label{exAbar}
 Let $A=\kb Q/I$ with $Q$ the quiver 
 $\xymatrix{1\ar@<2pt>[r]^\za&2\ar@<2pt>[l]^\zb\ar@<2pt>[r]^\zg&3\ar@<2pt>[l]^\zd}$ and $I=\langle \za\zb,\zb\za,\zg\zd,\zd\zg\rangle$.
 Then $\Abar=\Qbar/\Ibar$, with $\Qbar $ the quiver
 $\xymatrix@R5pt{1\ar[r]^{\za_a}&v_\za\ar[r]^{\za_b}&
 2\ar[r]^{\zg_a}\ar[ld]^{\zb_a}&v_\zg\ar[r]^{\zg_b}&3\ar[ld]^{\zd_a}\\
 &v_\zb\ar[lu]^{\zb_b}&&v_\zd\ar[lu]^{\zd_b}}$
 and $\Ibar=\langle  \za_b\zb_a,\zb_b\za_a,\zg_b\zd_a,\zd_b\zg_a\rangle$.
\end{example}

\begin{proposition}\label{probAbargentle}
The algebra $\Abar$ is  gentle and of global dimension at most 2.
\end{proposition}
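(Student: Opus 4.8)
The plan is to verify that $\Abar = \kb\Qbar/\Ibar$ satisfies the four defining conditions (G1)--(G4) of a gentle algebra, by a direct local check at each vertex and arrow of $\Qbar$, and then to bound the global dimension by explicitly computing projective resolutions of the simple $\Abar$-modules. First I would observe that the vertices of $\Qbar$ come in two flavours: the ``old'' vertices $i \in Q_0$ and the ``new'' vertices $v_\za$, one for each arrow $\za \in Q_1$. At a new vertex $v_\za$ there is exactly one incoming arrow $\za_a$ and exactly one outgoing arrow $\za_b$, so $\indegree{v_\za} = \outdegree{v_\za} = 1$ trivially. At an old vertex $i$, the arrows of $\Qbar$ into $i$ are exactly the $\zb_b$ for arrows $\zb \in Q_1$ with $t(\zb) = i$, and those out of $i$ are exactly the $\zg_a$ for $\zg \in Q_1$ with $s(\zg) = i$; since $A$ is gentle these numbers are each at most $2$. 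This gives (G1).

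For (G2) and (G3), the point is that composable pairs of arrows in $\Qbar$ are of three kinds: $\za_a\za_b$ at a new vertex (only one such pair, and it is not in $\Ibar$, so it is a string); pairs $\zb_b\zg_a$ meeting at an old vertex $i = t(\zb) = s(\zg)$; and that is all. A pair $\zb_b\zg_a$ lies in $\Ibar$ precisely when $\zb\zg \in I$ (by definition of $\Ibar$), and is a string precisely when $\zb\zg \notin I$. Thus the bijection $(\zb,\zg) \mapsto \zb_b\zg_a$ translates the (G2)/(G3) conditions for $\Abar$ at vertex $i$ into exactly the (G2)/(G3) conditions for $A$ at vertex $i$; and at a new vertex $v_\za$ the conditions are vacuous since there is a unique in-arrow and a unique out-arrow. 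So (G2) and (G3) follow from the corresponding properties of $A$. Condition (G4) is immediate: $\Ibar$ is generated by the length-two paths $\za_b\zb_a$ by construction.

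For the global dimension bound, I would compute the minimal projective resolution of each simple $\Abar$-module $S(x)$, $x \in \Qbar_0$. For a new vertex $v_\za$, the arrow $\za_b\colon v_\za \to t(\za)$ is the unique arrow out of $v_\za$, and $\za_b \cdot \zb_a \in \Ibar$ whenever $\za\zb \in I$ — but one checks directly that $\rad P(v_\za)$ is the string module on $\za_b$ (possibly extended), which has a simple or projective radical in one more step, giving $\pd S(v_\za) \le 2$. For an old vertex $i$, $\rad P(i)$ is a direct sum of string modules each of whose first arrow is some $\zg_a$, and each such summand is either projective or has projective radical after one step because the only relations one can hit starting from $\zg_a$ are of the form $\zg_a$ followed by $\ldots$; tracing this through, $\pd S(i) \le 2$ as well. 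I expect the main obstacle to be the bookkeeping in this last computation: one must carefully identify, for each $x$, which summands of $\rad P(x)$ are already projective and which need exactly one further projective cover, and check that no longer resolutions arise — in particular that the "doubled" structure of $\Qbar$ does not allow a relation to chain into another relation, which is exactly prevented by the fact that in $\Qbar$ every relation $\za_b\zb_a$ has its source $v_\za$ and target $v_\zb$ both being new vertices, each meeting only one other arrow. This observation — that relations in $\Abar$ are "isolated" in the sense that no two relations share a vertex with the relevant orientation — is the key structural fact that forces $\operatorname{gldim}\Abar \le 2$, and I would state and prove it as a lemma before doing the resolution computation.
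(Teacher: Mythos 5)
Your verification of (G1)--(G4) matches the paper's local check. The divergence is in the global-dimension bound: the paper observes that the relations in $\Ibar$ never overlap (if $\za_b\zb_a,\zg_b\zd_a\in\Ibar$ then $\zb_a\neq\zg_b$, since the first has subscript $a$ and the second $b$) and then cites Bardzell's criterion for monomial algebras, whereas you propose to compute projective resolutions of the simple $\Abar$-modules by hand. The ``isolated relations'' fact you single out as the key lemma is exactly the paper's non-overlap condition, recast in terms of vertices rather than arrows. Your resolution computation does work, and is slightly cleaner than your sketch suggests: for an old vertex $i$, the radical $\rad\Pbar(i)$ is a direct sum of the projectives $\Pbar(v_\zg)$ over arrows $\zg$ starting at $i$ (an arrow $\zg_a$ never begins a relation in $\Ibar$), so $\pd\,\Sbar(i)\le 1$; for a new vertex $v_\za$ with $j=t(\za)$, the projective cover $\Pbar(j)\twoheadrightarrow\rad\Pbar(v_\za)$ has kernel either $0$ or the projective $\Pbar(v_\zg)$, where $\zg$ is the unique arrow with $\za\zg\in I$, giving $\pd\,\Sbar(v_\za)\le 2$. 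The paper's route is shorter but leans on an external citation; yours is self-contained but requires exactly the bookkeeping you anticipate.
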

\begin{proof}
We need to check conditions \ref{def gentle:itm1:degrees}--\ref{def gentle:itm4:ideal} 
of Definition~\ref{def gentle}. By definition, $\Abar$ satisfies condition \ref{def gentle:itm4:ideal}.  
Moreover, the new vertices $v_\za$ are the source of exactly one arrow $\za_b$ and the target of exactly one arrow $\za_a$. The old vertices $Q_0\subset \Qbar_0$ have the same number of incoming and outgoing arrows as in $Q$. Thus $\Qbar $ satisfies \ref{def gentle:itm1:degrees}.
 
For every arrow $\za_a$, there is exactly one arrow $\za_b$ such that $\za_a\za_b$ is a path. This path does not lie in the ideal $\Ibar$. On the other hand, if $\zg\in\Qbar_1$ is an arrow such that $\zg\za_a$ is a path then $\zg=\zb_b$ for some arrow $\zb\in Q_1$.  Moreover, $\zb\za\in I$ if and only if $\zb_b\za_a\in \Ibar$. 
A similar argument holds for arrows of the form $\za_b$. This shows that $\Abar$ satisfies conditions \ref{def gentle:itm2:string algebra} and \ref{def gentle:itm3:gentle algebra}, and hence $\Abar$ is gentle. 

It follows directly from the definition of $\Ibar$ that there are no overlapping relations, that is, whenever $\za\zb, \zg\zd\in \Ibar$ then $\zb\ne \zg$. Therefore the global dimension of $\Abar $ is at most 2, see for example \cite{Bardzell}.
\end{proof}
\begin{proposition}\label{prop:projective Abar}
Let $\za\colon i\to j$ be an arrow in $Q$ and let $\Pbar(i)$ and $\Pbar(v_\za)$ denote the projective $\Abar$-modules at vertices $i$ and $v_\za$, respectively. Then we have the following conditions:
\begin{enumerate}[(1)]
    \item\label{prop:projective Abar:itm1}
$\Pbar(v_\za)$ is a direct summand of $\rad \Pbar(i)$.
\item\label{prop:projective Abar:itm2} The socle of $\Pbar(i)$ is a direct sum of simple modules of the form $\Sbar(\ell)$, with $\ell\in Q_0$.
\item\label{prop:projective Abar:itm3} The socle of $\Pbar(v_\za)$ is a simple module of the form $\Sbar(\ell)$, with $\ell\in Q_0$.
\item\label{prop:projective Abar:itm4} If $\overline{L}$ is a submodule of $\Pbar(i)$ such that the top of $\overline{L}$ is of the form $\overline{S}(v_\zb)$ 
for some $\zb\in Q_1$, then  $\overline{L}=\Pbar(v_\zb).$ 
\end{enumerate}
\end{proposition}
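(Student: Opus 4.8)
The approach is to compute the indecomposable projective $\Abar$-modules directly inside $\Abar$, using that $\Abar$ is spanned by the nonzero paths of $\Qbar$ (a path being nonzero exactly when no length-two subpath lies in $\Ibar$). The plan is to first record a few elementary features of $\Qbar$ and $\Ibar$ that are immediate from the construction: every new vertex $v_\za$ has exactly one incoming arrow $\za_a$ and exactly one outgoing arrow $\za_b$; the only length-two path of $\Qbar$ passing through a new vertex is $\za_a\za_b$, and $\za_a\za_b\notin\Ibar$; every generator of $\Ibar$ has the form $\za_b\zb_a$ with $\za\zb\in I$, so in particular no relation begins with an arrow of type $\za_a$; and, because $A$ is gentle (conditions \ref{def gentle:itm2:string algebra} and \ref{def gentle:itm3:gentle algebra}), whenever we arrive at an old vertex $j$ along an arrow $\za_b$ there is at most one arrow $\zg$ out of $j$ in $Q$ with $\za_b\zg_a\notin\Ibar$, the remaining ones satisfying $\za_b\zg_a\in\Ibar$.

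I would prove part~(3) first. The radical of $\Pbar(v_\za)$ is generated by the single arrow $\za_b$, and by the observations above each arrow of type $\za_a$ is followed by the forced continuation $\za_b$, while each arrow of type $\za_b$ admits at most one admissible continuation; since $A$ is finite dimensional this chain is finite, so $\Pbar(v_\za)$ is uniserial, equal to $M(p_\za)$ for a maximal direct string $p_\za=\za_b\,\zg^{(1)}_a\zg^{(1)}_b\cdots$ of $\Abar$. As such a string can never get stuck at a new vertex (there is always the continuation $\zg_b$ after $\zg_a$), it terminates at some $\ell\in Q_0$, whence $\soc\Pbar(v_\za)=\Sbar(\ell)$.

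Next I would do part~(1). Let $\za^{(1)},\dots,\za^{(k)}$ ($k\le 2$) be the arrows of $Q$ starting at $i$; the arrows of $\Qbar$ out of $i$ are exactly the $\za^{(t)}_a$, so $\rad\Pbar(i)=\sum_t\za^{(t)}_a\Abar$. For each $t$, left multiplication $z\mapsto\za^{(t)}_az$ is a homomorphism $\Pbar(v_{\za^{(t)}})\to\Pbar(i)$ with image $\za^{(t)}_a\Abar\subseteq\rad\Pbar(i)$, and it is injective because for a nonzero path $p$ of $\Abar$ starting at $v_{\za^{(t)}}$ the path $\za^{(t)}_ap$ is again nonzero (if $p$ is nontrivial it begins with $\za^{(t)}_b$ and the only new length-two subpath is $\za^{(t)}_a\za^{(t)}_b\notin\Ibar$); the sum is direct because distinct first arrows $\za^{(t)}_a$ produce disjoint families of basis paths of $\Pbar(i)$. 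Hence $\rad\Pbar(i)\cong\bigoplus_t\Pbar(v_{\za^{(t)}})$, which is part~(1). Part~(2) then follows: if $\outdegree{i}=0$ in $Q$ then $\Pbar(i)=\Sbar(i)$, whose socle is $\Sbar(i)$ with $i\in Q_0$; otherwise $\Pbar(i)$ is indecomposable and non-simple, so $\soc\Pbar(i)=\soc\bigl(\rad\Pbar(i)\bigr)=\bigoplus_t\soc\Pbar(v_{\za^{(t)}})$, a direct sum of simples $\Sbar(\ell)$, $\ell\in Q_0$, by part~(3).

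Finally part~(4), which I expect to be the delicate point. Since the top of $\overline{L}$ is $\Sbar(v_\zb)$, the module $\overline{L}$ is cyclic with a generator $x$ in the $v_\zb$-component of $\Pbar(i)$, that is $x\in e_i\Abar e_{v_\zb}$, a space spanned by the nonzero paths of $\Abar$ from $i$ to $v_\zb$; as $\zb_a$ is the unique arrow into $v_\zb$, every such path ends in $\zb_a$, so $x=\sum_q c_q\,q\zb_a$ with $q$ ranging over nonzero paths $i\to s(\zb)$. The homomorphism $f\colon\Pbar(v_\zb)\to\Pbar(i)$, $z\mapsto xz$, has image $x\Abar=\overline{L}$, and I claim it is injective; granting this, $\overline{L}=f(\Pbar(v_\zb))\cong\Pbar(v_\zb)$ (so $\overline{L}$ is precisely the copy of $\Pbar(v_\zb)$ cut out by $x$), which is part~(4). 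For injectivity one checks that each $q\zb_a p$, for $p$ a nonzero path from $v_\zb$, is again a nonzero path: this is clear for $p$ trivial, and if $p$ begins with $\zb_b$ then the junction $\zb_a\zb_b$ is not in $\Ibar$ and no other length-two subpath of $q\zb_a p$ lies in $\Ibar$ since $q\zb_a$ and $p$ are separately nonzero; as distinct paths are linearly independent in $\Abar$, $xp=\sum_q c_q\,q\zb_a p$ vanishes only if all $c_q=0$, i.e.\ only if $x=0$. The computations elsewhere are routine string-algebra bookkeeping; the main care is needed here, in handling a generator $x$ that is a genuine linear combination of paths and in keeping the right-module identification $\Hom(e_y\Abar,e_x\Abar)\cong e_x\Abar e_y$ consistent throughout. (Alternatively, parts~(1)--(3) can be read off from the description of $\Pbar(i)$ and $\Pbar(v_\za)$ as string modules --- a peak at $i$, resp.\ $v_\za$, with one or two descending legs, each leg below $i$ being a copy of some $\Pbar(v_{\za^{(t)}})$ --- and part~(4) from identifying $\overline{L}$ with the downward chain hanging from the unique occurrence of $v_\zb$.)
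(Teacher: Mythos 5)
Your argument is correct, and it essentially mirrors the paper's for parts (1)--(3): both rest on the same three observations about $\Qbar$ and $\Ibar$ (no generator of $\Ibar$ begins with an arrow $\za_a$, every generator ends at a vertex $v_\zg$, and each $v_\za$ has a unique outgoing arrow $\za_b$), you simply phrase them in terms of explicit path bases of $\Pbar(i)$ and $\Pbar(v_\za)$ rather than invoking them more tersely. The genuine divergence is in part~(4). The paper identifies $\Lbar$ as a string module $M(v)$, notes that $v$ is a down-set of one of the two direct legs $w'$ of the string of $\Pbar(i)$, and reads off from the string combinatorics that $v$ must be the full right-maximal direct string out of $v_\zb$; you instead pick a generator $x\in e_i\Abar e_{v_\zb}$ and show the left-multiplication map $\Pbar(v_\zb)\to\Pbar(i)$, $z\mapsto xz$, is injective by checking that concatenation preserves nonzero paths. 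Both work; the paper's route exploits the fact (not explicitly restated) that a submodule of a string module with simple top is itself a string module, whereas your path-algebra computation avoids this and is self-contained. One small point worth recording: since each of the two direct legs of $\Pbar(i)$ is a non-repeating path and at most one of the (at most two) arrows into $s(\zb)$ composes with $\zb$ outside $I$, the space $e_i\Abar e_{v_\zb}$ is in fact at most one-dimensional, so your sum $\sum_q c_q\,q\zb_a$ has at most one term; handling a genuine linear combination is therefore unnecessary (though harmless), and this also shows that the equality $\Lbar=\Pbar(v_\zb)$ in the statement is unambiguous as an identification of submodules, not merely an isomorphism.
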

\begin{proof} Recall that the relations in $\Ibar$ are paths of the form $\xymatrix{v_\zb\ar[r]^{\zb_b}&\ell\ar[r]^{\zg_a}&v_\zg}$, for $\zb\zg\in I$. Therefore, 
statement \ref{prop:projective Abar:itm1} holds, because the arrow $\za_a\colon i\to v_\za$ in $\Qbar$ is not the initial arrow of a relation in $\Ibar.$ Parts \ref{prop:projective Abar:itm2} and \ref{prop:projective Abar:itm3} hold, because the relations end at a vertex of the form $v_\zg$. Moreover, the socle is simple in part \ref{prop:projective Abar:itm3}, because $\za_b$ is the  unique arrow starting at $v_\za$.

It remains to show part \ref{prop:projective Abar:itm4}.
Let $v, w$ and $w'$ be strings such that $\Lbar = M(v), \Pbar (i) = M(w)$ and $M(w')$ is a direct summand of the radical of $\Pbar (i)$. By assumption, $v$ is a down-set subwalk of $w$. 
Indeed, $v$ is a down-set subwalk of $w'$, since $v_\zb \neq i$. Note that $w'$ is a right maximal direct string (up to inversion). Hence, so is $v$. If $v$ is trivial, then $\Lbar = \Sbar(v_\zb)$ and it is a direct summand of the socle of $\Pbar(i)$. But by \ref{prop:projective Abar:itm2} this would mean that $v_\zb \in Q_0$. By this contradiction, we must have that $v$ is a non-trivial right maximal string. It follows that $\Lbar = \Pbar (v_\zb)$ since these modules have the same top, $\Sbar (v_\zb)$, and there is a unique arrow, $\zb_b$, starting at $v_\zb$.
\end{proof}

\begin{example}
In Example~\ref{exAbar}, the indecomposable projective $\Abar$-modules
are
\[ \begin{smallmatrix}
        1\\v_\za\\2\\v_\zg\\3
    \end{smallmatrix},\ 
     \begin{smallmatrix}
        v_\za\\2\\v_\zg\\3
    \end{smallmatrix},\ 
     \begin{smallmatrix}
    2\\v_\zb\ v_\zg\\1\ \ 3
    \end{smallmatrix},\ 
     \begin{smallmatrix}
    v_\zb\\ 1
    \end{smallmatrix},\ 
     \begin{smallmatrix}
    v_\zg\\ 3
    \end{smallmatrix},\ 
     \begin{smallmatrix}
    3\\v_\zd\\2\\v_\zb\\1
    \end{smallmatrix} \text{ and }\ 
     \begin{smallmatrix}
    v_\zd\\2\\v_\zb\\1
    \end{smallmatrix}.
\] 
\end{example}
\subsection{The tiling \texorpdfstring{$G(S,M,M^*,P)$}{G(S,M,M*,P)}} 
\label{sec:tiling G(S,M,P)}

From the tiled surface $\TiledSurface$, we construct a new tiled surface $G\TiledSurface=(G(S),G(M),G(M^*),G(P))$, given as follows: 

\begin{enumerate}[(i)]
\item 
$G(S)$ is the surface obtained from $S$ by replacing every puncture with a boundary component.
\item  
$G(M)$ is obtained from $M$ by adding one black marked point $x(\za)$ for every arrow $\za$ in $Q$. 
To be more precise, an arrow $\za\colon i\to j$ in $Q$ corresponds to a $P$-angle  $(x,\tau_i,\tau_j)$ in the tile $\zD$.

If $\zD$ has an edge on the boundary of $S$, the new marked point $x(\za)$ is placed on that edge. If $\zD$ does not have a boundary edge then the surface $G(M)$ has a new boundary component in the interior of $\zD$. In this case, the new marked point $x(\za)$ is placed on this boundary component.

If the $P$-angles corresponding to several arrows $\za_1,\za_2,\ldots,\za_t$ lie in the same tile $\zD$, then we arrange the new marked points $x(\za_1),x(\za_2),\ldots,x(\za_t)$ in the same order along the boundary.

\item 
$G(P)$ is obtained from $P$ by adding one arc $\tau(\za)$ for every arrow $\za\in Q_1$ as follows. If $\za$ corresponds to the $P$-angle $(x,\tau_i,\tau_j)$ in the tile $\zD$, then the new $P$-arc  $\tau(\za)$ runs from the marked point $x $ to the new marked point $x(\za)$. 
The arc $\tau(\za)$ cuts the tile $\zD$ into two tiles $\zD_a$ and $\zD_b$, where $\zD_a$ is bounded by $\tau_i$
and  $\zD_b$ is bounded by $\tau_j$.

If the $P$-angles corresponding to several arrows $\za_1,\za_2,\ldots,\za_t$ lie in the same tile $\zD$, then we arrange the new $P$-arcs $\tau(\za_1),\tau(\za_2),\ldots,\tau(\za_t)$ such that they do not cross.

In particular, every tile of $G(P)$ contains a boundary segment of $(G(S),G(M))$.
\item The set $G(M^*)$ consists of one point on every boundary segment of $(S,M)$. If $y\in M^*$ lies in the tile $\zD(y)$ of $P$, we let $G(y)\subset G(M^*)$ be the subset of all the points that lie in a tile of $G(P)$ that is obtained by subdividing the tile $\zD(y)$. 
\end{enumerate}

Two examples are given in Figures~\ref{fig ex 7.5} and~\ref{fig orpheus}.

\begin{proposition}
 \label{prop 7.2}
 The algebra $\Abar$ is the tiling algebra of the tiled surface
 $G\TiledSurface$. 
\end{proposition}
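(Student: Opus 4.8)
The plan is to show directly that the tiling algebra $A_{G(P)}$ of the tiled surface $G\TiledSurface$ has the same quiver and the same ideal as $\Abar$. This amounts to matching up three pieces of data: the vertices of the tiling quiver (which are the arcs in $G(P)$), the arrows (which are the $P$-angles of $G(P)$), and the relations (paths of angles that do not form a contiguous subsequence of a complete fan). First I would set up the bijection on vertices: by construction $G(P) = P \sqcup \{\tau(\za) \mid \za \in Q_1\}$, so the arcs of $G(P)$ are in bijection with $P \sqcup Q_1$, which by Definition of $\Abar$ is exactly $\Qbar_0 = Q_0 \sqcup Q_1$ (using the bijection $P \leftrightarrow Q_0$ from Theorem~\ref{thm:tiling algebra iff gentle}). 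Under this identification $\tau_i \leftrightarrow i$ and $\tau(\za) \leftrightarrow v_\za$.

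**Arrows.** Next I would analyze the $P$-angles of $G(P)$ and match them with $\Qbar_1 = Q_1 \times \{a,b\}$. The key local picture is that when we cut a tile $\zD$ of $P$ along the arcs $\tau(\za_1), \ldots, \tau(\za_t)$ (one for each $P$-angle in $\zD$, arranged in fan order at the relevant marked points), each old $P$-angle $\za = (x, \tau_i, \tau_j)$ in $\zD$ gets subdivided. Concretely, the new marked point $x(\za)$ and the new arc $\tau(\za)$ from $x$ to $x(\za)$ create, at $x$, a new $P$-angle $(x, \tau_i, \tau(\za))$ — call it $\za_a$ — and at $x(\za)$, a new $P$-angle $(x(\za), \tau(\za), \tau_j)$ — call it $\za_b$. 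I would verify carefully, using the description of $G(M)$ and the ordering convention on the new marked points within a tile, that these are precisely the $P$-angles of $G(P)$: no others arise, because every tile of $G(P)$ contains a boundary segment and the complete fans at the new points $x(\za)$ each consist of exactly the pair $\tau(\za), \tau_j$ (plus the boundary segment), while the complete fan at an old point $x$ in $G(P)$ is obtained from the complete fan at $x$ in $P$ by inserting, between each consecutive pair $\tau_i, \tau_j$ that formed an angle, the new arc $\tau(\za)$. This gives arrows $\za_a : i \to v_\za$ and $\za_b : v_\za \to j$ in the tiling quiver, matching $\Qbar_1$ exactly.

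**Relations.** Then I would check that $I_{G(P)} = \Ibar$. A path of length two in $\Qbar$ is either $\za_a \za_b$ (for some $\za$), or $\za_b \zb_a$ where $\za\zb$ is composable in $Q$ (i.e. $t(\za) = s(\zb)$), or $\za_b \zb_a$ degenerate cases; no path $\za_a \zb_?$ exists with different $\za$ since $v_\za$ has a unique incoming and unique outgoing arrow. For $\za_a \za_b$: the corresponding angles $(x, \tau_i, \tau(\za))$ and $(x(\za), \tau(\za), \tau_j)$ are at the two endpoints of $\tau(\za)$ and lie in the same tile $\zD_b$; one checks this is a contiguous subsequence of a complete fan (it is the whole "inside" of $\zD_b$ adjacent to $\tau(\za)$), so $\za_a\za_b \notin I_{G(P)}$ — consistent with $\za_a\za_b \notin \Ibar$. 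For $\za_b \zb_a$ with $t(\za) = s(\zb) = \ell$: the angles are $(x(\za), \tau(\za), \tau_\ell)$ and $(y(\zb), \tau_\ell, \tau(\zb))$, both involving the arc $\tau_\ell \leftrightarrow \ell$; I would show these form a contiguous subsequence of the complete fan at $x(\za)$ (equivalently $y(\zb)$) if and only if the original angles $\za = (x, \tau_i, \tau_\ell)$ and $\zb = (y, \tau_\ell, \tau_k)$ at $\tau_\ell$ in $P$ did — i.e. iff $\za\zb \notin I$. This uses directly Definition~\ref{def:tiling algebra}: $\za\zb \in I$ means the $P$-angles at $\tau_\ell$ are non-consecutive in the complete fan, which after refinement persists exactly as non-consecutivity of $\za_b, \zb_a$. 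Hence $\za_b\zb_a \in I_{G(P)} \iff \za\zb \in I \iff \za_b\zb_a \in \Ibar$.

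**Main obstacle.** The routine parts (vertices, arrows) are bookkeeping, but the step I expect to be delicate is the relations analysis, specifically verifying that refinement neither creates nor destroys the "contiguous subsequence of a complete fan" property — one has to track how the complete fan at an old marked point $x$ is modified (arcs $\tau(\za)$ inserted) versus the brand-new complete fans at the points $x(\za)$, and handle the self-folded and loop cases (where $\tau_i = \tau_j$, or $x = y$, as in Figures~\ref{fig:relations:a equals b}–\ref{fig:relations:case 3}) where the old angle $\za$ might itself have been a relation $\za\za \in I$. In those degenerate cases one must confirm the refined picture still yields $\za_a\za_b \notin \Ibar$ and correctly reproduces any relation $\za_b\za_a$. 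I would organize this as a short case analysis paralleling the cases in Definition~\ref{def:tiling algebra}, invoking Proposition~\ref{probAbargentle} at the end to note that both algebras are gentle so the quiver-with-relations data determines the algebra.
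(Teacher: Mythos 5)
Your overall strategy (match vertices, arrows, relations of $A_{G(P)}$ with those of $\Abar$) is the same as the paper's, and the vertex bijection is fine. However, there is a genuine error in your description of the $P$-angles that makes the middle of the argument collapse.

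You claim the arrow $\za_b\colon v_\za\to j$ corresponds to a $P$-angle $(x(\za),\tau(\za),\tau_j)$ \emph{at the new marked point} $x(\za)$, and that the complete fan at $x(\za)$ consists of the pair $\tau(\za),\tau_j$. Neither is true. The new point $x(\za)$ lies on a boundary segment (or a new boundary circle) of $G(S)$, and the \emph{only} $G(P)$-arc incident to it is $\tau(\za)$; in particular $\tau_j$ does not end at $x(\za)$, so there is no $P$-angle there, and the complete fan at $x(\za)$ is the single arc $\tau(\za)$. (This is exactly the observation the paper uses to see that all relations of the tiling algebra come from the old marked points.) What actually happens is that inserting $\tau(\za)$ into the complete fan at the old point $x$ splits the original $P$-angle $(x,\tau_i,\tau_j)$ into two consecutive $P$-angles $(x,\tau_i,\tau(\za))$ and $(x,\tau(\za),\tau_j)$, \emph{both at $x$}. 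These are $\za_a$ and $\za_b$.

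The error is not cosmetic. Under your description, $\za_a$ sits at $x$ and $\za_b$ at $x(\za)$; two $P$-angles at \emph{different} marked points can never form a contiguous subsequence of a single complete fan, so Definition~\ref{def:tiling algebra} would force $\za_a\za_b\in I_{G(P)}$ — but $\za_a\za_b\notin\Ibar$, contradicting the claim you are trying to prove. Similarly, your claim that the two angles underlying a composable path $\za_b\zb_a$ live at $x(\za)$ and $y(\zb)$ "(equivalently)" is not coherent, since those are two distinct new marked points. Once the angles are placed at $x$ (and $y$) correctly, the relations argument works: $\za_a,\za_b$ are adjacent in the complete fan at $x$ so $\za_a\za_b\notin I_{G(P)}$, and $\za_b\zb_a\in I_{G(P)}$ iff $\za\zb\in I$ because inserting the new arcs $\tau(\za),\tau(\zb)$ into the fans preserves (non-)consecutivity of the two original $P$-angles, and because no other relations can arise since the only $P$-angles of $G(P)$ occur at old marked points.
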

\begin{proof}
We first show that the quiver $\Qbar$ is the quiver of the tiling $G\TiledSurface$. Indeed, the new vertices $v_\za$ correspond to the new arcs $\tau(\za)$, for each $\za\in Q_1$. For every arrow $\za\colon i\to j$ in $Q$, the path  $\xymatrix{i\ar[r]^{\za_a}&v_\za\ar[r]^{\za_b}&j}$ corresponds to the two $G(P)$-angles $(x, \tau_i, \tau(\za))$ and $(x, \tau(\za), \tau_j)$. 
 
Now we show that the ideal $\Ibar$ is the ideal of the tiling algebra. First observe that the composition $\za_a\za_b$ is nonzero in $\Abar$ as well as in the tiling algebra. On the other hand, if 
$\xymatrix{i\ar[r]^{\za}&j\ar[r]^{\zb}&k}$ lies in the ideal $I$, and thus $\za_b\zb_a\in \Ibar$, then in the surfaces we have the configuration shown in Figure~\ref{fig prop7.2} (possibly $x_1 = x_2$; compare with Figures ~\ref{fig:relations:a equals b} and   \ref{fig:relations:case 3}). 
In particular the path $\za_b\zb_a$ is a generator of the ideal of the tiling algebra, and all generators are of this form, since each marked point $x(\zg)$, with $\zg \in Q_1$, is only incident to one arc, $\tau (\zg)$, in $G(P)$.

\begin{figure}
\begin{center}
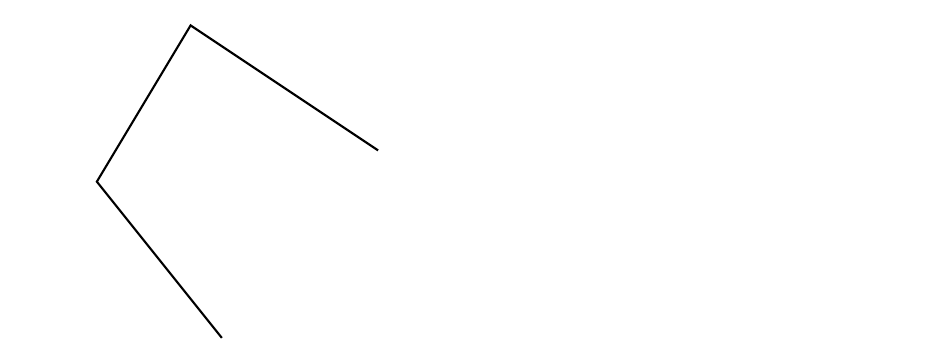
\caption{Proof of Proposition~\ref{prop 7.2}. The left picture shows a local configuration in the tiled surface $\TiledSurface$, and the right picture shows its image under $G$. The compositions $\za\zb$ and $\za_b\zb_a$ are zero in their respective tiling algebras.}
\label{fig prop7.2}
\end{center}
\end{figure}
\end{proof}

\subsection{The functor \texorpdfstring{$G\colon\textup{mod}\,A\to\textup{mod}\,\Abar$}{G}}\label{sect 7.3}
Given a string $w$ of $A$, we define $G(w)$ to be the string of $\Abar$ obtained by replacing each letter $\za$ with $\za_a\za_b$. If $M(w)$ is the string module of the string $w$, define $G(M(w))=M(G(w))$ to be the string module of the string $G(w)$.

If $w$ is a band of $A$, then $G(w)$ is a band of $\Abar$, again obtained by replacing each letter $\za$ with $\za_a\za_b$. 
If $M(w,\zl,r)$ is a band module of the band $w$ with parameters $\zl, r$, we define $G(M(w,\zl,r))=M(G(w),\zl,r)$ to be the band module of the band $G(w)$ with parameters $\zl,r$. This defines $G$ on indecomposable objects, and we extend $G$ additively to direct sums in $\textup{mod}\,A$. 

In particular, if $M=(M_i,M_\za)_{i\in Q_0,\za\in Q_1}$, we have the following description of the representation $G(M)$. On vertices $i$ of $\Qbar$, we have
$G(M)_i= M_i$ if $i\in Q_0$, and for all arrows $\za$ of $Q$, we have
$G(M)_{v_\za}=\im \,M_\za$. In particular, if $M=M(w)$ is a string module, then $G(M)_{v_\za}=\kb^{m_\za(w)}$ where $m_\za(w)$ is the multiplicity of the arrow $\za$ in the string $w$. 

On arrows of $\Qbar$, we can describe $G(M)$ by the following commutative diagram. 
\[\xymatrix@C34pt{
G(M)_{s(\za)}\ar[d]^=\ar[r]^{G(M)_{\za_a}}&G(M)_{v_\za}\ar[d]^=
\ar[r]^{G(M)_{\za_b} } &G(M)_{t(\za)}\ar[d]^=\\
M_{s(\za)}\ar[r]^{M_{\za}}&\im \,M_\za 
\ar[r]^{\textup incl. } &M_{t(\za)}\\
}\] 
Thus $G(M)_{\za_a}=M_\za$ and $G(M)_{\za_b}$ is the inclusion map $\im\, M_\za\subset M_{t(\za)}$. 
This defines the functor $G$ on objects.

To define $G$ on morphisms, let $f=(f_i)_{i\in Q_0}\colon M\to N$ be a morphism in $\textup{mod}\,A$. 
Let $G(f)_i=f_i$ if $i\in Q_0$, and for $\za\colon i\to j$ in $Q$ let $G(f)_{v_\za}=f_j|_{\im \,M_\za}$ be the restriction of the map $f_j$ to the image of $M_\za$. 
Let us check that $G(f)$ is indeed a morphism in $\textup{mod}\,\Abar$. For every arrow $\za\in Q_1$, the diagram 
\[\xymatrix@C34pt{
G(M)_i\ar[d]_{G(f_i)}\ar[r]^{G(M)_{\za_a}}&G(M)_{v_\za}\ar[d]^{G(f)_{v_\za}}
\ar[r]^{G(M)_{\za_b} } &G(M)_j\ar[d]^{G(f_j)}\\
G(N)_i\ar[r]^{G(N)_{\za_a}}&G(N)_{v_\za}
\ar[r]^{G(N)_{\za_b} } &G(N)_j\\
}\] 
is equal to the diagram
\[\xymatrix@C34pt{
M_i\ar[r]^{M_\za}\ar[d]_{f_i}&\im\,M_\za\ar[r]^{\textup incl}\ar[d]^{f_j|_{\im\, M\za}}&M_j\ar[d]^{f_j} 
\\
N_i\ar[r]^{N_\za}&\im\,N_\za\ar[r]^{\textup incl}&N_j }\]
and this diagram commutes because $f=(f_i)_{i\in Q_0}$ is a morphism in $\textup{mod}\,A$.
This shows that $G(f)$ is a morphism in $\textup{mod}\,\Abar$. To show $G$ is a covariant functor, it suffices to observe that $G(1_M)=1_{G(M)}$ and $G(f\circ g)=G(f)\circ G(g)$.
\begin{theorem}\label {thm functor G}
The functor $G\colon \textup{mod}\,A\to\textup{mod}\,\Abar$ is fully faithful. 
\end{theorem}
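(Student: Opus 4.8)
The plan is to prove the two halves of the statement separately. Faithfulness will be immediate from the construction of $G$ on morphisms; fullness will be deduced from an equality of dimensions of $\Hom$-spaces, obtained by matching up the combinatorial bases of Proposition~\ref{prop: string_hom_basis} on the two sides. Since $\textup{mod}\,A$ is Krull--Schmidt and $G$ is additive, it suffices to treat indecomposable $M,N$, and all $\Hom$-spaces are finite dimensional. By definition, for $f=(f_i)_{i\in Q_0}\colon M\to N$ the component of $G(f)$ at a vertex $i\in Q_0\subseteq\Qbar_0$ equals $f_i$, so $G(f)=0$ forces $f=0$; thus each map $\Hom_A(M,N)\to\Hom_{\Abar}(G(M),G(N))$ is injective, and it remains only to prove $\dim\Hom_A(M,N)=\dim\Hom_{\Abar}(G(M),G(N))$, after which injectivity forces bijectivity and hence full faithfulness.

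For the dimension count I would reduce to string modules $M=M(w)$, $N=M(v)$ — the band case being entirely analogous, using the band version of Proposition~\ref{prop: string_hom_basis} and the identity $G(M(w,\zl,r))=M(G(w),\zl,r)$. By Proposition~\ref{prop: string_hom_basis} we have $\dim\Hom_A(M(w),M(v))=|\basis(w,v)|$ and $\dim\Hom_{\Abar}(G(M(w)),G(M(v)))=|\basis(G(w),G(v))|$, and each basis element is recorded by a string which, up to inversion, occurs as an up-set subwalk of the source string and as a down-set subwalk of the target string. I will produce a bijection $\basis(G(w),G(v))\to\basis(w,v)$, $\bar w''\mapsto w''$, characterised by $\bar w''=G(w'')$. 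Recall that $G(w)$ is obtained from $w=\za_1\cdots\za_\stringlength$ by replacing each $\za_i\in Q_1$ by $(\za_i)_a(\za_i)_b$ and each $\za_i=\zb^{-1}$ by $\zb_b^{-1}\zb_a^{-1}$; thus along $G(w)$ the vertices alternate between vertices of $Q_0$ and the new vertices $v_\za$, each $v_\za$ sitting at the midpoint of a replaced pair and being, in $\Qbar$, the source of the single arrow $\za_b$ and the target of the single arrow $\za_a$.

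The heart of the argument — and, I expect, the only delicate point — is the following claim: if a subwalk $\bar w''$ of $G(w)$ is an up-set of $G(w)$ and also occurs as a down-set subwalk of $G(v)$, then both endpoints of $\bar w''$ are vertices of $Q_0$. Suppose instead that $\bar w''$ begins at a new vertex $v_\za$. Then its start lies at an even position of $G(w)$, hence is not the first letter of $G(w)$, so by Remark~\ref{rem:string submodule and string quotient module} the letter of $G(w)$ preceding it must be inverse. That preceding letter is the first half of the replaced pair through $v_\za$, namely $\za_a$ (direct) if the pair comes from an occurrence of $\za$, and $\za_b^{-1}$ (inverse) if it comes from an occurrence of $\za^{-1}$; so we must be in the latter situation, with $\bar w''$ beginning with $\za_a^{-1}$. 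But in $G(v)$ the letter $\za_a^{-1}$ occurs only as the second half of $G(\za^{-1})=\za_b^{-1}\za_a^{-1}$, so it is always immediately preceded there by the inverse letter $\za_b^{-1}$ and is never the first letter of $G(v)$; by Remark~\ref{rem:string submodule and string quotient module} this contradicts $\bar w''$ being a down-set of $G(v)$. Hence $\bar w''$ begins at a $Q_0$-vertex, and the dual argument, using the letter following the end of $\bar w''$, shows it ends at one.

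Granting the claim, a subwalk of $G(w)$ with both endpoints in $Q_0$ traverses only complete replaced pairs, hence equals $G(w'')$ for a unique subwalk $w''$ of $w$, and likewise $G$ of a unique subwalk of $v$. Since a letter of $G(w)$ adjacent to a $Q_0$-vertex is direct precisely when the corresponding letter of $w$ is direct (and similarly for $G(v)$ and $v$), Remark~\ref{rem:string submodule and string quotient module} shows that $G(w'')$ is an up-set subwalk of $G(w)$ if and only if $w''$ is an up-set subwalk of $w$, and a down-set subwalk of $G(v)$ if and only if $w''$ is a down-set subwalk of $v$. Therefore $\bar w''\mapsto w''$ is the desired bijection $\basis(G(w),G(v))\to\basis(w,v)$, the two $\Hom$-spaces have equal dimension, and together with faithfulness this proves that $G$ is fully faithful. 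The main obstacle, as indicated, is exactly the claim above: ruling out putative basis elements that would begin or end ``in the middle'' of a subdivided arrow, which works only because of the asymmetry between the up-set condition imposed on the $G(w)$-side and the down-set condition imposed on the $G(v)$-side at the inserted vertices $v_\za$.
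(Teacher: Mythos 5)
Your proof is correct, but it takes a genuinely different route from the one in the paper. The paper proves fullness directly and uniformly, without any dimension count or combinatorics: given $h\in\Hom_{\Abar}(G(M),G(N))$, the commutativity of the right-hand square in the defining diagram of $G$ forces $h_{v_\za}=h_j|_{\im\,M_\za}$ for every arrow $\za\colon i\to j$, so $h$ is already determined by its components at the vertices in $Q_0$; the left-hand square then gives precisely the condition that $f:=(h_i)_{i\in Q_0}$ is a morphism $M\to N$ in $\textup{mod}\,A$, and one checks $G(f)=h$. This requires no reduction to indecomposable modules, no appeal to the Crawley--Boevey basis, and no separate treatment of strings versus bands. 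Your approach instead proves fullness by combining the same faithfulness observation with an equality of $\Hom$-space dimensions, obtained from a bijection between the bases of Proposition~\ref{prop: string_hom_basis} for $(G(w),G(v))$ and for $(w,v)$. The heart of your argument --- the asymmetry between the up-set constraint on the $G(w)$-side and the down-set constraint on the $G(v)$-side, which rules out basis subwalks with an endpoint at a subdivision vertex $v_\za$ --- is a genuine structural insight, and it would also give a direct combinatorial description of $\Hom_{\Abar}(G(M),G(N))$ in terms of $\Hom_A(M,N)$. It buys this at the cost of extra scaffolding: the Krull--Schmidt reduction, the band case handled only by analogy, and some care with the string-inversion convention when matching the up-set occurrence in $G(w)$ with the down-set occurrence in $G(v)$ (you treat one orientation explicitly; the other is symmetric but should be noted). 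The paper's representation-theoretic argument is both shorter and more robust, since it works for arbitrary modules without invoking the classification of indecomposables at all.
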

\begin{proof}
Let $f\in\Hom_A(M,N)$ be such that $G(f)=0.$ Then $G(f)_i=0$ for all vertices $i\in Q_0$, so by definition of $G$, we have $f_i=0$, for all $i\in Q_0$, hence $f=0$, and $G$ is faithful. 

Now let $h\in\Hom_{\Abar}(G(M),G(N)).$ By definition of $G$, we have the following commutative diagram for all arrows $\za\colon i\to j$ in $Q_1$.
\[\xymatrix@C34pt{
M_i\ar[r]^{M_\za}\ar[d]_{h_i}&\im\,M_\za\ar[r]^{\textup incl}\ar[d]^{h_{v_\za}}&M_j\ar[d]^{h_j} 
\\
N_i\ar[r]^{N_\za}&\im\,N_\za\ar[r]^{\textup incl}&N_j }\]
In particular, the commutativity of the right square implies $h_{v_\za}=h_j|_{\im\,M_\za}$.
Thus, we can define $f\in\Hom_A(M,N)$ by $f_i=h_i$ for all $i\in Q_0,$ and obtain $G(f)=h$. Hence $G$ is full.
\end{proof}

Note that $G$ is not an equivalence of categories, since $G$ is not dense. For example the simple modules $\Sbar(v_\za)$, with $\za\in Q_1$, do not lie in the image of $G$. The following proposition describes the image of $G$.
\begin{proposition}
    \label{prop im G}
    An  $\Abar$-module $\Mbar$ lies in the image of $G$ if and only if the simple summands of the top and the socle of $\Mbar$  are all of the form $\Sbar(i)$ with $i \in Q_0$.
\end{proposition}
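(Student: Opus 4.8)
The plan is to prove both implications by working with string and band modules directly, using the explicit description of $G$ on objects given just before the statement. For the forward direction, suppose $\Mbar = G(M)$. By additivity of $G$ it suffices to treat the case where $M$ is indecomposable, so $M = M(w)$ is a string module (or a band module; the band case is identical since $G(w)$ is again obtained by the substitution $\za \mapsto \za_a\za_b$). Then $G(M) = M(G(w))$, and the string $G(w)$ visits the new vertices $v_\za$ only in the middle of the length-two detours $\za_a\za_b$, never at an end of the walk and never at a ``peak'' or ``valley'' of the coefficient quiver in a way that would put $v_\za$ in the top or socle. More precisely, in $G(w)$ each occurrence of $\za_a$ is immediately followed by $\za_b$ (and each $\za_b$ is immediately preceded by $\za_a$), so at every vertex $v_\za$ the coefficient quiver of $M(G(w))$ has exactly one incoming and one outgoing arrow at the corresponding basis vector; hence $v_\za$ contributes to neither the top (sources of the coefficient quiver) nor the socle (sinks). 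Therefore all top and socle summands of $G(M)$ are of the form $\Sbar(i)$ with $i \in Q_0$.

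For the converse, suppose $\Mbar$ is an $\Abar$-module all of whose top and socle simple summands are $\Sbar(i)$ with $i \in Q_0$. I would first reduce to $\Mbar$ indecomposable, hence $\Mbar = M(\bar w)$ a string module (the band case again being parallel), using Proposition~\ref{probAbargentle} so that $\Abar$ is gentle and its indecomposables are strings or bands. Now I claim the hypothesis on $\bar w$ forces it to be a ``substituted'' string, i.e.\ $\bar w = G(w)$ for some string $w$ of $A$. The point is the local structure of $\Qbar$: every vertex $v_\za$ has $\indegree = \outdegree = 1$ in $\Qbar$, with the unique incoming arrow $\za_a$ and unique outgoing arrow $\za_b$; and $\za_a\za_b \notin \Ibar$ while, for any arrow $\zg$ with $\zg\za_a$ a path, $\zg = \zb_b$ for a unique $\zb$, with $\zb_b\za_a \in \Ibar$ iff $\zb\za \in I$ (this is exactly what was checked in the proof of Proposition~\ref{probAbargentle}). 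Consequently, any reduced walk $\bar w$ in $\Qbar$ that passes through a vertex $v_\za$ must do so either along the subwalk $\za_a\za_b$ or along $(\za_a\za_b)^{-1} = \za_b^{-1}\za_a^{-1}$ — unless it turns around at $v_\za$, which would make $v_\za$ a peak or a valley of the coefficient quiver. A peak at $v_\za$ would contribute $\Sbar(v_\za)$ to the top, and a valley would contribute $\Sbar(v_\za)$ to the socle; either is excluded by hypothesis. Likewise $\bar w$ cannot start or end at a vertex $v_\za$, since such an endpoint is a source or sink of the coefficient quiver and so contributes $\Sbar(v_\za)$ to the top or socle. Hence every visit to a $v_\za$ occurs inside an (oriented) $\za_a\za_b$ segment, and $\bar w$ begins and ends at vertices of $Q_0$. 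Reading off the original arrows, $\bar w = G(w)$ where $w$ is the string of $A$ obtained by contracting each $\za_a\za_b$ back to $\za$; the fact that $w$ avoids the relations of $I$ follows from the displayed equivalence $\zb_b\za_a \in \Ibar \iff \zb\za \in I$. Therefore $\Mbar = M(\bar w) = M(G(w)) = G(M(w))$ lies in the image of $G$.

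The main obstacle, and the step deserving the most care, is the converse: making rigorous the claim that the top/socle condition precludes a walk from ``turning around'' at a vertex $v_\za$ or terminating there. This requires the precise dictionary between the top and socle of a string module and the sources and sinks of its coefficient quiver (available from Definition~\ref{def:string modules} and the fence-poset description recalled after it), together with a careful case analysis at a vertex $v_\za$ using $\indegree_{\Qbar}(v_\za) = \outdegree_{\Qbar}(v_\za) = 1$ and the shape of the relations in $\Ibar$ (relations are exactly the paths $\za_b\zb_a$, so they never begin with an arrow of type $\za_a$ nor end with one of type $\za_b$). One must also handle trivial strings $1_v$ (these are $\Sbar(v)$; the condition forces $v \in Q_0$, and indeed $\Sbar(v) = G(S(v))$) and, briefly, confirm that the band case introduces no new endpoints so the same local analysis applies verbatim. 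Once this local analysis is in place, the global conclusion $\bar w = G(w)$ is immediate.
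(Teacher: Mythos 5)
Your proof is correct and follows the same line as the paper's: reduce to indecomposable string (or band) modules, observe that in $G(w)$ each new vertex $v_\za$ is visited only inside a $\za_a\za_b$ segment and so contributes to neither top nor socle, and conversely that the top/socle hypothesis forces every occurrence of $\za_a$ or $\za_b^{-1}$ in $\overline{w}$ to be completed to $\za_a\za_b$ or $\za_b^{-1}\za_a^{-1}$, so $\overline{w}$ contracts to a string $w$ of $A$ with $G(w)=\overline{w}$. One very minor redundancy in your converse: since $v_\za$ has in-degree and out-degree $1$ in $\Qbar$ and strings are reduced, a peak or valley at $v_\za$ is already impossible for structural reasons, so the top/socle hypothesis is needed only to rule out $\overline{w}$ beginning or ending at $v_\za$.
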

\begin{proof} It suffices to prove the result in the case where $\Mbar$ is indecomposable.
If $\Mbar=G(M)$ for some $M\in \textup{mod}\,A$ with $\textup{top}\, M =\oplus_{i\in Q_0} m_i S(i)$ then $\textup{top}\, \Mbar=\oplus_{i\in Q_0} m_i \Sbar(i)$.  A similar formula holds for the socle.

Conversely, assume the top and socle of $\Mbar$ contain only summands  of the form $\Sbar(i)$, with $i \in Q_0$. If $\Mbar$ is a string $\Abar$-module then, whenever its string $\overline{w}$ contains the letter $\za_a$ (or $\za_b^{-1}$), it is immediately followed by the letter $\za_b$ (or $\za_a^{-1})$.   Define $w$ as the string in $A$ obtained from $\overline{w}$ by replacing every word $\za_a\za_b$ (or $\za_b^{-1}\za_a^{-1}$) with the letter $\za$ (or $\za^{-1}$). Then $G(w)=\overline{w}$ and thus $G(M(w))=\Mbar.$ The proof is similar if $\Mbar$ is a band module.
\end{proof}

Recall that the \emph{dimension} of a module is the sum of the entries in its dimension vector. 
\begin{proposition} \label{prop Gdim} 
Let $M$ be an indecomposable  $A$-module. 
\begin{enumerate}[(a)]
\item \label{prop Gdim:itm:a}
If $M$ is a string module then $\dim\, G(M)=2\,\dim\, M-1.$
\item \label{prop Gdim:itm:b}
If $M$ is a band module then $\dim\, G(M)=2\,\dim\, M.$ 
\end{enumerate} 
\end{proposition}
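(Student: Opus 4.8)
The plan is to reduce both parts to a length count: the dimension of an indecomposable string or band module is controlled by the length of the underlying (cyclic) reduced word, and the operation $w\mapsto G(w)$ simply doubles this length.

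For part~\ref{prop Gdim:itm:a}, write $M\cong M(w)$ with $w=\za_1\cdots\za_\stringlength$ a string of $A$ of length $\stringlength\ge 0$ (so $\stringlength=0$ exactly when $M$ is simple). By Definition~\ref{def:string modules}, the basis of $M(w)$ is indexed by $\{1,\dots,\stringlength+1\}$, hence $\dim\, M=\stringlength+1$. By the definition of $G$ in Section~\ref{sect 7.3}, the string $G(w)$ is obtained from $w$ by replacing each letter $\za_i$ with the two-letter word $(\za_i)_a(\za_i)_b$; thus $G(w)$ is a string of $\Abar$ of length $2\stringlength$, and so
\[
\dim\, G(M)=\dim\, M(G(w))=2\stringlength+1=2(\stringlength+1)-1=2\,\dim\, M-1.
\]
Equivalently, this can be read off the explicit description of $G(M)$ recorded just before Theorem~\ref{thm functor G}: $G(M)_i=M_i$ for $i\in Q_0$ and $G(M)_{v_\za}=\kb^{m_\za(w)}$, where $m_\za(w)$ counts the occurrences of $\za^{\pm1}$ in $w$; summing over $Q_1$ gives $\dim\, G(M)=\dim\, M+\sum_{\za\in Q_1}m_\za(w)=\dim\, M+\stringlength=2\,\dim\, M-1$.

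For part~\ref{prop Gdim:itm:b}, now let $w=\za_1\cdots\za_\stringlength$ be a band of $A$, viewed as a cyclic word of length $\stringlength$, and let $M\cong M(w,\zl,r)$. The band module $M(w,\zl,r)$ attaches an $r$-dimensional space to each of the $\stringlength$ cyclic positions of $w$, so $\dim\, M=r\stringlength$. As before, $G(w)$ is the band of $\Abar$ obtained by doubling every letter, hence of length $2\stringlength$, and since $G(M)=M(G(w),\zl,r)$ we obtain $\dim\, G(M)=r\cdot 2\stringlength=2\,\dim\, M$.

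There is no real obstacle here; the computation is routine. The one point that deserves a line of justification is that $G(w)$ is genuinely a reduced word (resp.\ band) of $\Abar$ of length exactly $2\stringlength$ — that is, that the letter-doubling introduces no cancellations and respects the relations in $\Ibar$ — but this is precisely what is built into the construction of $\Abar$ in Section~\ref{sect 7.1} and into the definition of $G$ in Section~\ref{sect 7.3}, so it may simply be cited.
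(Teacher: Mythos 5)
Your proof is correct and follows essentially the same route as the paper's: the paper counts vertices ($d$) and arrows ($d-1$) of the coefficient quiver of $M$, noting $\dim G(M)$ equals their sum, which is exactly your string-length count $\dim M = \ell+1$, $\dim G(M) = 2\ell+1$; the band case is handled identically in both, using that a cyclic word has equal numbers of positions and letters.
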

\begin{proof} \ref{prop Gdim:itm:a}
Let $d$ be the number of vertices in the coefficient quiver $\Gamma$ of $M$. Then $\Gamma$ has exactly $d-1$ arrows. The dimension of $M$ is the number of vertices of $\Gamma$. The dimension of $G(M)$ is the number of vertices plus the number of arrows. Thus $\dim\, M=d$ and $\dim\, G(M)=2d-1.$

\ref{prop Gdim:itm:b}
Similarly, the band defining $M$ has the same number of vertices and arrows. Thus the dimension of  $G(M)$ is twice the dimension of $M$.
\end{proof}
\subsection{\texorpdfstring{$G$ maps maximal almost rigid $A$-modules to tilting $\Abar$-modules}
{G maps maximal almost rigid A-modules to tilting Abar-modules}}
\label{sec:mar to tilting}

Tilting theory is an important branch of representation theory that plays a crucial role in several applications including the categorification of cluster algebras, see \cite{BMRRT06}. The main objects  of interest are the tilting modules and their endomorphism algebras.
We recall the following definition from  \cite[page 192--193, Definition VI.2.1]{ASS}.

Let $\Lambda$ be an algebra. A $\Lambda$-module $T$ is called a \emph{tilting module} if the following conditions are satisfied.
\begin{enumerate}[(1)]
\item\label{thm:tilting:itm1} The projective dimension of $T$ is at most 1.
\item\label{thm:tilting:itm2} $T$ has no self-extensions, that is, $\Ext^1(T,T)=0.$
\item \label{thm:tilting:itm3} 
There is a short exact sequence 
\[0\to \Lambda\to T'\to T''\to 0  \] with $T',T''\in \add \, T$.
 \end{enumerate}
By \cite[page 217, Corollary VI.4.4]{ASS},  condition \ref{thm:tilting:itm3} can be replaced by the following:
\begin{enumerate}[($3'$)]
\item
\label{thm:tilting:itm3prime} 
the number of pairwise non-isomorphic indecomposable summands of $T$ equals the number of pairwise non-isomorphic simple modules. 
\end{enumerate}

\smallskip

\begin{theorem}\label{thm:tilting}
Let $T$ be a maximal almost rigid module over a gentle algebra $A$. Then $G(T)$ is a tilting $\Abar$-module.
\end{theorem}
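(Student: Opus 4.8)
Proof proposal.

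The plan is to verify the three defining conditions of a tilting module for $G(T)$ over $\Abar$, using the structural results established so far. Condition \ref{thm:tilting:itm1} (projective dimension at most $1$) is the heart of the matter and will be the main obstacle; conditions \ref{thm:tilting:itm2} and \ref{thm:tilting:itm3prime} should follow more formally. First, I would dispatch condition \ref{thm:tilting:itm3prime}: by Theorem~\ref{thm functor G} the functor $G$ is fully faithful, so it sends pairwise non-isomorphic indecomposables to pairwise non-isomorphic indecomposables, hence $G(T)$ has exactly $m=|Q_0|+|Q_1|$ indecomposable summands (using Corollary~\ref{cor MAR summands}); on the other hand $\Abar$ has $|\Qbar_0|=|Q_0|+|Q_1|$ simple modules, so the counts agree. (One should also note $G(T)$ is basic, again because $G$ is fully faithful.)

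For condition \ref{thm:tilting:itm2}, I would show $\Ext^1_{\Abar}(G(T),G(T))=0$. Since $T$ is maximal almost rigid, Proposition~\ref{prop:all-exts} says there are no overlap extensions among the summands of $T$, equivalently (Proposition~\ref{prop:overlap-cross}) the permissible arcs of the triangulation $\calt$ do not cross in their interiors. The key point is to translate this into the surface $G\TiledSurface$: by Proposition~\ref{prop 7.2}, $\Abar$ is the tiling algebra of $G\TiledSurface$, and the construction in Section~\ref{sec:tiling G(S,M,P)} subdivides each tile without introducing interior crossings between the (images of the) arcs of $\calt$. Concretely, I would argue that the permissible arc of $G\TiledSurface$ corresponding to $G(M(w))$ is isotopic to the arc corresponding to $M(w)$ pushed off the new $P$-arcs $\tau(\za)$, so two such arcs cross in $G(S)$ if and only if the original arcs crossed in $S$. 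Since no two arcs of $\calt$ cross, no two arcs in the image cross, hence $\Ext^1_{\Abar}(G(T),G(T))=0$ by Proposition~\ref{prop:overlap-cross} applied over $\Abar$. One must additionally rule out arrow extensions among the $G(T_i)$; here I would use Remark~\ref{rem:arrow extensions correspond to certain intersections at endpoints} together with the fact that in $G\TiledSurface$ every tile contains a boundary segment, so the endpoint-configuration of Figure~\ref{fig:arrow-extension} cannot arise between two arcs both sitting inside the triangulation $G(\calt)$ — alternatively, one observes directly from the definition of $G(w)$ that if $vaw$ were a string of $\Abar$ with $v,w\in\im G$ then $a$ would be of the form $\za_a$ or $\za_b$, forcing a vertex of the form $v_\za$ in the top or socle of $G(M(v))$ or $G(M(w))$, contradicting Proposition~\ref{prop im G}.

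The remaining and most delicate step is condition \ref{thm:tilting:itm1}: $\pd_{\Abar} G(T_i)\le 1$ for every summand $T_i$ of $T$. Since $\Abar$ has global dimension at most $2$ (Proposition~\ref{probAbargentle}), it suffices to show that a minimal projective presentation $\Pbar_1\to\Pbar_0\to G(T_i)\to 0$ has kernel $0$, i.e. that the map $\Pbar_1\to\Pbar_0$ is injective. I would do this by analyzing the projective cover of $G(M(w))$ explicitly using the string combinatorics: the top of $G(M(w))$ consists of simples $\Sbar(i)$ with $i\in Q_0$ (Proposition~\ref{prop im G}), so $\Pbar_0=\bigoplus \Pbar(i)$ with $i\in Q_0$, and the syzygy $\Omega\, G(M(w))$ is a submodule of $\rad\Pbar_0$. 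By Proposition~\ref{prop:projective Abar}\ref{prop:projective Abar:itm1}, $\rad\Pbar(i)$ is a direct sum of modules of the form $\Pbar(v_\za)$ (for the arrows $\za$ out of $i$ that don't begin a relation) plus, when $\outdegree i=2$, possibly one more string summand; the crucial observation is that the relations of $\Abar$ all have the form $\za_b\zb_a$ with middle vertex in $Q_0$, so the syzygy $\Omega\, G(M(w))$, which is generated by the "ends" of the string $G(w)$ where it fails to be a summand of $\rad\Pbar_0$, is again a direct sum of projectives of the form $\Pbar(v_\za)$. Here is where I expect the real work: one must show that maximality of $T$ forces each such end of $G(w)$ to extend to a full projective $\Pbar(v_\za)$ inside $\rad\Pbar_0$ — this should come down to Proposition~\ref{prop:projective Abar}\ref{prop:projective Abar:itm4} (a submodule of $\Pbar(i)$ with top $\Sbar(v_\zb)$ equals $\Pbar(v_\zb)$) combined with the characterization of required summands and the structure of $M_\proj$ from Section~\ref{sec:Mproj}, ultimately using that the summands of a maximal almost rigid module, read on the surface, leave no room for a strictly-smaller-than-projective syzygy. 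I would organize this by first treating $T_i=M_{\proj}$-type summands (projectives, radicals of projectives, required summands), for which $G(T_i)$ is directly seen to have a projective syzygy, and then argue the general MAR module by the mutation/completion arguments already in the paper (Proposition~\ref{prop can complete ar to mar}) or, cleaner, by noting that having $\pd\le 1$ for all summands is a property detected on the surface via $G\TiledSurface$ and is invariant under passing between different maximal noncrossing collections. The hard part, to emphasize, is pinning down the syzygy of an arbitrary $G(M(w))$ and showing it is projective; everything else is bookkeeping.
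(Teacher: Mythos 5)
Your plan for condition~\textup{(3$'$)} matches the paper's and is correct. For condition~\textup{(1)}, however, you have talked yourself into a phantom difficulty. You set up the right apparatus (minimal projective cover, Proposition~\ref{prop:projective Abar}), but then assert that ``one must show that maximality of $T$ forces each such end of $G(w)$ to extend to a full projective $\Pbar(v_\za)$,'' and you end by gesturing at mutation and surface invariance. Maximality plays no role whatsoever here. The bound $\pd_{\Abar} G(M(w)) \le 1$ holds for \emph{every} string $A$-module $M(w)$, not just for summands of maximal almost rigid ones; it is a structural fact about the image of $G$. After decomposing the kernel $\Lbar = \Lbar_0 \oplus \cdots \oplus \Lbar_k$ of the projective cover via \cite[Prop.~3.1]{BS21} (the interior summands are automatically projective for a gentle algebra), each end summand $\Lbar_0$ or $\Lbar_k$ is either zero or has top $\Sbar(v_\za)$ for some $\za \in Q_1$---because every relation $\za_b\zb_a$ of $\Abar$ passes through an old vertex of $Q_0$, the end of $G(w)$ simply runs into $\za_b$. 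Proposition~\ref{prop:projective Abar}\ref{prop:projective Abar:itm4} then forces $\Lbar_0 = \Pbar(v_\za)$, and you are done. No detour through $M_\proj$ or mutation is needed.

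For condition~\textup{(2)}, your route is genuinely different from the paper's. The paper lifts a short exact sequence $0 \to G(T_1) \to \overline E \to G(T_2) \to 0$ back through $G$ (using Proposition~\ref{prop im G} to see $\overline E \in \im\,G$ and fullness of $G$), applies almost rigidity to conclude the lifted sequence either splits or has indecomposable middle term $E$, and kills the second case by the dimension count of Proposition~\ref{prop Gdim}: $\dim G(T_1) + \dim G(T_2) = 2\dim E - 2$, which can never equal $\dim G(E) \in \{2\dim E - 1,\; 2\dim E\}$. You instead propose to work entirely inside $\textup{mod}\,\Abar$, ruling out overlap extensions via noncrossing of $G(\calt)$ and arrow extensions via endpoints. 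Your arrow-extension argument---that a string $vaw$ with $v,w \in \im\,G$ would place a vertex $v_\za$ in the top or socle of $M(v)$ or $M(w)$, contradicting Proposition~\ref{prop im G}---is clean and correct, and is a nice alternative to part of the paper's argument. But the overlap-extension half rests on the unproved assertion that $G(\gamma_i)$ and $G(\gamma_j)$ cross in $G(S)$ only when $\gamma_i$ and $\gamma_j$ cross in $S$; this requires real justification near the punctures, which become boundary components in $G(S)$, changing isotopy classes and sending a single puncture endpoint $y$ to a whole set $G(y)$. The paper's dimension count sidesteps this cleanly and is the simpler route.
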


\begin{figure}
\centering
\begin{tikzpicture}[xscale=1,yscale=0.6]
\node (xLL) at (-1.5,-3) {};
\node (xL) at (-0.5,-1) {};

\node at (-0.5,-2.5) [ForestGreen] {$\Lbar_0$};

\draw[ForestGreen, densely dashed, shorten <=-2pt, shorten >=-2pt]  (xLL) -- (xL);

\draw plot [smooth cycle, tension=.3] 
coordinates {(-0.5-0.2,-1+0.1)(-0.5+0.2,-1-0.2) (-1.5+0.2,-3-0.2)(-1.5-0.2,-3+0.1)} [ForestGreen];

\node (x) at (0,0) {};
\node at (x) [fill,circle,inner sep=1.1] {};
\node at (x) [left] {$x$};

\draw [very thick, ForestGreen,->] (x) -- (xL);

\node (v1) at (2,4) {};
\node at (v1) [fill,circle,inner sep=1.1] {};
\node at (v1) [above] {$v_1$};

\node (v12) at (3,2) {};
\node at (v12) [fill,circle,inner sep=1.1] {};

\draw plot [smooth cycle, tension=.3] 
coordinates {
(3-0.2,2+0.1)
(3+0.2,2+0.1) 
(3.5+0.2,1+0.1)
(3.5-0.2,1-0.2)
(3,1.4)
(2.5+0.2,1-0.2)
(2.5-0.2,1+0.1)} [ForestGreen];

\node at (3,0.5) [ForestGreen] {$\Lbar_1$};

\node (v12L) at (2.5,1) {};
\node (v12R) at (3.5,1) {};

\draw [shorten <=-4pt, shorten >=-4pt] (x) -- (v1);
\draw [shorten <=-4pt, shorten >=-4pt] (v1) -- (v12);

\draw[ForestGreen, densely dashed, shorten <=-2pt, shorten >=-2pt] (v12L) -- (v12);
\draw[ForestGreen, densely dashed, shorten <=-2pt, shorten >=-2pt]  (v12) -- (v12R);

\node (v2) at (4,3.5) {};
\node at (v2) [fill,circle,inner sep=1.1] {};
\node at (v2) [above] {$v_2$};

\draw[shorten <=-2pt, shorten >=-2pt] (v2) -- (v12);

\node (v23) at (5,1) {};
\node at (v23) [fill,circle,inner sep=1.1] {};
\draw[shorten <=-2pt, shorten >=-2pt] (v2) -- (v23);

\node (v23L) at (4.5,0) {};
\node (v23R) at (5.5,0) {};

\draw plot [smooth cycle, tension=.3] 
    coordinates {
    (5-0.2,1+0.1)
    (5+0.2,1+0.1) 
    (5.5+0.2,0+0.1)
    (5.5-0.2,0-0.2)
    (5,.4)
    (4.5+0.2,0-0.2)
    (4.5-0.2,0+0.1)} [ForestGreen];

\node at (5,-0.5) [ForestGreen] {$\Lbar_2$};

\draw[ForestGreen, densely dashed, shorten <=-2pt, shorten >=-2pt] (v23L) -- (v23);
\draw[ForestGreen, densely dashed, shorten <=-2pt, shorten >=-2pt]  (v23) -- (v23R);

\node (v3) at (5.7,2.2) {};

\draw[shorten <=-2pt, shorten >=-2pt] (v3) -- (v23);

\node at (6,2.2) {$\dots$};

\node (v4) at (6.3,2.2) {};

\node (v45) at (7,1) {};
\node at (v45) [fill,circle,inner sep=1.1] {};
\draw[shorten <=-2pt, shorten >=-2pt] (v4) -- (v45);

\draw plot [smooth cycle, tension=.3] 
    coordinates {
    (7-0.2,1+0.1)
    (7+0.2,1+0.1) 
    (7.5+0.2,0+0.1)
    (7.5-0.2,0-0.2)
    (7,.4)
    (6.5+0.2,0-0.2)
    (6.5-0.2,0+0.1)} [ForestGreen];

\node at (7.4,-0.5) [ForestGreen] {$\Lbar_{k-1}$};

\node (v45L) at (6.5,0) {};
\node (v45R) at (7.5,0) {};

\draw[ForestGreen, densely dashed, shorten <=-2pt, shorten >=-2pt] (v45L) -- (v45);
\draw[ForestGreen, densely dashed, shorten <=-2pt, shorten >=-2pt]  (v45) -- (v45R);

\node (vk) at (8,3.5) {};
\node at (vk) [fill,circle,inner sep=1.1] {};
\node at (vk) [above] {$v_k$};

\draw[shorten <=-2pt, shorten >=-2pt] (vk) -- (v45);

\node (y) at (9.5,0) {};
\node at (y) [fill,circle,inner sep=1.1] {};

\draw [shorten <=-4pt, shorten >=-4pt] (y) -- (vk);

\node (yR) at (10,-1) {};
\node (yRR) at (11,-3) {};
\draw [very thick, ForestGreen,->] (y) -- (yR); 

\draw plot [smooth cycle, tension=.3] 
coordinates {(10-0.2,-1-0.2)(10+0.2,-1+0.1) (11+0.2,-3+0.1)(11-0.2,-3-0.2)} [ForestGreen];
    
\node at (10.1,-2.5) [ForestGreen] {$\Lbar_k$};

\draw[ForestGreen, densely dashed, shorten <=-2pt, shorten >=-2pt]  (yR) -- (yRR);
\end{tikzpicture}
\caption{Illustration for the proof of Theorem~\ref{thm:tilting}.}
\label{fig:thm:tilting}
\end{figure}
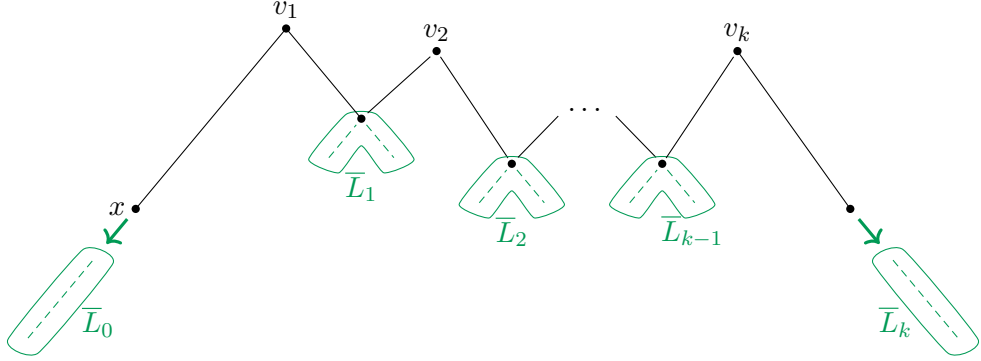

\begin{proof} 
\ref{thm:tilting:itm1} Let $T_1$ be an indecomposable summand of $T$ and $g \colon \overline{P}\to G(T_1)$ be a projective cover of $G(T_1)$ in $\textup{mod}\,\Abar$. Denote by $\Lbar$ the kernel of $g$, and write $\overline{P} = \Pbar_1 \oplus \dots \oplus \Pbar_k$ as a direct summand of indecomposable projective modules. We want to show that $\Lbar$ is projective. Since $\Abar$ is a gentle algebra, \cite[Proposition 3.1]{BS21} implies that $\Lbar=\Lbar_0\oplus \Lbar_1\oplus \cdots\oplus \Lbar_k$, where $\Lbar_1,\ldots,\Lbar_{k-1}$ are indecomposable projectives. It remains to show that $\Lbar_0$ and $\Lbar_k$ are either zero or projective modules. We will only show the former, as the proof of the latter is similar.

Let $w$ be a string of $\Abar$ for which $G(T_1) = M(w)$, and let $x=s(w)$ and $v_1, \ldots, v_k$ be the vertices defining the hills of $w$ in order. Note that $x, v_1, \ldots, v_k \in Q_0$, by Proposition~\ref{prop im G}. Moreover, up to reordering, we have $\Pbar_i = \Pbar(v_i)$, for $i=1, \ldots, k$, and $\Lbar_0$ is either zero or a uniserial submodule of the radical of $\Pbar_1$. See Figure~\ref{fig:thm:tilting} for an illustration.

Now, if there is $\za \in Q_1$ such that $s(\za) = x$ and $(\za_a \za_b)^{-1}w$ is a string of $\Abar$, then $\Lbar_0$ corresponds to a string whose first letter is $\za_b$. In particular, the top of $\Lbar_0$ is of the form $\Sbar (v_\za)$. It thus follows from Proposition~\ref{prop:projective Abar:itm4} that $\Lbar_0$ is projective. 

If no such arrow $\za$ exists, then $\Sbar (x)$ is a summand of the socle of $\Pbar_1$, and so $\Lbar_0 = 0$, which finishes the proof of~\ref{thm:tilting:itm1}.

\ref{thm:tilting:itm2} Assume there is a short exact sequence
\begin{equation}\label{eq ses}   \xymatrix{0\ar[r]&G(T_1)\ar[r]^-{\fbar}&\overline{E}\ar[r]^-{\overline{g}}&G(T_2)\ar[r]&0},
\end{equation}
with $T_1,T_2$ indecomposable summands of $T$. 

Our first goal is to show that $\overline{E}$ lies in the image of $G$. 
Suppose $i\colon\Sbar\hookrightarrow \overline E$ is a simple submodule of $\overline{E}$. If $\overline{g}\circ i =0$, then the universal property of the kernel yields a morphism $h\colon \Sbar\to G(T_1)$ such that $i=\fbar\circ h$. 
Since $\Sbar$ is simple and $h \neq 0$, we see that $h$ must be injective, and hence $\Sbar$ is a submodule of $G(T_1).$ Otherwise, $\overline{g}\circ i\ne 0$, and since $\Sbar$ is simple, this means $\overline{g}\circ i $ is injective. Thus $\Sbar $ is a submodule of $G(T_2).$ 
We have shown that every indecomposable summand of the socle of $\overline{E}$ is also a summand of the socle of $G(T_1)\oplus G(T_2).$ Using Proposition~\ref{prop im G}, we conclude that the socle of $\overline{E}$ only contains summands of the form $S(i)$, with $i\in Q_0.$ 
The analogous result for the top of $\overline{E}$ is proved in a similar way. 
Applying Proposition~\ref{prop im G} again, we see that $\overline{E}$ lies in the image of $G$.

Say $\overline{E}=G(E)$ for some $A$-module $E$. Then, since $G$ is full, there exist morphisms $f\colon T_1\to E$ and $g\colon E\to T_2$ in $\textup{mod}\,A$ such that $\fbar=G(f)$ and $\overline{g}=G(g)$. From the definition of $G$, we obtain
$\im \fbar_i=\im f_i $ and $\ker \overline{g}_i=\ker g_i$ for every vertex $i\in Q_0.$ Therefore we get an exact sequence
$\xymatrix{0\ar[r]&T_1\ar[r]^-{f}&E\ar[r]^-{g}&T_2\ar[r]&0}$
in $\textup{mod}\,A.$
Since $T$ is almost rigid, this sequence is either split, or the middle term $E$ is indecomposable. If the sequence splits, then there exists $u\colon E\to T_1$ such that $u\circ f=1_{T_1}$. 
Consequently $G(u)\circ \fbar=G(u)\circ G(f)=G(u\circ f)= 1_{G(T_1)}$ and hence the sequence \eqref{eq ses} splits. Suppose now that the middle term $E$ is indecomposable. Because of exactness, we have $\dim \,E=\dim \, T_1+\dim \, T_2$. Since $T_1,T_2$ are string modules,  Proposition~\ref{prop Gdim} implies  $\dim \, G(T_1)+\dim \, G(T_2)= 2(\dim \, T_1+\dim \, T_2)-2=2\,\dim \, E-2$. On the other hand, the same proposition yields $\dim \, G(E)= 2\dim \, E -1$, if 
 $E$ is a string module, and $\dim \, G(E)= 2\dim \, E $, if 
 $E$ is a band module. In both cases, we have a contradiction to the exactness of \eqref{eq ses}. 

Thus the sequence \eqref{eq ses} splits and we have proved condition \ref{thm:tilting:itm2}.

\ref{thm:tilting:itm3prime}
By Corollary \ref{cor MAR summands}, the number of indecomposable summands of $T$ is $|Q_0| +|Q_1|$. By definition, this is also equal to the number of vertices in $\Qbar,$ and hence the number of simple modules over $\Abar$. The result follows since $G$ is fully faithful.
\end{proof}

\subsection{The image of $A$-modules under $G$ in the geometric model}
\label{sect 7.5}

Let $X$ be an indecomposable string $A$-module and $\zg$ the corresponding permissible arc in $\TiledSurface$. 
We first assume that $X$ is not simple.
Recall that each arrow $\alpha_i$ or inverse arrow $(\alpha_i)^{-1}$ in the string of $X$ corresponds to a $P$-angle which is traversed by $\gamma$, and denoted $\angle\gamma^i$.
In particular, the curve $\gamma$ is determined by the sequence:
\[y, \angle\gamma^1, \angle\gamma^2,\ldots, \angle\gamma^t,y' \]
where $y,y'\in M^*$ are the endpoints of $\zg$.

Observe that in  $(G(S),G(M), G(P))$ each $P$-angle $\angle \gamma^i$ corresponding to an arrow $\alpha_i\in Q_1$ is bisected by a new $P$-arc $\tau(\alpha_i)$ to form two $P$-angles, $\angle \gamma^{ia}$ and $\angle \gamma^{ib}$.
The pair of $P$-angles $\angle \gamma^{ia}$ and $\angle \gamma^{ib}$ corresponds to the pair of arrows $\alpha_{ia}$ and $\alpha_{ib}$.
The permissible arc $G(\zg)$ in $(G(S),G(M),G(M^*),G(P))$ which represents the module $G(X)$ is defined by the sequence:

\[y_1, \angle\gamma^{1a},\angle\gamma^{1b},  \angle\gamma^{2a},\angle\gamma^{2b},\ldots, \angle\gamma^{ta}, \angle\gamma^{tb},y_1' \]
where $y_1, y_1'\in G(M^*)$ are uniquely determined by the rest of the sequence.

If $X$ is simple $S(i)$, then $\gamma$ traverses no angles and crosses only the $P$-arc $\tau_i$.
We define $G(\gamma)$ so that it also traverses no angles and crosses only $G(\tau_i)$.

If $T=T_1 \oplus \cdots \oplus T_m$ is a maximal almost rigid $A$-module and $\calt = \{\gamma_1, \ldots, \gamma_m\}$ is the corresponding triangulation of $(S, M^*)$, then $G(\calt)$ denotes the set of permissible arcs $\{G(\gamma_i) \mid i =1, \ldots m \}$ of $(G(S),G(M),G(M^*),G(P))$.

For an illustration, see Example~\ref{ex 7.5}. For instance, here the arc $\zg_2$ in $\calt$ that joins the points $y_2,y_3$ in $M^*$ corresponds to the $A$-module ~$\begin{smallmatrix}
 2\\1
\end{smallmatrix}$.
We have $G(y_2)=\{y_{21},y_{22}\}$ and $G(y_3)=\{y_3\}$. The starting point of $G(\zg_2)$ is $y_{22}$, the unique point of $G(M^*)$ in the first tile of $G(\zg)$.

\subsection{The tiled surface of \texorpdfstring{$\End_A T$}{End T}}\label{sect 7.6}

Recall that we denote by $C$ the endomorphism algebra $\End_A T$ of an MAR module $T$ over a gentle algebra $A$. Also recall the notation $T=T_1 \oplus \cdots \oplus T_m$ for the decomposition of $T$ into pairwise non-isomorphic indecomposable summands.
Now let $\calt = \{ \gamma_1, \ldots, \gamma_m\}$ denote the corresponding triangulation of $(S,M^*)$.

\begin{lemma} \label{lem end}
We have $C\cong \End_{\Abar} \ G(T).$ 
\end{lemma}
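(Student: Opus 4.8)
The plan is to use the full faithfulness of the functor $G$ established in Theorem~\ref{thm functor G}. Recall that $C = \End_A T = \End_A(T_1 \oplus \cdots \oplus T_m)$, and that $G$ maps $T$ to the $\Abar$-module $G(T) = G(T_1) \oplus \cdots \oplus G(T_m)$, which by Theorem~\ref{thm:tilting} is a tilting $\Abar$-module. The key observation is that the functor $G$ induces, for every pair of indecomposable summands $T_i, T_j$ of $T$, a $\kb$-linear isomorphism
\[
\Hom_A(T_i, T_j) \xrightarrow{\ \sim\ } \Hom_{\Abar}(G(T_i), G(T_j)),
\]
simply by the definition of $G$ being fully faithful: injectivity of $f \mapsto G(f)$ is faithfulness, and surjectivity is fullness.

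First I would assemble these isomorphisms into a single $\kb$-linear isomorphism $\Phi \colon \End_A T \to \End_{\Abar} G(T)$. Explicitly, writing a morphism $f \colon T \to T$ as a matrix $(f_{ij})$ with $f_{ij} \colon T_j \to T_i$ (using the biproduct decomposition), we set $\Phi(f) = (G(f_{ij}))$, viewed as an endomorphism of $G(T) = \bigoplus_i G(T_i)$ via the same biproduct structure on the target. Since $G$ is additive and preserves biproducts (being additive on a finite direct sum, the canonical inclusions and projections are sent to the canonical inclusions and projections of $\bigoplus_i G(T_i)$), one checks that $\Phi(f)$ is indeed the image $G(f)$ of $f$ under $G$ applied to the endomorphism $f$ of the object $T$. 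Then $\Phi$ is bijective because each block map $\Hom_A(T_j, T_i) \to \Hom_{\Abar}(G(T_j), G(T_i))$ is bijective.

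Next I would verify that $\Phi$ is a $\kb$-algebra homomorphism, i.e. that it respects composition and sends $1_T$ to $1_{G(T)}$. This is immediate from the functoriality of $G$: we have $G(1_T) = 1_{G(T)}$ and $G(f \circ g) = G(f) \circ G(g)$ for composable morphisms $f, g$, properties already recorded in Section~\ref{sect 7.3} where $G$ was shown to be a covariant functor. Hence $\Phi = G|_{\End_A T}$ is a unital ring homomorphism which is also $\kb$-linear, and we have just shown it is bijective. Therefore $\Phi$ is an isomorphism of $\kb$-algebras, giving $C = \End_A T \cong \End_{\Abar} G(T)$, as claimed.

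There is essentially no hard obstacle here; the statement is a formal consequence of full faithfulness together with $G$ being a $\kb$-linear functor. The only point requiring a line of care is the compatibility of $G$ with the biproduct decompositions used to identify $\End_A T$ with a matrix algebra and $\End_{\Abar} G(T)$ likewise — but since $G$ is additive and $T$ is a finite direct sum, $G$ carries the split idempotents projecting onto the summands $T_i$ to the split idempotents projecting onto the summands $G(T_i)$, so the matrix descriptions match up on the nose. Thus the proof is short and purely formal, relying only on Theorem~\ref{thm functor G}.
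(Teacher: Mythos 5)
Your proposal is correct and takes exactly the same route as the paper, which simply states that the isomorphism follows from the full faithfulness of $G$ established in Theorem~\ref{thm functor G}; you have just spelled out the routine details (preservation of biproducts, compatibility with composition and identity) that the paper leaves implicit.
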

\begin{proof}
    This follows because $G$ is fully faithful by Theorem~\ref{thm functor G}.
\end{proof}
\begin{corollary}\label{cor:end-alg-tilt}
    \label{cor end}
    The algebras $C$ and $\Abar$ are derived equivalent. In particular, $C$ is a gentle algebra.
\end{corollary}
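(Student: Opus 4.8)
The statement to prove is Corollary~\ref{cor:end-alg-tilt}: the algebras $C$ and $\Abar$ are derived equivalent, and in particular $C$ is gentle.

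\textbf{Approach.} The plan is to combine the results immediately preceding the corollary with the classical tilting theorem. By Theorem~\ref{thm:tilting}, $G(T)$ is a tilting $\Abar$-module, and by Lemma~\ref{lem end} (equivalently Theorem~\ref{thm intro 3}(c)) we have an algebra isomorphism $C\cong\End_{\Abar}G(T)$. The Happel tilting theorem (see, e.g., \cite{ASS}) states that if $\Lambda$ is a finite-dimensional algebra and $U$ is a tilting $\Lambda$-module, then the derived categories $D^b(\textup{mod}\,\Lambda)$ and $D^b(\textup{mod}\,\End_\Lambda U)$ are equivalent as triangulated categories. Applying this with $\Lambda=\Abar$ and $U=G(T)$ gives a triangulated equivalence $D^b(\textup{mod}\,\Abar)\simeq D^b(\textup{mod}\,\End_{\Abar}G(T))\simeq D^b(\textup{mod}\,C)$, which is the asserted derived equivalence.

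\textbf{Key steps, in order.} First, invoke Proposition~\ref{probAbargentle} to record that $\Abar$ is a gentle algebra (this is needed for the ``in particular'' claim). Second, quote Theorem~\ref{thm:tilting} to get that $G(T)$ is a tilting $\Abar$-module; in particular its projective dimension is at most $1$, so the hypotheses of Happel's theorem are met. Third, quote Lemma~\ref{lem end} for the isomorphism $C\cong\End_{\Abar}G(T)$. Fourth, apply Happel's tilting theorem to obtain the derived equivalence $D^b(\textup{mod}\,\Abar)\simeq D^b(\textup{mod}\,C)$. Finally, for the last assertion, use the fact that the class of gentle algebras is closed under derived equivalence \cite{Schroer99,SZ03}: since $\Abar$ is gentle and $C$ is derived equivalent to $\Abar$, the algebra $C$ is gentle as well.

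\textbf{Main obstacle.} There is essentially no obstacle here: the corollary is a formal consequence of Theorem~\ref{thm:tilting}, Lemma~\ref{lem end}, the Happel tilting theorem, and the derived invariance of gentleness. The only point requiring a little care is that Happel's theorem is usually stated for finitely generated modules over a finite-dimensional algebra, and one should note that $\Abar$ is indeed finite-dimensional (it is a bound quiver algebra on a finite quiver with an admissible ideal, as follows from its construction in Section~\ref{sect 7.1}); once this is observed the argument is immediate. One could alternatively cite \cite{SZ03} directly for the statement that two gentle algebras related by a tilting module are derived equivalent, but the route through Happel's general theorem is cleaner and does not require $C$ to be known to be gentle in advance.
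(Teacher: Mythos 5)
Your proof is correct and follows essentially the same route as the paper: apply Theorem~\ref{thm:tilting} to get that $G(T)$ is a tilting $\Abar$-module, use Lemma~\ref{lem end} together with Happel's tilting theorem to conclude $C\cong\End_{\Abar}G(T)$ is derived equivalent to $\Abar$, and invoke Proposition~\ref{probAbargentle} plus the derived invariance of gentleness (\cite{SZ03}) for the final claim. The paper's proof is just a more compressed version of the same argument.
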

\begin{proof}
    We have seen in Theorem~\ref{thm:tilting} that $G(T)$ is a tilting $\Abar$-module, and thus  $C$ and $\Abar$ are derived equivalent. 
Proposition~\ref{probAbargentle} shows that $\Abar$ is gentle, and thus $ 
   C$ is gentle as well, by a result of \cite{SZ03}. 
\end{proof}

The next theorem describes the tiling for which $C$ is the tiling algebra. For the proof, recall from Theorem~\ref{thm hom basis} that morphisms between string modules (apart from isomorphisms)  
correspond to permissible angles, as defined in Definition~\ref{def:permissible-angle}. 

Since we are considering morphisms between summands of an MAR module, the only angles we need to consider are at endpoints, as illustrated in Figure~\ref{fig angles 1 later}. 

\begin{definition}($\calt$-angle)
We will call a \emph{$\calt$-angle} an angle  of the form $(x, \za^\circ, \zb^\circ)$, where $x \in M^*$ and $\za, \zb \in \calt$ are such that $\zb^\circ$ follows $\za^\circ$ immediately around $x$ in the counterclockwise order. 
\end{definition}

All $\calt$-angles illustrated in Figure~\ref{fig angles 1 later} are indicated by a blue arrow. The leftmost picture illustrates a $\calt$-angle of the form $(x, \za^\circ, \zb^\circ)$, where $x$ is not a puncture. 
The two  pictures on the right hand side of Figure~\ref{fig angles 1 later} show  configurations with multiple $\calt$-angles s at punctures ($x$ and $y$) involving the same arcs ($\za$ and $\zb$). In the first of these two pictures, the arc $\za$ starts and ends at the same point $x$. We have therefore three $\calt$-angles: $(x,\za_1^\circ, \zb^\circ)$, $(x, \zb^\circ, \za_2^\circ)$ and $(x, \za_2^\circ, \za_1^\circ)$. 
The rightmost picture has $\calt$-angles $(x, \zb^\circ, \za^\circ)$ and $(x, \za^\circ, \zb^\circ)$.

 \begin{figure}[ht!]
 \begin{center}
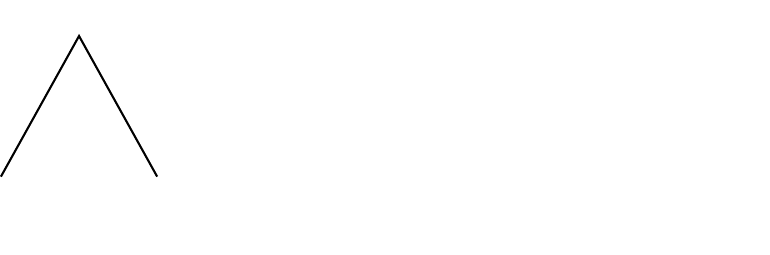
\caption{Examples of $\calt$-angles at an endpoint.}
\label{fig angles 1 later}
\end{center}
\end{figure}

The only $\calt$-angles at endpoints giving rise to a  nonzero morphism are the permissible ones, i.e. those traversed by an arc in $P$. See  Figure \ref{fig angles 2 later} for an illustration. In the first picture of that figure, the region is simply connected, so $\tau$ traverses the $\calt$-angle. In the second picture, the region contains a boundary component, thus $\tau$ does not traverse the $\calt$-angle. In the third picture, $\tau$ does not cross $\zb$.

\begin{figure}[ht!]
 \begin{center}
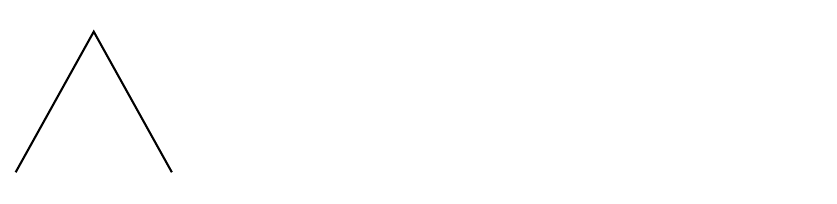
\caption{The $\calt$-angle $(x,\za^\circ,\zb^\circ)$ is traversed by the arc $\tau$ in the picture on the left, but not in the two pictures on the right. }
\label{fig angles 2 later}
\end{center}
\end{figure}

Let $\za, \zb$ be two permissible arcs and suppose there is a permissible $\calt$-angle $(x, \za^\circ, \zb^\circ)$. We denote by $\xymatrix@C45pt{M(\za)\ar[r]^{(x,\za^\circ,\zb^\circ)}
&M(\zb)}$ the corresponding morphism of string modules. We will need to consider composition of such morphisms, for which we need the following definition.

\begin{definition}
An ordered sequence of two $\calt$-angles $(x_1,\za^\circ,\zb_1^\circ)$,$(x_2,\zb_2^\circ,\zg^\circ)$, is called \emph{consecutive} if 
\begin{enumerate}
\item $x_1=x_2$ is the same vertex,
\item $\zb_1=\zb_2$ is the same arc, and 
\item $\zb_1^\circ=\zb_2^\circ$ is the same end of the arc.
\end{enumerate}   
See Figure \ref{fig angles 3} for an illustration.
\end{definition}

 \begin{figure}[ht!]
 \begin{center}
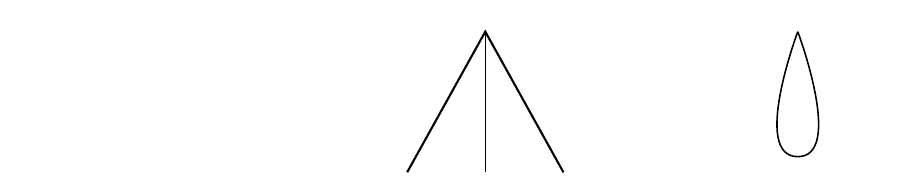
\caption{Left: The two blue angles are not consecutive because they do not share the same vertex. Right: The two blue angles are not consecutive because the two ends of the arc $\zb$ are not the same. Middle: The two angles are consecutive.} 
\label{fig angles 3}
\end{center}
\end{figure}

\begin{lemma}\label{lem:composition}
Let $\za, \zb$ and $\zg$ be permissible arcs with no crossings. Suppose we have permissible $\calt$-angles $(x,\za^\circ,\zb^\circ)$, $(y,\zb^{\circ \circ},\zg^\circ)$, where $\zb^{\circ \circ}$ may or may not coincide with $\zb^\circ$. 
The following statements are equivalent:
\begin{enumerate}[(1)]
\item\label{lem:composition:itm1} The composition 
\[\xymatrix@C45pt{M(\za)\ar[r]^{(x,\za^\circ,\zb^\circ)}
&M(\zb)\ar[r]^{(y,\zb^{\circ \circ},\zg^\circ)}
&M(\zg)}\]
is nonzero;
\item \label{lem:composition:itm2} the $\calt$-angles  $(x,\za^\circ,\zb^\circ)$, $(y,\zb^{\circ \circ},\zg^\circ)$ are consecutive, i.e.~$x=y$ and $\zb^\circ = \zb^{\circ \circ}$.
\end{enumerate}
\end{lemma}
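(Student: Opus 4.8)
The plan is to translate the statement into the string‑combinatorial model via the dictionary of Theorem~\ref{thm hom basis}. Since $\za,\zb,\zg$ have no mutual crossings and are permissible arcs corresponding to summands of an MAR module, the only angles that occur are at common endpoints in $M^*$ (interior crossings would produce overlap extensions, which are forbidden); in particular the maps $f:=\phi(x,\za^\circ,\zb^\circ)\colon M(\za)\to M(\zb)$ and $g:=\phi(y,\zb^{\circ\circ},\zg^\circ)\colon M(\zb)\to M(\zg)$ are basis morphisms in the sense of Proposition~\ref{prop: string_hom_basis}: each factors as an epimorphism onto a string module $M(u_i)$ followed by a monomorphism, where $u_1$ is the maximal common subwalk of $w_\za$ and $w_\zb$ read from the end $\zb^\circ$, and $u_2$ is the maximal common subwalk of $w_\zb$ and $w_\zg$ read from the end $\zb^{\circ\circ}$. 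The first thing I would do is make these subwalks $u_1,u_2\subseteq w_\zb$ precise, together with the up‑set/down‑set type of each factorisation: this type is pinned down by the counterclockwise convention in the definition of a $\calt$‑angle and by whether the letter of $u_i$ at the $P$‑arc traversing the angle is direct or inverse (Remark~\ref{rem:string submodule and string quotient module}).

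\textbf{The implication \ref{lem:composition:itm2}$\Rightarrow$\ref{lem:composition:itm1}.} Assume the angles are consecutive, so $x=y$ and $\zb^\circ=\zb^{\circ\circ}$. Then $u_1$ and $u_2$ are both subwalks of $w_\zb$ starting at the same end, hence one contains the other. Reading off the local picture at $x$ (the end segments $\za^\circ,\zb^\circ,\zg^\circ$ occur in this counterclockwise order, and the $P$‑arc traversing $(x,\zb^\circ,\zg^\circ)$ also meets $\za$), one checks directly that $g\circ f$ equals, up to a nonzero scalar, the basis morphism through $M(u_1\cap u_2)$ obtained by ``rotating past $\zb$''. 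Concretely, $f$ restricts to an isomorphism on the part of $M(\za)$ supported on $u_1$, and $g$ is supported on exactly $u_2$, so the composite is nonzero.

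\textbf{The implication \ref{lem:composition:itm1}$\Rightarrow$\ref{lem:composition:itm2}.} I would prove the contrapositive: if the angles are not consecutive, then $\im(f)\subseteq\ker(g)$, so $g\circ f=0$. If $x\ne y$, then $\zb^\circ$ and $\zb^{\circ\circ}$ are the two distinct endpoints of $\zb$, so $u_1$ is an initial and $u_2$ a terminal subwalk of $w_\zb$; the image of $f$ in $M(\zb)$ is the string submodule supported on a down‑set contained in $u_1$, while the canonical surjection $M(\zb)\twoheadrightarrow M(u_2)$ through which $g$ factors kills everything not meeting $u_2$, and the orientation data from the first step (together with the no‑crossing hypothesis on $\za$ and $\zg$ in the subcase where $u_1$ and $u_2$ would otherwise overlap) shows $\im(f)$ lies in this kernel. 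If instead $x=y$ but $\zb^\circ\ne\zb^{\circ\circ}$, then $\zb$ is a loop at $x$ and the same argument applies with $u_1,u_2$ at the two opposite ends of $w_\zb$. The type $\mathbb A_2$ model $S(2)\to P(1)\to S(1)$ is the prototype: $\im(f)=\soc P(1)=\ker(g)$.

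\textbf{Main obstacle.} The delicate point is the bookkeeping in the first step and its use in the case $x\ne y$: one must verify that the up‑set/down‑set types of the two factorisations — determined by the counterclockwise convention at two \emph{different} endpoints — always force $\im(f)\subseteq\ker(g)$ when the angles are not consecutive, including the subtle subcase where $u_1$ and $u_2$ overlap as subwalks of $w_\zb$, which can occur because the same $P$‑arc may appear several times in a complete fan (cf.\ Figure~\ref{fig:fan}). Handling loops and arcs crossing $P$ only once (where the string is trivial and there is no ``first letter'' to orient) requires separate but straightforward checks.
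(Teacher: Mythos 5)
Your plan follows the same overall strategy as the paper's proof (translate $\calt$-angles to string morphisms via Proposition~\ref{prop: string_hom_basis}, identify $e_1, e_2$ as common subwalks of $w_\zb$ read from the two ends, and case on prefix vs.\ suffix), and the direction \ref{lem:composition:itm2}$\Rightarrow$\ref{lem:composition:itm1} is essentially right — though you should make explicit, as the paper does, why the composite angle $(x,\za^\circ,\zg^\circ)$ is \emph{permissible}: the first $P$-arc traversing $(x,\za^\circ,\zb^\circ)$ coincides with the first $P$-arc traversing $(x,\zb^\circ,\zg^\circ)$ because $P$ is a tiling, and this is what guarantees that the ``rotate past $\zb$'' morphism is actually one of the basis morphisms of Proposition~\ref{prop: string_hom_basis}. ``One checks directly'' glosses over precisely that point.

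The real gap is in your contrapositive for \ref{lem:composition:itm1}$\Rightarrow$\ref{lem:composition:itm2}, and you have the logic inverted. You claim that when the angles are not consecutive, $\im(f)\subseteq\ker(g)$, which amounts to $e_1\cap e_2=\emptyset$ inside $w_\zb$. That is simply not true in general: as you yourself observe, the two subwalks can overlap. In the overlap case the conclusion you want — $\im(f)\subseteq\ker(g)$ — is \emph{false}; what is true is that the overlap case \emph{cannot occur} under the no-crossing hypothesis, and you never explain why. The paper's argument is: writing $e_1 = e_\ell a^{-1} e''$ and $e_2 = e'' b^{-1} e'_r$ (using the up-set/down-set conditions coming from $e_1$ being a prefix down-set of $w_\zb$ and $e_2$ a suffix up-set, which force $a\in Q_1$ since $e_2$ is not a prefix and $b\in Q_1\cup\{0\}$), one checks that the data $a,b$ together with the surrounding letters of $w(\za)$ and $w(\zg)$ assemble into an overlap extension of $M(\za)$ by $M(\zg)$ with overlap $e''$ — hence, by Proposition~\ref{prop:overlap-cross}, a crossing of $\za$ and $\zg$, contradicting the hypothesis. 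The edge case $b=0$ forces $e_1 = w(\zb)$ and (to avoid the overlap extension) $w(\za) = w(\zb)$, which is ruled out because permissible angles do not encode the identity morphism. You have Proposition~\ref{prop:overlap-cross} available in your setup, but you never actually invoke it where it's needed; instead you label this the ``main obstacle'' and stop. That step is the content of the lemma, so without it the proposal is a sketch, not a proof.
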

\begin{proof}
First we show that \ref{lem:composition:itm2} implies \ref{lem:composition:itm1}. 
Suppose $x = y$ and $\zb^\circ = \zb^{\circ \circ}$. Let us orient the arcs $\za, \zb, \zg$ so that $x = y$ is their starting point, and recall, from Definition~\ref{def:string-from-arc}, that $w(\delta)$ denotes the string associated to the permissible arc $\delta$. By Theorem~\ref{thm hom basis}, the permissible $\calt$-angle $(x, \za^\circ, \zb^\circ)$ corresponds to a nonzero morphism from $M(\za)$ to $M(\zb)$. By Proposition~\ref{prop: string_hom_basis}, this non-zero morphism corresponds to a string $e$ which is an up-set of $w(\za)$ and a down-set of $w(\zb)$. The string $e$ can be read from the surface by the sequence of angles defined by the crossings with $P$ that $\za$ and $\zb$ have in common. Since $\za$ and $\zb$ meet at an endpoint, it follows that $e$ is a prefix of $w(\za)$ and of $w(\zb)$. Similarly, denote by $e'$ the up-set of $w(\zb)$ and down-set of $w(\zg)$ corresponding to the permissible $\calt$-angle $(x, \zb^\circ, \zg^\circ)$, and note that $e'$ is a prefix of $w(\zb)$ and of $w(\zg)$.

Now let $\tau$ (respectively $\tau'$) be the first arc of $P$ that traverses  $(x, \za^\circ, \zb^\circ)$ (respectively $(x,\zb^\circ, \zg^\circ)$). It follows from the definition of permissible angle and the fact that $P$ is a tiling that $\tau = \tau'$. So the angle $(x, \za^\circ, \zg^\circ)$ is permissible. The corresponding up-set of $w(\za)$ and down-set of $w(\zg)$ is the intersection of $e$ and $e'$, which corresponds to the composition map $\xymatrix@C45pt{M(\za)\ar[r]^{(x,\za^\circ,\zb^\circ)}
&M(\zb)\ar[r]^{(y,\zb^\circ,\zg^\circ)}
&M(\zg)}$, proving~\ref{lem:composition:itm1}.

Next we show \ref{lem:composition:itm1} implies \ref{lem:composition:itm2}.  
Let $e$ (respectively $e'$) be the up-set of $w(\za)$ (respectively $w(\zb)$) and down-set of $w(\zb)$ (respectively $w(\zg)$) associated to the angle $(x, \za^\circ, \zb^\circ)$ (respectively $(y, \zb^{\circ \circ}, \zg^\circ)$). We can assume $e$ is a prefix of both $w(\za)$ and $w(\zb)$. If $e'$ is also a prefix of $w(\zb)$ and $w(\zg)$, then \ref{lem:composition:itm2} follows immediately. So assume $e'$ is not a prefix of $w(\zb)$ and $w(\zg)$. Then $e'$ must be a suffix of both $w(\zb)$ and $w(\zg)$, as otherwise $\beta$ and $\gamma$ would cross.

Since the composition of the morphisms is nonzero, we must have $e= e_\ell a^{-1} e''$ and $e' = e'' b^{-1} e'_r$, where $a \in Q_1$ (since $e'$ is not a prefix of $w(\zb)$) and $b \in Q_1 \cup \{0\}$.

If $b = 0$, then either there is an overlap extension between $M(\za)$ and $M(\zg)$ with overlap $e''$ or $w(\za) = e = w(\zb)$. But the former contradicts the fact that there are no crossings between $\za$ and $\zg$, and the latter contradicts the fact that permissible angles don't correspond to the identity morphism. Therefore $b \neq 0$, and again there is an overlap extension between $M(\za)$ and $M(\zg)$ with overlap $e''$, a contradiction. Hence, $e'$ is indeed a prefix of both $w(\zb)$ and $w(\zg)$, and we are done.
\end{proof}

In Figure~\ref{fig angles 3}, the composition is zero in both the left hand side and right hand side pictures, and it is nonzero in the picture in the center.

\begin{theorem}\label{thm endoalg}
The quadruple $(G(S),G(M^*),G(M^*)^*,G(\calt))$ is a tiled surface and the corresponding tiling algebra is $C$. 
\end{theorem}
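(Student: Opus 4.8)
The plan is to prove the two assertions in turn. Throughout I use that, by Theorem~\ref{thm:tilting}, $G(T)=G(T_1)\oplus\cdots\oplus G(T_m)$ is a tilting $\Abar$-module with $m=|Q_0|+|Q_1|$ summands (Corollary~\ref{cor MAR summands}), that $C\cong\End_{\Abar}G(T)$ by Lemma~\ref{lem end}, and that $C$ is gentle by Corollary~\ref{cor:end-alg-tilt}. For the first assertion, the arcs of $G(\calt)$ are the permissible arcs $G(\gamma_1),\dots,G(\gamma_m)$ of $G\TiledSurface$ corresponding to the string modules $G(T_i)$. Since $\Ext^1_{\Abar}(G(T),G(T))=0$, there are in particular no overlap extensions (self-extensions included) among the $G(T_i)$, so Proposition~\ref{prop:overlap-cross}, applied to the gentle algebra $\Abar$, shows that no two of the $G(\gamma_i)$ cross transversally in their interior and that none of them has a self-crossing. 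One then reads off from the constructions of $G\TiledSurface$ and of the arcs $G(\gamma)$ in Sections~\ref{sec:tiling G(S,M,P)} and~\ref{sect 7.5} that each $G(\gamma_i)$ is a $P$-arc of the marked surface $(G(S),G(M^*))$: its endpoints lie in $G(M^*)$, its interior avoids $\partial G(S)$ and $G(M^*)^*$, and it is noncontractible. To see that $G(\calt)$ subdivides $G(S)$ into polygons and once-punctured polygons, I would follow the effect of $G$ on the triangulation $\calt$ of $(S,M^*)$ given by Theorem~\ref{thm:mar=permissible-triangulation}: passing to $G(S)$ turns every puncture into a boundary component, and passing to $G(\calt)$ bisects every $P$-angle traversed by an arc of $\calt$, so each triangle of $\calt$ is cut into finitely many tiles; placing one point of $G(M^*)^*$ in each of them makes $(G(S),G(M^*),G(M^*)^*,G(\calt))$ a tiled surface. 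This step can also be shortened: since $C$ is already known to be gentle, Theorem~\ref{thm:tiling algebra iff gentle} guarantees $C$ is the tiling algebra of some tiled surface, and only the identification of that surface with the quadruple then remains, which is what the second part accomplishes.

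For the second assertion, let $B$ denote the tiling algebra of the quadruple, with quiver $Q_B$ and ideal $I_B$ as in Definition~\ref{def:tiling algebra}, and compare it with $C\cong\End_{\Abar}G(T)$. The vertices $(Q_B)_0$ are the arcs of $G(\calt)$, hence the summands $G(T_i)$, hence the primitive idempotents of $C$, by Theorem~\ref{thm:mar=permissible-triangulation} and the full faithfulness of $G$ (Theorem~\ref{thm functor G}). The arrows of $Q_B$ are the $P$-angles of $G(\calt)$ at the black points $G(M^*)$, which in the terminology preceding Lemma~\ref{lem:composition} are the permissible $\calt$-angles of $G\TiledSurface$; I claim these are exactly the arrows of the Gabriel quiver of $C$. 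By Theorem~\ref{thm hom basis}, a basis of $\Hom_{\Abar}(G(T_i),G(T_j))$ is indexed by the permissible angles $(x,G(\gamma_i)^\circ,G(\gamma_j)^\circ)$ of the arcs $G(\gamma_i)$ and $G(\gamma_j)$, and since these arcs are noncrossing every such angle sits at a shared endpoint $x\in G(M^*)$. Walking around $x$ along the complete fan of $G(\calt)$, any permissible angle that is not a $\calt$-angle decomposes, by Lemma~\ref{lem:composition}, as a nontrivial composition of $\calt$-angles, whereas a $\calt$-angle admits no such decomposition; hence the $\calt$-angles are precisely the irreducible maps among the $G(T_i)$ and $Q_B$ equals the Gabriel quiver of $C$. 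For the relations, $C$ being gentle means its relation ideal is generated by paths of length two, and for a length-two path $\alpha\beta$ of arrows of $Q_B$ one has $\alpha\beta\neq 0$ in $C$ if and only if the two $\calt$-angles are consecutive (Lemma~\ref{lem:composition}), if and only if the corresponding $P$-angles of $G(\calt)$ form a contiguous subsequence of a complete fan, if and only if $\alpha\beta\notin I_B$; the loop case $\alpha=\beta$ is covered by case (i) of Definition~\ref{def:tiling algebra}. Therefore $I_B$ coincides with the relation ideal of $C$, and $B\cong C$.

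The hard part will be the arrow identification in the second assertion: one must check that the permissible $\calt$-angles form \emph{exactly} a basis of the space of irreducible maps between the summands of $T$. This means handling uniformly the angles between the two ends of a single arc both of whose endpoints lie in $M^*$, the ``full turn'' angles at punctures, and complete fans that wind several times around a point, and in each case verifying that the composition of the intermediate $\calt$-angles recovers the given basis element of $\Hom$ and is never the identity morphism, as well as checking that distinct $\calt$-angles remain linearly independent modulo the radical square. Lemma~\ref{lem:composition} is the engine for all of this, but applying it uniformly over these configurations is the delicate point. The bookkeeping in the first assertion — confirming that $G$ cuts each triangle of $\calt$ into genuine tiles with no leftover annular or higher-genus region — is the other routine-but-lengthy ingredient, and, as noted, it can be sidestepped using that $C$ is already gentle.
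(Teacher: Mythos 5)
Your proof is in the right spirit and uses the paper's key engine, Lemma~\ref{lem:composition}, but you take a genuinely different route: you compute $C\cong\End_{\Abar}G(T)$ and work entirely inside the tiled surface $G\TiledSurface=(G(S),G(M),G(M^*),G(P))$ of $\Abar$, comparing the tiling angles of $(G(S),G(M^*),G(M^*)^*,G(\calt))$ with the permissible $G(\calt)$-angles of arcs $G(\gamma_i)$ in $G\TiledSurface$. The paper instead works directly in $\TiledSurface$: by Lemma~\ref{lem:composition}, arrows of $C=\End_A T$ correspond to permissible $\calt$-angles and nonzero length-two paths to consecutive permissible $\calt$-angles, and then the paper proves a \emph{bijection} between permissible $\calt$-angles and tiling angles of $(G(S),G(M^*),G(M^*)^*,G(\calt))$, which yields $\Hom_A(M(\gamma_i),M(\gamma_j))\cong e_iC'e_j$ directly without passing through a Gabriel-quiver-and-relations identification. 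Your route has one advantage you undersell: since $G(S)$ has no punctures, the ``full-turn'' and winding configurations at punctures you flag as delicate simply do not arise on the $\Abar$ side.

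However, there is a genuine gap in your arrow identification. You assert that the arrows of $Q_B$, namely \emph{all} pairs of consecutive arcs of $G(\calt)$ around a point of $G(M^*)$, ``are the permissible $\calt$-angles of $G\TiledSurface$.'' That requires showing that every such consecutive pair is actually traversed by a $G(P)$-arc; otherwise $Q_B$ could have strictly more arrows than the Gabriel quiver of $C$ and your isomorphism would fail. This is precisely the point the paper's proof handles and you do not: the paper observes that when a $\calt$-angle $(y,\gamma_i^\circ,\gamma_j^\circ)$ in $\TiledSurface$ is \emph{not} permissible, the set $G(y)\subset G(M^*)$ contains at least two points and the ends $G(\gamma_i)^\circ$, $G(\gamma_j)^\circ$ no longer share an endpoint in $G(S)$, so the angle does not survive as a tiling angle of $G(\calt)$; whereas permissible $\calt$-angles map to $G(\calt)$-angles that are visibly traversed by $G(\tau)\in G(P)$. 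This splitting of red points is the geometric content that makes the bijection hold, and your writeup needs an equivalent step (on the $\Abar$ side, one would argue that each tile of $G(P)$ has exactly one boundary segment with exactly one red point, so two arcs of $G(\calt)$ consecutive at that red point must first cross a common $G(P)$-arc). Your fallback for the first assertion, ``since $C$ is gentle it is a tiling algebra of some tiled surface,'' does not substitute for this: Theorem~\ref{thm:tiling algebra iff gentle} gives existence of \emph{some} tiled surface, not that your specific quadruple is one.
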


\begin{remark}
Recall that $G(S)$ and $G(M^*)$ are defined as in Section~\ref{sec:tiling G(S,M,P)}, $G(\calt)$ is defined as in Section~\ref{sect 7.5}, and $(G(M^*))^*$ is defined as in Section~\ref{sec:tiled surfaces}. In particular, we have that:
\begin{itemize}
    \item $G(S)$ is obtained from $S$ by replacing the punctures by boundary components;
    \item $G(M^*)$ consists of the endpoints of the arcs in $G(\calt)$, and
    \item $(G(M^*))^*$ consists of the endpoints of the arcs in $G(P)$. 
\end{itemize} 

The bottom right pictures of Figures~\ref{fig ex 7.5} and~\ref{fig orpheus} illustrate this tiling for the examples in Section~\ref{sect examples}.
\end{remark}

\begin{proof}[Proof of Theorem~\ref{thm endoalg}]
The quiver of $C$ has vertices $1,2,\ldots, m$ in bijection with the summands of $T$ and hence with the arcs of the triangulation $\mathcal{T}$. The number of arrows $i\to j$ in $C$ is equal to the dimension of the quotient of $\Hom_A(M(\zg_j),M(\zg_i))$ by the subspace of all morphisms that factor non-trivially through $T$. It follows from Lemma~\ref{lem:composition} that there is an arrow $\za\colon i \to j $ in $C$ if and only if there is a permissible $\calt$-angle $(x,\zg_j^\circ,\zg^\circ_i)$. 
Moreover the composition of two arrows $\xymatrix{i\ar[r]^\za&j\ar[r]^\zb&k}$ is nonzero if and only if the corresponding permissible $\calt$-angles $(x,\zg_j^\circ,\zg_i^\circ)$ and $(x,\zg_k^\circ,\zg_j^\circ)$ are consecutive.

Next, we consider the relation between the triangulation $\mathcal{T}$ of $(S,M^*)$ and the  surface tiling $(G(S),G(M^*) ,G(M^*)^*,G(\mathcal{T}))$. Let  $(y,\zg_i^\circ,\zg_j^\circ)$ be a $\calt$-angle in $(S,M^*)$. 
 From the definition of $G(M^*)$ it follows that if the $\calt$-angle is permissible, and $\tau$ is a $P$-arc of the tiling $P$ that traverses said angle, then the set $G(y)$ consists of a single point and $(G(y),G(\zg^\circ_i),G(\zg^\circ_j))$ is a $G(\calt)$-angle in $(G(S),G(M^*))$ that is traversed by the $P$-arc $G(\tau)$ in $G(P)$. 
On the other hand, 
if the $\calt$-angle $(y,\zg_i^\circ,\zg^\circ_j)$ is not permissible, then $G(y)$ consists of at least two points and the two ends $G(\zg_i^\circ), G(\zg_j^\circ)$ do not share an endpoint. We therefore obtain a bijection from the set of permissible $\calt$-angles to the set of $G(\calt)$-angles in $(G(S),G(M^*),G(M^*)^*,G(\calt))$. 
In particular, we have an isomorphism
\begin{equation*}
\label{eq iso hom}
\Hom_A(M(\zg_i),M(\zg_j))\quad \cong\quad e_i\, C'\, e_j,
\end{equation*}
where $C'$ denotes the tiling algebra of $(G(S),G(M^*),G(M^*)^*,G(\mathcal{T}))$ and $e_i,e_j$ are the primitive idempotents corresponding to the arcs $G(\zg_i),G(\zg_j)$
respectively. Taking the direct sum over all pairs of summands of $T$, we obtain
\begin{equation*}
C\ =\ \End_A\, T \ =\  \bigoplus_{i,j} \Hom_A(M(\zg_i),M(\zg_j)) \  \cong \ \bigoplus_{i,j}e_i\, C'\, e_j \ =\  C'.
\qedhere
\end{equation*}
\end{proof}

\subsection{The tiling algebra of a triangulation}
The permissible triangulation $\calt$ of $\TiledSurface$ is a tiling of the surface $(S,M^*,M)$ where we exchange the roles of $M$ and $M^*$. It is then natural to consider the associated tiling algebra, which we denote by $B$.

\begin{remark} It is important to note that this tiled surface $(S,M^*,M,\calt)$ is different from the ones considered so far in this paper because the first set of marked points $M^*$ may contain points in the interior of the surface. 
 The algebra $B$ is infinite dimensional if the global dimension of $A$ is infinite. 
 By \cite{PPP}, $B$ is locally gentle.
\end{remark} 
In this subsection, we study the relation between $B$ and the endomorphism algebra $C=\End_A T$ of section~\ref{sect 7.6}.

\begin{figure}[htbp!]
    \centering
\scalebox{0.6}{
\begingroup%
  \makeatletter%
  \providecommand\color[2][]{%
    \errmessage{(Inkscape) Color is used for the text in Inkscape, but the package 'color.sty' is not loaded}%
    \renewcommand\color[2][]{}%
  }%
  \providecommand\transparent[1]{%
    \errmessage{(Inkscape) Transparency is used (non-zero) for the text in Inkscape, but the package 'transparent.sty' is not loaded}%
    \renewcommand\transparent[1]{}%
  }%
  \providecommand\rotatebox[2]{#2}%
  \newcommand*\fsize{\dimexpr\f@size pt\relax}%
  \newcommand*\lineheight[1]{\fontsize{\fsize}{#1\fsize}\selectfont}%
  \ifx\svgwidth\undefined%
    \setlength{\unitlength}{461.51870057bp}%
    \ifx\svgscale\undefined%
      \relax%
    \else%
      \setlength{\unitlength}{\unitlength * \real{\svgscale}}%
    \fi%
  \else%
    \setlength{\unitlength}{\svgwidth}%
  \fi%
  \global\let\svgwidth\undefined%
  \global\let\svgscale\undefined%
  \makeatother%
  \begin{picture}(1,0.39271121)%
    \lineheight{1}%
    \setlength\tabcolsep{0pt}%
    \put(0,0){\includegraphics[width=\unitlength,page=1]{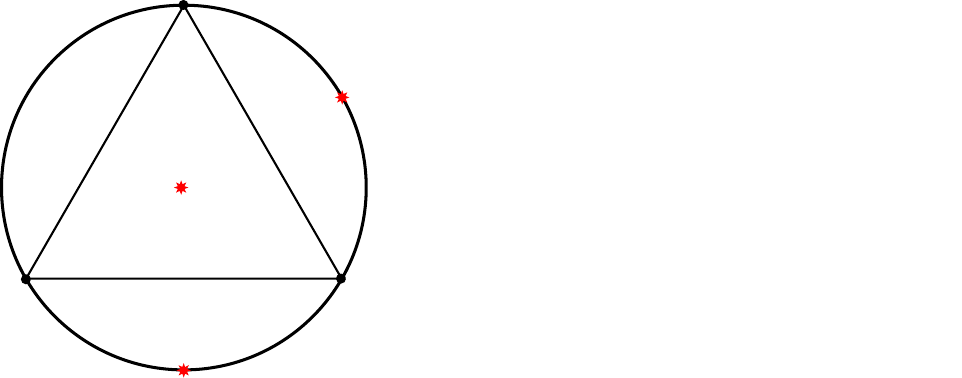}}%
    \put(0.09048237,0.23990522){\makebox(0,0)[lt]{\lineheight{1.25}\smash{\begin{tabular}[t]{l}1\end{tabular}}}}%
    \put(0.17916507,0.07739827){\makebox(0,0)[lt]{\lineheight{1.25}\smash{\begin{tabular}[t]{l}2\end{tabular}}}}%
    \put(0.27899044,0.24023319){\makebox(0,0)[lt]{\lineheight{1.25}\smash{\begin{tabular}[t]{l}3\end{tabular}}}}%
    \put(0,0){\includegraphics[width=\unitlength,page=2]{figskewgentle.pdf}}%
  \end{picture}%
\endgroup%
}
    \caption{An illustration of Example \ref{ex skew gentle}. The left picture shows the tiled surface $\TiledSurface$ and the right picture shows the tiled surface $(S,M^*,M,\calt)$. }
    \label{fig skew gentle}
\end{figure}

\begin{example}\label{ex skew gentle} 
Let $A$ be the algebra given by the quiver $\xymatrix@R10pt@C10pt{1\ar[rr]^\za&&2\ar[ld]^\zb\\&3\ar[lu]^\zg}$ bound by the relations $\za\zb=\zb\zg=\zg\za=0.$ Then $A$ is the tiling algebra of the tiled surface $\TiledSurface$ shown in the left picture of Figure~\ref{fig skew gentle}. This algebra has precisely six indecomposable modules, namely $P(1),P(2),P(3),S(1),S(2)$ and $S(3)$.
The sum of all six modules is the unique MAR $A$-module. The corresponding triangulation $\calt$ is shown in the right picture of the same figure. The algebra $B$ is the tiling algebra of the tiled surface $(S,M^*,M,\calt)$. Note that the first set of marked points $M^*$ contains a point in the interior of the surface. Thus the algebra $B$ is given by the quiver \[\xymatrix@R15pt{&&P(3)\ar[ld]_{\zd}\\
&S(1)\ar[rr]^{\zd}\ar[ld]_{\zs}&&S(3)\ar[lu]_{\zd}\ar[ld]^{\tau}
\\
P(1)\ar[rr]_{\zs}&& S(2)\ar[lu]^{\zs}\ar[rr]_{\tau}&& P(2)\ar[lu]_{\tau}
}\]
with relations $\zd^2,\zs^2,\tau^2$. Note that there are no relations on the  3-cycle $\zd\tau\zs$, so this algebra is infinite dimensional.
\end{example}

We need some preliminary results. Recall from Definition \ref{def:permissible-angle} that an angle between arcs of $\calt$ is permissible if it is traversed by a $P$-arc of $P$.

\begin{lemma}
 \label{lem:1-angle-not-traversed} 
 Every triangle in $\calt$ contains a unique $\calt$-angle that is not permissible.
\end{lemma}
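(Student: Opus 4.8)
The plan is to study how the tiling $P$ sits inside a single triangle of $\calt$ and to match permissible $\calt$-angles with a combinatorial feature of that picture.

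Fix a triangle $\Delta$ of $\calt$ with corners $y_1,y_2,y_3\in M^*$ and let $a_1,a_2,a_3$ be its three $\calt$-angles (the interior angle at each $y_k$), with $a_k$ lying between the two sides $\gamma,\gamma'$ of $\Delta$ meeting at $y_k$. First I would record that every side of $\Delta$ crosses $P$ at least once: each side of $\Delta$ is an arc of $\calt$, hence (by Theorem~\ref{thm:permissible}) corresponds to a string module of dimension $\ell+1\ge 1$, where $\ell+1$ is its number of crossings with $P$. Put $\Delta$ and the part of $P$ in it in minimal position. The pieces of $P$-arcs lying in $\Delta$ are pairwise disjoint simple arcs; they cut $\Delta$ into finitely many regions, each contained in a single tile of $P$, hence containing at most one point of $M^*$. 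Since $\calt$ is a triangulation of $(S,M^*)$, the only points of $M^*$ in $\Delta$ are $y_1,y_2,y_3$. An Euler-characteristic count (carried out first for non-self-folded $\Delta$, the self-folded case following by the same argument on the unfolding) shows that the dual graph $\mathcal G$ of this cell decomposition is a tree; since each side of $\Delta$ meets $P$, the tree has at least one edge and hence at least two leaves, and since $\Delta$ has only three corners it has at most three.

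The key step is the equivalence: $a_k$ is permissible if and only if the region $R_k$ of the decomposition containing $y_k$ is a \emph{corner triangle}, i.e.\ $R_k$ is bounded by exactly one piece of a $P$-arc together with the two end segments at $y_k$ of the two sides of $\Delta$ meeting there. One direction is immediate: such a $P$-arc $\tau$ crosses both sides and the walk around $\partial R_k$ bounds the disk $R_k$, so $\tau$ traverses $a_k$. For the converse, suppose some $P$-arc $\tau$ traverses $a_k$; using that the end segments of arcs of $\calt$ carry no interior crossings with $P$, that crossings are minimal, and that no further point of $M^*$ is available, one can repeatedly replace $\tau$ by the $P$-arc first met by each of the two sides, forcing $R_k$ to be a corner triangle. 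Granting this, $a_k$ is permissible exactly when $R_k$ is a leaf of $\mathcal G$. Conversely a leaf that is not a corner triangle would be a bigon between a single side and a single $P$-arc, contradicting minimality; and a leaf cannot contain two corners, since a tile of $P$ contains only one point of $M^*$. Hence every leaf of $\mathcal G$ is a corner triangle sitting at a single corner, distinct leaves are at distinct corners, and the number of permissible $\calt$-angles of $\Delta$ equals the number of leaves of $\mathcal G$, which is $2$ or $3$.

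It remains to exclude three leaves. If all of $a_1,a_2,a_3$ were permissible, then by Theorem~\ref{thm hom basis} they yield three nonzero morphisms among the summands $M(\gamma_1),M(\gamma_2),M(\gamma_3)$ of $T$, and tracing the orientations of the three interior angles around $\partial\Delta$ these assemble into an oriented $3$-cycle in the quiver of $C=\End_A T$; since consecutive angles of $\Delta$ share a side but at its two different ends, Lemma~\ref{lem:composition} shows every length-two subpath of this $3$-cycle is a relation of $C$. But a $3$-cycle all of whose length-two subpaths are relations forces the simples on the cycle to have infinite projective resolutions, contradicting the finiteness of the global dimension of $C$ (Theorem~\ref{thm intro 3}(e); equivalently, $C$ is derived equivalent to the algebra $\Abar$ of finite global dimension, Corollary~\ref{cor:end-alg-tilt}). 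Therefore $\mathcal G$ has exactly two leaves, so exactly two of $a_1,a_2,a_3$ are permissible and exactly one is not, which proves the lemma. The main obstacles are the converse direction of the corner-triangle characterisation (the reduction of an arbitrary traversing $P$-arc to the nearest one) and the bookkeeping for self-folded triangles; both are handled by the same minimal-position and single-$M^*$-point arguments used above.
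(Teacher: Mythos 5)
Your proposal is correct in broad outline, but it takes a genuinely different route from the paper. The paper argues both halves directly from the geometry: if all three $\calt$-angles were permissible, the three $P$-arcs that are the last to traverse each angle would bound an internal $3$-gon tile of $P$ whose red puncture sits in the interior of the $\calt$-triangle, contradicting that $\calt$ triangulates $(S,M^*)$; and if two $\calt$-angles were non-permissible, the common side would have to cross some $P$-arc which, since $M\subset\partial S$ puts no black points in the interior, would have to traverse one of the two angles. The self-folded case is treated separately by identifying the interior endpoint of the folded arc with the puncture of a $1$-gon tile. Your argument replaces the first half with a homological contradiction and the second half with a dual-graph/tree argument.

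What your approach buys is a clean, global count: permissible angles are exactly the leaves of the tree dual to the decomposition of $\Delta$ by $P$, and leaves are pinned to corners. What it costs is reliance on machinery from Section~6: to exclude three leaves you invoke finiteness of $\mathrm{gldim}\,C$, and you correctly route this through Corollary~\ref{cor:end-alg-tilt} (derived equivalence with $\Abar$, which has global dimension $\le 2$) plus Happel's theorem that derived equivalence preserves finiteness of global dimension — and \emph{not} through Corollary~\ref{cor endoalg}, which would be circular since that corollary cites this very lemma. You should make that non-circularity explicit, and you should also spell out that the three permissible-angle arrows are distinct arrows of $Q_C$ (which requires noting, via Lemma~\ref{lem:composition}, that the two ends $\gamma_j^\circ\ne\gamma_j^{\circ\circ}$ of a shared side differ even when the two corners coincide as points of $M^*$), and that a gentle algebra containing an oriented cycle with every consecutive pair of arrows a relation must have infinite global dimension. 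The converse of your corner-triangle characterisation and the unfolding argument for self-folded triangles are the remaining details you flag; they are fillable but do need the minimality-of-crossings and one-$M^*$-point-per-tile observations carried through carefully. The paper's argument is shorter and entirely self-contained, so if concision matters you may prefer it, but your version is a valid alternative.
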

\begin{proof}
First consider a triangle which is not self-folded and suppose all $\calt$-angles of said triangle were permissible. Let $\tau_{i}$ be the last $P$-arc of $P$ traversing the $\calt$-angle at $x_i$, with $i =1, 2, 3$; see Figure~\ref{fig:every-angle-traversed}. 

\begin{figure}[ht!]
 \centering
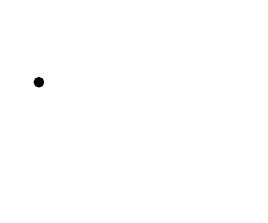
\caption{The case where every $\calt$-angle is traversed by arcs in $P$.}
\label{fig:every-angle-traversed}
\end{figure}

Since each arc $\gamma_i$ is permissible, it follows that the $P$-arcs $\tau_{i}$ and $\tau_{i-1}$ of $P$ form a $P$-angle, for each $1 \leq i \leq 3$ (where $i-1$ is taken mod 3). Hence $\tau_1,\tau_2,\tau_3$ form an internal $3$-gon tile $\Delta$ in $P$ such that the puncture in $M^*$ of $\Delta$ lies in the interior of the triangle in $\calt$ formed by $\zg_1,\zg_2,\zg_3$, see Figure~\ref{fig:every-angle-traversed}.
This contradicts the fact that $\calt$ is a triangulation of $(S,M^*)$. 

Therefore, there is at least one $\calt$-angle which is not permissible. Suppose for the sake of contradiction that there are two $\calt$-angles which are not permissible. Let $\gamma_1$ be the arc of $\calt$ in common between these two $\calt$-angles. The arc $\gamma_1$ must cross an arc $\tau$ of $P$. Since there are no marked points of $M$ in the interior of the red triangle, since $M\subset \partial S$, it follows that $\tau$ must traverse one of the two $\calt$-angles, contradicting the hypothesis. 

Finally, if the triangle is self-folded, let $\gamma_1=\gamma_2$ be the sides that coincide, $\gamma_3$ the third side of the triangle, and $x$ the puncture in $M^*$ which is the endpoint of $\gamma_1= \gamma_2$ but not of $\gamma_3$. 
Then $x$ must be the red puncture of a 1-gon tile $\Delta$ of $\TiledSurface$, since $\gamma_3$ is permissible. 
It is then clear that the $\calt$-angle $(x, \gamma_1^\circ, \gamma_2^\circ)$ is not traversed by an arc of $P$, and the other two $\calt$-angles are traversed by the loop arc defining $\Delta$.  
\end{proof} 

The following is a corollary of Theorem~\ref{thm endoalg} and Lemma~\ref{lem:1-angle-not-traversed}.

\begin{corollary}
 \label{cor endoalg}
The endomorphism algebra $C$ is a gentle algebra of global dimension at most $2$.
\end{corollary}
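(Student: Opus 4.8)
The plan is to read off both assertions from the realization of $C$ as a tiling algebra in Theorem~\ref{thm endoalg}. Since $(G(S),G(M^*),G(M^*)^*,G(\calt))$ is a tiled surface whose tiling algebra is $C$, the fact that $C$ is gentle is immediate from Theorem~\ref{thm:tiling algebra iff gentle}, and it was already recorded in Corollary~\ref{cor:end-alg-tilt}. So the real content is the bound on the global dimension. Arguing exactly as in the proof of Proposition~\ref{probAbargentle}, it suffices to show that $C$ has no \emph{overlapping relations}: there is no path of arrows $\alpha\beta\delta$ in $C$ with $\alpha\beta\in I_C$ and $\beta\delta\in I_C$. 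Indeed, by \cite{Bardzell} a gentle algebra with no overlapping relations has global dimension at most $2$.

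First I would translate the relations of $C$ into the combinatorics of the triangulation $\calt$. By the dictionary set up in the proof of Theorem~\ref{thm endoalg}, the arrows of $C$ correspond to the permissible $\calt$-angles — an arrow $i\to j$ to a permissible $\calt$-angle between $\gamma_i$ and $\gamma_j$ — and, by Lemma~\ref{lem:composition}, a composable pair $\alpha\beta$ lies in $I_C$ exactly when the corresponding two $\calt$-angles are not consecutive. I would then check that the minimal relations of $C$ are in bijection with the triangles of $\calt$: a triangle $\Delta$ has three $\calt$-angles, exactly one of which is non-permissible by Lemma~\ref{lem:1-angle-not-traversed}, and the two arrows of $C$ arising from its two permissible $\calt$-angles compose to the generator of $I_C$ attached to $\Delta$. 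This is the algebraic shadow of the statement, recalled in the introduction, that the tiled surface of $C$ is obtained from that of the $2$-Calabi--Yau tilted algebra $B$ of the triangulation by cutting one angle in each interior triangle; equivalently, $C$ is $B$ with one arrow of each internal $3$-cycle deleted, leaving exactly one length-two relation per triangle.

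Granting this description, suppose for contradiction that $\alpha\beta,\beta\delta\in I_C$ for arrows $i\xrightarrow{\alpha}j\xrightarrow{\beta}k\xrightarrow{\delta}l$; these are two distinct relations of $C$ both having $\beta$ as a factor. The arrow $\beta$ sits in a single triangle $\Delta$ of $\calt$, namely the one containing its permissible $\calt$-angle. If $\Delta$ is an interior triangle, then in $B$ the arrow $\beta$ is a factor of exactly two of the three relations of the $3$-cycle attached to $\Delta$, and the deletion of the arrow corresponding to the unique non-permissible $\calt$-angle of $\Delta$ (the passage from $B$ to $C$) kills both relations that involve that arrow; by condition~\ref{def gentle:itm2:string algebra} for $B$ these are all the relations of $B$ through $\beta$, so in $C$ the arrow $\beta$ is a factor of at most one relation. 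If $\Delta$ is not an interior triangle, then $\beta$ is a factor of no relation of $B$, and a fortiori of none of $C$. Either way we obtain a contradiction, so $C$ has no overlapping relations and its global dimension is at most $2$.

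The delicate point — the only real work beyond invoking Lemma~\ref{lem:1-angle-not-traversed} — is the second paragraph: making the correspondence between arrows of $C$ and permissible $\calt$-angles explicit enough to identify exactly which composable pairs are generators of $I_C$, and then establishing the bijection ``minimal relations of $C$ $\leftrightarrow$ triangles of $\calt$, each relation using up the triangle's unique non-permissible $\calt$-angle.'' This requires care with the degenerate configurations — self-folded triangles, loop arcs, punctures, and the double arrows they produce (as in the one-loop example of Figure~\ref{fig:one loop}) — where ``triangle'' and ``the two permissible angles of a triangle'' must be interpreted appropriately. Everything else is then formal.
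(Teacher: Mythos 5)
Your proof is correct in outline but takes a genuinely different route from the paper. The paper reads the bound off directly from the geometry of the tiled surface $(G(S),G(M^*),(G(M^*))^*,G(\mathcal{T}))$ from Theorem~\ref{thm endoalg}: since each triangle of $\mathcal{T}$ is cut at its unique non‑permissible $\mathcal{T}$‑angle (Lemma~\ref{lem:1-angle-not-traversed}), every tile of $G(\mathcal{T})$ is a quadrilateral with exactly one boundary side, and this immediately rules out overlapping relations. You instead pass to the bigger tiling algebra $B$ of $(S,M^*,(M^*)^*,\mathcal{T})$, use that $B$ is locally gentle, and argue that deleting one arrow of each internal $3$‑cycle (the passage from $B$ to $C$) leaves every surviving arrow in at most one relation. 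Both arguments ultimately rest on Lemma~\ref{lem:1-angle-not-traversed}; the paper uses it geometrically (cut and get a quadrilateral), you use it algebraically (delete one arrow of each $3$‑cycle). Your route pre‑runs a piece of the proof of Theorem~\ref{thm:tensor} (relations of $C$ in bijection with internal triangles, $Q_C\subset Q_B$ by arrow deletion); this is not circular, since that bijection follows from the angle dictionary of Theorem~\ref{thm endoalg}, Lemma~\ref{lem:composition} and Lemma~\ref{lem:1-angle-not-traversed} without invoking the ABS tensor algebra machinery, but it is more roundabout than the paper's argument.

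A few points need fixing. You cite condition~\ref{def gentle:itm2:string algebra} (which bounds compositions avoiding $I$), but the condition you actually need is~\ref{def gentle:itm3:gentle algebra} (at most one relation $\alpha\beta'\in I$ and at most one $\gamma'\alpha\in I$ through a given arrow $\alpha$). The sentence ``kills both relations that involve that arrow; by condition~\ref{def gentle:itm2:string algebra} for $B$ these are all the relations of $B$ through $\beta$'' conflates two sets: if the $3$‑cycle is $\mu\nu\pi$ with $\pi$ the deleted arrow and $\beta=\nu$, the relations of $B$ killed by deleting $\pi$ are $\nu\pi$ and $\pi\mu$, while the relations of $B$ through $\beta$ are $\mu\nu$ and $\nu\pi$ — these overlap but are not equal. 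The intended argument (\ref{def gentle:itm3:gentle algebra} limits $\beta$ to two relations in $B$, both belong to the $3$‑cycle of $\Delta$, and exactly one of them contains the deleted arrow, so at most one survives in $C$) is sound; the wording should be tightened to state it that way. Finally, your case ``$\Delta$ is not an interior triangle'' is actually vacuous: such a triangle has at most one $\mathcal{T}$‑angle, which by Lemma~\ref{lem:1-angle-not-traversed} is non‑permissible, so it contains no arrow of $C$; your conclusion there is correct but would be cleaner stated as an impossibility.
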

\begin{proof}
By Theorem~\ref{thm endoalg}, $C$ is the gentle algebra associated to $(G(S), G(M^*), G(M^*)^*, G(\calt))$. Note that this tiling is obtained from $(S, M^*, \calt)$ by replacing each puncture by a boundary component and by ``cutting'' each triangle in $\calt$ at its unique (by Lemma~\ref{lem:1-angle-not-traversed}) non-permissible $\calt$-angle. Therefore, each tile in $(G(S), G(M^*), G(\calt))$ is a quadrilateral with exactly one side a boundary segment. 
This implies that there are no overlapping relations, meaning the global dimension of $C$ is at most $2$. 
\end{proof}
 
Before stating our main result, we make a short digression into cluster-tilting theory. 
Given any algebra $\zL$ of global dimension at most 2, it is natural to consider the bimodule $E=\Ext^2_{\zL}(D\zL,\zL)$, where $D\zL$ is the direct sum of the indecomposable injective $\zL$-modules and $\zL$ the direct sum of the indecomposable projectives.
Indeed, if $\zL$ is a tilted algebra, then the tensor algebra $T_\zL (E)$ is the corresponding cluster-tilted algebra,  see \cite{ABS}. In the general case,   the tensor algebra is a 2-Calabi-Yau tilted algebra, which means that it is the endomorphism algebra of a cluster-tilting object in the cluster category of $\zL$, see \cite{Amiot09}. In both cases, the construction is fundamental in the relation between representation theory and cluster algebras.

\smallskip

It turns out that this construction also applies to our situation. 

\begin{theorem}
 \label{thm:tensor}
Let $A$ be the tiling algebra of the tiled surface $\TiledSurface$. 
Let $T$ be a maximal almost rigid $A$-module, let $C=\End_A T$, and let $\calt$ be the corresponding permissible triangulation. 
Denote by $B$ the tiling algebra of the tiled surface $(S,M^*,(M^*)^*,\calt)$.
Then there is an isomorphism of algebras
\[B\cong T_C(\Ext_C^2(DC,C).\]
\end{theorem}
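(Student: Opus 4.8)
The plan is to compare the two tiled surfaces $(G(S),G(M^*),G(M^*)^*,G(\calt))$ and $(S,M^*,(M^*)^*,\calt)$ directly. By Theorem~\ref{thm endoalg}, the first of these has tiling algebra $C$, and by definition the second has tiling algebra $B$. As observed in the proof of Corollary~\ref{cor endoalg}, the surface $G(\calt)$ is obtained from $(S,M^*,\calt)$ by the following two operations: first replace every puncture of $S$ by a boundary component (this does not change the quiver or relations, only how certain tiles look), and second ``cut'' each triangle of $\calt$ at its unique non-permissible $\calt$-angle, which by Lemma~\ref{lem:1-angle-not-traversed} exists and is unique. Cutting a triangle at one angle turns a once-punctured $3$-gon or a $3$-gon with boundary into a quadrilateral with one boundary edge, and more importantly introduces exactly one new relation in the tiling algebra: the composition of the two arrows through the cut angle. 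Thus, combinatorially, $Q_C$ and $Q_B$ have the same vertex set (the arcs of $\calt$), the same arrow set (the permissible $\calt$-angles — note a non-permissible angle produces no arrow in either algebra), and $I_C$ is obtained from $I_B$ by adding one new relation for each interior triangle of $\calt$, namely the relation coming from the cut angle.

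First I would make this precise at the level of quivers with relations: I would show $Q_C=Q_B$ and $I_B\subseteq I_C$, and identify the generators of $I_C$ not lying in $I_B$ as exactly the paths $\zb\za$ where $(x,\zg_j^\circ,\zg_i^\circ)$ and $(x,\zg_k^\circ,\zg_j^\circ)$ are the two permissible $\calt$-angles at the non-permissible corner of an interior triangle of $\calt$ — equivalently, the two arrows of $Q_B$ that sit inside a single interior triangle. This uses Lemma~\ref{lem:composition} (for which angles compose nonzero) together with Theorem~\ref{thm endoalg} to read off $I_C$, and the definition of the tiling algebra applied to $(S,M^*,\calt)$ to read off $I_B$. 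The upshot is $C\cong B/J$, where $J$ is the (two-sided) ideal generated by the ``triangle paths.''

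The second, and main, step is to identify $J$ with the bimodule of extensions and hence realize $B$ as the tensor algebra. Since $C$ has global dimension at most $2$ (Corollary~\ref{cor endoalg}), the bimodule $E=\Ext^2_C(DC,C)$ is defined, and by general nonsense (cf.\ \cite{ABS,Amiot09}) $T_C(E)$ has the same vertex set as $C$, the arrows of $C$ plus a new arrow $v\to u$ for each basis element of $\Ext^2_C(I(u),P(v))$, and the relations of $C$ plus relations making the new arrows square to zero in the appropriate sense and killing the old relations that are ``resolved.'' The plan is to compute $\Ext^2_C(I(u),P(v))$ using the description of $C$ as a tiling algebra of a surface all of whose tiles are quadrilaterals with one boundary edge: each such tile $\zD$, with its unique relation $\rho_\zD=\zb\za$ (the cut path), contributes exactly a one-dimensional $\Ext^2$ from the injective at $s(\za)$ to the projective at $t(\zb)$, with the new arrow of $T_C(E)$ precisely ``undoing'' the cut — i.e.\ gluing the quadrilateral tile $\zD$ back into a triangle. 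Carrying this out means writing down the minimal projective resolution of $I(u)$ for $C$ a gentle algebra of global dimension $2$ (string-combinatorial, using that relations don't overlap), reading off $\Ext^2$, and matching the resulting quiver and relations of $T_C(E)$ with those of $B$ found in step one. I expect the identification $\Ext^2_C(DC,C)\cong J$ as $C$-bimodules — in particular that the tensor-algebra relations coincide exactly with the tiling relations of $(S,M^*,\calt)$ at the glued-back triangles, with no spurious extra relations and no missing ones — to be the delicate point; it is essentially the statement that reversing the ``cut'' operation on the surface corresponds on the algebra side exactly to forming the tensor algebra over the $\Ext^2$-bimodule, and this is where the geometric bookkeeping (matching $P$-angles, orientations, and the correspondence $G(y)\leftrightarrow\{$points of $G(M^*)\}$) must be done carefully, possibly invoking the analogous surface description of $T_C(E)$ as in \cite[]{ABS}-type arguments adapted to the tiled-surface setting.

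Finally I would assemble the pieces: step one gives $B\cong B$ and $C\cong B/J$ with $J$ generated by the triangle paths; step two gives $T_C(E)\cong B/J \ltimes E$-type presentation whose quiver and relations match those of $B$; combining, $B\cong T_C(\Ext^2_C(DC,C))$. I would also note the immediate corollary, already flagged in the introduction: since $\operatorname{gl.dim}C\le 2$, Amiot's theorem \cite{Amiot09} gives that $T_C(E)$ is the endomorphism algebra of a cluster-tilting object in the generalized cluster category $\calc_C$, so $B\cong\End_{\calc_C}C$ is $2$-Calabi--Yau tilted.
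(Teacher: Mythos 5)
Your first step contains a false claim that undermines the rest of the argument. You assert $Q_C=Q_B$ and that ``a non-permissible angle produces no arrow in either algebra.'' For $B$ this is wrong: by Definition~\ref{def:tiling algebra}, the arrows of the tiling algebra of $(S,M^*,(M^*)^*,\calt)$ are in bijection with \emph{all} $\calt$-angles at points of $M^*$, permissible or not --- permissibility (Definition~\ref{def:permissible-angle}) refers to the old tiling $P$ and plays no role in defining the tiling algebra of $\calt$. So every interior triangle of $\calt$ contributes three arrows to $Q_B$, but by Lemma~\ref{lem:1-angle-not-traversed} together with the proof of Theorem~\ref{thm endoalg} it contributes only two arrows to $Q_C$. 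Hence $Q_B$ strictly contains $Q_C$, the extra arrows being exactly the non-permissible $\calt$-angles. Your related claim ``$I_B\subseteq I_C$'' is also wrong in the sense you intend: the relation $\za\zb$ coming from the two permissible corners of an interior triangle is \emph{already} a relation in $B$ (the two angles sit at distinct marked points of $M^*$, so the path is a tiling relation), and passing from $B$ to $C$ \emph{deletes} the third arrow and the two relations that involve it rather than adding a relation. In other words, the ``cut'' removes an arrow; it does not introduce a relation. Your second paragraph, where you expect $T_C(\Ext^2_C(DC,C))$ to gain one new arrow per $\Ext^2$-basis element, is actually in tension with your own claim $Q_C=Q_B$: those new arrows \emph{are} the non-permissible-angle arrows of $Q_B$ that you said do not exist.

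Once this is corrected, your plan aligns with the paper's, but the paper avoids the step you flag as ``delicate.'' Rather than computing $\Ext^2_C(DC,C)$ via minimal projective resolutions and matching it with $J$ by hand, it cites \cite[Theorem 2.4]{ABS} to read off the quiver of $T_C(\Ext^2_C(DC,C))$ directly --- one new arrow $\ze(r)\colon t(r)\to s(r)$ for each relation $r\in R_C$ --- and then identifies the tensor algebra with the Jacobian algebra of the quiver with potential $W=\sum_r\ze(r)\,r$ via \cite{Amiot-survey,Keller}. Since $C$ is gentle of global dimension at most $2$ (Corollary~\ref{cor endoalg}), no two relations share an arrow, so the Jacobian relations are transparently $\za\zb$, $\ze(r)\za$, and $\zb\ze(r)$ for $r=\za\zb$, and these match the three arrows and three length-two relations around each interior triangle of $\calt$. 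Your direct $\Ext^2$ computation would effectively re-prove the ABS theorem in this special case, which is substantially more work and where it is easy to lose track of bimodule structure.
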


\begin{proof}
 Let $C=\kb Q_C/I_C$ and let $R_C$ be a minimal set of generating relations for $I_C$. Since $C$ is  gentle, by Corollary \ref{cor:end-alg-tilt}, every relation in $R_C$ is a path of length 2. Throughout the proof, we denote the tensor algebra  $T_C(\Ext_C^2(DC,C)$ by $\widetilde{C}$. By \cite[Theorem 2.4]{ABS}, the quiver $Q_{\widetilde C}$ of  $\widetilde C$ is obtained from $Q_C$ by adding one arrow $\ze(r)\colon t(r)\to s(r)$ for each relation $r=\za\zb\in R$, where $s(r)=s(\za)$ and $t(r)=t(\zb)$. Define a potential $\widetilde W=\sum_{r\in R} \ze(r) \,r$. 
 Then by \cite[\S 3.6]{Amiot-survey} (see also~\cite[Theorem 6.12]{Keller}), the algebra $\widetilde C$ is isomorphic to the Jacobian algebra of the quiver with potential $(Q_{\widetilde C},\widetilde W)$. 
 
 In our situation, the algebra $C$ is gentle of global dimension at most 2. Therefore every arrow of $C$ is in at most one relation, and consequently, every arrow of $\widetilde C$ appears in at most one term of the potential. Thus the set of relations for $\widetilde C$ is precisely   the set of paths of the form $\za\zb, \ze(r)\za$ and $\zb\ze(r)$, with $r=\za\zb\in R_C$.
 
 On the other hand, the quiver $Q_B$ of the tiling algebra $B$ has one vertex for each arc of the triangulation $\calt=\{\zg_1,\ldots,\zg_m\}$, which clearly is in bijection with the vertices of $Q_{\widetilde C}$, since these are given by the indecomposable summands of the MAR module $T=T_1\oplus\cdots\oplus T_m$. 
 
 Moreover, the  arrows of $Q_B$ are in bijection with the angles between the arcs in $\calt$. There are two types of angles, 
 the permissible angles, and
the non-permissible angles.
The permissible angles correspond bijectively to the arrows in $Q_C$, by Lemma \ref{lem:composition}. 

 We will now show that the non-permissible angles are in bijection with the arrows in $(Q_{\widetilde C})_1\setminus(Q_{C})_1$. Let $r=\za\zb\in R_C$ be a generating relation, and let $(x,\gamma_k^\circ, \gamma_j^\circ)$ and $(y,\gamma_j^{\circ\circ}, \gamma_i^\circ)$ be the angles corresponding to the arrows $\za$ and $\zb$, respectively.  Note that both angles are permissible. Since $\za\zb$ is a path in $Q_C$, the same arc $\zg_j$ appears in both angles, however, since $\za\zb\in I_C$, the respective ends $\zg_j^\circ,\zg_j^{\circ\circ}$ are not the same. In particular, the arcs $\zg_i,\zg_j,\zg_k$ form a triangle in $\calt$. It then follows from Lemma~\ref{lem:1-angle-not-traversed} that the third angle of this triangle is not permissible. Thus this angle corresponds to an arrow of the second type in $Q_B$.  

Conversely, take an arrow $\ze\colon i \to k$ in $Q_B$ corresponding to a non-permissible angle of $\calt$. This angle lies in a unique triangle of $\calt$ and by Lemma~\ref{lem:1-angle-not-traversed}, the other two angles of this triangle are permissible. So they correspond to arrows $\alpha\colon k \to j$ and $\beta\colon j \to i$ in $Q_C$. The corresponding composition of morphisms in $\textup{mod}\,{A}$ must be zero, since we have distinct end segments  
of $\gamma_j$ in both angles. Since $C= \End_A T$, it follows that $\alpha \beta \in R_C$. Hence, there is an arrow $\ze({\alpha \beta})$ in $Q_{\widetilde C}$. 

 We have thus defined two maps between the set of arrows of the form $\ze(r)$ in $Q_{\widetilde C}$ and the set of arrows corresponding to non-permissible angles in $Q_B$, and these maps are inverses of each other.
 This shows that there is an isomorphism between the quivers of the algebras $\widetilde C$ and $B$ that induces a bijection between the defining relations of the algebras. Hence the theorem is proved.
\end{proof}

 Since $C$ is of global dimension at most 2, it follows that the algebra $B$ is the endomorphism algebra of a cluster-tilting object in Amiot's generalized cluster category $\calc_C$ of $C$, see \cite{Amiot09}. 
 
\begin{corollary}
 The algebra $B$ is 2-Calabi-Yau tilted and $B\cong \End_{\calc_C} C$.
\end{corollary}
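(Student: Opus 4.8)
The plan is to deduce this corollary directly from Theorem~\ref{thm:tensor} together with the general machinery of Amiot's generalized cluster categories. First I would recall the setup: by Corollary~\ref{cor endoalg}, the algebra $C$ is gentle of global dimension at most $2$. This is precisely the hypothesis needed to apply Amiot's construction \cite{Amiot09}: for any finite-dimensional algebra $\zL$ of global dimension at most $2$, there is an associated generalized cluster category $\calc_\zL$, which is Hom-finite and $2$-Calabi--Yau, and the image of $\zL$ in $\calc_\zL$ is a cluster-tilting object. Its endomorphism algebra $\End_{\calc_\zL}\zL$ is then, by definition, a $2$-Calabi--Yau tilted algebra.

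Next I would invoke the identification of $\End_{\calc_\zL}\zL$ with a tensor algebra. By Amiot's results (see also \cite{Keller}, \cite{Amiot-survey}), when $\gldim \zL \le 2$ the relative $3$-Calabi--Yau completion of $\zL$ has Jacobian algebra isomorphic to the tensor algebra $T_\zL(\Ext^2_\zL(D\zL,\zL))$, and this Jacobian algebra is exactly $\End_{\calc_\zL}\zL$. Applying this with $\zL = C$ gives
\[
\End_{\calc_C} C \;\cong\; T_C(\Ext^2_C(DC,C)).
\]
Combining this isomorphism with the isomorphism $B \cong T_C(\Ext^2_C(DC,C))$ established in Theorem~\ref{thm:tensor} yields $B \cong \End_{\calc_C} C$. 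Since $\End_{\calc_C} C$ is the endomorphism algebra of a cluster-tilting object in the $2$-Calabi--Yau category $\calc_C$, the algebra $B$ is $2$-Calabi--Yau tilted, which is the remaining assertion.

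The only real subtlety to be careful about is that Amiot's construction in \cite{Amiot09} is stated for finite-dimensional algebras, and here $B$ (hence $C$ via the tensor construction) may be infinite-dimensional when $A$ has infinite global dimension, as noted in the Remark after Theorem~\ref{thm intro 4}. However $C = \End_A T$ is always finite-dimensional (it is the endomorphism algebra of a finite-dimensional module), and it is only $C$ that needs to satisfy the hypotheses of Amiot's theorem; the tensor algebra $B = T_C(\Ext^2_C(DC,C))$ is allowed to be infinite-dimensional, and $\calc_C$ is still Hom-finite because $\gldim C \le 2$ guarantees this. So the main obstacle is simply bookkeeping: checking that the hypotheses ($C$ finite-dimensional, $\gldim C \le 2$) are exactly what Amiot requires, and citing the correct form of the Jacobian-algebra identification. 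No new computation is needed beyond what Theorem~\ref{thm:tensor} and Corollary~\ref{cor endoalg} already provide.
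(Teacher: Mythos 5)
Your proposal is correct and follows essentially the same route as the paper: the paper simply observes that since $C$ has global dimension at most $2$ (Corollary~\ref{cor endoalg}) and $B\cong T_C(\Ext^2_C(DC,C))$ (Theorem~\ref{thm:tensor}), Amiot's results in \cite{Amiot09} identify this tensor algebra with $\End_{\calc_C}C$, hence $B$ is $2$-Calabi--Yau tilted. Your additional bookkeeping about finite-dimensionality of $C$ versus possible infinite-dimensionality of $B$ is a sensible clarification but does not change the argument.
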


\section{Examples}\label{sect examples}
We finish by giving two examples to illustrate the constructions in this article.

We note that in both examples, string modules are completely determined by their composition series. This explains the notation we use for string modules in this section.

\subsection{First example}\label{ex 7.5}
 
\begin{figure}
\begin{center}
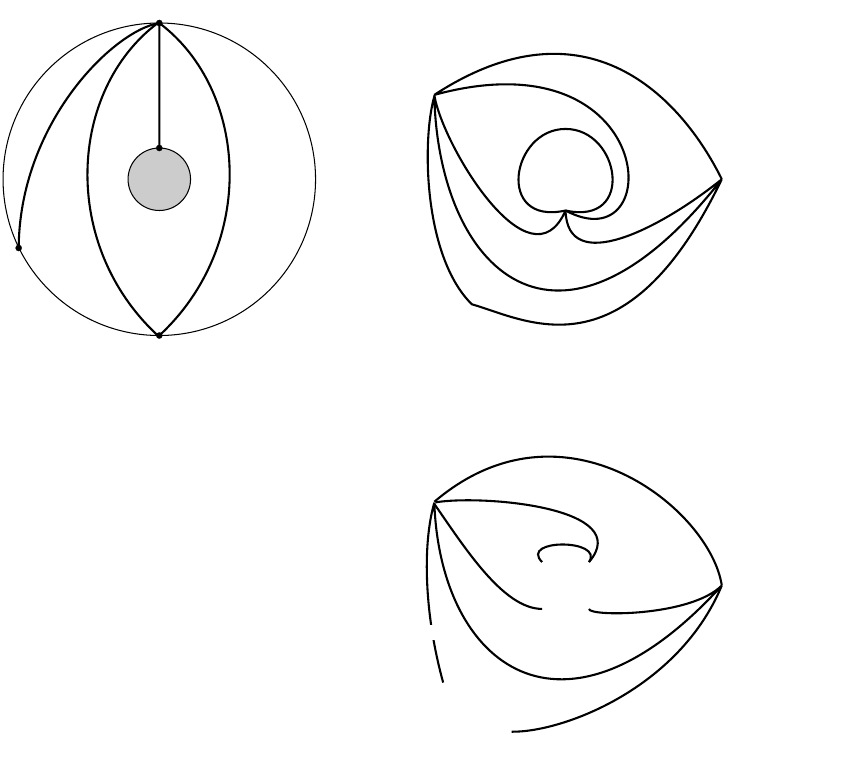
\caption{Illustration of Example~\ref{ex 7.5}. The tiled surface  $\TiledSurface$ of the algebra $A$ (top left), the triangulation $\calt$ of the maximal almost rigid $A$-module $T$ (top right), the  tiled surface  $G\TiledSurface$ of the algebra $\Abar$ (bottom left), and the image $G(\calt)$ of the triangulation $\calt$ under the map $G$ (bottom right).}
\label{fig ex 7.5}
\end{center}
\end{figure}

 Let $A=\kb Q/I$ with $Q$ the quiver 
 $\vcenter{\vbox{\xymatrix@R1pt{&4\ar[dr]^\zg\\
 1\ar[ru]^\zb&&2\ar[ll]^\za\ar[r]^\zd&3
}}}$ and  $I=\langle\za\zb, \zg\za\rangle$. 
The tiled surface $\TiledSurface$ of $A$ is shown in the top left picture in Figure~\ref{fig ex 7.5}. 
The Auslander-Reiten quiver of $A$ is given below in \eqref{eq:ex 7.5:AR quiver}, 
where the two copies of the simple module~$1$ should be identified. The required summands are typeset between vertical bars. 
\begin{equation}
\label{eq:ex 7.5:AR quiver}
\xymatrix@R10pt@C10pt{&&&&
{\left|\begin{smallmatrix}
 1\\4\\2\\3
\end{smallmatrix}\right|} \ar[dr]
\\
&&&{\begin{smallmatrix}
 4\\2\\3
\end{smallmatrix}} \ar[dr]\ar[ur]
&&{\begin{smallmatrix}
1\\ 4\\2
\end{smallmatrix}} \ar[dr]
\\
{\left|\begin{smallmatrix}
1
\end{smallmatrix}\right|} \ar[dr]
&&{\begin{smallmatrix}
 2\\3
\end{smallmatrix}} \ar[dr]\ar[ur]
&&{\begin{smallmatrix}
 4\\2
\end{smallmatrix}} \ar[dr]\ar[ur]
&&{\begin{smallmatrix}
 1\\4
 \end{smallmatrix}} \ar[dr]
 \\
 &
 {\begin{smallmatrix}
2\\1\ 3
\end{smallmatrix}} \ar[dr]\ar[ur]
&&{\begin{smallmatrix}
2
\end{smallmatrix}} \ar[ur]
&&{\left|\begin{smallmatrix}
4
\end{smallmatrix}\right|} \ar[ur]
&&{\left|\begin{smallmatrix}
1
\end{smallmatrix}\right|} \ar[dr]
\\
{\left|\begin{smallmatrix}
3
\end{smallmatrix}\right|} \ar[ur]
&&{\left|\begin{smallmatrix}
2\\1
\end{smallmatrix}\right|} \ar[ur]
 &&&&&&\ldots}
\end{equation}

Let $T$ be the maximal almost rigid $A$-module
\[T={\begin{smallmatrix}
3
\end{smallmatrix}}\oplus
{\begin{smallmatrix}
2\\1
\end{smallmatrix}}
\oplus
{\begin{smallmatrix}
4
\end{smallmatrix}}
\oplus
{\begin{smallmatrix}
1
\end{smallmatrix}}\oplus
{\begin{smallmatrix}
1\\4\\2\\3
\end{smallmatrix}}\oplus
{\begin{smallmatrix}
2\\1\ 3
\end{smallmatrix}}\oplus
{\begin{smallmatrix}
2\\3
\end{smallmatrix}}\oplus
{\begin{smallmatrix}
4\\2\\3
\end{smallmatrix}}.
\] 
Note that $T$ is the unique MAR module $M_\proj$ that contains all projectives. 
We see directly from the Auslander-Reiten quiver that the endomorphism algebra $\End_A T$ is given by the quiver \eqref{eq:ex75: quiver} below,  bound by the relations 
$fa=gb=ce=hd=0$. 
\begin{equation}\label{eq:ex75: quiver}
 \xymatrix@R10pt@C10pt{&&&&5\ar[ld]_a\\
&&&8\ar[ld]_b&&\\
4\ar@/^10pt/[rrrruu]^f&&7\ar[ld]_c\\
&6\ar[ld]_d\ar[lu]_e&&&&3\ar[lluu]_g\\
1&&2\ar[lu]_h}
\end{equation}

The maximal almost rigid module $T$ induces a  triangulation $\calt$ of $(S,M^*)$ which is shown in the top right picture in Figure~\ref{fig ex 7.5}. The arcs $\zg_i$ of the triangulation are determined by the dimension vectors of the direct summands of $T$. For example, the arc $\zg_1$ corresponds to the direct summand $T_1=3$, because $\zg_1$ crosses the arc $\tau_3$ exactly once and it doesn't cross any other arc of the tiling $P$.

The algebra $\Abar$ is given by the quiver $\Qbar= \vcenter{\vbox{\xymatrix@R1pt@C=20pt{&&4\ar[dr]^{\zg_a}\\
&v_\zb\ar[ru]^{\zb_b} && v_\zg\ar[rd]^{\zg_b}\\
 1\ar[ru]^{\zb_a}&&v_\za\ar[ll]^{\za_b}&&2\ar[ll]^{\za_a}\ar[r]^{\zd_a}&v_\zd\ar[r]^{\zd_b}&3
}}}$ and the ideal $\Ibar=\langle\za_b\zb_a, \zg_b\za_a\rangle$.
By Proposition~\ref{prop 7.2}, its tiled surface is equal to $G\TiledSurface$, which is shown in the bottom left picture of Figure~\ref{fig ex 7.5}. This also illustrates the action of $G$ on $\TiledSurface$. 

The functor $G$ maps the $A$-module $T$ to an $\Abar $-module $G(T)$ as described in section~\ref{sect 7.3}. The corresponding image $G(\calt)$ of the triangulation $\calt$ is shown in the bottom right picture in Figure~\ref{fig ex 7.5}. 
The set of red points $G(M^*)$ consists of the points $y_1,y_{21},y_{22}, y_3,y_{41},y_{42},y_{43},y_{44}$. Note that each tile in $G(\calt)$ has a boundary edge.

We obtain a new tiled surface $(G(S),G(M^*),G(M^*)^*,G(\calt))$, where $G(M^*)^*$ has exactly one point on each boundary component and no punctures. The  corresponding gentle algebra is isomorphic to the endomorphism algebra $\End_A T $. Indeed the quiver of the tiling algebra is the same as the one shown in \eqref{eq:ex75: quiver}. There are four generating relations, each corresponding to one of the four tiles that have an edge on the boundary. For example, the tile bounded by $G(\zg_4),G(\zg_5),$ and $G(\zg_8)$ induces the relation 
$0=fa=\xymatrix{4\ar[r]^f&5\ar[r]^a&8}$.

The quiver of the tensor algebra $B=T_C(\Ext^2_C(DC,C))$ is drawn in \eqref{eq:ex75: quiverB} below. 
\begin{equation}\label{eq:ex75: quiverB}
\xymatrix@R15pt@C15pt{&&&&5\ar[ld]^a\\
&&&8\ar[ld]^b\ar[dlll]_{\ze(fa)}&&\\
4\ar@/^15pt/[rrrruu]^f\ar[rr]_{\ze(ce)}&&7\ar[ld]^c\ar[rrrd]_{\ze(gb)}\\
&6\ar[ld]_d\ar[lu]^e&&&&3\ar[lluu]_g\\
1\ar[rr]_{\ze(hd)}&&2\ar[lu]_h}
\end{equation}

For a comparison of the permissible triangulations associated to $M_\proj$ and $M_\inj$ (defined in Section~\ref{sec:Mproj}), see Figure~\ref{fig:triangulations-proj-inj-mars}.

\begin{figure}[ht!]
\centering 
\scalebox{0.8}{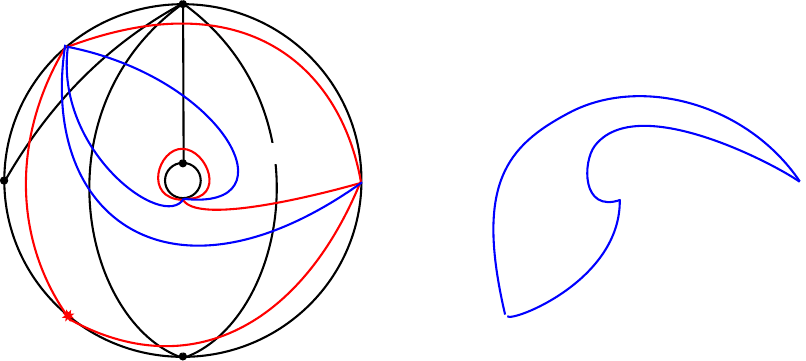}
\caption{Left: The triangulation corresponding to $M_\proj$. Right: The triangulation corresponding to $M_\inj$. In both pictures, the required summands of the MAR module are drawn in red, and the remaining summands are drawn in blue. The tiling $P$ of the surface is drawn in black.}
\label{fig:triangulations-proj-inj-mars}
\end{figure}

\subsection{Second example: The Orpheus algebra} \label{sect Orpheus Algebra}

Let $A$ be the gentle algebra given by the quiver $\xymatrix{1\ar@<2pt>[r]^{\za_1}&2\ar@<2pt>[r]^{\zb_1}\ar@<2pt>[l]^{\za_2}&3\ar@<2pt>[l]^{\zb_2}\ar@<2pt>[r]^{\zg_1}&4\ar@<2pt>[l]^{\zg_2}}$ bound by the relations $\za_1\za_2, \,\za_2\za_1, 
\,
\zb_1\zb_2, \,\zb_2\zb_1,\,
\zg_1\zg_2,\, \zg_2\zg_1$. Because of the relations, a path is zero as soon as it changes direction in the quiver. So the nonzero paths go from left to right or from right to left. Because of this ``never turn around'' property, this algebra is called the \emph{Orpheus algebra}.

The Auslander-Reiten quiver is given in Figure~\ref{fig orpheus ar}, where the modules with the same labels should be identified.  The corresponding tiled surface is shown in the top left picture of  Figure~\ref{fig orpheus}.

The top right picture of Figure~\ref{fig orpheus} shows the triangulation corresponding to the unique MAR module 
\[T=M_{\textup{proj}} =
{\begin{smallmatrix}
 1\\2\\3\\4
\end{smallmatrix}}
\oplus
{\begin{smallmatrix}
 1
\end{smallmatrix}}\oplus
{\begin{smallmatrix}
 2\\3\\4
\end{smallmatrix}} 
\oplus
{\begin{smallmatrix}
2\\1\, 3\\\  \ 4
\end{smallmatrix}} 
\oplus
{\begin{smallmatrix}
2\\1
\end{smallmatrix}}  
\oplus
{\begin{smallmatrix}
 3\\4
\end{smallmatrix}}
\oplus
{\begin{smallmatrix}
3\\2\,4\\1\ \ 
\end{smallmatrix}}
\oplus
{\begin{smallmatrix}
3\\2\\1 
\end{smallmatrix}} 
\oplus
{\begin{smallmatrix}
 4
\end{smallmatrix}}
\oplus{\begin{smallmatrix}
 4\\3\\2\\1
\end{smallmatrix}}
\]
that contains all projective summands. The projective modules correspond to the four arcs whose endpoints are the two boundary points. The remaining arcs are summands of radicals. There are precisely four required summands: the projective-injectives $P(1), P(4)$ which correspond to the leftmost and the rightmost arc in the triangulation, and the simples $S(1),S(4)$ which correspond to the short arcs from the boundary points to the nearest puncture. 

In the Auslander-Reiten quiver the required summands are typeset between vertical bars. The circles in the Auslander-Reiten quiver indicate holes, that is the module to the left of a circle is injective and the module to the right of a circle is projective.

The endomorphism algebra $C=\End_A T$ is given by the quiver
\[\xymatrix@R10pt{
&{\begin{smallmatrix}
1
\end{smallmatrix}}  
\ar[ld] _\za
\ar@{<-}[rd] ^\zb
&&
{\begin{smallmatrix}
 2\\1
\end{smallmatrix}}
\ar[ld]_\zg 
\ar@{<-}[rd] ^\zd
&&
{\begin{smallmatrix}
3\\2\\1 
\end{smallmatrix}} 
\ar[ld]_\zs
\ar@{<-}[rd] ^\tau
\\
{\begin{smallmatrix}
 1\\2\\3\\4
\end{smallmatrix}}
&&
{\begin{smallmatrix}
2\\1\, 3\\\  \ 4
\end{smallmatrix}} 
&&
{\begin{smallmatrix}
3\\2\,4\\1\ \ 
\end{smallmatrix}}
&&
{\begin{smallmatrix}
 4\\3\\2\\1
\end{smallmatrix}} 
\\ 
&{\begin{smallmatrix}
 2\\3\\4
\end{smallmatrix}}  
\ar@{<-}[lu] ^\za
\ar[ru] _\zb
&&
{\begin{smallmatrix}
 3\\ 4
\end{smallmatrix}}
\ar@{<-}[lu]^\zg 
\ar[ru]_\zd
&&
{\begin{smallmatrix}
 4
\end{smallmatrix}}
\ar@{<-}[lu] ^\zs
\ar[ru] _
\tau
}
\]
bound by the relations $\za^2,\zb^2,\zg^2,\zd^2,\zs^2$ and $\tau^2$. Therefore the tensor algebra $B$ is given by the quiver
\[\xymatrix@R10pt{
&{\begin{smallmatrix}
1
\end{smallmatrix}}  
\ar[ld] _\za
\ar@{<-}[rd] ^\zb
\ar@{<-}@<-2pt>[dd]_{\za}
\ar@{<-}@<2pt>@{<-}[dd]^{\zb}
&&
{\begin{smallmatrix}
 2\\1
\end{smallmatrix}}
\ar[ld]_\zg 
\ar@{<-}[rd] ^\zd
\ar@{<-}@<-2pt>[dd]_{\zg}
\ar@{<-}@<2pt>@{<-}[dd]^{\zd}
&&
{\begin{smallmatrix}
3\\2\\1 
\end{smallmatrix}} 
\ar[ld]_\zs
\ar@{<-}[rd] ^\tau
\ar@{<-}@<-2pt>[dd]_{\zs}
\ar@{<-}@<2pt>@{<-}[dd]^{\tau}\\
{\begin{smallmatrix}
 1\\2\\3\\4
\end{smallmatrix}}
&&
{\begin{smallmatrix}
2\\1\, 3\\\  \ 4
\end{smallmatrix}} 
&&
{\begin{smallmatrix}
3\\2\,4\\1\ \ 
\end{smallmatrix}}
&&
{\begin{smallmatrix}
 4\\3\\2\\1
\end{smallmatrix}} 
\\ 
&{\begin{smallmatrix}
 2\\3\\4
\end{smallmatrix}}  
\ar@{<-}[lu] ^\za
\ar[ru] _\zb
&&
{\begin{smallmatrix}
 3\\ 4
\end{smallmatrix}}
\ar@{<-}[lu]^\zg 
\ar[ru]_\zd
&&
{\begin{smallmatrix}
 4
\end{smallmatrix}}
\ar@{<-}[lu] ^\zs
\ar[ru]_
\tau
}
\]
bound by the relations $\za^2,\zb^2,\zg^2,\zd^2,\zs^2$ and $\tau^2$.
It is rather curious that, in this example, the quiver with relations of $C$ is isomorphic to the quiver with relations of $\Abar.$

The bottom pictures of Figure \ref{fig orpheus}  show the tiled surface of the algebra $\Abar$ of section~\ref{sect 6} as well as the image of $M_\proj$ under the map $G$. 

 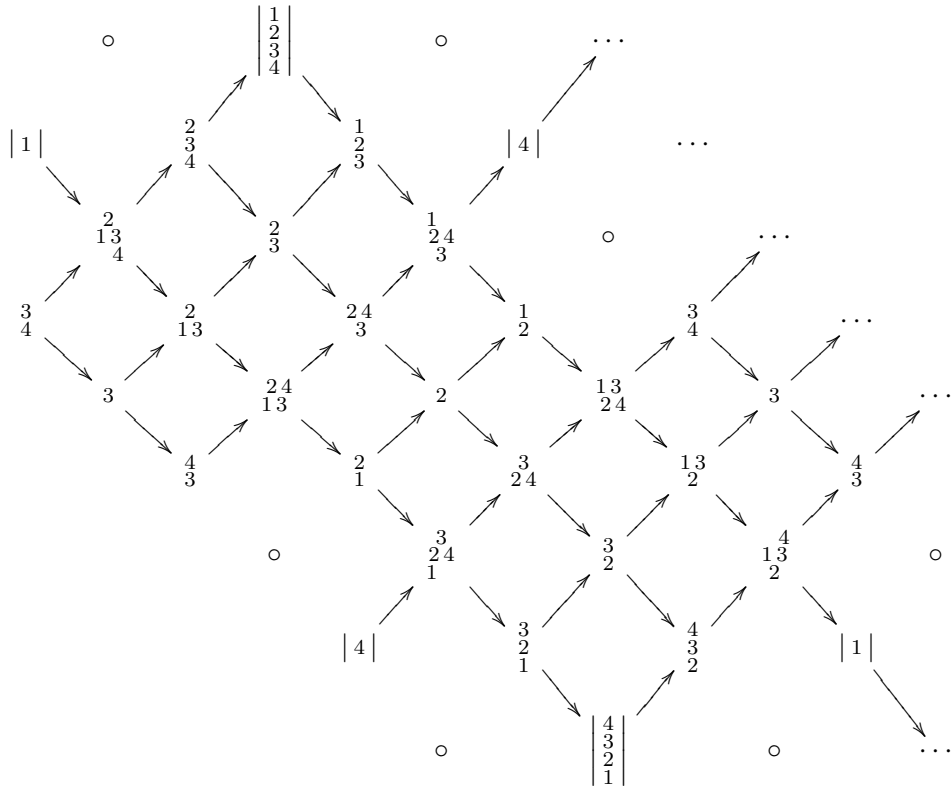
\begin{figure}
\begin{center}
\[\xymatrix@R10pt@C10pt{&\circ&&
\left|{\begin{smallmatrix}
 1\\2\\3\\4
\end{smallmatrix}}\right| \ar[rd] &&\circ&&\cdots
\\
\left|{\begin{smallmatrix}
 1
\end{smallmatrix}}\right| \ar[rd]
&&{\begin{smallmatrix}
 2\\3\\4
\end{smallmatrix}} \ar[rd]\ar[ru]
&&{\begin{smallmatrix}
 1\\2\\3
\end{smallmatrix}} \ar[rd]
&&
\left|{\begin{smallmatrix}
 4
\end{smallmatrix}}\right| \ar[ru] &&\cdots
\\
&{\begin{smallmatrix}
2\\1\, 3\\\  \ 4
\end{smallmatrix}} \ar[rd]\ar[ru]
&
&{\begin{smallmatrix}
 2\\3
\end{smallmatrix}} \ar[rd]\ar[ru]
&
&{\begin{smallmatrix}
 1\  \ \\2\, 4\\3
\end{smallmatrix}} \ar[rd]\ar[ru]
&&\circ
&&\cdots
\\
{\begin{smallmatrix}
3\\4
\end{smallmatrix}} \ar[rd]\ar[ru]&&{\begin{smallmatrix}
2\\1\, 3\\
\end{smallmatrix}} \ar[rd]\ar[ru]
&&{\begin{smallmatrix}
2\, 4\\ \, 3\\
\end{smallmatrix}} \ar[rd]\ar[ru]
&&{\begin{smallmatrix}
1\\2
\end{smallmatrix}} \ar[rd]
&&{\begin{smallmatrix}
 3\\4
\end{smallmatrix}} \ar[rd]\ar[ru]
&&\cdots
\\
&{\begin{smallmatrix}
3
\end{smallmatrix}}\ar[ru]\ar[rd]&&{\begin{smallmatrix}
\ 2\,4\\ 1\, 3\\
\end{smallmatrix}} \ar[rd]\ar[ru]
&&{\begin{smallmatrix}
2
\end{smallmatrix}} \ar[rd]\ar[ru]
&&{\begin{smallmatrix}
 1\, 3\\\  2\,4\\
\end{smallmatrix}} \ar[rd]\ar[ru]
&&{\begin{smallmatrix}
3
\end{smallmatrix}} \ar[rd]\ar[ru]
&&\cdots
\\
&&{\begin{smallmatrix}
4\\3
\end{smallmatrix}}\ar[ru]&&{\begin{smallmatrix}
2\\1
\end{smallmatrix}} \ar[rd]\ar[ru]
&&{\begin{smallmatrix}
3\\ 2\,4
\end{smallmatrix}} \ar[rd]\ar[ru]
&&{\begin{smallmatrix}
1\,3\\2
\end{smallmatrix}} \ar[rd]\ar[ru]
&&{\begin{smallmatrix}
4\\3
\end{smallmatrix}} \ar[ru]
&&
\\
&&&\circ
&&{\begin{smallmatrix}
3\\2\,4\\1\ \ 
\end{smallmatrix}} \ar[rd]\ar[ru]
&&{\begin{smallmatrix}
3\\2
\end{smallmatrix}} \ar[rd]\ar[ru]
&&{\begin{smallmatrix}
\ \ 4\\1\,3\\2
\end{smallmatrix}} \ar[rd]\ar[ru]
&&\circ
\\
&&&&\left|{\begin{smallmatrix}
4
\end{smallmatrix}} \right|\ar[ru]
&&{\begin{smallmatrix}
3\\2\\1 
\end{smallmatrix}} \ar[rd]\ar[ru]
&&{\begin{smallmatrix}
4\\3\\2 
\end{smallmatrix}} \ar[ru]
&&\left|{\begin{smallmatrix}
1 
\end{smallmatrix}}\right| \ar[rd]
\\
&&&&&\circ
&&\left|{\begin{smallmatrix}
4\\3\\2\\1 
\end{smallmatrix}}\right| \ar[ru] 
 &&\circ&&\cdots
}\]
\caption{The Auslander-Reiten quiver of the Orpheus algebra. Modules with the same labels need to be identified. The required summands are typeset between vertical bars. The circles $\circ$ represent holes.}
\label{fig orpheus ar}
\end{center}
\end{figure}

\begin{figure}
\begin{center}
\scalebox{.8}{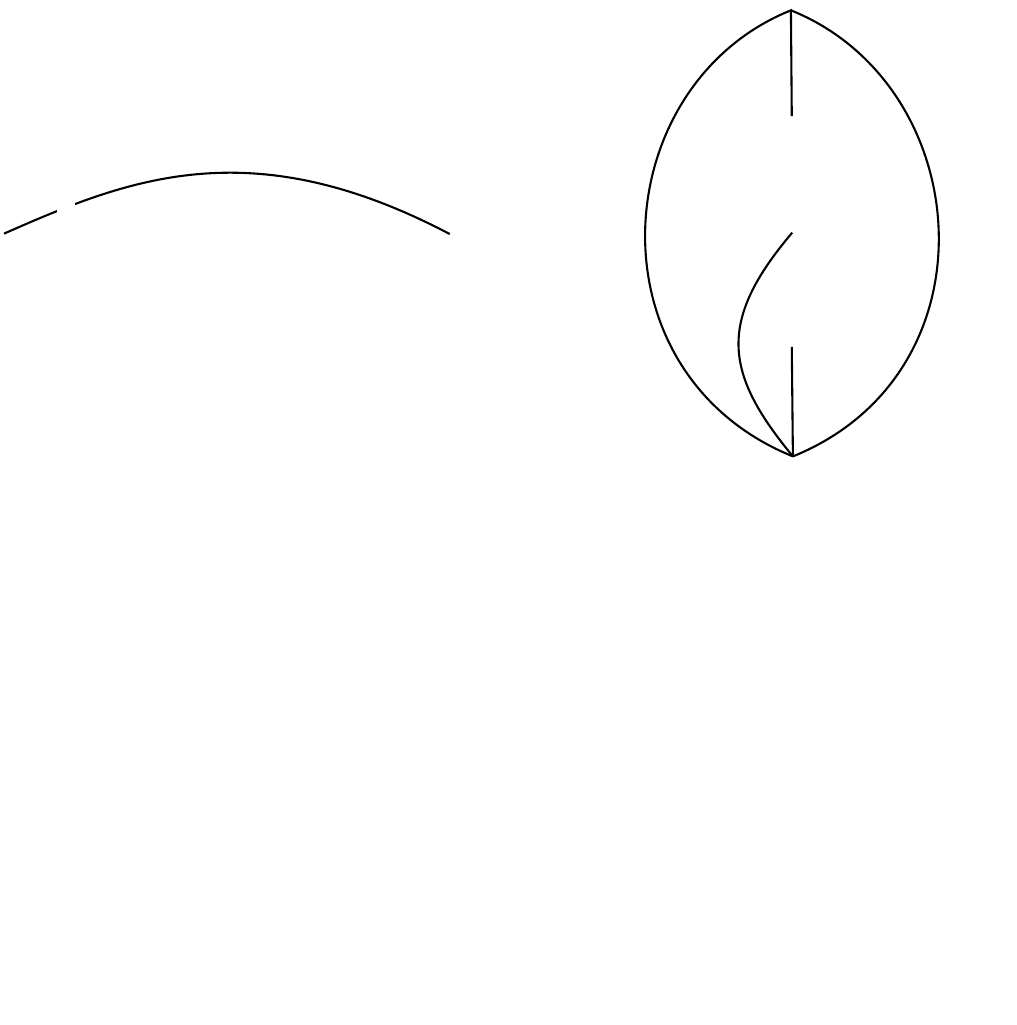} 
\caption{Illustration of Example \ref{sect Orpheus Algebra}. The tiled surface  $\TiledSurface$ of the orpheus algebra $A$ in black and the arc corresponding to the projective module
$\begin{smallmatrix}
2\\1\, 3\\\  \ 4
\end{smallmatrix}$
 in red (top left), the triangulation  $\calt$ corresponding to the MAR module $M_{\textup{proj}}$ (top right), the  tiled surface  $G\TiledSurface$ of the algebra $\Abar$ (bottom left), and the image $G(\calt)$ of the triangulation $\calt$ under the map $G$ (bottom right).}
\label{fig orpheus}
\end{center}
\end{figure}

\subsection*{Acknowledgements} 
The authors are grateful to Thomas Br\"{u}stle, Alastair King, Yann Palu, Matthew Pressland, and Yadira Valdivieso-D\'iaz for helpful discussions. 
The authors would like to thank the Isaac Newton Institute for Mathematical Sciences for support and hospitality during the programme Cluster Algebras and Representation Theory where work on this paper was undertaken. This work was supported by: EPSRC Grant Number EP/R014604/1. RCS acknowledges the financial support of European Union’s Horizon 2020 research and innovation programme through the Marie Sk\l odowska-Curie Individual Fellowship grant 838706. RS was supported by the NSF grants  DMS-2054561 and DMS-2348909.

\printbibliography

\end{document}